\definecolor{dblue}{rgb}{0, 0, 0.72}
\numberwithin{equation}{section}
\newtheorem{lemma}{Lemma}[section]
\newtheorem{theorem}[lemma]{Theorem}
\newtheorem{proposition}{Proposition}[section]
\newtheorem{rem}[lemma]{Remark}
\newtheorem{remark}[lemma]{Remark}
\newtheorem{example}[lemma]{Example}
\newtheorem{definition}[lemma]{Definition}
\newtheorem{corollary}[lemma]{Corollary}
\newcommand{\re}{\begin{rem}\rm}
	\newcommand{\mar}{\end{rem}}
\newcommand{\ee }{\mathrm{I}\!\!1}
\renewcommand{\for}{\begin{eqnarray*}}
	\newcommand{\mel}{\end{eqnarray*}}
\DeclareMathOperator{\sgn}{sgn}
\DeclareMathOperator{\tr}{tr}
\newcommand{\pl}{\hspace{.1cm}}
\newcommand{\qd}{\end{proof}\vspace{0.5ex}}
\newcommand{\id}{\iota_{\infty,2}^n}
\newcommand{\A}{{\mathcal A}}
\newcommand{\pf}{\begin{proof}}
\newcommand{\be}{\left|{\atop}}
\newcommand{\xspace}{\hbox{\kern-2.5pt}}
\newcommand{\xyspace}{\hbox{\kern-1.1pt}}
\newcommand{\ssubset} {\!\!\subset\! \!}
\definecolor{LightGray}{rgb}{0.94,0.94,0.94}
\definecolor{VeryLightBlue}{rgb}{0.9,0.9,1}
\definecolor{LightBlue}{rgb}{0.8,0.8,1}
\definecolor{DarkBlue}{rgb}{0,0,0.6}
\definecolor{LightGreen}{rgb}{0.88,1,0.88}
\definecolor{MidGreen}{rgb}{0.6,1,0.6}
\definecolor{DarkGreen}{rgb}{0,0.6,0}
\definecolor{DarkGrreen}{rgb}{0,0.8,0}
\definecolor{VeryLightYellow}{rgb}{1,1,0.9}
\definecolor{LightYellow}{rgb}{1,1,0.6}
\definecolor{MidYellow}{rgb}{1,1,0.5}
\definecolor{DarkYellow}{rgb}{0.8,1,0.3}
\definecolor{VeryLightRed}{rgb}{1,0.9,0.9}
\definecolor{LightRed}{rgb}{1,0.8,0.8}
\definecolor{DarkRed}{rgb}{0.8,0.2,0}
\definecolor{DarkRedb}{rgb}{0.6,0.2,0}
\definecolor{DarkLila}{rgb}{0.8,0,1}
\definecolor{Beige}{rgb}{0.96,0.96,0.86}
\definecolor{Gold}{rgb}{1.,0.84,0.}
\definecolor{Goldb}{rgb}{0.7,0.3,0.5}
\definecolor{MyYellow}{rgb}{1.,0.84,0.8}
\def\11{\mathbb{I}}
\DeclareRobustCommand\openone{\leavevmode\hbox{\small1\normalsize\kern-.33em1}}
\renewcommand{\id}{\rm{id}}
\renewcommand{\be}{\begin{equation}}
	\renewcommand{\ee}{\end{equation}}
\newcommand{\bea}{\begin{eqnarray}}
	\newcommand{\eea}{\end{eqnarray}}
\newcommand{\beas}{\begin{eqnarray*}}
	\newcommand{\eeas}{\end{eqnarray*}}
\newtheorem*{theorem*}{Theorem}
\newtheorem*{remark*}{Remark}
\newtheorem*{lemma*}{Lemma}
\newtheorem*{notation*}{Notation}
\newtheorem*{cor*}{Corollary}
\newtheorem*{note*}{Note}
\newtheorem*{prop*}{Proposition}
\newtheorem*{example*}{Example}
\title{Noncommutative Poisson Random Measure and Its Applications}
\begin{document}
\author[Y. Chen]{Yidong Chen}
\author[M. Junge]{Marius Junge}
\begin{abstract}
	We introduce a noncommutative Poisson random measure on a von Neumann algebra. This is a noncommutative generalization of the classical Poisson random measure. We call this construction \textbf{Poissonization}. Poissonization is a functor from the category of von Neumann algebras with normal semifinite faithful weights to the category of von Neumann algebras with normal faithful states. Poissonization is a natural adaptation of the second quantization \cite{KRP} to the context of von Neumann algebras. The construction is compatible with normal (weight-preserving) homomorphisms and unital normal completely positive (weight-preserving) maps. We present two main applications of Poissonization. First Poissonization provides a new framework to construct algebraic quantum field theories \cite{haag} that are not generalized free field theories. Second Poissonization permits straight-forward calculations of quantum relative entropies (and other quantum information quantities) in the case of type III von Neumann algebras.
\end{abstract}
	\maketitle
	\tableofcontents
	\section{Introduction}\label{section:introduction}
	In classical probability theory, the Poisson process and p-stable processes are important tools with applications in harmonic analysis and Banach space theory. By the Lévy-Khintchine formula, p-stable random variables can be obtained as a stochastic integral from the Poisson process \cite{ST}. Using p-stable random variables, Dacuhna-Castelle, Bretagnole and Kirvine \cite{BDK} showed that there exists an isometric isomorphism between $L_p(\mathbb{R}_+)$ and $L_q(0,1)$ for $0 < q<p\leq 2$. Using noncommutative generalizations of classical probability constructions \cite{KRP, Meyer}, it was shown in \cite{MJJP} that for any hyperfinite von Neumann algebra $M$, there exists a completely positive isometric embedding from $L_p(M)$ to $L_q(M')$ for $1\leq 1 < p \leq 2$. In particular, the von Neumann algebra $M'$ can be made hyperfinite. The preservation of hyperfiniteness relies crucially on a noncommutative version of Poisson random measure. The detailed construction was based on an unpublished work of Marius Junge. In this paper, we expand the original construction of noncommutative Poisson random measure to general von Neumann algebras with normal semifinite faithful weights. We study various functorial properties of this construction. From the perspective of quantization, the noncommutative Poisson random measure is a natural generalization of the second quantization to the setting of operator algebras. Hence we decide to name our construction \textbf{Poissonization}. 
	
	Recall the second quantization can be understood as a functor from the category of real Hilbert spaces with symplectic forms and contractions to the category of CCR algebras and unital completely positive maps \cite{BR1, BR2}. A CCR algebra comes equipped with distinguished representations by quasi-free states. These states are built out of the inner product on the underlying Hilbert space. From a physics perspective, the quasi-free states calculate the "vacuum" expectation of the multi-particle correlations. The vectors of the underlying Hilbert space represent physical states of a single particle species. An inherent drawback to this quantization approach is its inability to produce connected $n$-point correlation functions. Plainly, using only the inner product it is impossible to produce multilinear forms on tensor products of Hilbert space without breaking the multilinear form into the quasi-free form. Hence without perturbation theory, the second quantization (with the quasi-free states) can only produce the so-called generalized free field theories. 
	
	On the other hand, if the quantization starts from the category of operator algebras instead of Hilbert spaces, one can replace inner products with linear functionals. Then it is almost trivial to product connected $n$-point correlation functions. These functions will simply be produced by the linear functional acting on the product of $n$ operators. From this point of view, it is necessary to construct a generalization of the second quantization to the category of operator algebras with linear functionals.
	
	Moreover, any legitimate quantization procedure must produce unambiguous outcome. Therefore, when quantizing a common "one-particle" algebra, two different realizations of the quantization outcome must be isomorphic in a suitable sense. From a quantum probabilistic perspective, this question can be recast as a noncommutative analog of the classical Hamburger's moment problem \cite{wid}. It was shown in \cite{MAW} that the noncommutative moment problem has a unique solution if the linear functional satisfies certain growth upper bound. This upper bound can be understood as a noncommutative analog of the moments of classical Poisson random variables. Combining these discussions, it is natural to consider a general quantization procedure based on a noncommutative analog of classical Poisson random measure.
	
	To summarize the main results of our construction, we introduce the following notations. Let $\textbf{vNa}_w$ be the category where the objects are von Neumann algebras with a normal faithful semifinite weight $(N,\omega)$ and the morphisms are weight-preserving normal unital completely postive maps. Let $\text{vNa}_s$ be the category where the objects are von Neumann algebras with a normal faithful states and the morphisms are state-preserving normal unital completely positive maps. Then Poissonization can be summarized in the following theorem:
	\begin{theorem}\label{theorem:main}
		Poissonization \textbf{Poiss} is a functor:
		\begin{equation}
			\textbf{Poiss}:\textbf{vNa}_w\Rightarrow\textbf{vNa}_s: (N,\omega)\rightarrow (\mathbb{P}_\omega N, \phi_\omega)
		\end{equation}
		such that the following properties hold:
		\begin{enumerate}
			\item(Noncommutative Poisson moment formula) For self-adjoint affiliated operators $x = x^*\eta N$ such that $e^{ix} - 1\in m_\omega$ (the definition ideal of $\omega$), Poissonization lifts the unitary operator $e^{ix}\in N$ to a unitary operator $\Gamma(e^{ix})\in \mathbb{P}_\omega N$ such that:
			\begin{equation}
				\phi_\omega(\Gamma(e^{ix})) = \exp(\omega(e^{ix} - 1))\pl;
			\end{equation}
			\item(Compatibility of modular automorphism) Poissonization preserves the modular automorphism group:
			\begin{center}
				\begin{tikzcd}
					N \arrow[r, "\textbf{Poiss}"] \arrow[d, "\sigma^\omega_t"] & \mathbb{P}_\omega N \arrow[d, "\sigma^{\phi_\omega}_t = \textbf{Poiss}(\sigma^\omega_t)"] \\
					N \arrow[r, "\textbf{Poiss}"] & \mathbb{P}_\omega N \pl;
				\end{tikzcd}
			\end{center}
			\item The Haagerup $L_2$-space of the Poisson algebra is isomorphic to the symmetric Fock space of the $L_2$-space of the original von Neumann algebra:
			\begin{equation}
				L_2(\mathbb{P}_\omega N, \phi_\omega) \cong \mathcal{F}_s(L_2(N,\omega))\pl;
			\end{equation}
			\item(Generalization of classical Poisson random variables) If $(N,\omega) = (\mathbb{C}, \lambda)$ where the functional is simple multiplication by $\lambda > 0$, then $\mathbb{P}_{\tr}\mathbb{C} = \ell_\infty(\mathbb{N})$ and the state gives the Poisson distribution with intensity $\lambda$:
			\begin{equation}
				\phi_{\tr} (\delta_{k}) = \frac{e^{-\lambda}\lambda^k}{k!}
			\end{equation} 
			where $\delta_k\in\ell_\infty(\mathbb{N})$ is the characteristic function supported at $k\in\mathbb{N}$;
			\item The algebra $\mathbb{P}_{\tr}\mathbb{B}(\ell_2(\mathbb{Z}))$ is the hyperfinite II$_1$ factor. More generally, $\mathbb{P}_{\omega\otimes\tr}N\overline{\otimes}\mathbb{B}(\ell_2)$ is a factor and it is either type III or type II$_1$. The type is determined by the group generated by the Arveson spectrum $Sp(\Delta_\omega)$.
		\end{enumerate}
	\end{theorem}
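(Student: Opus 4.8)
The plan is to reduce everything to the modular data supplied by parts (2) and (3) and then to recognize the resulting algebra as an infinite tensor product (Araki--Woods / ITPFI) factor, whose type is read off from the modular spectrum. By part (3) the Haagerup space $L_2(\mathbb{P}_\omega N,\phi_\omega)$ is the symmetric Fock space $\mathcal{F}_s(L_2(N,\omega))$, and by part (2) the modular group of $\phi_\omega$ is the second quantization of $\sigma^\omega_t$; hence the modular operator of $\phi_\omega$ is $\Gamma(\Delta_\omega)$, acting on the $n$-particle subspace $L_2(N,\omega)^{\otimes_s n}$ as $\Delta_\omega^{\otimes n}$. First I would compute its spectrum: since $J_\omega\Delta_\omega J_\omega=\Delta_\omega^{-1}$ forces $Sp(\Delta_\omega)=Sp(\Delta_\omega)^{-1}$, the union over $n$ of the $n$-fold products is already closed under inversion, so
\[
Sp(\Delta_{\phi_\omega})=\overline{\langle Sp(\Delta_\omega)\rangle},
\]
the closed subgroup of $\mathbb{R}_{>0}$ generated by $Sp(\Delta_\omega)$ (the vacuum contributing the neutral element $1$).

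The technical core is to turn this spectral statement into an isomorphism with a known factor, and here the stabilization by $\mathbb{B}(\ell_2)$ is essential: tensoring with $(\mathbb{B}(\ell_2),\tr)$ leaves $\Delta_{\omega\otimes\tr}=\Delta_\omega\otimes 1$, hence the spectrum, unchanged, but forces every spectral value of $\Delta_\omega$ to occur with infinite multiplicity in the one-particle space. I would diagonalize $\Delta_{\omega\otimes\tr}$ and use the exponential law $\mathcal{F}_s(H_1\oplus H_2)\cong\mathcal{F}_s(H_1)\otimes\mathcal{F}_s(H_2)$ to factor the Fock representation over spectral modes. Each mode contributes an oscillator-type type I factor carrying a quasi-free product state whose eigenvalue ratios are governed by the corresponding value of $\Delta_\omega$, and the infinite multiplicity guarantees that each ratio recurs infinitely often. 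This exhibits $\mathbb{P}_{\omega\otimes\tr}(N\overline{\otimes}\mathbb{B}(\ell_2))$ as an Araki--Woods (ITPFI) algebra, whence factoriality and hyperfiniteness follow from the standard structure theory of such infinite tensor products.

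With the Araki--Woods identification in hand, the type is dictated by the asymptotic ratio set $r_\infty$, which by construction equals $\overline{\langle Sp(\Delta_\omega)\rangle}$. When $\omega$ is a trace we have $\Delta_\omega=1$, the group is trivial, $\phi_\omega$ is tracial, and the factor is finite, hence type II$_1$; otherwise the group is a nontrivial closed subgroup of $\mathbb{R}_{>0}$, no faithful normal trace survives, and the factor is type III, with $\overline{\langle Sp(\Delta_\omega)\rangle}=\lambda^{\mathbb{Z}}$ giving III$_\lambda$ and a dense group giving III$_1$. For the named special case $N=\mathbb{B}(\ell_2(\mathbb{Z}))$ with its canonical trace, $\Delta_{\tr}=1$ places us in the tracial branch; the ITPFI presentation over infinitely many modes is manifestly approximately finite dimensional, so by the uniqueness of the hyperfinite II$_1$ factor the Poissonization is $R$.

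I expect the main obstacle to be the mode factorization in the presence of continuous modular spectrum: while the exponential law and the quasi-free computation are routine when $\Delta_\omega$ is diagonalizable, a genuinely continuous $Sp(\Delta_\omega)$ requires a direct-integral version of the Araki--Woods argument, or equivalently an appeal to the Connes spectrum $\Gamma(\sigma^{\phi_\omega})$ together with a separate proof that reduction by centralizer projections does not shrink the Arveson spectrum. Verifying that the reduced spectrum stays equal to $\overline{\langle Sp(\Delta_\omega)\rangle}$ — equivalently, that the asymptotic ratio set is not diminished by passing to corners — is the step that genuinely uses the $\mathbb{B}(\ell_2)$ stabilization and carries the real content of both the factoriality and the type assertions.
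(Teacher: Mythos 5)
Your proposal addresses only item (5) of the theorem, taking items (2) and (3) as inputs; but those items, together with the existence of the functor itself, the moment formula (1), and the classical case (4), are what the paper actually spends Section~\ref{section:construction} proving (the $\lambda_\emptyset$- and $\lambda_{\emptyset\emptyset}$-bases, the moment estimates of Lemma~\ref{lemma:momentformulaPoisson}, the ultraproduct and Tomita-algebra constructions for n.s.f.\ weights, Theorem~\ref{theorem:Poisson}). None of that is reproduced or even sketched in your argument. Within the part you do treat, one step is correct and coincides with the paper's final step in Corollary~\ref{corollary:typeIII}: $\Delta_{\phi_\omega}$ acts on the Fock picture as $\Gamma(\Delta_\omega)$, and $Sp(\Delta_{\phi_\omega})$ is the closed multiplicative group generated by $Sp(\Delta_\omega)$, inverse-closure coming from $J_\omega\Delta_\omega J_\omega=\Delta_\omega^{-1}$.

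The genuine gap is the Araki--Woods/ITPFI identification, which is the load-bearing step of your argument and which fails for Poisson algebras. The exponential law $\mathcal{F}_s(H_1\oplus H_2)\cong\mathcal{F}_s(H_1)\otimes\mathcal{F}_s(H_2)$ factorizes the Hilbert space, not the algebra: $\mathbb{P}_\omega N$ is not generated by Weyl operators on $\mathcal{F}_s(L_2(N,\omega))$, its generators $e^{i\lambda(x)}$ are not exponentials of field operators (by Lemma~\ref{lemma:noQuasiParticle}, $\lambda(x)$ acts on the Fock basis with multiplication terms $xy_i$ and contraction terms $-\omega(x)$, $-\omega(xy_i)$, not as creation plus annihilation), and the Poisson state is not quasi-free --- producing non-vanishing connected $n$-point functions is precisely the paper's stated purpose (see the remark following Lemma~\ref{lemma:noQuasiParticle} and item (4) of Proposition~\ref{proposition:PoissonNet}). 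Moreover, a spectral subspace of $\Delta_{\omega\otimes\tr}$ is not the $L_2$-space of any subalgebra, so there is no ``mode algebra'' attached to it: the only tensor splittings available are those induced by projections in the centralizer of the weight, which split along corners of $N\overline{\otimes}\mathbb{B}(\ell_2)$ rather than along spectral modes (Lemma~\ref{lemma:centralProj}), and even these yield tensor factors that are again Poisson algebras, not type I factors, so iterating them cannot produce an ITPFI presentation without circularity. Consequently both your factoriality claim and the asymptotic-ratio-set computation of the type are unsupported. The paper's mechanism is different and avoids ITPFI entirely: factoriality of $\mathbb{P}_{\omega\otimes\tr}(N\overline{\otimes}\mathbb{B}(\ell_2))$ \emph{and of its centralizer} $\mathbb{P}_{\omega\otimes\tr}(N_\omega\overline{\otimes}\mathbb{B}(\ell_2))$ are proved by an asymptotic factorization argument using the state-preserving inductive limit $\Gamma(x)\mapsto\Gamma(x+e_n-e_k)$ and swap unitaries $u_{2n}\rightarrow 0$ weakly, for which
\begin{equation*}
\phi_{\tr_{2n}}\big(\Gamma(x^*)\Gamma(u_{2n}^*)\Gamma(y)\Gamma(u_{2n})\big)\longrightarrow \phi_{\tr}(\Gamma(x^*))\,\phi_{\tr}(\Gamma(y))
\end{equation*}
forces the center to be trivial (Proposition~\ref{proposition:typeII1}, Corollary~\ref{corollary:typeIII}); then, because the centralizer is a factor, Connes' criterion gives $S=Sp(\Delta_{\phi_{\omega\otimes\tr}})$, and the spectral computation you did correctly finishes the type determination. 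If you wish to salvage your outline, the step to replace is the mode factorization: prove directly that the centralizer of the Poisson state is a factor.
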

	As applications, we use Poissonization to construction new examples of type III von Neumann factors from unitary principal series representations of real semisimple Lie groups \cite{Knapp, Knapp1}. It turns out that a collection of these constructions naturally forms a pre-cosheaf of von Neumann algebras that satisfies the axioms of algebraic quantum field theory (AQFT) \cite{haag}. In a separate paper, we will present an in-depth study of numerous other constructions of AQFT using Poissonization. These toy models are not generalized free field theories. Hence it is of great interest to see if these models can provide valid examples of interacting field theories in higher dimension ($\geq 2$). Finally, we show that Poissonization preserves quantum relative entropy in a suitable sense. Since Poissonization naturally provides examples of type III von Neumann algebras, our proposition provides a rigorous and straight-forward way to calculate the relative entropy when density matrices are not well-defined. In a separate paper, we will continue this line of research and study various other quantum information quantities (e.g. tripartite information, out-of-time-order correlation function) using Poissonization. Because of its probabilistic origin, Poissonization provides a natural framework to study the so-called quantum chaotic systems \cite{PCASA}.
	
	This paper is the first part of a series of papers on Poissonization and its connection to various questions in mathematical physics. In this paper, we will lay the mathematical foundation for future projects. The paper is organized as follows. In Section \ref{section:review}, we review the classical Poisson random measure and discuss some motivations of Poissonization. In Section \ref{section:construction}, we construct the Poissonization functor with increasing generality. We start by considering von Neumann algebras with fixed normal faithful functionals, and then generalize to normal weithful semifinite weights. Several equivalent constructions of Poissonization are presented in this section. Each construction emphasizes on a different aspect of Poissonization. After the construction, we study the functorial properties of Poissonization. In particular, we show that Poissonization is compatible with both normal $*$-homomorphism and normal unital completely positive maps. In Section \ref{section:properties}, we study some important properties of Poissonization. In Subsection \ref{subsection:type}, we determine the factoriality and the type of the von Neumann algebras constructed by Poissonization. In Subsection \ref{subsection:representation}, we use Poissonization to construct type III von Neumann algebras from unitary principal series representations and present a simple toy model of algebraic quantum field theory in the sens of \cite{haag}. In Subsection \ref{subsection:split}, we briefly study the structure of Poisson subfactors. It turns out that Poissonization naturally produces infinite index subfactors \cite{Jones1, Jones2, Longo3}. Although it is not yet clear how to define standard invariants for infinite index subfactors, we have enough control on Poissonization to make explicit calculations on the so-called centralizer algebras and the central vectors \cite{JP1}. In Subsection \ref{subsection:relEnt}, we use Poissonization to calculate the relative entropy of certain states in some type III von Neumann algebras.
	\section{Review of Classical Poisson Random Measure}\label{section:review}
	Recall the classical notion of a Poisson random measure associated to a measure space $(\Omega, \Sigma, \mu)$. Denote $\Sigma_{fin}\ssubset\Sigma$ to be the collection of finite measures. A Poisson random measure $\nu_\Omega$ is given by a family of Poisson random variables $(P_A)_{A\in \Sigma_{fin}}$ defined on a probability space $(X,\mathcal{F},\mathbb{P})$. The Poisson random variables are labeled by the measurable sets $A$ such that \cite{classical}:
	\begin{enumerate}
		\item For $A\in\Sigma_{fin}$, $\mathbb{P}(P_A = k) = \frac{e^{-\mu(A)}\mu(A)^k}{k!}$;
		\item If $\{A_1,...,A_n\}$ are mutually disjoint, then $\{P_{A_1},...,P_{A_n}\}$ are mutually independent;
		\item For disjoint sets $A_1,...,A_n\in\Sigma_{fin}$, $P_{\cup_j A_j} = \sum_j P_{A_j}$.
	\end{enumerate}
	Given a Poisson random measure $\nu_{\Omega}$ on the finite measure space $(\Omega,\Sigma,\mu)$ and an integrable function $f\in L^1(\Omega,\mu)$, the characteristic function is given by:
	\begin{equation}
			\mathbb{E}(e^{it \int_\Omega f d\nu_\Omega}) = \exp(\int_\Omega d\mu(x)(e^{itf(x)} - 1))\pl.
	\end{equation}

	We are now ready to lay out some basic requirements a noncommutative analog of Poisson random measure must satisfy. Following the general scheme of quantization, the measure space $(\Omega,\Sigma,\mu)$ is replaced by the noncommutative analog of $L^\infty(\Omega,\mu)$, namely a von Neumann algebra $N$ together with a normal weight $\omega$. Disjoint subsets of $\Omega$ are replaced by orthogonal projections in $N$. The family of noncommutative Poisson random variables should be constructed in a noncommutative probability space. Recall a noncommutative probability space \cite{VND, Meyer, MAW} $(\mathcal{A},\tau)$ is a unital $*$-algebra $\mathcal{A}$ over $\mathbb{C}$ together with a unital linear functional $\tau$. In addition, we recall the notion of strong independence in quantum probability theory. This is the correct analog of the classical independence. 
	\begin{definition}\label{definition:strongIndependence}
		Let $(\mathcal{A},\tau)$ be a noncommutative probability space, then two subalgebras $\mathcal{B}_1,\mathcal{B}_2\ssubset\mathcal{A}$ are strongly independent if for any $x\in\mathcal{B}_1, y\in\mathcal{B}_2$ we have:
		\begin{equation}
			[x,y] = 0 \text{ , }\tau(xy) = \tau(x)\tau(y)\pl.
		\end{equation}
	\end{definition}
	A noncommutative Poisson random measure should be a $*$-preserving linear map \footnote{Linearity ensures that the sum of independent Poisson random variables is another Poisson random variable }:
	\begin{equation}
		\lambda: (N,\omega)\rightarrow (\mathcal{A},\tau)
	\end{equation}
	such that:
	\begin{enumerate}
		\item(Noncommutative independence) For two orthogonal projections $e_1,e_2 \in N$, then $[\lambda(eNe), \lambda(fNf)] = 0$ and the algebras generated by $\lambda(eNe)$ and $\lambda(fNf)$ are strongly independent;
		\item(Noncommutative Poisson moment) $\tau(e^{i\lambda(x)})  = \exp(\omega(e^{ix} - 1))$ where $x =x^*\in N$ such that $e^{ix} -1$ is $\omega$-finite.
	\end{enumerate}
	
	We now briefly summarize the main idea of Poissonization of a hyperfinite matrix algebra $(\mathbb{B}(H),\tr)$. The measure space $(\Omega,\Sigma,\mu)$ is replaced by the projection lattice in $\mathbb{B}(H)$ and disjoint subsets are replaced by orthogonal projections. Fix a family of mutually orthogonal projections $\{e_i\}_{i\in\mathbb{N}}$ such that $f_n:=\sum_{1\leq i \leq n}e_i$ weakly converges to $1$. For finite dimensional matrix algebra, it turns out that the Poissonization of $(M_n(\mathbb{C}),\tr)$ is nothing but the infinite symmetric tensor product of $M_n(\mathbb{C})$. The quantization map $\lambda$ is given by:
	\begin{equation}
		\lambda: M_n(\mathbb{C})\rightarrow M_s(M_n(\mathbb{C})): x\mapsto (\sum_{1\leq j \leq n}\pi_j(x))_n
	\end{equation}
	where $||x|| < 1$ and $\pi_j(x) := 1\otimes...\otimes x\otimes...\otimes 1 \in \bigotimes^nM_n(\mathbb{C})$ with $x$ in the $j$-th component. And the state $\phi_{\tr}$ can be calculated explicitly:
	\begin{equation}
		\phi_{\tr}(\Gamma(e^{ix})) = \exp(\tr(e^{ix} - 1))
	\end{equation}
	where $x$ is a self-adjoint contraction in $M_n(\mathbb{C})$ and $\Gamma(e^{ix})  :=  \exp(i\lambda(x)) = ((e^{ix})^{\otimes n})_{n\geq 0}\in\mathbb{P}_{\tr}M_n(\mathbb{C})\cong M_s(M_n(\mathbb{C}))$. The strong independence is an easy consequence of the following calculation:\footnote{This calculation will be repeated numerous times in the subsequence constructions.}
	\begin{align}
		\begin{split}
			\phi_{\tr}(\Gamma(e^{ix})\Gamma(e^{iy})) &= \exp(\tr(e^{ix}e^{iy} - 1)) = \exp(\tr \big((e^{ix} - 1)(e^{iy} - 1) + (e^{ix} -1 ) + (e^{iy} -1 )\big))
			\\
			&=\exp(\tr(e^{ix} -1 ))\exp(\tr(e^{iy} - 1)) = \phi_{\tr}(\Gamma(e^{ix}))\phi_{\tr}(\Gamma(e^{iy}))
		\end{split}
	\end{align}
	where $x, e^{ix}\in eM_n(\mathbb{C})e$ and $y, e^{iy}\in fM_n(\mathbb{C})f$, and $e\perp f$ in $M_n(\mathbb{C})$. The commutation requirement follows from the fact that $\lambda$ is in fact a Lie algebra homomorphism:
	\begin{equation}
		\lambda([x,y]) = [\lambda(x), \lambda(y)]\pl.
	\end{equation}
	And hence for $x \in e M_n(\mathbb{C}) e, y\in fM_n(\mathbb{C})f$ we have $[\Gamma(e^{ix}), \Gamma(e^{iy})] = 0$.
	
	Then for infinite dimensional type I factor $\mathbb{B}(H)$, its Poissonization is constructed as an inductive limit. The key observation is that the following $*$-homomorphism preserves the Poisson state:
	\begin{equation}
		\pi_{n,k}:\mathbb{P}_{\tr}(f_k\mathbb{B}(H)f_k)\rightarrow \mathbb{P}_{\tr}(f_n\mathbb{B}(H)f_n): \Gamma(x)\mapsto \Gamma(x + f_n - f_k)
	\end{equation}
	where we have:
	\begin{equation}
		\phi_{\tr_n}(\pi_{n,k}\Gamma(x)) = \exp(\tr_n(x + f_n - f_k - f_n)) = \exp(\tr_k(x - f_k)) = \phi_{\tr_k}(\Gamma(x))\pl.
	\end{equation}
	Here the trace $\tr_n$ is the restriction of the trace to the corner $f_n\mathbb{B}(H)f_n$. Therefore we can consider the inductive limit of the sequence $\{\mathbb{P}_{\tr}\mathbb{B}(f_nHf_n), \pi_{n,k}\}_{n,k\in\mathbb{N}}$ and the weak closure of the inductive limit is the Poisson algebra $\mathbb{P}_{\tr}\mathbb{B}(H)$. It is easy to check that the basic requirements of noncommutative Poisson random measure are satisfied in this case as well. We will elaborate on the details of these constructions later.
	\section{Constructions of Poisson Algebra}\label{section:construction}
	In this first section, we construct the Poisson algebra from a von Neumann algebra and a normal faithful weight. A complete characterization of a normal faithful weight is due to Haagerup \cite{T2} and it implies that a weight can be thought of as a limit of positive functionals. For this reason, we first construct the Poisson algebra from the initial data of a von Neumann algebra with a normal faithful state. Then we present the full construction for a general normal faithful weight. 
	
	The construction of the Poisson algebra from a von Neumann algebra with a normal faithful weight is a functor where the target category is the category of von Neumann algebra with a normal faithful state. Henceforth, we will call this functor \textit{Poissonization} and we will call the lifted state \textit{the Poisson state}. All normal $*$-homomorphisms that preserve the weight can be lifted to normal $*$-homomorphisms between two Poisson algebras. The lifted $*$-homomorphisms preserve the Poisson states. In particular, the Poissonization preserves the modular automorphism groups, thereby preserving the intrinsic dynamics of von Neumann algebras. More precisely, the modular automorphism of the initial state (or weight) is lifted to the modular automorphism of the Poisson state via the functorial property of the Poissonization. Later in this section, we will extend the functorial property of the Poissonization and discuss how normal unital completely positive maps (or quantum channels) can be lifted to quantum channels between Poisson algebras. 
	
	Finally, Poissonization constructs the Poisson algebra in a standard form \cite{T}. It canonically acts on the Haagerup $L_2$-space of the Poisson state. It turns out that the Haagerup $L_2$-space of the Poisson algebra is isometrically isomorphic to the symmetric Fock space of the $L_2$-space of the original von Neumann algebra. However, in the Poisson algebra, there is \textit{no} simple way to construct a creation / annhilation operator. The natural candidate for the creation / annhilation operator does \textit{not} act on the Fock basis in the expected way.
	
	\subsection{Poisson Algebra for Normal Faithful Positive Functionals}
	In this subsection, we fix a von Neumann algebra $N$ and a normal faithful positive functional $\omega\in N^+_*$.  $\omega$ may not be normalized. We denote the normalized state to be $\omega_n(x):=\frac{\omega(x)}{\omega(1)}$. We need the following $*$-homomorphisms:
	\begin{equation}
		\pi_j: N\rightarrow N^{\otimes n}: x\mapsto 1\otimes...\otimes x\otimes...\otimes 1
	\end{equation}
	where $x$ is in the $j$-th position. Then we can define the key "quantization" map:
	\begin{equation}\label{equation:quanatization}
		\lambda: N \rightarrow \bigoplus_{n\geq 0} N^{\otimes n}: x\mapsto (\sum_{1\leq j \leq n}\pi_j(x))_{n\geq 0}
	\end{equation}
	where for the 0th component, we define $\lambda(x)_0 := 0$.
	\begin{definition}\label{definition:poissonalgebra(state)}
		The Poisson algebra of $(N,\omega)$ is the ultra-weak closure of $\{\exp(i\lambda(x)): x \in N_{s.a.}\}$ in the von Neumann algebra $\bigoplus_{n\geq 0} N^{\otimes n}$. 
	\end{definition}
	On $\bigoplus_{n \geq 0}N^{\otimes n}$, we can define the positive linear functional:
	\begin{equation}
		\phi_\omega((x_n)_{n\geq 0}):= e^{-\omega(1)}\sum_{n\geq 0 }\frac{\omega^{\otimes n}(x_n)}{n!}\pl.
	\end{equation}
	The exponential factor normalizes $\phi_\omega$. When restricted to the Poisson algebra, $\phi_\omega$ defines the Poisson state that we are looking for.
	\begin{definition}
		Let $(N,\omega)$ be a von Neumann algebra with a fixed normal faithful positive functional. Then Poissonization \textbf{Poiss} constructs the Poisson algebra with a fixed normal faithful state: $(\mathbb{P}_\omega N, \phi_\omega)$.
	\end{definition}
	One of the goals of this section is to extract a few results on the Poisson state $\phi_\omega$ and the corresponding $L_2$-space: $L_2(\mathbb{P}_\omega N, \phi_\omega)$. First of all, we explain why the state $\phi_\omega$ deserves to be called the Poisson state:
	\begin{lemma}\label{lemma:nameexplainPoissonState}
		Let $\Gamma: N\rightarrow \bigoplus_{n\geq 0}N^{\otimes n}: x\mapsto (x^{\otimes n})_n$ be the $*$-homomorphism of (symmetric) quantization. The formula $\Gamma(x) = (x^{\otimes n})_n$ is well-defined when $x$ is a contraction in $N$. Then we have:
		\begin{enumerate}
			\item  The following formula for the generators of $\mathbb{P}_\omega N$ holds:
			\begin{equation}
				\exp(i\lambda(x)) = \Gamma(\exp(ix))
			\end{equation}
			where $x \in N_{s.a.}$.
			\item On a single generator, the Poisson state is given by:
			\begin{equation}\label{equation:Poissoncharacteristic}
				\phi_\omega(\exp(i\lambda(x))) = \exp(\omega(\exp(ix) - 1))\pl.
			\end{equation}
		\end{enumerate}
	\end{lemma}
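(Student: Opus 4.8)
The plan is to prove both assertions by a componentwise computation in the von Neumann algebra direct sum $\bigoplus_{n\geq 0}N^{\otimes n}$, where functional calculus is applied separately on each summand $N^{\otimes n}$. The point to keep in mind throughout is that $\lambda(x)$ is \emph{not} a bounded element (on the $n$-th component its norm grows like $n\|x\|$), so it is only an affiliated operator; it is the bounded unitary $\exp(i\lambda(x))$ that genuinely lives in the algebra, which is precisely why the lemma is phrased in terms of the exponential.

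For part (1), I would fix $x=x^*\in N$ and examine the $n$-th component of $\exp(i\lambda(x))$, namely $\exp\bigl(i\sum_{1\leq j\leq n}\pi_j(x)\bigr)$. The key structural fact is that the partial embeddings $\pi_j\colon N\to N^{\otimes n}$ have mutually commuting ranges, since $\pi_j(x)$ and $\pi_k(x)$ act nontrivially on disjoint tensor legs for $j\neq k$. Hence the exponential of the sum factors as $\prod_{j=1}^n\exp(i\pi_j(x))$, and because each $\pi_j$ is a normal $*$-homomorphism it intertwines functional calculus, so $\exp(i\pi_j(x))=\pi_j(\exp(ix))$. Multiplying these out yields $(\exp(ix))^{\otimes n}$, which is exactly the $n$-th component of $\Gamma(\exp(ix))$. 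Since $\exp(ix)$ is unitary, hence a contraction, $\Gamma(\exp(ix))$ is well-defined, and the identity $\exp(i\lambda(x))=\Gamma(\exp(ix))$ holds on each component, hence in the algebra.

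For part (2), I would substitute the result of part (1) into the definition of the Poisson functional. Writing $u:=\exp(ix)$, the $n$-th component of the generator is $u^{\otimes n}$, so the definition of $\phi_\omega$ gives $\phi_\omega(\exp(i\lambda(x)))=e^{-\omega(1)}\sum_{n\geq 0}\frac{\omega^{\otimes n}(u^{\otimes n})}{n!}$. The product functional factors on product elements, $\omega^{\otimes n}(u^{\otimes n})=\omega(u)^n$ (with the $n=0$ term equal to $1$), so the series is the exponential series for $\omega(u)$. Since $u$ is a contraction and $\omega$ is positive, $|\omega(u)|\leq\omega(1)<\infty$, so the series converges absolutely and sums to $e^{\omega(u)}$. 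Absorbing the normalizing factor gives $\exp(\omega(u)-\omega(1))=\exp(\omega(u-1))=\exp(\omega(\exp(ix)-1))$, as claimed.

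The computation is elementary once the direct-sum bookkeeping is in place; the only genuine care needed is in justifying the componentwise functional calculus, verifying the absolute convergence of the defining series of $\phi_\omega$, and observing that one must work with the bounded unitary $\exp(i\lambda(x))$ rather than the unbounded affiliated operator $\lambda(x)$ itself. I do not expect any substantive obstacle beyond this routine verification.
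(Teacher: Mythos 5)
Your proposal is correct and follows essentially the same route as the paper's proof: factor $\exp\bigl(i\sum_{1\leq j\leq n}\pi_j(x)\bigr)$ into $\prod_{1\leq j\leq n}\exp(i\pi_j(x))=(\exp(ix))^{\otimes n}$ componentwise, then plug into the definition of $\phi_\omega$ and sum the exponential series $e^{-\omega(1)}\sum_{n\geq 0}\omega(e^{ix})^n/n!$. The extra justifications you supply (commuting ranges of the $\pi_j$, functional calculus intertwining, absolute convergence) are exactly the steps the paper leaves implicit, so there is no substantive difference.
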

	\begin{proof}
		For the first claim, for every $n \geq 1$, we have:
		\begin{equation}
			\exp(i\sum_{1\leq j \leq n}\pi_j(x)) = \prod_{1\leq j \leq n}\exp(i \pi_j(x)) = \big(\exp(ix)\big)^{\otimes n}\pl.
		\end{equation}
		Thus, we have:
		\begin{align}
			\begin{split}
				\exp(i\lambda(x)) &= \bigoplus_{n\geq  0}\exp(i\sum_{1\leq j \leq n}\pi_j(x)) = \bigoplus_{n\geq 0}\big(\exp(ix)\big)^{\otimes n} = \Gamma(\exp(ix))\pl.
			\end{split}
		\end{align}
		For the second claim, we have following calculation:
		\begin{align}
			\begin{split}
				\phi_\omega(\exp(i\lambda(x)))&= e^{-\omega(1)}\sum_{n\geq 0}\frac{1}{n!}\omega^{\otimes n}\big((e^{i\lambda(x)})_n\big)
				= e^{-\omega(1)}\sum_{n\geq 0}\frac{1}{n!}\big(\omega(e^{ix})\big)^n
				\\
				& = \exp(\omega(e^{ix}  - 1))\pl.\qedhere
			\end{split}
		\end{align}
	\end{proof}
	\begin{remark}\label{remark:classicalPoisson}
		The classical Poisson random measure associated to a finite measure space $(\Omega, \Sigma, \mu)$ is given by a family of Poisson random variables $(P_A)_{A\in \Sigma}$ labeled by the measurable sets $A$ such that \cite{classical}:
		\begin{align*}
			\mathbb{P}(P_A = k) = \frac{e^{-\mu(A)}\mu(A)^k}{k!}\pl.
		\end{align*}
		In addition, if the measurable sets $A_1,...,A_n$ are disjoint, then the Poisson random variables $P_{A_1},...,P_{A_n}$ are independent and $P_{\cup_j A_j} = \sum_j P_{A_j}$. 
		
		Given a Poisson random measure $\nu_{\Omega}$ on the finite measure space $(\Omega,\Sigma,\mu)$ and an integrable function $f\in L^1(\Omega,\mu)$, the characteristic function is given by:
		\begin{equation*}
			\mathbb{E}(e^{it \int_\Omega f d\nu_\Omega}) = \exp(\int_\Omega d\mu(x)(e^{itf(x)} - 1))\pl.
		\end{equation*}
		Compare the classical characteristic function with Equation \ref{equation:Poissoncharacteristic}, it is clear that $\phi_\omega$ is a direct generalization of the classical Poisson random measure.
	\end{remark}
	An immediate consequence of Remark \ref{remark:classicalPoisson} is the following result of the (noncommutative) moment formula for the Poisson state $\phi_\omega$:
	\begin{lemma}\label{lemma:momentformulaPoisson}
		Let $x_1,...,x_n$ be contractions in $N$, then we have:
		\begin{equation}\label{equation:momentformula}
			\phi_\omega(\lambda(x_1)...\lambda(x_n)) = \sum_{\sigma\in \mathcal{P}(n)}\prod_{A\in\sigma}\phi_\omega(\overrightarrow{\prod}_{i\in A}x_i)
		\end{equation} 
		where $\mathcal{P}_n$ is the set of partitions of $[n]:=\{1,...,n\}$ and $\overrightarrow{\prod}_{i\in A}$ denotes the ordered product over the ordered set $A\ssubset [n]$. The ordering of $A$ is inherited from the canonical ordering on $[n]$. 
		
		Moreover, there exists a universal constant $C > 0$ such that the norm:
		\begin{equation}\label{equation:normdefinition}
			|||x||| := \max\{||x||, \omega(x^*x)^{1/2}, \omega(xx^*)^{1/2}, |\omega(x)|\}
		\end{equation}
		satisfies the following estimates:
		\begin{equation}\label{equation:growth}
			|\phi_\omega(\lambda(x_1)...\lambda(x_n))| \leq C^n n^n \prod_{1\leq i \leq n}|||x_i|||\pl.
		\end{equation}
	\end{lemma}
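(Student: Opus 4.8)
The plan is to compute the moment directly from the explicit descriptions of $\lambda$ and $\phi_\omega$ on $\bigoplus_{m\ge 0}N^{\otimes m}$, and then to read the growth bound off the resulting partition sum by a single Cauchy--Schwarz step per block. The generating-function identity $\phi_\omega(\exp(i\lambda(x)))=\exp(\omega(e^{ix}-1))$ of Lemma \ref{lemma:nameexplainPoissonState} is the right heuristic, but since the $x_i$ need not commute it is cleaner to expand everything combinatorially rather than to differentiate a multivariate characteristic function.

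First I would expand the product. Because $\lambda(x_r)=(\sum_{j=1}^m \pi_j(x_r))_{m\ge 0}$, the degree-$m$ component of $\lambda(x_1)\cdots\lambda(x_n)$ is $\sum_{j\colon [n]\to[m]}\prod_{r=1}^n \pi_{j(r)}(x_r)$. The structural point is that $\pi_j(x)$ and $\pi_{j'}(x')$ commute when $j\ne j'$ and multiply inside a single tensor leg when $j=j'$; hence for each function $j$ the product equals $\bigotimes_{k=1}^m\big(\overrightarrow{\prod}_{r\in j^{-1}(k)}x_r\big)$, where the ordered product in each leg is taken in the order inherited from $[n]$ and is the identity on empty fibres. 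Applying $\omega^{\otimes m}$ factorizes over the legs, so the function $j$ contributes $\prod_{k=1}^m \omega\big(\overrightarrow{\prod}_{r\in j^{-1}(k)}x_r\big)$.

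Next I would organize the sum over $j$ by the set partition $\sigma$ of $[n]$ formed by its nonempty fibres. For a fixed $\sigma$ with $|\sigma|$ blocks there are exactly $m!/(m-|\sigma|)!$ functions realizing it (inject the blocks into the $m$ legs), and each of the $m-|\sigma|$ empty legs contributes a factor $\omega(1)$. Substituting into $\phi_\omega((x_m)_m)=e^{-\omega(1)}\sum_m \omega^{\otimes m}(x_m)/m!$ and interchanging the two (absolutely convergent) sums, the factor $1/m!$ cancels $m!/(m-|\sigma|)!$ and the residual series $\sum_{m\ge|\sigma|}\omega(1)^{m-|\sigma|}/(m-|\sigma|)!=e^{\omega(1)}$ cancels the normalization $e^{-\omega(1)}$; what survives is $\sum_{\sigma\in\mathcal{P}(n)}\prod_{A\in\sigma}\omega\big(\overrightarrow{\prod}_{i\in A}x_i\big)$, which is precisely \eqref{equation:momentformula} with each block evaluated by the underlying functional $\omega$ on $N$. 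The absolute convergence needed for the interchange is immediate, since there are at most $m^n$ functions $j$, each contributing at most $\omega(1)^m$ in modulus (products of contractions are contractions and $|\omega(y)|\le\|y\|\,\omega(1)$), and $\sum_m m^n\omega(1)^m/m!<\infty$.

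For the growth estimate I would bound each block factor on its own. For $A=\{i_1<\cdots<i_k\}$, one Cauchy--Schwarz step for the positive functional $\omega$ gives $|\omega(x_{i_1}\cdots x_{i_k})|\le \omega(x_{i_1}x_{i_1}^*)^{1/2}\,\omega\big((x_{i_2}\cdots x_{i_k})^*(x_{i_2}\cdots x_{i_k})\big)^{1/2}$, and iterating the operator inequality $y^*x^*xy\le\|x\|^2 y^*y$ bounds the second factor by $\prod_{l=2}^k|||x_{i_l}|||$. Since each of $\|x_i\|$, $\omega(x_ix_i^*)^{1/2}$, $\omega(x_i^*x_i)^{1/2}$, $|\omega(x_i)|$ is dominated by $|||x_i|||$ (the singleton case $k=1$ uses the last of these), we get $|\omega(\overrightarrow{\prod}_{i\in A}x_i)|\le\prod_{i\in A}|||x_i|||$. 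As the blocks partition $[n]$, the product over blocks telescopes to $\prod_{i=1}^n|||x_i|||$ independently of $\sigma$, and summing over the $B_n=|\mathcal{P}(n)|$ partitions with the crude bound $B_n\le n^n$ (the map $\sigma\mapsto(i\mapsto\min\text{-block-of-}i)$ embeds partitions into $[n]^{[n]}$) yields the estimate with $C=1$, hence with any $C\ge 1$. The step I expect to be most delicate is the combinatorial bookkeeping in the third paragraph: counting the $m!/(m-|\sigma|)!$ functions per partition, tracking the empty-leg factors $\omega(1)$, and justifying the interchange of the two infinite sums so that the Poisson normalization cancels exactly; the per-block Cauchy--Schwarz and the partition count are then routine.
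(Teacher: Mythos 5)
Your proof is correct and follows essentially the same route as the paper's: expand the degree-$m$ component over set maps $[n]\to[m]$, group by the partition of nonempty fibres with the count $m!/(m-|\sigma|)! = |\sigma|!\binom{m}{|\sigma|}$ and the empty-leg factors $\omega(1)^{m-|\sigma|}$, cancel the Poisson normalization against $e^{-\omega(1)}$, and bound each block by one Cauchy--Schwarz step plus the operator-norm contraction argument. The only (harmless) deviations are that you justify the interchange of the two sums explicitly, and you bound $|\mathcal{P}(n)|\le n^n$ by an explicit injection into $[n]^{[n]}$ rather than via the Bell-number/Poisson-moment identity used in the paper, which in fact yields the cleaner constant $C=1$.
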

	\begin{proof}
		For a given $m\in \mathbb{N}$, the $m$-th component of $\lambda(x_1)...\lambda(x_n)$ is given by:
		\begin{align*}
			\begin{split}
				\big(\lambda(x_1)...\lambda(x_n)\big)_m &= \big(\sum_{1\leq j \leq m}\pi_j(x_1)\big)...\big(\sum_{1\leq j \leq m}\pi_j(x_n)\big)  = \sum_{j_1, ..., j_n}\pi_{j_1}(x_1)...\pi_{j_n}(x_n) \\&= \sum_{\substack{f:[n]\rightarrow[m]}} \bigotimes_{1\leq j \leq m} \big(\overrightarrow{\prod}_{i \in f^{-1}(j)} x_i\big)
			\end{split}
		\end{align*}
		where the summation is over all possible set maps $f:[n]\rightarrow [m]$ and the product $\overrightarrow{\prod}$ is the ordered product over the subset $f^{-1}(j)$. The order structure on $f^{-1}(j)\ssubset [n]$ is inherited from the canonical ordering on $[n]$. If the set $f^{-1}(j)$ is empty, the product is interpretted to give $1\in N$. Using this formula, we have:
		\begin{equation*}
			\omega^{\otimes m}\big((\lambda(x_1)...\lambda(x_n))_m\big) = \sum_{f:[n]\rightarrow [m]}\prod_{1\leq j \leq m}\omega(\overrightarrow{\prod}_{i\in f^{-1}(j)}x_i)\pl.
		\end{equation*}
		For each set function $f:[n]\rightarrow[m]$, the collection of non-empty subsets $\{f^{-1}(j)\neq \emptyset: j\in[m]\}$ forms a partition of $[n]$. For a fixed partition $\sigma\in\mathcal{P}(n)$, let $|\sigma|$ be the number of subsets in this partition. Then we have:
		\begin{equation*}
			\omega^{\otimes m}\big((\lambda(x_1)...\lambda(x_n))_m\big) = \sum_{\substack{\sigma\in\mathcal{P}(n)\\ |\sigma| \leq m}}|\sigma|!\binom{m}{|\sigma|} \omega(1)^{m - |\sigma|}\prod_{A\in \sigma}\omega(\overrightarrow{\prod}_{i\in A}x_i)
		\end{equation*}
		where the combinatorial factor counts the number of set functions $f:[n]\rightarrow[m]$ that gives the same partition $\sigma\in \mathcal{P}(n)$.
		Therefore we have:
		\begin{align*}
			\begin{split}
				\phi_\omega(\lambda(x_1)...\lambda(x_n)) &= e^{-\omega(1)}\sum_{m\geq 0}\frac{1}{m!}\omega^{\otimes m}\big((\lambda(x_1)...\lambda(x_n))_m\big)\\& = e^{-\omega(1)}\sum_{m\geq 0}\frac{|\sigma|!}{m!}\binom{m}{|\sigma|}\sum_{\substack{\sigma\in\mathcal{P}(n)\\ |\sigma| \leq m}}\omega(1)^{m - |\sigma|}\prod_{A\in\sigma}\omega(\overrightarrow{\prod}_{i\in A}x_i)\\
				&=e^{-\omega(1)}\sum_{\sigma\in\mathcal{P}(n)}\sum_{m \geq |\sigma|}\frac{\omega(1)^{m - |\sigma|}}{(m - |\sigma|)!}\prod_{A\in\sigma}\omega(\overrightarrow{\prod}_{i\in A}x_i) = \sum_{\sigma\in\mathcal{P}(n)} \prod_{A\in\sigma}\omega(\overrightarrow{\prod}_{i\in A}x_i)\pl.
			\end{split}
		\end{align*}
		To prove the upper bound on the moment formula, we first observe that the following estimate holds by Cauchy-Schwarz inequality:
		\begin{equation*}
			|\omega(x_1...x_n)| \leq \omega(x_1^*x_1)^{\frac{1}{2}}\omega(x_n^*...x_2^*x_2...x_n) \leq \omega(x_1^*x_1)^{\frac{1}{2}}(\prod_{2\leq i \leq n-1}||x_i||) \omega(x_n^*x_n)^{\frac{1}{2}} \leq \prod_{1\leq i \leq n}|||x_i|||
		\end{equation*}
		Using this observation, we have:
		\begin{align*}
			\begin{split}
				|\phi_\omega(\lambda(x_1)...\lambda(x_n))| &\leq \sum_{\sigma\in\mathcal{P}(n)} \prod_{A\in\sigma}|\omega(\overrightarrow{\prod}_{i\in A}x_i)| \leq \sum_{\sigma\in \mathcal{P}(n)}\prod_{A\in\sigma} \prod_{i\in A}|||x_i||| = |\mathcal{P}(n)|\prod_{1\leq i \leq n}|||x_i|||\pl.
			\end{split}
		\end{align*}
		The number of partitions of $[n]$ is given by the sum of Stirling numbers of the second kind:
		\begin{align*}
			\begin{split}
				|\mathcal{P}(n)| &= \sum_{1\leq k \leq n} S(n,k) = \mathbb{E}(X^n)
			\end{split}
		\end{align*}
		where $X\sim \text{Poiss}(1)$ is a Poisson random variable with intensity $1$. Then by the classical estimates of the moments of a Poisson random variable \cite{classical}, we have:
		\begin{equation*}
			|\phi_\omega(\lambda(x_1)...\lambda(x_n))| \leq C^n n^n \prod_{1\leq i \leq n}|||x_i|||\pl.\qedhere
		\end{equation*}		
	\end{proof}
	\begin{remark}
		The moment formula of the Poisson state $\phi_\omega$ is a direct generalization of the classical moment formula for a Poisson random measure.
	\end{remark}
	Before we study the structure of the Haagerup space: $L_2(\mathbb{P}_\omega N, \phi_\omega)$, we give an alternative description of $\mathbb{P}_\omega N$.
	\begin{proposition}\label{proposition:symmetrictensor}
		Let $\bigotimes^n_s N$ be the $n$-fold symmetric tensor product of $N$ and let $M_s(N) :=\bigoplus_{n\geq 0}\bigotimes_s^n N$ be the infinite symmetric tensor product of $N$. Then the following statements are true:
		\begin{enumerate}
			\item $\bigotimes_s^n N = \overline{\text{span}\{x^{\otimes n}:x\in N\}}^{\text{ultra-weak}}$;
			\item Let $\lambda_n(x) = \sum_{1\leq j \leq n}\pi_j(x)$ be the n-th component of $\lambda(x)$. Define the following elements in $\bigotimes_s^n N$:
			\begin{equation*}
				\lambda_{n,k}(x) := \sum_{\substack{f:[k]\rightarrow[n]}}\bigotimes_{1\leq i \leq n}x^{|f^{-1}(i)|}
			\end{equation*}
			where $f:[k]\rightarrow[n]$ is a set map and by definition $\sum_{1\leq i \leq n}|f^{-1}(i)| =k $. Then $\lambda_{n,k}(x) \in \text{span}\{\lambda_n(x^{p_1})...\lambda_n(x^{p_j}): j\leq k , p_1,...,p_j \in \mathbb{N}\}$;
			\item $\mathbb{P}_\omega N\cong M_s(N)$.
		\end{enumerate}
	\end{proposition}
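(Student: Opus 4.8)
The plan is to handle the three claims in order, treating (1) and (2) as the combinatorial input for (3).

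For (1), I read $\bigotimes_s^n N$ as the fixed-point algebra $(N^{\otimes n})^{S_n}$ for the permutation action of $S_n$ on the $n$-fold spatial tensor product, and I would use the symmetrization map $P=\frac{1}{n!}\sum_{\sigma\in S_n}\sigma$, which is a normal unital conditional expectation onto it. The inclusion of the ultra-weak closed span of $\{x^{\otimes n}\}$ into $\bigotimes_s^n N$ is immediate, since each $x^{\otimes n}$ is $S_n$-fixed and $\bigotimes_s^n N$ is ultra-weakly closed. For the reverse inclusion I would use that the algebraic tensor product is ultra-weakly dense in $N^{\otimes n}$ and that $P$ is normal, so the image under $P$ of the elementary tensors is ultra-weakly dense in $\bigotimes_s^n N$; the polarization identity
\[
\sum_{\sigma\in S_n}x_{\sigma(1)}\otimes\cdots\otimes x_{\sigma(n)}=\sum_{\emptyset\neq S\subseteq[n]}(-1)^{n-|S|}\Big(\sum_{i\in S}x_i\Big)^{\otimes n}
\]
then shows that $P(x_1\otimes\cdots\otimes x_n)$ already lies in $\mathrm{span}\{x^{\otimes n}\}$, which closes the argument.

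For (2), the essential observation is that the $\pi_j$ are homomorphisms into mutually commuting tensor slots with $\pi_j(x)\pi_j(x')=\pi_j(xx')$, so expanding the $k$-th power directly gives
\[
\lambda_n(x)^k=\Big(\sum_{j=1}^n\pi_j(x)\Big)^k=\sum_{f:[k]\to[n]}\prod_{s=1}^k\pi_{f(s)}(x)=\sum_{f:[k]\to[n]}\bigotimes_{i=1}^n x^{|f^{-1}(i)|}=\lambda_{n,k}(x).
\]
Thus $\lambda_{n,k}(x)=\lambda_n(x)^k=\lambda_n(x^1)\cdots\lambda_n(x^1)$ is literally one of the claimed spanning products (with $j=k$ and all $p_i=1$), so the membership is immediate. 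Should the more general form with varying exponents be needed downstream, the same grouping of $f$ by the composition of $[k]$ it induces rewrites every product $\lambda_n(x^{p_1})\cdots\lambda_n(x^{p_j})$ as an integer combination of the $\lambda_{n,m}$, which is routine bookkeeping.

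For (3), the inclusion $\mathbb{P}_\omega N\subseteq M_s(N)$ is easy: every generator $\exp(i\lambda(x))=\Gamma(e^{ix})=((e^{ix})^{\otimes n})_n$ is symmetric in each component and $M_s(N)=\bigoplus_{n\geq0}\bigotimes_s^n N$ is ultra-weakly closed. The content is the reverse inclusion, and I expect the main obstacle to be that the generators are correlated across all graded components at once (the same unitary occupies every slot of every $n$), so it is not clear a priori that $\mathbb{P}_\omega N$ is the full direct sum rather than a diagonal subalgebra; relatedly, $\lambda(x)=(\lambda_n(x))_n$ is globally unbounded because $\|\lambda_n(x)\|$ grows with $n$, so one cannot differentiate the generating one-parameter group before the components have been separated. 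I would overcome this in two steps. First, applying $\Gamma$ to the scalar unitaries $e^{i\theta}1$ yields $\Gamma(e^{i\theta}1)=\sum_{n\geq0}e^{in\theta}z_n\in\mathbb{P}_\omega N$, where $z_n$ is the central projection onto the $n$-th summand; the ultra-weakly convergent Fourier average $z_n=\frac{1}{2\pi}\int_0^{2\pi}e^{-in\theta}\Gamma(e^{i\theta}1)\,d\theta$ then places each $z_n$ in $\mathbb{P}_\omega N$. Second, compressing by $z_n$ (which is central and lies in $\mathbb{P}_\omega N$), the corner $z_n\mathbb{P}_\omega N\subseteq N^{\otimes n}$ is generated by the one-parameter groups $t\mapsto z_n\Gamma(e^{itx})=e^{it\lambda_n(x)}$, whose generators $\lambda_n(x)$ are now bounded, so by Stone's theorem $\lambda_n(x)$, hence $\lambda_n(x^p)$ for every $p$ and every self-adjoint $x$ (and by linearity every $x\in N$), lies in $z_n\mathbb{P}_\omega N$. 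Since $\lambda_n(x^p)=\sum_j\pi_j(x)^p$ is the $p$-th power sum in the commuting variables $\pi_1(x),\dots,\pi_n(x)$, Newton's identities express the top elementary symmetric function $\pi_1(x)\cdots\pi_n(x)=x^{\otimes n}$ as a polynomial in $\lambda_n(x),\dots,\lambda_n(x^n)$, so $x^{\otimes n}\in z_n\mathbb{P}_\omega N$; by part (1) this gives $\bigotimes_s^n N\subseteq z_n\mathbb{P}_\omega N$, and with the easy inclusion $z_n\mathbb{P}_\omega N=\bigotimes_s^n N$. Finally, since the $z_n$ are central, lie in $\mathbb{P}_\omega N$, and sum ultra-weakly to $1$, I would assemble $\mathbb{P}_\omega N=\bigoplus_{n\geq0}z_n\mathbb{P}_\omega N=\bigoplus_{n\geq0}\bigotimes_s^n N=M_s(N)$.
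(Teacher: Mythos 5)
Your proof is correct, and its overall architecture is the same as the paper's: symmetrization plus polarization for (1), the power-sum expansion for (2), and, for (3), extraction of the graded projections from the scalar generators followed by norm-differentiation of the compressed one-parameter groups. The differences in execution are worth recording. For (1), the paper produces the symmetrization $E_n(x_1\otimes\cdots\otimes x_n)$ by the contour integral $\frac{1}{(2\pi i)^n}\oint \frac{dz_1}{z_1^2}\cdots\frac{dz_n}{z_n^2}\,(\sum_j x_j z_j)^{\otimes n}$, which is just the analytic form of your finite-difference polarization identity; the two are interchangeable. For (3), the paper applies a Gaussian to the number operator, $f(\lambda(1))=\frac{1}{2\pi}\int \hat{f}(t)e^{it\lambda(1)}\,dt=\sum_n e^{-n^2/2}e_n$, and then invokes functional calculus to split off each spectral projection $e_n$; your Fourier averaging of the periodic unitary $\Gamma(e^{i\theta}1)=\sum_n e^{in\theta}z_n$ yields the same projections in one step and is, if anything, cleaner. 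The substantive divergence is the final algebraic step. The paper asserts $\lambda_{n,n}(x)=x^{\otimes n}$ and concludes directly from claim (2); but with $\lambda_{n,k}$ defined as a sum over \emph{all} maps $f:[k]\to[n]$, this identity is false --- indeed your part (2) shows $\lambda_{n,n}(x)=\lambda_n(x)^n$, which already for $n=2$ equals $x^2\otimes 1+1\otimes x^2+2\,x\otimes x\neq x\otimes x$. Your substitute --- Newton's identities, expressing the top elementary symmetric polynomial $\pi_1(x)\cdots\pi_n(x)=x^{\otimes n}$ in the commuting variables $\pi_j(x)$ as a polynomial in the power sums $\lambda_n(x^k)$, $k\leq n$ --- is exactly the correct ingredient and repairs this flaw in the paper's argument; the remaining assembly (the corners $z_n\mathbb{P}_\omega N=\bigotimes_s^n N$, then summing the central projections $z_n$ ultra-weakly to $1$) is sound.
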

	\begin{proof}
		To prove the first claim, for each $n\in\mathbb{N}$ let $E_n$ be the conditional expectation from $N^{\otimes n}$ to the $n$-fold symmetric tensor product: $\bigotimes_s^n N$. Since $\text{span}\{x_1\otimes...\otimes x_n: x_1,...,x_n\in N\}$ is ultra-weakly dense in $N^{\otimes n}$, we only need to show that $E_n(x_1\otimes...\otimes x_n)$ is in $\text{span}\{x^{\otimes n}:x\in N\}$. For this we consider the contour integral:
		\begin{align*}
			\begin{split}
				\frac{1}{(2\pi i)^n}\oint_{|z_i| = 1} \frac{dz_1}{z_1^2}...\frac{dz_n}{z_n^2}(\sum_{1\leq j \leq n}x_jz_j)^{\otimes n} &=\sum_{j_1,...j_n = 1}^n x_{j_1}\otimes...\otimes x_{j_n}\prod_{1\leq i \leq n}(\frac{1}{2\pi i}\oint_{|z| = 1}\frac{dz}{z^2}z^{|k: j_k = i|})
				\\& = \sum_{\pi \in S_n}x_{\pi(1)}\otimes...\otimes x_{\pi(n)} = n!E_n(x_1\otimes...\otimes x_n)
			\end{split}
		\end{align*}
		where in the last equation, we used the fact that the only non-zero terms come from subscripts $\{(j_1,...,j_n): |k:j_k = i| = 1 \text{ for all i}\}$. For all such tuples of subscripts, there exists a permutation $\pi\in S_n$ such that $j_i = \pi(i)$ for all $1\leq i \leq n$. Therefore we have $E_n(x_1\otimes...\otimes x_n)\in \text{span}\{x^{\otimes n}: x\in N\}$.
		
		For the second claim, we use induction on $k$. For $k = 1$, we have $\lambda_{n,1}(x) = \lambda_n(x)$. The claim is trivial. For $k+1$, it is not difficult to see that we have:
		\begin{equation}
			\lambda_{n,k}(x)\lambda_n(x) = \lambda_{n,k+1}(x)
		\end{equation}
		since each map $f:[k+1]\rightarrow [n]$ is given by the restriction $f|_{[k]}$ and a map $g:[1]\rightarrow [n]$ that specifies the assignment of $k+1$.
		Then by simple induction, we have proved the claim.
		
		In particular, since $\lambda_{n,n}(x) = x^{\otimes n}$, we have shown that $x^{\otimes n}\in \text{span}\{\lambda_n(x^{p_1})...\lambda_n(x^{p_j}): j\leq n, p_1,...,p_j\in\mathbb{N}\}$. Therefore, since $\bigotimes_s^n N$ is the ultra-weak closure of $x^{\otimes n}$, $\bigotimes_s^n N$ is the ultra-weak closure of the algebra generated by $\lambda_n(x)$:
		\begin{equation}
			\bigotimes_s^n N = \{\lambda_n(x): x\in N\}''\pl.
		\end{equation}
		To prove the last claim, we use the number operator $\lambda(1)$. More precisely, let $f(t):= e^{-t^2/2}$, we have:
		\begin{equation}
			f(\lambda(1)) = \frac{1}{2\pi}\int_\mathbb{R} \widehat{f}(t)e^{it\lambda(1)}dt
		\end{equation}
		where $\hat{f}(t)$ is the Fourier transform of the Gaussian function $f$. Since by definition $e^{it\lambda(1)}\in \mathbb{P}_\omega N$, we have $f(\lambda(1))\in\mathbb{P}_\omega N$. Since $f(\lambda(1)) = \sum_{n\geq 0 }e^{-n^2/2}e_n$ and $e_n$ is the projection onto $\bigotimes_s^n N$, then $e_n\in \mathbb{P}_\omega N$ for all $n\in\mathbb{N}$. Then we have the following differentiation:
		\begin{equation*}
			i\lambda_n(x) = \frac{d}{dt}|_{t = 0}e_n e^{it\lambda(x)}e_n
		\end{equation*}
		where the differentiation is well-defined in the norm topology. Therefore $\lambda_n(x)\in\mathbb{P}_\omega N$. And by the previous two results, we have: $\mathbb{P}_\omega N\cong M_s(N)$.
	\end{proof}
	\subsection{Noncommutative Bernoulli Approximation and Poisson Algebra}
	So far, Poissonization seems to be completely analogous to the second quantization. However, we will now see that the Hilbert space $L_2(\mathbb{P}_\omega N, \phi_\omega)$ has a grading structure that is different from the grading structure of $M_s(N)$. First, we use a noncommutative analog of the classical Bernoulli approximation to the Poisson process to give yet another description of $\mathbb{P}_\omega N$.
	\begin{proposition}\label{proposition:grading}
		Let $w$ be a free ultrafilter on $\mathbb{N}$. For $n \geq \omega(1)$, define:
		\begin{equation}
			\widetilde{\lambda}_n(x):=\sum_{1\leq j \leq n}\pi_j(x\otimes \chi_{[0, 1/n]}) \in \bigotimes^n N\overline{\otimes} L^\infty([0,\omega(1)^{-1}])\pl.
		\end{equation}
		Otherwise, define $\widetilde{\lambda}_n(x):=\sum_{1\leq j \leq n}\pi_j(x\otimes 1)\in \bigotimes^n N\overline{\otimes}L^\infty([0,\omega(1)^{-1}])$. Consider the operator $\lambda^w(x):= (\widetilde{\lambda}_n(x))^\bullet$ affiliated with the (Ocneanu-)ultraproduct \cite{AH}:
		\begin{equation*}
			\prod^w(N\overline{\otimes} L^\infty([0,\omega(1)^{-1}]))^{\otimes n}\pl.
		\end{equation*}Then the von Neumann algebra $\mathcal{N}^\infty_\omega$ generated by $\{\exp(i\lambda^w(x)):x\in N_{s.a.}\}$ is isomorphic to $\mathbb{P}_\omega(N)$.
	\end{proposition}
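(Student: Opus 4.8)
The proposition is the operator-algebraic incarnation of the classical fact that a Poisson law is a limit of binomial (Bernoulli) laws: in each of the $n$ tensor slots the element $x\otimes\chi_{[0,1/n]}$ plays the role of a single rare Bernoulli trial, contributing the amplitude $e^{ix}$ on a set of mass $\sim 1/n$, and aggregating $n$ such independent trials reproduces the Poisson characteristic function in the limit $n\to w$. The plan is therefore to establish the isomorphism not by manipulating the affiliated operators directly, but by showing that the state-preserving correspondence of generators $\exp(i\lambda(x))\mapsto\exp(i\lambda^w(x))$ matches all joint moments, and then invoking uniqueness of the GNS representation to upgrade this correspondence to a normal $*$-isomorphism.

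First I would fix the reference states. On each copy of $N\overline{\otimes}L^\infty([0,\omega(1)^{-1}])$ I take $\rho:=\omega_n\otimes\tau$, where $\omega_n=\omega/\omega(1)$ is the normalized state and $\tau$ is the normalized integral on the interval of length $\omega(1)^{-1}$, so that $\tau(\chi_{[0,1/n]})=\omega(1)/n$; the reference state on the $n$-th factor is $\psi_n:=\rho^{\otimes n}$. The interval length $\omega(1)^{-1}$ is chosen precisely so that this combinatorial factor produces the correct Poisson intensity. The core computation is then the joint characteristic function. For $x_1,\dots,x_m\in N_{s.a.}$, linearity of $\widetilde{\lambda}_n$ makes each $\exp(i\widetilde{\lambda}_n(x_k))$ factor as a tensor product over the $n$ slots, so the whole product localizes slot by slot:
\[
  \prod_{k=1}^m \exp(i\widetilde{\lambda}_n(x_k)) \,=\, \bigotimes_{j=1}^n \prod_{k=1}^m \big(e^{ix_k}\otimes\chi_{[0,1/n]} + 1\otimes(1-\chi_{[0,1/n]})\big).
\]
Because $\chi_{[0,1/n]}$ and $1-\chi_{[0,1/n]}$ are orthogonal projections, the inner product collapses to $u\otimes\chi_{[0,1/n]}+1\otimes(1-\chi_{[0,1/n]})$ with $u:=\prod_k e^{ix_k}$, and applying $\psi_n$ gives $\big(1+\tfrac1n\,\omega(u-1)\big)^n$. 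Taking the limit along $w$ yields $\exp(\omega(u-1))=\exp\big(\omega(\prod_k e^{ix_k}-1)\big)$, which is exactly $\phi_\omega\big(\prod_k\exp(i\lambda(x_k))\big)$ by the characteristic-function formula of Lemma \ref{lemma:nameexplainPoissonState} together with the homomorphism property $\prod_k\Gamma(e^{ix_k})=\Gamma(\prod_k e^{ix_k})$.

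With the moments matched, I would conclude as follows. The $*$-algebra generated by $\{\exp(i\lambda(x))\}$ is the linear span of words $\exp(i\lambda(x_1))\cdots\exp(i\lambda(x_m))$, adjoints being absorbed by $x\mapsto -x$, and likewise on the ultraproduct side. Equality of all moments says that the Gram matrices of the corresponding families of GNS vectors coincide, so $\exp(i\lambda(x))\mapsto\exp(i\lambda^w(x))$ extends to a unitary $W$ between the two GNS Hilbert spaces intertwining left multiplication; conjugation by $W$ is then a normal $*$-isomorphism of the generated von Neumann algebras. Faithfulness of $\phi_\omega$ identifies the source with $\mathbb{P}_\omega N$, while faithfulness and normality of the Ocneanu limit state identify the target with $\mathcal{N}^\infty_\omega$, giving $\mathcal{N}^\infty_\omega\cong\mathbb{P}_\omega N$.

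I expect the main obstacle to be the functional-analytic bookkeeping internal to the Ocneanu ultraproduct rather than the moment computation. One must check that the uniformly bounded sequence $(\exp(i\widetilde{\lambda}_n(x)))_n$ genuinely represents an element of $\prod^w(N\overline{\otimes}L^\infty([0,\omega(1)^{-1}]))^{\otimes n}$, that the family $t\mapsto(\exp(it\widetilde{\lambda}_n(x)))^\bullet$ is strongly continuous so that its generator $\lambda^w(x)$ is a well-defined self-adjoint affiliated operator (making $\exp(i\lambda^w(x))$ literally this ultraproduct element), and that the limit state is normal and faithful on the generated subalgebra. The free ultrafilter $w$ is what guarantees convergence of the scalar limits $\big(1+\tfrac1n\omega(u-1)\big)^n$ and renders the ``otherwise'' clause, active only for the finitely many $n<\omega(1)$, irrelevant. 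The final verification that $W$ is normal, and hence matches the ultra-weak closures rather than only the generated $*$-algebras, is routine given the standard-form identifications.
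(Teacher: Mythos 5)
Your proposal is correct, and it reaches the isomorphism by a genuinely different route than the paper. You match the joint \emph{characteristic functions} of the bounded unitaries: the exact identity $\exp\big(i(x\otimes\chi_{[0,1/n]})\big)=e^{ix}\otimes\chi_{[0,1/n]}+1\otimes(1-\chi_{[0,1/n]})$ and orthogonality of $\chi_{[0,1/n]}$ and $1-\chi_{[0,1/n]}$ collapse every word slot-wise to $u\otimes\chi_{[0,1/n]}+1\otimes(1-\chi_{[0,1/n]})$ with $u=\prod_k e^{ix_k}$, giving $\big(1+\tfrac1n\,\omega(u-1)\big)^n\to\exp(\omega(u-1))=\phi_\omega(\Gamma(u))$, after which GNS uniqueness (plus Kaplansky density of the word algebra in $\mathbb{P}_\omega N$ and faithfulness of the Ocneanu limit state on the generated subalgebra) yields the normal, state-preserving isomorphism. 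The paper instead computes the \emph{polynomial moments of the unbounded generators}: it expands $\omega_\infty^{\otimes n}\big(\widetilde{\lambda}_n(x_1)\cdots\widetilde{\lambda}_n(x_m)\big)$ as a sum over partitions with weight $|\sigma|!\binom{n}{|\sigma|}n^{-|\sigma|}\to 1$, recovering the moment formula of Lemma \ref{lemma:momentformulaPoisson}, and then invokes the noncommutative moment-problem uniqueness theorem of \cite{MAW}, which needs the growth estimate of Lemma \ref{lemma:momentformulaPoisson} and is formulated over the analytic elements $N^\omega_{ana}$. Your route buys self-containedness: working with unitaries avoids the Carleman-type growth condition, the restriction to analytic elements, and the external moment-problem machinery altogether. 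The paper's route buys coherence with the rest of the development, since the same framework from \cite{MAW} is reused for the abstract algebraic construction (Proposition \ref{proposition:identicalPoisson}), and it addresses the affiliated generators directly. Both arguments carry the same technical debt, which you correctly flag rather than hide: one must know that $(\exp(it\widetilde{\lambda}_n(x)))^\bullet$ is strongly continuous on the relevant cyclic subspace, so that $\exp(i\lambda^w(x))$ is literally the ultraproduct of the component exponentials, and one must know where the limit state is faithful --- the paper secures this by passing to the support corner of the Groh--Raynaud ultraproduct, while you obtain it from faithfulness of the Ocneanu limit state; the two are equivalent by \cite{AH}.
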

	\begin{proof}
		On $N\overline{\otimes} L^\infty([0,\omega(1)^{-1}])\cong L^\infty([0,\omega(1)^{-1}], N)$, define the normal faithful state:
		\begin{equation*}
			\omega_\infty(f(t)) := \int_0^{\omega(1)^{-1}} dt \omega(f(t))\pl.
		\end{equation*} Then on the ultraproduct, the family of states $(\omega_\infty^{\otimes n})_n$ induces a normal state $\phi^w_\omega$ such that:
		\begin{equation*}
			\phi^w_\omega((x_n)_n) := \lim_{n\rightarrow\infty}\omega_\infty^{\otimes n}(x_n)\pl.
		\end{equation*}
		Let $e_\omega\in \prod^w(N\overline{\otimes}L^\infty([0,\omega(1)^{-1}]))^{\otimes n}$ be the support of $\phi^w_{\omega}$. Then on the corner (the Groh-Raynaud ultraproduct) $\mathcal{N}^w$, the state $\phi^w_\omega$ is faithful. 
		
		In addition, using the same calculation as in Lemma \ref{lemma:momentformulaPoisson}, we can check that the induced state $\phi^w_\omega$ satisfies the same moment formula as $\phi_\omega$ on the Poisson algebra $\mathbb{P}_\omega N$. To see this, we first derive the following formula for the tensor state when $n \geq \omega(1)$:
		\begin{align}
			\begin{split}
				\omega_\infty^{\otimes n}(\widetilde{\lambda}_n(x_1)...\widetilde{\lambda}_n(x_m)) &= \sum_{f:[m]\rightarrow[n]}\prod_{1\leq j \leq n}\omega_\infty(\overrightarrow{\prod}_{i\in f^{-1}(j)}x_i\otimes \chi_{[0, \omega(1)/n]})
				\\
				&=\sum_{\substack{\sigma\in\mathcal{P}(m)\\ |\sigma| \leq n}}|\sigma|!\binom{n}{|\sigma|}\big(\prod_{A\in \sigma}\omega(\overrightarrow{\prod}_{i\in A}x_i)\big)(\frac{1}{n})^{|\sigma|}\pl.
			\end{split}
		\end{align}
		Since $\lim_{n\rightarrow \infty}\frac{|\sigma|!\binom{n}{|\sigma|}}{n^{|\sigma|}} = 1$, we have the following moment formula:
		\begin{equation}
			\phi^w_\omega(\lambda^w(x_1)...\lambda^w(x_n)) = \lim_{m\rightarrow \infty}\omega_\infty^{\otimes m}(\widetilde{\lambda}_m(x_1)...\widetilde{\lambda}_m(x_n)) = \sum_{\sigma\in\mathcal{P}(n)}\prod_{A\in\sigma}\omega(\overrightarrow{\prod}_{i\in A}x_i)\pl.
		\end{equation}
		Following the results in \cite{MAW}, we consider the unital $*$-algebra $\mathcal{A}$ generated by abstract symbols $\{\lambda'(x): x\in N^\omega_{ana}\ssubset N\}$ where \begin{equation*}
			N^\omega_{ana} :=\{x\in N: |||\sigma_z(x)||| \leq C\exp(cIm(z)) \text{ for some positive constants C, c}\}\pl.
		\end{equation*}  
		$N^\omega_{ana}$ is the set of analytic elements where the modular automorphism group admits an analytic continuation in a strip in the complex plane. In particular, since $\omega$ is a normal positive functional, $1\in N^\omega_{ana}$. Let $\phi'$ be a positive linear functional on $\mathcal{A}$ such that:
		\begin{equation*}
			\phi'(\lambda'(x_1)...\lambda'(x_n)) := \sum_{\sigma\in\mathcal{P}(n)}\prod_{A\in\sigma}\omega(\overrightarrow{\prod}_{i\in A}x_i)\pl.
		\end{equation*}
		Here a functional is positive if $\phi'(a^*a) \geq 0$ and $\phi'(a^*) = \overline{\phi'(a)}$ for $a\in\mathcal{A}$. Then both $L_2(\mathbb{P}_\omega N, \phi_\omega)$ and $L_2(\mathcal{N}^w, \phi^w_\omega)$ are representations of the abstract $*$-probability space $(\mathcal{A},\phi')$. Since the operators $\lambda^w(x)$ are affiliated to the corner $\mathcal{N}^w$, the von Neumann algebra $\mathcal{N}^\infty_\omega$ is a subalgebra in $\mathcal{N}^w$. Then by a result in \cite{MAW}, there exists a normal $*$-isomorphism: $\varphi: \mathbb{P}_\omega N\rightarrow \mathcal{N}^\infty_\omega$ such that the states are preserved:
		\begin{equation*}
			\phi^w_\omega\circ\varphi = \phi_\omega\pl.\qedhere
		\end{equation*}
	\end{proof}
	\begin{corollary}\label{corollary:grading}
		Using the same notation as Proposition \ref{proposition:grading}, there exists a normal conditional expectation:
		\begin{equation*}
			E: \mathcal{N}^w\rightarrow \mathbb{P}_\omega N
		\end{equation*}
		where $\mathcal{N}^w$ is the (Groh-Raynaud) ultraproduct of $\big(N\overline{\otimes}L^\infty([0,\omega(1)^{-1}])^{\otimes n}, \omega_\infty^{\otimes n} \big)$. The normal conditional expectation preserves the state: $\phi^w_\omega = \phi_\omega\circ E$.
	\end{corollary}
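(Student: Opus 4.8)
The plan is to realize $E$ through \textbf{Takesaki's theorem} on the existence of conditional expectations \cite{T}: if $\mathcal{B}\subset\mathcal{M}$ is a von Neumann subalgebra of a von Neumann algebra carrying a faithful normal state $\psi$, then a normal $\psi$-preserving conditional expectation $\mathcal{M}\to\mathcal{B}$ exists if and only if $\mathcal{B}$ is globally invariant under the modular automorphism group $\sigma_t^\psi$. By Proposition \ref{proposition:grading} the state $\phi^w_\omega$ is faithful and normal on the corner $\mathcal{N}^w$, and $\mathcal{N}^\infty_\omega\subset\mathcal{N}^w$ is a unital von Neumann subalgebra (its identity $\exp(i\lambda^w(0))=e_\omega$ is a generator) isomorphic, via $\varphi$, to $\mathbb{P}_\omega N$ with $\phi^w_\omega\circ\varphi=\phi_\omega$. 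Hence it suffices to produce a normal $\phi^w_\omega$-preserving conditional expectation $\widetilde{E}:\mathcal{N}^w\to\mathcal{N}^\infty_\omega$ and set $E:=\varphi^{-1}\circ\widetilde{E}$; state preservation then follows from $\phi_\omega\circ E=\phi_\omega\circ\varphi^{-1}\circ\widetilde{E}=\phi^w_\omega\circ\widetilde{E}=\phi^w_\omega$.

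Everything therefore reduces to checking that $\mathcal{N}^\infty_\omega$ is invariant under the modular flow of $\phi^w_\omega$. First I would identify this flow: by the Ando--Haagerup description of the modular theory of Ocneanu ultraproducts \cite{AH}, the modular automorphism group of the ultraproduct state is computed representative-wise,
\begin{equation*}
	\sigma_t^{\phi^w_\omega}\big((x_n)^\bullet\big) = \big(\sigma_t^{\omega_\infty^{\otimes n}}(x_n)\big)^\bullet \pl,
\end{equation*}
and this restricts to the support corner $\mathcal{N}^w$. Since the functional on $L^\infty([0,\omega(1)^{-1}])$ induced by Lebesgue integration is a trace, its modular flow is trivial, so $\sigma_t^{\omega_\infty}=\sigma_t^\omega\otimes\mathrm{id}$ and $\sigma_t^{\omega_\infty^{\otimes n}}=(\sigma_t^\omega\otimes\mathrm{id})^{\otimes n}$. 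Applying this factor-by-factor to the generators $\widetilde{\lambda}_n(x)=\sum_{1\leq j\leq n}\pi_j(x\otimes\chi_{[0,1/n]})$ and using that $\sigma_t^\omega\otimes\mathrm{id}$ fixes the characteristic function in the $L^\infty$ leg gives $\sigma_t^{\omega_\infty^{\otimes n}}(\widetilde{\lambda}_n(x))=\widetilde{\lambda}_n(\sigma_t^\omega(x))$, hence $\sigma_t^{\phi^w_\omega}(\lambda^w(x))=\lambda^w(\sigma_t^\omega(x))$ for the self-adjoint affiliated generator $\lambda^w(x)$. Because $\sigma_t^\omega$ is a $*$-automorphism of $N$ preserving $N_{s.a.}$ and commutes with the continuous functional calculus, I obtain $\sigma_t^{\phi^w_\omega}(\exp(i\lambda^w(x)))=\exp(i\lambda^w(\sigma_t^\omega(x)))$, which is again one of the unitary generators of $\mathcal{N}^\infty_\omega$. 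Thus $\sigma_t^{\phi^w_\omega}(\mathcal{N}^\infty_\omega)=\mathcal{N}^\infty_\omega$ for all $t$, and Takesaki's theorem yields $\widetilde{E}$.

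The main obstacle I expect is the rigorous justification of the representative-wise modular formula on the Groh--Raynaud corner $\mathcal{N}^w$: one must confirm that $\phi^w_\omega$ admits a genuine faithful modular automorphism group there and that the Ando--Haagerup computation \cite{AH}, originally phrased for the Ocneanu ultraproduct $\prod^w(\cdots)^{\otimes n}$, restricts correctly to the support corner cut out by $e_\omega$. One should also dispatch the mild well-definedness points --- that $t\mapsto\widetilde{\lambda}_n(\sigma_t^\omega(x))$ is a bounded lift (immediate from $\|\sigma_t^\omega(x)\|=\|x\|$) and that the resulting one-parameter group is genuinely the modular flow of $\phi^w_\omega$ rather than merely an automorphism group obeying the same generator relations. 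Once the modular flow is pinned down, the invariance of $\mathcal{N}^\infty_\omega$ and the appeal to Takesaki's theorem are routine.
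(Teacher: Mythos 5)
Your proposal is correct and takes essentially the same route as the paper: the paper's proof likewise rests on the identity $\sigma_t^{\phi^w_\omega}(\lambda^w(x)) = \lambda^w(\sigma_t^\omega(x))$ (which it declares obvious), deduces from it that $\mathcal{N}^\infty_\omega\cong\mathbb{P}_\omega N$ is globally invariant under the modular flow of $\phi^w_\omega$, and then invokes Takesaki's theorem to obtain the state-preserving normal conditional expectation. Your write-up simply supplies the justification the paper omits, namely the representative-wise computation of the ultraproduct modular flow via \cite{AH} and the triviality of the modular action on the tracial $L^\infty$ leg.
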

	\begin{proof}
		Recall $N^\omega_{ana}$ is the set of analytic elements in $N$. It is obvious that $\lambda^w(\sigma_t^\omega(x)) = \sigma_t^{\phi^w_\omega}(\lambda^w(x))$ for $x\in N_{ana}^\omega$. Therefore by Takasaki's theorem \cite{T}, the normal conditional expectation exists and it preserves the state. By Proposition \ref{proposition:grading}, the image of this conditional expectation is isomorphic to $\mathbb{P}_\omega N$.
	\end{proof}
	Now we are ready to study the grading structure of $L_2(\mathbb{P}_\omega N, \phi_\omega)$. Let $\xi_\omega\in L_2(\mathbb{P}_\omega N, \phi_\omega)$ be the vector representing $\phi_\omega$ and let $\xi^w_\omega\in L_2(\mathcal{N}^w, \phi^w_\omega)$ be the vector representing $\phi^w_\omega$. Then by Corollary \ref{corollary:grading}, the normal conditional expectation $E$ extends to an $L_2$-projection such that: $E\xi^w_\omega = \xi_\omega$. To state the structural lemma on $L_2(\mathbb{P}_\omega N, \phi_\omega)$, we first introduct the following definition:
	\begin{definition}\label{definition:grading}
		Let $\sigma\in\mathcal{P}(n)$ be a partition of $[n]$. For every $m\geq |\sigma|$, a set function $f:[n]\rightarrow [m]$ is subordinated to $\sigma$ if the partition induced by $f$ equals to $\sigma$:
		\begin{equation*}
			f(i) = f(j) \iff \exists A\in\sigma \text{ such that } i,j\in A\pl.
		\end{equation*}
 	\end{definition}
	Let $F_{n,m}(\sigma)$ be the set of $\sigma$-subordinated functions from $[n]$ to $[m]$. If $m < |\sigma|$, then we define $F_{n,m}(\sigma)$ to be the full set of set functions from $[n]$ to $[m]$. Then we can define the following element in the ultraproduct:
	\begin{equation*}
		\lambda^w_\sigma(x_1,...,x_n):=\big(\sum_{f\in F_{n,m}(\sigma)}\pi_{f(1)}(x_1\otimes \chi_{[0,1/m]})...\pi_{f(n)}(x_n\otimes \chi_{[0,1/m]})\big)^\bullet\pl.
	\end{equation*}
	The $m$-th component of $\lambda_\sigma^w$ is denoted as $\lambda_{\sigma,m}^w$. Using these notations, we can state the following lemma:
	\begin{lemma}\label{lemma:emptybasis}
		Let $\sigma_\emptyset\in\mathcal{P}(n)$ be the singleton partition: $\{\{1\},...,\{n\}\}$. Consider the vectors in $L_2(\mathbb{P}_\omega N, \phi_\omega)$:
		\begin{equation}\label{equation:emptyset}
			\lambda_\emptyset(x_1,...,x_n)\xi_\omega := E\lambda_{\sigma_\emptyset}^w(x_1,...,x_n)\xi_\omega^w\pl.
		\end{equation}
		Then the following statements are true:
		\begin{enumerate}
			\item For any partition $\sigma = \{A_1,...,A_{|\sigma|}\}\in\mathcal{P}(n)$, we have:
			\begin{equation}
				E\lambda^w_\sigma(x_1,...,x_n)\xi^w_\omega = \lambda_\emptyset(\overrightarrow{\prod}_{i_1\in A_1}x_{i_1}, ..., \overrightarrow{\prod}_{i_{|\sigma|}\in A_{|\sigma|}}x_{i_{|\sigma|}})\xi_\omega\pl.
			\end{equation}
			\item For $x\in N_{ana}^\omega$, we have:
			\begin{equation}\label{equation:emptysetaction}
				\lambda(x)\lambda_\emptyset(y_1,...,y_n)\xi_\omega = \lambda_\emptyset(x,y_1,...,y_n)\xi_\omega + \sum_{1\leq i \leq n}\lambda_\emptyset(y_1,...,xy_i,...,y_n)\xi_\omega\pl.
			\end{equation}
			\item Each vector of the form $\lambda(x_1)...\lambda(x_n)\xi_\omega$ can be written as a linear combination of $\lambda_\emptyset$-vectors:
			\begin{equation}
				\lambda(x_1)...\lambda(x_n)\xi_\omega = \lambda_\emptyset(x_1,...,x_n)\xi_\omega+ \sum_{m < n}\alpha_{x'} \lambda_\emptyset(x'_1,..,x'_m)\xi_\omega
			\end{equation}
			where $\alpha_{x'}\in\mathbb{C}$'s are coefficients.
			\item The space $K:=\text{span}\{\lambda_\emptyset(x_1,...,x_n): x_i \in N^\omega_{ana}\}$ is dense in $L_2(\mathbb{P}_\omega N, \phi_\omega)$.
			\item The inner product between two $\lambda_\emptyset$-vectors satisfies:
			\begin{align}\label{equation:emptysetInnerProduct}
				\begin{split}
					\langle&\lambda_\emptyset(x_1,...,x_n)\xi_\omega,\lambda_\emptyset(y_1,...,y_m)\xi_\omega\rangle \\&= \sum_{0\leq p \leq \min\{m,n\}}\sum_{\substack{A_n \sqcup B_n = [n] \\ A_m \sqcup B_m = [m] \\ |B_n| = |B_m| = p}}\prod_{i\in B_n, j \in B_m}\omega(x_i^*y_j)\prod_{i\in A_n}\omega(x_i^*)\prod_{j\in A_m}\omega(y_j)
				\end{split}
			\end{align}
			\item The norm of these vectors can be bounded above:
			\begin{equation}
				||\lambda_\emptyset(x_1,...,x_n)\xi_\omega||^2_2 \leq Cn^{2n}\prod_{1\leq i \leq n}|||x_i|||^2
			\end{equation}
			where $C > 0$ is an absolute constant.
		\end{enumerate}
	\end{lemma}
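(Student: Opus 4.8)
The only tools I would use are the Bernoulli model of Proposition~\ref{proposition:grading} and the state-preserving conditional expectation $E$ of Corollary~\ref{corollary:grading}; once these are in place the six assertions reduce to combinatorics parallel to Lemma~\ref{lemma:momentformulaPoisson}. For (1) I would work at the level of the ultraproduct before applying $E$. If $\sigma=\{A_1,\dots,A_{|\sigma|}\}$ and $f\in F_{n,r}(\sigma)$, then $f$ is constant on each block and injective across blocks, so in the slot $f(A_l)$ the factors $x_i\otimes\chi_{[0,1/r]}$, $i\in A_l$, multiply (using $\chi_{[0,1/r]}^{|A_l|}=\chi_{[0,1/r]}$) to $(\overrightarrow\prod_{i\in A_l}x_i)\otimes\chi_{[0,1/r]}$. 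Writing $X_l=\overrightarrow\prod_{i\in A_l}x_i$, this shows $\lambda^w_{\sigma,r}(x_1,\dots,x_n)=\lambda^w_{\sigma_\emptyset,r}(X_1,\dots,X_{|\sigma|})$ at every level $r$, so the two ultraproduct elements agree and (1) follows from the defining formula~(\ref{equation:emptyset}) upon applying $E$.

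Assertion (2) is the engine. Identifying $\lambda(x)$ with $\lambda^w(x)\in\mathbb{P}_\omega N\ssubset\mathcal N^w$ and using the module property $\lambda(x)E(\cdot)=E(\lambda^w(x)\,\cdot)$, I reduce to computing $\lambda^w(x)\lambda^w_{\sigma_\emptyset}(\vec y)$ at level $r$. Expanding $\widetilde\lambda_r(x)=\sum_s\pi_s(x\otimes\chi_{[0,1/r]})$ against the sum over injections defining $\lambda^w_{\sigma_\emptyset,r}(\vec y)$ and splitting on whether the slot $s$ avoids the image of the injection or equals some $g(i)$ produces exactly two kinds of terms: a creation term that reassembles $\lambda^w_{\sigma_\emptyset,r}(x,y_1,\dots,y_n)$ (distinct slots commute), and, for each $i$, a term $\pi_{g(i)}(xy_i\otimes\chi_{[0,1/r]})$ reassembling $\lambda^w_{\sigma_\emptyset,r}(y_1,\dots,xy_i,\dots,y_n)$. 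Applying $E$ yields~(\ref{equation:emptysetaction}). Assertion (3) is then immediate by induction on $n$: the creation term reproduces $\lambda_\emptyset(x_1,\dots,x_n)$, while every diagonal term strictly decreases the number of arguments, giving the stated triangular expansion with $m<n$.

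For (4) I would observe that $\xi_\omega=\lambda_\emptyset(\,)\xi_\omega\in K$ (the empty tuple, since $E\xi^w_\omega=\xi_\omega$), that $K$ is invariant under left multiplication by $\lambda(x)$ for $x\in N^\omega_{ana}$ by (2) (here one uses that $N^\omega_{ana}$ is a $*$-algebra, so $x,xy_i\in N^\omega_{ana}$), and that the growth bound of Lemma~\ref{lemma:momentformulaPoisson} makes each vector of $K$ an analytic vector for $\lambda(x)$. Consequently $\overline K$ is invariant under the unitaries $\exp(i\lambda(x))$, and since these generate $\mathbb{P}_\omega N$, for which $\xi_\omega$ is cyclic, $\overline K=L_2(\mathbb{P}_\omega N,\phi_\omega)$.

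The heart of the lemma, and the step I expect to be hardest, is the inner product (5). I would compute it in the finite-level Bernoulli model and pass to the limit along $w$. Because $\omega_\infty$ is a product state, the level-$r$ pairing $\omega_\infty^{\otimes r}(\lambda^w_{\sigma_\emptyset,r}(\vec x)^*\lambda^w_{\sigma_\emptyset,r}(\vec y))$ factorizes over slots: a slot hit by both injections contributes $\omega(x_i^*y_j)/r$, a slot hit by only one contributes $\omega(x_i^*)/r$ or $\omega(y_j)/r$, and an empty slot contributes $1$. Grouping the double sum over injections by the partial matching of common size $p$ between their images, the number of slot assignments is asymptotic to $r^{\,n+m-p}$, cancelling the accompanying $(1/r)^{\,n+m-p}$; in the limit only the matchings survive and~(\ref{equation:emptysetInnerProduct}) results. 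The real subtlety is that~(\ref{equation:emptyset}) carries the projection $E$, so one must check that the genuine $L_2(\mathbb{P}_\omega N)$-pairing equals this ambient ultraproduct pairing; I would settle this using self-adjointness of the $L_2$-projection $E$ together with (3), which places the $\lambda_\emptyset$-vectors inside $L_2(\mathbb{P}_\omega N)$ and expresses their Gram matrix through the common moment functional of Proposition~\ref{proposition:grading}. Finally (6) follows from (5): Cauchy--Schwarz and~(\ref{equation:normdefinition}) give $|\omega(x_i^*y_j)|\le|||x_i|||\,|||y_j|||$ and $|\omega(x_i^*)|\le|||x_i|||$, and the number of partial matchings between $[n]$ and itself, $\sum_p\binom{n}{p}^2 p!$, is bounded by $Cn^{2n}$, which yields the estimate.
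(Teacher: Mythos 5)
Your proposal is correct and follows essentially the same route as the paper's proof: working in the Bernoulli/ultraproduct model of Proposition \ref{proposition:grading}, expanding slot-by-slot at finite level, and passing to the limit along the ultrafilter, with your arguments for (1), (2), (3), (5), (6) matching the paper's computations almost verbatim. The only cosmetic differences are in (4), where you argue via analytic vectors and cyclicity of $\xi_\omega$ while the paper directly invokes density of the span of $\lambda(x_1)\cdots\lambda(x_n)\xi_\omega$ (which rests on the same differentiation/analytic-vector content), and in (5), where your explicit justification that the projection $E$ can be dropped from the pairing is a point the paper leaves implicit.
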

	\begin{proof}
		The first claim follows directly from a straight-forward calculation:\begin{align*}
			\begin{split}
				\lambda^w_\sigma(x_1,...,x_n) &= (\lambda^w_{\sigma,m}(x_1,...,x_n))^\bullet = (\sum_{f\in F_{n,m}(\sigma)}\pi_{f(1)}(x_1\otimes \chi_{[0,1/m]})...\pi_{f(n)}(x_n\otimes \chi_{[0,1/m]}))^\bullet
				\\
				&=\big(\sum_{i_1\neq i_2\neq...\neq i_{|\sigma|}\in[m]}\pi_{i_1}(\overrightarrow{\prod}_{j_1\in A_1}x_{j_1}\otimes\chi_{[0,1/m]})...\pi_{i_{|\sigma|}}(\overrightarrow{\prod}_{j_{|\sigma|}\in A_{|\sigma|}}x_{j_{|\sigma|}}\otimes\chi_{[0,1/m]})\big)^\bullet\\
				&=\lambda^w_{\emptyset}(\overrightarrow{\prod}_{i_1\in A_1}x_{i_1},...,\overrightarrow{\prod}_{i_{|\sigma|}\in A_{|\sigma|}}x_{i_{|\sigma|}})\pl.
			\end{split}
		\end{align*}
		The first claim follows directly from this calculation.
		
		For the second claim, since $\lambda(x)$ is affiliated with $\mathbb{P}_\omega N$, we have:
		\begin{align}\label{equation:combinatorial}
			\begin{split}
				\lambda(x)&\lambda_\emptyset(y_1,...,y_n)\xi_\omega = E\lambda^w(x)\lambda^w_{\sigma_\emptyset}(y_1,...,y_n)\xi_\omega^w = E\big(\lambda^w_m(x)\lambda^w_{\sigma_\emptyset, m}(y_1,...,y_n)\big)^\bullet\xi^w_\omega\\
				& = E\big(\sum_{1\leq j \leq m}\pi_j(x\otimes \chi_{[0,\frac{1}{m}]})\sum_{f\in F_{n,m}(\sigma_\emptyset)}\pi_{f(1)}(y_1\otimes \chi_{[0,\frac{1}{m}]})...\pi_{f(n)}(y_n\otimes\chi_{[0,\frac{1}{m}]})\big)^\bullet\xi^w_\omega\\
				&= E\big(\sum_{g\in F_{n+1,m}(\sigma_\emptyset)}\pi_{g(1)}(x\otimes \chi_{[0,\frac{1}{m}]})\pi_{g(2)}(y_1\otimes\chi_{[0,\frac{1}{m}]})...\pi_{g(n+1)}(y_n\otimes\chi_{[0,\frac{1}{m}]})\big)^\bullet\xi_\omega^w\\
				& + E\big(\sum_{\substack{1\leq j \leq n \\ f\in F_{n,m}(\sigma)}}\pi_{f(1)}(y_1\otimes\chi_{[0,\frac{1}{m}]})...\pi_{f(j)}(xy_j\otimes\chi_{[0,\frac{1}{m}]})...\pi_{f(n)}(y_n\otimes\chi_{[0,\frac{1}{m}]})\big)^\bullet\xi^w_\omega\\
				&= \lambda_\emptyset(x,y_1,...,y_n)\xi_\omega + \sum_{1\leq j \leq n}\lambda_\emptyset(y_1,...,xy_j,...,y_n)\xi_\omega\pl.
			\end{split}
		\end{align}
		In the first equation, we have used the fact that $\lambda(x) = \lambda^w(x)$ is affiliated with $\mathbb{P}_\omega N$. 
		
		For the third claim, we use Equation \ref{equation:combinatorial} and induction on $n$. For $n = 1$, we have $\lambda_\emptyset(x)\xi_\omega = \lambda(x)\xi_\omega$. Assume the claim is true upto $k$. For $k + 1$, Equation \ref{equation:combinatorial} implies:
		\begin{equation}\label{equation:combinatorialReverse}
			\lambda(x_1)\lambda_\emptyset(x_2,...,x_{k+1})\xi_\omega  =\lambda_\emptyset(x_1,...,x_{k+1})\xi_\omega + \sum_{2\leq j \leq k+1}\lambda_\emptyset(x_2,...,x_1x_j,...,x_{k+1})\xi_\omega\pl.
		\end{equation}
		By inductive hypothesis, $\lambda_\emptyset(x_2,...,x_{k+1})\xi_\omega = \lambda(x_2)...\lambda(x_{k+1})\xi_{\omega} - \sum_{k' < k}\alpha_{x'}\lambda_\emptyset(x'_1,...,x'_{k'})\xi_\omega$ for some constants $\alpha_{x'}\in\mathbb{C}$. Thus Equation \ref{equation:combinatorialReverse} gives a formula for $\lambda(x_1)...\lambda(x_{k+1})\xi_\omega$ in terms of $\lambda_\emptyset$-vectors where $\lambda_\emptyset(x_1,...x_{k+1})\xi_\omega$ appears in the linear combination only once. Therefore by induction, the third claim is proved.
		
		The fourth claim is a direct consequence of the third claim. Since by construction $\overline{\text{span}\{\lambda(x_1)...\lambda(x_n)\xi_\omega: x_i \in N^\omega_{ana}\}} = L_2(\mathbb{P}_\omega N, \phi_\omega)$, the combinatorial formula shows that $\lambda(x_1)...\lambda(x_n)\xi_\omega\in K$ for all tuples of analytic elements $x_1,...,x_n \in N^\omega_{ana}$. Therefore $\overline{K}= L_2(\mathbb{P}_\omega N, \phi_\omega)$.
		
		To prove the formula of the inner product, we use the ultraproduct model:
		\begin{align*}
			\begin{split}
				\langle&\lambda_\emptyset(x_1,...,x_n)\xi_\omega,\lambda_\emptyset(y_1,...,y_m)\xi_\omega\rangle = \langle\lambda^w_{\sigma_\emptyset}(x_1,...,x_n)\xi^w_\omega, \lambda^w_{\sigma_\emptyset}(y_1,...,y_m)\xi^w_\omega\rangle\\
				&=\lim_{l\rightarrow \infty}\omega_\infty^{\otimes l}\bigg(\big(\sum_{f\in F_{n,l}(\sigma_\emptyset)}\pi_{f(n)}(x^*_n\otimes\chi_{[0, \frac{1}{l}]})...\pi_{f(1)}(x^*_1\otimes\chi_{[0,\frac{1}{l}]})\big)^\bullet\big(\sum_{g\in F_{m,l}(\sigma_\emptyset)}\pi_{g(1)}(y_1\otimes\chi_{[0,\frac{1}{l}]})...\\&\times\pi_{g(m)}(y_m\otimes\chi_{[0,\frac{1}{l}]})\big)^\bullet\bigg)\\
				& = \lim_{l \rightarrow\infty}\omega_\infty^{\otimes l}\bigg(\sum_{\substack{0\leq p \leq \min\{n,m\} \\ A\subset [l], |A| = p \\ f\in F_{n,l}(\sigma_\emptyset), g\in F_{m,l}(\sigma_\emptyset)}}\prod_{i\in A}\pi_i(x^*_{f^{-1}(i)}y_{g^{-1}(i)}\otimes \chi_{[0,\frac{1}{l}]})\prod_{i\in Im(f) - A}\pi_i(x^*_{f^{-1}(i)}\otimes\chi_{[0,\frac{1}{l}]})\\&\times\prod_{i\in Im(g) - A}\pi_i(y_{g^{-1}(i)}\otimes\chi_{[0,\frac{1}{l}]})\bigg)
				\\
				&=\lim_{l\rightarrow \infty}\sum_{0\leq p \leq \min\{m,n\}}\sum_{\substack{A_n \sqcup B_n = [n] \\ A_m \sqcup B_m = [m] \\ |B_n| = |B_m| = p}}\frac{\omega(1)^{-l + n + m - p}}{l^{n+m - p}}\binom{l}{n+m - p}\binom{n+m - p}{p}p!(n-p)!(m-p)! \\&\times\prod_{i\in B_n, j \in B_m}\omega(x_i^*y_j)\prod_{i\in A_n}\omega(x_i^*)\prod_{j\in A_m}\omega(y_j)\times \omega(1)^{l-n-m+p}
				\\
				&=\sum_{0\leq p \leq \min\{m,n\}}\sum_{\substack{A_n \sqcup B_n = [n] \\ A_m \sqcup B_m = [m] \\ |B_n| = |B_m| = p}}\prod_{i\in B_n, j \in B_m}\omega(x_i^*y_j)\prod_{i\in A_n}\omega(x_i^*)\prod_{j\in A_m}\omega(y_j)
			\end{split}
		\end{align*}
		where we used the fact that any set function in $ F_{n,l}(\sigma_\emptyset)$ is injective and hence $f^{-1}$ is well defined. 
		
		Finally, using Equation \ref{equation:emptysetInnerProduct}, we have the following upper bound on the norm of $\lambda_\emptyset$-vectors:
		\begin{align*}
			\begin{split}
				||\lambda_\emptyset(x_1,...,x_n)\xi_\omega||^2_2 &\leq \sum_{0\leq p\leq n}\sum_{\substack{A_n\sqcup B_n = [n] \\ A_m\sqcup B_m = [n] \\ |B_n| = |B_m| = p}} |\prod_{i\in B_n, j\in B_m}\omega(x^*_i x_j)\prod_{i\in A_n}\omega(x^*_i)\prod_{j\in A_m}\omega(x_j)|
				\\
				& \leq \big(\sum_{0\leq p \leq n}p!\binom{n}{p}^2\big)\prod_{1\leq i \leq n}|||x_i|||^2 \leq \big(\sum_{0\leq p \leq n}\frac{n^{2p}}{p!}\big)\prod_{1\leq i \leq n}|||x_i|||^2\\&\leq C n^{2n}\prod_{1\leq i \leq n}|||x_i|||^2
			\end{split}
		\end{align*}
		where $C > 0$ is some absolute constant. 
	\end{proof}
	\begin{corollary}\label{corollary:meanzero}
		Let $x_i, y_j\in N^\omega_{ana}$ be mean zero analytic elements (i.e. $\omega(x_i) = \omega(y_j) = 0$), then we have:
		\begin{equation}\label{equation:meanzero}
			\langle\lambda_\emptyset(x_1,...,x_n)\xi_\omega, \lambda_\emptyset(y_1,...,y_m)\xi_\omega\rangle = \delta_{n,m}\sum_{\pi \in S_n}\prod_{1\leq i \leq n}\omega(x_i^*y_{\pi(i)})			
		\end{equation}
	\end{corollary}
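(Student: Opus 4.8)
The plan is to read the result off directly from the inner-product formula \eqref{equation:emptysetInnerProduct} established in Lemma \ref{lemma:emptybasis}, specialized to the mean-zero hypothesis. First I would record the elementary consequence of positivity of $\omega$: since $\omega$ is a positive linear functional it is self-adjoint in the sense that $\omega(x^*) = \overline{\omega(x)}$, so the assumption $\omega(x_i) = 0$ forces $\omega(x_i^*) = 0$ as well, and likewise $\omega(y_j) = 0$. Thus every factor of the form $\omega(x_i^*)$ or $\omega(y_j)$ that occurs in the formula of Lemma \ref{lemma:emptybasis} vanishes.

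Next I would examine which terms of the double sum in Equation \ref{equation:emptysetInnerProduct} survive. A term indexed by decompositions $A_n \sqcup B_n = [n]$ and $A_m \sqcup B_m = [m]$ carries the factor $\prod_{i\in A_n}\omega(x_i^*)\prod_{j\in A_m}\omega(y_j)$, which by the previous observation is zero unless $A_n = A_m = \emptyset$. Hence only the terms with $B_n = [n]$ and $B_m = [m]$ contribute. Since these sets satisfy $|B_n| = |B_m| = p$, this simultaneously forces $p = n$ and $p = m$, so the entire sum is empty unless $n = m$; this produces the Kronecker factor $\delta_{n,m}$.

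Finally, in the case $n = m$ the only surviving data is the matching of the full index set $[n]$ against $[m] = [n]$. Tracing back through the derivation of Equation \ref{equation:emptysetInnerProduct}, where the pairing between elements of $B_n$ and $B_m$ arises from a shared position in the ultraproduct tensor leg and is enumerated by the factorial factor $p!$, this matching is precisely a bijection of $[n]$ onto itself, i.e.\ a permutation $\pi \in S_n$, with associated weight $\prod_{1\le i\le n}\omega(x_i^* y_{\pi(i)})$. Summing over all such $\pi$ yields
\[
\langle\lambda_\emptyset(x_1,\dots,x_n)\xi_\omega,\ \lambda_\emptyset(y_1,\dots,y_m)\xi_\omega\rangle = \delta_{n,m}\sum_{\pi\in S_n}\prod_{1\le i\le n}\omega(x_i^* y_{\pi(i)}),
\]
which is the asserted identity.

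Since the argument is essentially a substitution into an already-proved formula, there is no serious obstacle. The one point requiring care is the bookkeeping in the last step: one must confirm that, once the diagonal condition $A_n = A_m = \emptyset$ is imposed, the residual combinatorial data in Equation \ref{equation:emptysetInnerProduct} is genuinely a sum over $S_n$ rather than a single product over all pairs $(i,j)\in B_n\times B_m$. This is settled by returning to the injective set maps $f,g$ in the proof of Lemma \ref{lemma:emptybasis}, whose common image $A = \Img(f)\cap\Img(g)$ induces, via $i\mapsto g^{-1}(f(i))$, exactly the permutation $\pi$ that labels each surviving term.
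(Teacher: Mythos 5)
Your proposal is correct and follows exactly the paper's route: the paper's entire proof of this corollary is the one-line observation that it is a direct consequence of Equation \ref{equation:emptysetInnerProduct}, which is precisely the substitution you carry out (mean-zero kills every term with $A_n\neq\emptyset$ or $A_m\neq\emptyset$ via $\omega(x_i^*)=\overline{\omega(x_i)}=0$, forcing $p=n=m$ and hence the factor $\delta_{n,m}$). Your final point of care is well taken: the notation $\prod_{i\in B_n,\,j\in B_m}\omega(x_i^*y_j)$ in Lemma \ref{lemma:emptybasis} must indeed be read, via the injective maps $f,g$ in its ultraproduct derivation, as a sum over the bijections $i\mapsto g^{-1}(f(i))$ between $B_n$ and $B_m$, which is exactly what produces the sum over $S_n$ in the statement.
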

	\begin{proof}
		The proof is a direct consequence of Equation \ref{equation:emptysetInnerProduct}
	\end{proof}
	The inner product in Corollary \ref{corollary:meanzero} has the same form as the familiar quasi-free state on CCR algebra. This suggests a connection between $L_2(\mathbb{P}_\omega N, \phi_\omega)$ and the symmetric Fock space $\mathcal{F}(L_2(N, \omega))$. Indeed we will see later that these two Hilbert spaces are isomorphic. However, to state the result, we need yet another basis for $L_2(\mathbb{P}_\omega N, \phi_\omega)$ because it is difficult to study the grading structure using $\lambda_\emptyset$-basis. Before we construct the Fock basis, we use $\lambda_\emptyset$-basis to construct the lift of a conditional expectation. This construction requires several functorial properties of Poissonization, which we will study in the next subsection.
	\subsection{Functorial Properties of Poissonization}
	In this subsection, we collect some functorial properties of Poissonization. These results will be used later to construct the isometric isomorphism between $L_2(\mathbb{P}_\omega N, \phi_\omega)$ and the symmetric Fock space $\mathcal{F}(L_2(N,\omega))$.
	\begin{lemma}\label{lemma:predualstructure}
		Let $d_\phi \in L_1(\mathbb{P}_\omega N, \phi_\omega)$ be the density corresponding to the state $\phi_\omega\in \mathbb{P}_\omega N_*$. The subspace $\text{span}\{\lambda_\emptyset(x_1,..,x_n)d_\phi\}$ is dense in $L_1(\mathbb{P}_\omega N, \phi_\omega)$.
	\end{lemma}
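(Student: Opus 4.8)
The plan is to reduce the $L_1$-density statement to the $L_2$-density of the $\lambda_\emptyset$-vectors already established in Lemma~\ref{lemma:emptybasis}(4), and then to close the argument using the separating property of the vacuum vector together with Haagerup duality. I work in the standard form, so that $\xi_\omega=d_\phi^{1/2}\in L_2(\mathbb{P}_\omega N,\phi_\omega)$ is the cyclic and separating vector implementing $\phi_\omega$, and I recall the H\"older product $L_2\times L_2\to L_1$ under which $d_\phi=\xi_\omega\cdot\xi_\omega$. The first step is the identification
\[
\lambda_\emptyset(x_1,\dots,x_n)\,d_\phi \;=\; \big(\lambda_\emptyset(x_1,\dots,x_n)\xi_\omega\big)\cdot\xi_\omega,
\]
i.e.\ the $L_1$-element on the left is the right multiplication by $\xi_\omega$ of the $L_2$-vector $\lambda_\emptyset(x_1,\dots,x_n)\xi_\omega$ built in Lemma~\ref{lemma:emptybasis}. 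By part (3) of that lemma each vector $\lambda_\emptyset(x_1,\dots,x_n)\xi_\omega$ is a fixed linear combination of vectors $\lambda(x_1')\cdots\lambda(x_m')\xi_\omega$; correspondingly $\lambda_\emptyset(x_1,\dots,x_n)$ is the same linear combination of products of the affiliated operators $\lambda(x_i)$, an operator affiliated with $\mathbb{P}_\omega N$ having $\xi_\omega=d_\phi^{1/2}$ in its domain, and factoring $d_\phi=d_\phi^{1/2}\cdot d_\phi^{1/2}$ and pulling the operator onto the first factor yields the display. I expect this operator/domain bookkeeping, together with the verification that the H\"older bound $\|\zeta\cdot\xi_\omega\|_1\le\|\zeta\|_2\,\|\xi_\omega\|_2$ applies, to be the only genuinely technical point.

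Granting the identification, the core is soft. Let $V:=\operatorname{span}\{\lambda_\emptyset(x_1,\dots,x_n)\,d_\phi\}$ and let $a\in\mathbb{P}_\omega N=\big(L_1(\mathbb{P}_\omega N,\phi_\omega)\big)^{*}$ be any functional annihilating $V$, the pairing being $\eta\mapsto\tr(a\eta)$. For each $\lambda_\emptyset$-vector,
\[
\tr\big(a\,\lambda_\emptyset(x_1,\dots,x_n)d_\phi\big)
=\tr\big((a\,\lambda_\emptyset(x_1,\dots,x_n)\xi_\omega)\cdot\xi_\omega\big)
=\big\langle a\,\lambda_\emptyset(x_1,\dots,x_n)\xi_\omega,\ \xi_\omega\big\rangle
=\big\langle \lambda_\emptyset(x_1,\dots,x_n)\xi_\omega,\ a^{*}\xi_\omega\big\rangle,
\]
where the middle identity is the standard relation $\tr(\zeta\cdot\xi_\omega)=\langle\zeta,\xi_\omega\rangle$ for $\zeta\in L_2$ (using that $\xi_\omega$ is self-adjoint). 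If $a$ annihilates $V$, the last inner product vanishes for all $n$ and all $x_i\in N^\omega_{ana}$, so $a^{*}\xi_\omega$ is orthogonal to $K=\operatorname{span}\{\lambda_\emptyset(x_1,\dots,x_n)\xi_\omega\}$. By Lemma~\ref{lemma:emptybasis}(4) this $K$ is dense in $L_2(\mathbb{P}_\omega N,\phi_\omega)$, hence $a^{*}\xi_\omega=0$; since $\xi_\omega$ is separating for $\mathbb{P}_\omega N$, this forces $a^{*}=0$ and therefore $a=0$. By Hahn--Banach, $V$ is norm-dense in $L_1(\mathbb{P}_\omega N,\phi_\omega)$, which is the assertion.

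Alternatively, and this is the form I would keep in reserve should the duality bookkeeping prove awkward, one can argue purely by boundedness: the right-multiplication map $R_{\xi_\omega}\colon L_2\to L_1$, $\zeta\mapsto\zeta\cdot\xi_\omega$, is a contraction by H\"older, its range contains $\mathbb{P}_\omega N\cdot\xi_\omega\cdot\xi_\omega=\mathbb{P}_\omega N\,d_\phi$, and the latter is $L_1$-dense because $\phi_\omega$ is faithful (if $b\in\mathbb{P}_\omega N$ kills $\mathbb{P}_\omega N\,d_\phi$ then $\phi_\omega(bx)=0$ for all $x$, so $\phi_\omega(bb^{*})=0$ and $b=0$). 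Thus $R_{\xi_\omega}$ has dense range, and since $K$ is dense in $L_2$ by Lemma~\ref{lemma:emptybasis}(4), boundedness gives $\overline{R_{\xi_\omega}(K)}=\overline{R_{\xi_\omega}(L_2)}=L_1$, and $R_{\xi_\omega}(K)=V$. Either route reduces the proposition entirely to the density proved in Lemma~\ref{lemma:emptybasis}, so the only new ingredient is the passage from the $L_2$-vectors to the corresponding $L_1$-densities via multiplication by $\xi_\omega=d_\phi^{1/2}$.
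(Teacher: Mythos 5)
Your proof is correct, and it is a strict refinement of the paper's own argument. The paper's entire proof is two sentences: $K=\operatorname{span}\{\lambda_\emptyset(x_1,\dots,x_n)\xi_\omega\}$ is dense in $L_2(\mathbb{P}_\omega N,\phi_\omega)$ by Lemma \ref{lemma:emptybasis}, and since $L_2$ is dense in $L_1$, the span of the $\lambda_\emptyset(x_1,\dots,x_n)d_\phi$ is dense in the predual --- which is exactly your reserve argument, compressed. What you add is precisely the point that needs care: in the Haagerup construction the spaces $L_p$ for different $p$ intersect trivially, so $L_2$ is not literally a dense subspace of $L_1$; the correct statement is that right multiplication $R_{\xi_\omega}\colon L_2\to L_1$ by $\xi_\omega=d_\phi^{1/2}$ is a contraction with dense range (dense because $\mathbb{P}_\omega N\, d_\phi$ has trivial annihilator in $\mathbb{P}_\omega N$ by faithfulness of $\phi_\omega$), and it carries $K$ onto $V=\operatorname{span}\{\lambda_\emptyset(x_1,\dots,x_n)d_\phi\}$. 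Your primary Hahn--Banach route is the dual formulation of the same fact: an $a\in\mathbb{P}_\omega N$ annihilating $V$ satisfies $\langle a^*\xi_\omega,\lambda_\emptyset(x_1,\dots,x_n)\xi_\omega\rangle=0$ for all such vectors, hence $a^*\xi_\omega=0$ by the $L_2$-density, hence $a=0$ since $\xi_\omega$ is separating; this is equally valid, modulo the harmless slot/adjoint bookkeeping (the standard identity is $\tr(\zeta\xi_\omega)=\langle\zeta^*,\xi_\omega\rangle$ rather than $\langle\zeta,\xi_\omega\rangle$, which does not affect the conclusion). Either route is complete, and both rest on exactly the same key input as the paper, namely Lemma \ref{lemma:emptybasis}.
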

	\begin{proof}
		By Lemma \ref{lemma:emptybasis}, the subspace $K = \text{span}\{\lambda_\emptyset(x_1,...,x_n)\xi_\omega\}$ is dense in $L_2(\mathbb{P}_\omega N, \phi_\omega)$. Since $L_2(\mathbb{P}_\omega N, \phi_\omega)$ is dense in $L_1(\mathbb{P}_\omega N, \phi_\omega)$, $\text{span}\{\lambda_\emptyset(x_1,...,x_n)d_\phi\}$ is dense in the predual.
	\end{proof}
	\begin{lemma}\label{lemma:ultraweakdense}
		The unital $*$-subalgebra: $\mathcal{A}:=\{\Gamma(a) = \sum_{n\geq 0}a^{\otimes n}: ||a|| \leq 1\}$ is ultra-weakly dense in $\mathbb{P}_\omega N$.
	\end{lemma}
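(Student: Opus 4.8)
The plan is to sandwich the ultra-weak closure of $\mathcal{A}$ between $\mathbb{P}_\omega N$ and itself, using only the identification $\mathbb{P}_\omega N\cong M_s(N)$ from Proposition \ref{proposition:symmetrictensor} together with Lemma \ref{lemma:nameexplainPoissonState}. The one point that needs care is that $\Gamma$ is \emph{not} linear, so one cannot argue that $\Gamma(a)$ lies in $\mathbb{P}_\omega N$ merely because a contraction $a$ is an average of unitaries; that membership has to come instead from the structural description of $\mathbb{P}_\omega N$ as a direct sum of symmetric tensor powers. This is the real content; everything else is bookkeeping on the generating set.

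First I would record the elementary multiplicative behaviour of $\Gamma$ on the closed unit ball of $N$. For $\|a\|,\|b\|\leq 1$ one has $\Gamma(a)\Gamma(b)=\big((ab)^{\otimes n}\big)_n=\Gamma(ab)$ and $\Gamma(a)^*=\Gamma(a^*)$, with $\|ab\|\leq 1$ and $\|a^*\|\leq 1$, while $\Gamma(1)=\mathbf 1$. Hence the linear span of $\{\Gamma(a):\|a\|\leq 1\}$ is genuinely closed under products and adjoints and contains the unit, so it is legitimate to treat $\mathcal{A}$ as a unital $*$-subalgebra.

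The inclusion $\{\Gamma(e^{ix}):x=x^*\}\subseteq\mathcal{A}$ is then immediate, since each $e^{ix}$ is a contraction; and by Lemma \ref{lemma:nameexplainPoissonState} these elements are exactly the generators $\exp(i\lambda(x))$ of $\mathbb{P}_\omega N$. For the reverse inclusion $\mathcal{A}\subseteq\mathbb{P}_\omega N$ I would invoke Proposition \ref{proposition:symmetrictensor}, which identifies $\mathbb{P}_\omega N$ with the von Neumann direct sum $M_s(N)=\bigoplus_{n\geq 0}\bigotimes_s^n N$ and, in its first part, gives $\bigotimes_s^n N=\overline{\operatorname{span}\{x^{\otimes n}:x\in N\}}^{\text{uw}}$. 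For any contraction $a$ each component $a^{\otimes n}$ thus lies in $\bigotimes_s^n N$, and since $\sup_n\|a^{\otimes n}\|\leq 1$ the bounded sequence $\Gamma(a)=(a^{\otimes n})_n$ defines an element of $M_s(N)=\mathbb{P}_\omega N$. As $\mathbb{P}_\omega N$ is a linear space, the span $\mathcal{A}$ of these elements is contained in it.

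Finally I would close the argument by passing to ultra-weak closures. By Definition \ref{definition:poissonalgebra(state)} together with Lemma \ref{lemma:nameexplainPoissonState}, $\mathbb{P}_\omega N$ is by definition the ultra-weak closure of $\{\Gamma(e^{ix}):x=x^*\}$, and we have just established
\[
\{\Gamma(e^{ix}):x=x^*\}\ \subseteq\ \mathcal{A}\ \subseteq\ \mathbb{P}_\omega N .
\]
Taking ultra-weak closures yields $\mathbb{P}_\omega N\subseteq\overline{\mathcal{A}}^{\text{uw}}\subseteq\mathbb{P}_\omega N$, hence $\overline{\mathcal{A}}^{\text{uw}}=\mathbb{P}_\omega N$, which is exactly the claimed density. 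I expect no serious obstacle beyond correctly isolating the use of Proposition \ref{proposition:symmetrictensor}, whose sole purpose here is to sidestep the non-linearity of $\Gamma$ when certifying that $\Gamma(a)\in\mathbb{P}_\omega N$ for an arbitrary contraction $a$.
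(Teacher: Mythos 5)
Your proof is correct, but it takes a genuinely different route from the paper's. Both arguments lean on Proposition \ref{proposition:symmetrictensor}, and you are right that its essential role is to certify $\Gamma(a)\in\mathbb{P}_\omega N$ for a non-unitary contraction $a$, precisely because $\Gamma$ is not linear; after that the mechanisms diverge. You run a sandwich: the defining generators $\exp(i\lambda(x))=\Gamma(e^{ix})$ of the Poisson algebra lie in $\mathcal{A}$ because $e^{ix}$ is a contraction (Lemma \ref{lemma:nameexplainPoissonState}), while $\mathcal{A}\subseteq M_s(N)=\mathbb{P}_\omega N$ by Proposition \ref{proposition:symmetrictensor}; since $\operatorname{span}\mathcal{A}$ is a unital $*$-algebra (via $\Gamma(a)\Gamma(b)=\Gamma(ab)$, $\Gamma(a)^*=\Gamma(a^*)$), its ultra-weak closure is a von Neumann algebra squeezed between $\mathbb{P}_\omega N$ and itself. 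The paper never uses that the generators belong to $\mathcal{A}$; instead it shows that $\overline{\mathcal{A}}^{\text{uw}}$ contains every graded component $a^{\otimes n}$ through the scaling-limit formula
$\lim_{t\to 0^+}t^{-(k+1)}\bigl(\Gamma(ta)-\sum_{0\leq j\leq k}t^{j}a^{\otimes j}\bigr)=a^{\otimes(k+1)}$,
where $a^{\otimes 0}$ is the unit of the zeroth summand (the paper's printed sum starts at $j=1$, a harmless slip: the omitted vacuum term must also be subtracted or the expression diverges as $t\to 0^+$), and then invokes part (1) of Proposition \ref{proposition:symmetrictensor} — density of $\operatorname{span}\{x^{\otimes n}\}$ in $\bigotimes_s^n N$ — to conclude. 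Your route is shorter and bypasses the limit computation entirely, at the price of relying directly on the description of $\mathbb{P}_\omega N$ by its unitary generators; the paper's route yields the extra, occasionally useful, information that each symmetric tensor $a^{\otimes n}$ — not merely the algebra they generate — is an explicit ultra-weak limit of elements of $\operatorname{span}\mathcal{A}$. For the lemma as stated, your argument is complete.
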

	\begin{proof}
		By Proposition \ref{proposition:symmetrictensor}, it suffices to prove that the ultra-weak closure of $\mathcal{A}$ contains $\{x^{\otimes n}: n\in \mathbb{N}\}$. For all $a\in N$, the element $\Gamma(ta) = \sum_{n\geq 0}t^n a^{\otimes n}$ is well-defined in $\mathbb{P}_\omega N\cong M_s(N)$ for all $|t| < \frac{1}{||a||}$. $\lim_{t\rightarrow 0^+}\Gamma(ta)$ converges in the ultra-weak topology and is equal to the vacuum vector. Hence the vacuum vector is in the ultra-weak closure of $\mathcal{A}$. By a simple induction and the formula:
		\begin{equation*}
			\lim_{t\rightarrow 0^+}\frac{1}{t^{k+1}}(\Gamma(ta) - \sum_{1\leq j \leq k}t^ja^{\otimes j}) = a^{\otimes (k+1)}
		\end{equation*}
		we have that $a^{\otimes n}\in \overline{\mathcal{A}}^{\text{ultra-weak}}$. Therefore the claim follows from Proposition \ref{proposition:symmetrictensor}.
	\end{proof}
	\begin{proposition}\label{proposition:contractionLift}
		Let $(N,\omega_N)$ and $(M,\omega_M)$ be two von Neumann algebras with fixed normal faithful positive functionals. And let $T:N\rightarrow M$ be a normal unital contraction such that $\omega_M\circ T = \omega_N$. Assume there exists a normal unital linear map:
		\begin{equation*}
			\hat{T}: M\rightarrow N
		\end{equation*}
		such that $(T,\hat{T})$ forms a dual pair: $\omega_M(T(x)y) = \omega_N(x\hat{T}(y))$. Then there exists a normal contraction:
		\begin{equation}
			\Gamma(T):\mathbb{P}_\omega N\rightarrow \mathbb{P}_\omega M: \Gamma(a)\mapsto \Gamma(Ta)
		\end{equation}
		such that $\phi_{\omega_M}\circ\Gamma(T) = \phi_{\omega_N}$ and the following formula holds in the predual:
		\begin{equation}\label{equation:contractionEmptyBasis}
			\Gamma(T)^*\lambda_\emptyset(x_1,...,x_n)d_{\phi_M} = \lambda_\emptyset(\hat{T}x_1,...,\hat{T}x_n)d_{\phi_N}
		\end{equation}
		where $\Gamma(T)^*$ is the dual of $\Gamma(T)$ and $d_{\phi_M}$ is the density associated with thet state $\phi_{\omega_M}$. Here we identify the predual with the Haagerup $L_1$-space: $L_1(\mathbb{P}_{\omega_N}N,\phi_{\omega_N}) \cong \mathbb{P}_{\omega_N}N_*$.
	\end{proposition}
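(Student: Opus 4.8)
The plan is to build $\Gamma(T)$ directly from the symmetric-tensor description of the Poisson algebra (Proposition \ref{proposition:symmetrictensor}), deduce contractivity, normality and state preservation essentially for free, and then identify the predual action on the $\lambda_\emptyset$-basis by a single pairing computation carried out in the Bernoulli/ultraproduct model of Proposition \ref{proposition:grading}. Under the identification $\mathbb{P}_\omega N\cong M_s(N)=\bigoplus_{n\ge0}\bigotimes_s^n N$, I would use that $T$ is a normal completely positive unital map so that each tensor power $T^{\otimes n}:N^{\otimes n}\to M^{\otimes n}$ is again normal and completely positive and is $S_n$-equivariant; hence it restricts to a normal completely positive map between the symmetric corners $\bigotimes_s^n N\to\bigotimes_s^n M$. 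Setting $\Gamma(T):=\bigoplus_{n\ge0}(T^{\otimes n})|_{\bigotimes_s^n N}$ then yields a normal unital completely positive (in particular contractive) map $\mathbb{P}_\omega N\to\mathbb{P}_\omega M$, and $T^{\otimes n}(a^{\otimes n})=(Ta)^{\otimes n}$ gives $\Gamma(T)\Gamma(a)=\Gamma(Ta)$, as required. Here I am quietly using complete positivity, not mere contractivity, to make the tensor powers meaningful on the von Neumann algebraic tensor products; this is the precise sense in which the morphisms are to be lifted.

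State preservation is then immediate. Writing $x=(x_n)_n\in M_s(N)$ and using $\omega_M^{\otimes n}\circ T^{\otimes n}=(\omega_M\circ T)^{\otimes n}=\omega_N^{\otimes n}$ together with $\omega_M(1)=\omega_M(T1)=\omega_N(1)$, one obtains
\[
\phi_{\omega_M}(\Gamma(T)x)=e^{-\omega_M(1)}\sum_{n\ge0}\frac{\omega_M^{\otimes n}(T^{\otimes n}x_n)}{n!}=e^{-\omega_N(1)}\sum_{n\ge0}\frac{\omega_N^{\otimes n}(x_n)}{n!}=\phi_{\omega_N}(x).
\]

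The substance is formula (\ref{equation:contractionEmptyBasis}). By Lemma \ref{lemma:ultraweakdense} the span of $\{\Gamma(a):\|a\|\le1\}$ is weak-$*$ dense in $\mathbb{P}_\omega N$, so it separates the predual, and it suffices to verify, for every contraction $a$ and analytic $x_1,\dots,x_n$, the pairing identity $\langle\Gamma(Ta),\lambda_\emptyset(x_1,\dots,x_n)d_{\phi_M}\rangle=\langle\Gamma(a),\lambda_\emptyset(\hat Tx_1,\dots,\hat Tx_n)d_{\phi_N}\rangle$. To evaluate a generic such pairing I would compute, in the Groh-Raynaud/Bernoulli model of Proposition \ref{proposition:grading} and Corollary \ref{corollary:grading}, the closed form
\[
\langle\Gamma(c),\lambda_\emptyset(z_1,\dots,z_n)d_\phi\rangle=\langle\xi_\omega,\Gamma(c)\lambda_\emptyset(z_1,\dots,z_n)\xi_\omega\rangle=\phi^w_\omega\big(\Gamma(c)\lambda^w_{\sigma_\emptyset}(z_1,\dots,z_n)\big)=e^{\omega(c-1)}\prod_{i=1}^n\omega(cz_i).
\]
This is obtained by noting that on the $m$-site Bernoulli algebra the generator is realized as $\Gamma(c)=\bigotimes_{j=1}^m\big(1\otimes1+(c-1)\otimes\chi_{[0,1/m]}\big)$ and $\lambda^w_{\sigma_\emptyset}$ is a sum over injections $f:[n]\hookrightarrow[m]$; a site carrying $z_i$ then contributes $\omega_\infty(cz_i\otimes\chi_{[0,1/m]})=\omega(cz_i)/m$, an empty site contributes $1+\omega(c-1)/m$, and summing over the $m!/(m-n)!$ injections and passing to the ultrafilter limit gives the displayed formula (the factor $e^{\omega(c-1)}$ arising as $(1+\omega(c-1)/m)^{m-n}$). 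Specializing to $c=Ta$ with $z_i\in M$, and using $\omega_M\circ T=\omega_N$, $\omega_M(1)=\omega_N(1)$ to get $\omega_M(Ta-1)=\omega_N(a-1)$, the dual-pair identity $\omega_M(T(a)z_i)=\omega_N(a\,\hat T z_i)$ converts each factor, so that
\[
\langle\Gamma(Ta),\lambda_\emptyset(x_1,\dots,x_n)d_{\phi_M}\rangle=e^{\omega_M(Ta-1)}\prod_{i=1}^n\omega_M\big((Ta)x_i\big)=e^{\omega_N(a-1)}\prod_{i=1}^n\omega_N\big(a\,\hat T x_i\big)=\langle\Gamma(a),\lambda_\emptyset(\hat Tx_1,\dots,\hat Tx_n)d_{\phi_N}\rangle,
\]
which proves (\ref{equation:contractionEmptyBasis}).

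The hard part is this third step, and within it two technical points. First, making the pairing rigorous: $\lambda_\emptyset(z_1,\dots,z_n)$ is only an affiliated (unbounded) operator, so I must justify the identification of the Haagerup $L_1$-pairing with $\langle\xi_\omega,\Gamma(c)\lambda_\emptyset(z)\xi_\omega\rangle$ and the passage through the state-preserving conditional expectation $E$ of Corollary \ref{corollary:grading}, exactly as the inner-product formula (\ref{equation:emptysetInnerProduct}) was established; this forces me to work with analytic arguments $x_i\in N^\omega_{ana}$ and to check that $\hat T$ carries these into (a core of) $N^\omega_{ana}$, which is where normality of $\hat T$ together with the modular information implicit in the dual pair must enter. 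Second, the reason a \emph{single} application of $T\leftrightarrow\hat T$ per factor suffices — rather than a sandwiched expression such as $\omega(z_i^*\,c\,z_j)$ that the dual pair could not handle — is the combinatorial fact that in $\lambda^w_{\sigma_\emptyset}$ distinct arguments occupy distinct sites, so each $z_i$ meets exactly one copy of $c$. I would isolate this as the crucial structural observation that makes the functoriality go through.
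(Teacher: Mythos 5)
Your third step --- the verification of (\ref{equation:contractionEmptyBasis}) --- is in substance identical to the paper's own proof: the paper likewise reduces the whole proposition to the single pairing identity $\langle\lambda_\emptyset(x_1,\dots,x_m)d_{\phi_M},\Gamma(Ty)\rangle=\exp(\omega_M(Ty-1))\prod_{i}\omega_M\big((Ty)x_i\big)$ (its Equation (\ref{equation:keyidentity})), followed by the same dual-pair conversion $\omega_M\big((Ty)x_i\big)=\omega_N\big(y\hat Tx_i\big)$ and the same density argument over $\{\Gamma(y):\|y\|\le1\}$. Your Bernoulli/ultraproduct derivation of the closed form is a legitimate way to justify that identity (the paper invokes it without derivation; it is the computation underlying Lemma \ref{lemma:emptybasis} and Corollary \ref{corollary:corner}), and your observation that distinct arguments occupy distinct sites, so each $z_i$ meets exactly one copy of $c$, is exactly the right structural reason why one application of the dual pair per factor suffices. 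One of your two technical worries is vacuous here, though: since $\omega_N,\omega_M$ are \emph{bounded} functionals in this proposition, $\lambda_\emptyset(y_1,\dots,y_n)d_{\phi_N}$ makes sense for arbitrary $y_i\in N$, so you do not need $\hat T$ to preserve analytic elements; that issue only arises in the weight version (Lemma \ref{lemma:contractionLiftWeight}).

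The genuine divergence, and the genuine gap, is your construction of $\Gamma(T)$ as $\bigoplus_{n}T^{\otimes n}|_{\bigotimes_s^nN}$. As you yourself flag, this requires $T$ to be completely positive (or at least completely bounded), which is \emph{not} among the hypotheses: the proposition assumes only a normal unital contraction with a dual pair, and for such maps $T^{\otimes n}$ can fail to be bounded even after restriction to the symmetric part. Concretely, take $N=M=\bigoplus_k(M_{n_k}\oplus M_{n_k})$, $T=\hat T=\bigoplus_k(\theta\oplus\mathrm{id})$ with $\theta$ the transpose, and $\omega$ a weighted sum of normalized traces: all hypotheses hold, yet the norm-one symmetric element $z=\sum_{i,j\le n_k}\big((e_{ji}\oplus0)\otimes(0\oplus e_{ij})+(0\oplus e_{ij})\otimes(e_{ji}\oplus0)\big)$ is sent by $T\otimes T$ to an element having the component $(\theta\otimes\mathrm{id})\big(\textstyle\sum_{ij}e_{ji}\otimes e_{ij}\big)=\sum_{ij}e_{ij}\otimes e_{ij}$, whose norm is $n_k$. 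So your argument proves the proposition only under an added complete positivity hypothesis, whereas the paper's proof is structured precisely to avoid tensor powers: it defines $\Gamma(T)$ only on the generators $\Gamma(a)\mapsto\Gamma(Ta)$ and extracts normality and contractivity from the predual via the pairing identity. In fairness to you, the same example shows the soft spot sits in the statement itself and not only in your proof: any bounded normal $\Gamma(T)$ with $\Gamma(T)\Gamma(a)=\Gamma(Ta)$ must act as $T^{\otimes n}$ on each graded component (by normality and the differentiation trick of Lemma \ref{lemma:ultraweakdense}), so the conclusion forces $T^{\otimes n}|_{\bigotimes_s^nN}$ to be contractive, which the example violates; correspondingly, the paper's unproved claim that elementwise contractivity on $\{\Gamma(a)\}$ gives a contraction on its span (equivalently, that $\Gamma(T)^*$ is an $L_1$-contraction) is not automatic. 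In short: you have given a correct proof of the completely positive version of the statement --- the version actually used in all later applications (Corollaries \ref{corollary:homomorphismLift}, \ref{corollary:conditionalExpLift}, \ref{corollary:corner}) --- but not of the statement as literally formulated, which itself appears to need the same strengthening.
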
	
	\begin{proof}
		For all $s < 1$, we have: $||\Gamma(sTa)|| = \frac{1}{1 - ||sTa||} \leq \frac{1}{1 - ||sa||} = ||\Gamma(sa)||$ where the inequality holds because $T$ is a contraction. Therefore $\Gamma(T)$ is well-defined and is a contraction on the ultra-weakly dense subalgebra $\mathcal{A}=\{\Gamma(a):||a||\leq 1\}$. To see $\Gamma(T)$ is normal, we consider its dual $\Gamma(T)^*$ restricted to the predual $\mathbb{P}_{\omega_M}M_*$. Using the dual map $\hat{T}$, we have:
		\begin{align}\label{equation:keyidentity}
			\begin{split}
				\langle\Gamma(T)^*\lambda_\emptyset(x_1,...,x_m)d_{\phi_M}, \Gamma(y)\rangle & = \langle\lambda_\emptyset(x_1,...,x_m)d_{\phi_M}, \Gamma(Ty)\rangle\\
				&=\exp(\omega_M(Ty - 1))\prod_{1\leq i \leq m}\omega_M((Ty)x_i) \\&= \exp(\omega_N(y - 1))\prod_{1\leq i \leq m}\omega_N(y\hat{T}(x_i))
				\\&=\langle\lambda_\emptyset(\hat{T}x_1,...,\hat{T}x_m)d_{\phi_N},\Gamma(y)\rangle
			\end{split}
		\end{align} 
		where $x_i\in M, y\in N$ are all contractions and $\langle,\rangle$ denotes the canonical pairing between the predual and the algebra. Since the algebra generated by contraction is ultra-weakly dense in the Poisson algebra, Equation \ref{equation:contractionEmptyBasis} holds. Since the dual map $\Gamma(T)^*$ defines a contraction on the predual, $\Gamma(T)$ is a normal contraction. 
		
		In addition, on the ultra-weakly dense subalgebra generated by contractions, we have:
		\begin{equation*}
			\phi_{\omega_M}(\Gamma(T)\Gamma(x)) = \exp(\omega_M(Tx - 1)) = \exp(\omega_N(x - 1)) = \phi_{\omega_N}(\Gamma(x))\pl.
		\end{equation*}
		Hence $\phi_{\omega_M}\circ\Gamma(T) = \phi_{\omega_N}$.  
	\end{proof}
	\begin{corollary}\label{corollary:homomorphismLift}
		Using the same notation as Lemma \ref{proposition:contractionLift}, let $\pi:N \rightarrow M$ be normal $*$-homomorphism that preserves the faithful functionals: $\omega_M\circ\pi = \omega_N$. In addition, assume the subalgebra $\pi(N)$ is invariant under the modular automorphism group $\{\sigma^M_t\}$. Then there exists a normal $*$-homomorphism:
		\begin{equation}
			\Gamma(\pi):\mathbb{P}_\omega N\rightarrow \mathbb{P}_\omega M: \Gamma(a)\mapsto \Gamma(\pi(a))
		\end{equation}
		such that $\phi_{\omega_M}\circ\Gamma(\pi) = \phi_{\omega_N}$.
	\end{corollary}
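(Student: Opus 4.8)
The plan is to obtain $\Gamma(\pi)$ as a special case of the contraction lift from Proposition~\ref{proposition:contractionLift} and then to upgrade the resulting normal contraction to a $*$-homomorphism. A normal unital $*$-homomorphism $\pi$ is in particular a normal unital contraction, and since $\omega_N=\omega_M\circ\pi$ is faithful the map $\pi$ is injective, so $\pi:N\to\pi(N)$ is a normal $*$-isomorphism onto its image. To invoke Proposition~\ref{proposition:contractionLift} I must exhibit a dual map $\hat\pi:M\to N$ making $(\pi,\hat\pi)$ a dual pair, and this is exactly where the modular-invariance hypothesis will be used.

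Because $\pi(N)$ is globally invariant under $\{\sigma^M_t\}$, Takesaki's theorem~\cite{T} furnishes a normal $\omega_M$-preserving conditional expectation $E:M\to\pi(N)$. I would then set $\hat\pi:=\pi^{-1}\circ E$, which is normal and unital. The dual-pair identity follows from the bimodule property of $E$ together with $\omega_N=\omega_M\circ\pi$ and multiplicativity of $\pi$:
\begin{equation*}
\omega_N\bigl(x\,\hat\pi(y)\bigr)=\omega_M\bigl(\pi(x)\,E(y)\bigr)=\omega_M\bigl(\pi(x)\,y\bigr),
\end{equation*}
where the first equality uses $\omega_N=\omega_M\circ\pi$ and $\pi(\pi^{-1}(E(y)))=E(y)$, and the second uses $\omega_M(aE(y))=\omega_M(E(ay))=\omega_M(ay)$ for $a=\pi(x)\in\pi(N)$. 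Proposition~\ref{proposition:contractionLift} then yields a normal contraction $\Gamma(\pi):\mathbb{P}_\omega N\to\mathbb{P}_\omega M$ with $\Gamma(a)\mapsto\Gamma(\pi(a))$ and $\phi_{\omega_M}\circ\Gamma(\pi)=\phi_{\omega_N}$.

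It remains to verify that $\Gamma(\pi)$ is multiplicative and $*$-preserving, and here the fact that $\pi$ is a $*$-homomorphism does the work. On the generators this is immediate from $\Gamma(a)\Gamma(b)=\Gamma(ab)$: indeed $\Gamma(\pi)(\Gamma(a)\Gamma(b))=\Gamma(\pi(a)\pi(b))=\Gamma(\pi)(\Gamma(a))\,\Gamma(\pi)(\Gamma(b))$, and $\Gamma(\pi)(\Gamma(a)^*)=\Gamma(\pi(a)^*)=\Gamma(\pi)(\Gamma(a))^*$. Thus $\Gamma(\pi)$ restricts to a $*$-homomorphism on the unital $*$-subalgebra $\mathcal{A}=\{\Gamma(a):\|a\|\le 1\}$, which is ultra-weakly dense by Lemma~\ref{lemma:ultraweakdense}. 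I would then extend multiplicativity to all of $\mathbb{P}_\omega N$ by normality of $\Gamma(\pi)$ combined with the separate ultra-weak continuity of multiplication on bounded sets: fixing $\Gamma(a)\in\mathcal{A}$ and varying the second argument gives $\Gamma(\pi)(\Gamma(a)z)=\Gamma(\pi)(\Gamma(a))\Gamma(\pi)(z)$ for all $z$, and then fixing $z$ and varying the first argument extends the identity to arbitrary products, with all quantities bounded since $\Gamma(\pi)$ is a contraction.

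The genuinely essential step will be the production of $\hat\pi$ via Takesaki's theorem; this is where the modular-invariance assumption is indispensable, since without a normal $\omega_M$-preserving conditional expectation onto $\pi(N)$ there is no canonical dual map and Proposition~\ref{proposition:contractionLift} cannot be applied. By contrast, the passage from multiplicativity on generators to the whole algebra is routine once normality is in hand, so I expect no real difficulty there.
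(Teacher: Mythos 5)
Your proposal is correct and follows essentially the same route as the paper's own proof: both use faithfulness of $\omega_N=\omega_M\circ\pi$ to identify $N$ with $\pi(N)$, invoke Takesaki's theorem on the modular-invariant subalgebra $\pi(N)$ to produce the state-preserving conditional expectation $E$ serving as the dual map $\hat\pi=\pi^{-1}\circ E$, apply Proposition~\ref{proposition:contractionLift}, and then verify the $*$-homomorphism properties on the ultra-weakly dense subalgebra $\mathcal{A}=\{\Gamma(a):\|a\|\le 1\}$ before extending by normality and density. The only cosmetic difference is that you spell out the separate ultra-weak continuity argument for extending multiplicativity, which the paper compresses into ``by density and linearity.''
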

	\begin{proof}
		If $\exists x\neq 0 \in N$ such that $\pi(x) = 0$, then $0 = \omega_M\circ\pi(x) = \omega_N(x)$. Since $\omega_N$ and $\omega_M$ are both faithful, $\pi$ is necessarily faithful. Hence we can identify $N$ with its image in $M$ and regard $N$ as a subalgebra in $M$. Since $\pi(N)$ is invariant under the modular automorphism and $\omega_M$ restricted to $\pi(N)$ equals to the normal functional $\omega_N$, by Takesaki's theorem \cite{T} there exists a normal conditional expectation:
		\begin{equation*}
			E:M\rightarrow \pi(N)
		\end{equation*}
		such that $\omega_N\circ E = \omega_M$. In addition, $\omega_N(x\pi^{-1}E(y)) = \omega_M(\pi(x)E(y)) = \omega_M(E(\pi(x)y)) = \omega_M(\pi(x)y)$. Therefore by Proposition \ref{proposition:contractionLift}, $\Gamma(\pi)$ is a well-defined normal contraction that preserves the Poisson states. 
		
		Since the unital $*$-algebra $\mathcal{A} = \{\Gamma(x):||x||\leq 1\}$ is ultra-weakly dense in the Poisson algebra (c.f. Lemma \ref{lemma:ultraweakdense}), we only need to show that $\Gamma(\pi)$ is a well-defined bounded $*$-homomorphism on $\mathcal{A}$. Since $\Gamma(1) = 1$ in the Poisson algebra, we have $\Gamma(\pi)\Gamma(1) = \Gamma(\pi(1)) = \Gamma(1)$. In addition, $\Gamma(\pi)$ is multiplicative: $\Gamma(\pi)\big(\Gamma(x)\Gamma(y)\big) = \Gamma(\pi)\Gamma(xy) = \Gamma(\pi(xy)) = \Gamma(\pi(x))\Gamma(\pi(y))$. It is clear that $\Gamma(\pi)$ is $*$-preserving: $\Gamma(\pi)\Gamma(x^*) = \Gamma(\pi(x^*)) = \Gamma(\pi(x))^*$. Finally, since $\pi$ is injective, it preserves the norm. Hence for all $t < 1$, we have: $||\Gamma(\pi(tx))|| = \frac{1}{1 - ||\pi(tx)||} = \frac{1}{1 - ||tx||} = ||\Gamma(tx)||$. Therefore by density and linearity, $\Gamma(\pi)$ on the entire Poisson algebra is a normal $*$-homomorphism. 
	\end{proof}
	\begin{corollary}\label{corollary:modularautoLift}
		Using the same notation as Corollary \ref{corollary:homomorphismLift}, the modular automorphism group $\{\sigma^\omega_t\}$ admits a strongly continuous lift: $\Gamma(\sigma^\omega_t):\mathbb{P}_\omega N\rightarrow \mathbb{P}_\omega N$ that preserves the Poisson state. In addition, the lift is the modular automorphism of the Poisson state:
		\begin{equation}
			\Gamma(\sigma^\omega_t) =\sigma^{\phi_\omega}_t\pl.
		\end{equation}
	\end{corollary}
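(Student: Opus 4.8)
The plan is to obtain the lift from the functorial machinery already in place and then to pin down its coincidence with the modular group by the KMS characterisation, with the ultraproduct model of Proposition~\ref{proposition:grading} providing a second, more structural route.

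First I would apply Corollary~\ref{corollary:homomorphismLift} with $M=N$ and $\pi=\sigma^\omega_t$. Each $\sigma^\omega_t$ is a normal $*$-automorphism with $\omega\circ\sigma^\omega_t=\omega$, and its range $\sigma^\omega_t(N)=N$ is trivially invariant under the modular flow, so the corollary produces a normal, state-preserving $*$-homomorphism $\Gamma(\sigma^\omega_t):\mathbb{P}_\omega N\to\mathbb{P}_\omega N$ with $\phi_\omega\circ\Gamma(\sigma^\omega_t)=\phi_\omega$. Functoriality of $\Gamma$, namely $\Gamma(\sigma^\omega_s)\Gamma(\sigma^\omega_t)\Gamma(a)=\Gamma(\sigma^\omega_s\sigma^\omega_t(a))=\Gamma(\sigma^\omega_{s+t})\Gamma(a)$ together with $\Gamma(\sigma^\omega_0)=\mathrm{id}$, upgrades the family to a one-parameter group of automorphisms. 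Strong continuity I would check on the ultra-weakly dense generating set of Lemma~\ref{lemma:ultraweakdense}, since $\Gamma(\sigma^\omega_t)\Gamma(e^{ix})=\Gamma(e^{i\sigma^\omega_t(x)})$ and the matrix elements $\phi_\omega(\Gamma(u)^*\Gamma(e^{i\sigma^\omega_t(x)})\Gamma(w))$ are, by the Poisson moment formula, exponentials of normal functionals evaluated at $\sigma^\omega_t(\,\cdot\,)$, hence continuous in $t$; uniform boundedness then yields pointwise $\sigma$-weak continuity.

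To identify $\Gamma(\sigma^\omega_t)$ with $\sigma^{\phi_\omega}_t$ I would verify the modular (KMS) condition for $\phi_\omega$ with respect to $\alpha_t:=\Gamma(\sigma^\omega_t)$ and invoke uniqueness of the modular group \cite{T}. For analytic self-adjoint $x,y\in N^\omega_{ana}$ put $u=e^{ix}$, $v=e^{iy}$; using $\Gamma(u)\Gamma(v)=\Gamma(uv)$ and $\phi_\omega(\Gamma(w))=\exp(\omega(w-1))$, the function $F(z):=\exp(\omega(u\,\sigma^\omega_z(v))-\omega(1))$ is bounded and analytic on the strip $-1\le\operatorname{Im}z\le0$ (it is the exponential of the KMS-analytic function of $\omega$ for the pair $(u,v)$), with boundary values $F(t)=\phi_\omega(\Gamma(u)\alpha_t(\Gamma(v)))$ and, by the $\omega$-KMS relation $\omega(u\,\sigma^\omega_{t-i}(v))=\omega(\sigma^\omega_t(v)\,u)$, $F(t-i)=\phi_\omega(\alpha_t(\Gamma(v))\Gamma(u))$. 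Hence $\phi_\omega(\Gamma(u)\,\alpha_{-i}(\Gamma(v)))=\phi_\omega(\Gamma(v)\Gamma(u))$, which is exactly the modular boundary condition. Extending $F$ bilinearly to the span of such generators — an $\alpha$-invariant, ultra-weakly dense $*$-subalgebra of $\alpha$-analytic elements — shows $\phi_\omega$ is an $(\alpha,-1)$-KMS state, so by Takesaki's theorem $\alpha_t=\sigma^{\phi_\omega}_t$.

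A second route bypasses the explicit KMS computation: Corollary~\ref{corollary:grading} furnishes a $\phi^w_\omega$-preserving normal conditional expectation $E:\mathcal{N}^w\to\mathbb{P}_\omega N$, so by Takesaki's theorem \cite{T} the algebra $\mathbb{P}_\omega N$ is globally invariant under $\{\sigma^{\phi^w_\omega}_t\}$ and $\sigma^{\phi_\omega}_t$ is the restriction of $\sigma^{\phi^w_\omega}_t$. Since the modular flow of the ultraproduct state is the ultraproduct of the component flows \cite{AH}, it acts on the generators by $\sigma^{\phi^w_\omega}_t(\lambda^w(x))=\lambda^w(\sigma^\omega_t(x))$, i.e. by $\Gamma(\sigma^\omega_t)$ on $\mathbb{P}_\omega N$; normality and ultra-weak density then give $\sigma^{\phi_\omega}_t=\Gamma(\sigma^\omega_t)$ again. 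I expect the main obstacle to lie in this identification step rather than in producing the lift: in the first route, promoting the elementary boundary computation on single generators to a genuine verification of the KMS condition on a $\sigma$-weakly dense $*$-algebra of analytic elements (controlling the analytic continuation $\sigma^\omega_z$ on $N^\omega_{ana}$ and the strip bounds for $F$) is the delicate point, while in the second route the technical heart is the precise behaviour of the modular flow on the Ocneanu/Groh--Raynaud ultraproduct together with the applicability of Takesaki's conditional-expectation theorem to $E$.
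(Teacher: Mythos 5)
Your first route is exactly the paper's proof: apply Corollary \ref{corollary:homomorphismLift} with $\pi=\sigma^\omega_t$ to obtain the normal state-preserving lift, then verify the KMS condition for $\phi_\omega$ on the ultra-weakly dense subalgebra of analytic generators via the moment formula $\phi_\omega(\Gamma(a)\Gamma(b))=\exp(\omega(ab-1))$ and the KMS property of $\omega$ itself, and conclude $\Gamma(\sigma^\omega_t)=\sigma^{\phi_\omega}_t$ by uniqueness of the modular group, with strong continuity falling out of this identification. Your second, ultraproduct route through Corollary \ref{corollary:grading} is an extra alternative the paper does not pursue, but the core argument coincides with the paper's.
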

	\begin{proof}
		For each $t\in \mathbb{R}$, the modular automorphism $\sigma^\omega_t$ satisfies the assumptions of Corollary \ref{corollary:homomorphismLift}. Thus $\Gamma(\sigma^\omega_t)$ is a well-defined normal $*$-homomorphism that preserves the Poisson state. In addition, we can check the KMS-condition holds on the ultra-weakly dense subalgebra $\mathcal{A}_{ana} = \{\Gamma(a):||a||\leq 1, a \in N^\omega_{ana}\} = \mathcal{A}\cap \big(\mathbb{P}_\omega(N)\big)^{\phi_\omega}_{ana}$:
		\begin{align}\label{equation:KMSonPoisson}
			\begin{split}
				\phi_\omega\bigg(\Gamma(\sigma^\omega_{t+i})\big(\Gamma(x)\big)\Gamma(y)\bigg) &= \phi_\omega\bigg(\Gamma\big(\sigma^\omega_{t+i}(x)\big)\Gamma(y)\bigg) = \exp(\omega(\sigma^\omega_{t+i}(x)y - 1)) \\&=\exp(\omega(y\sigma^\omega_t(x) - 1)) = \phi_\omega\bigg(\Gamma(y)\Gamma(\sigma^\omega_t)\big(\Gamma(x)\big)\bigg)\pl.
			\end{split}
		\end{align}
		Then by the uniqueness of the one-parameter group that satisfies the KMS condition \cite{T}, $\Gamma(\sigma^\omega_t) = \sigma^{\phi_\omega}_t$. In particular, the one-parameter group $\{\Gamma(\sigma^\omega_t)\}$ is strongly continuous. 
	\end{proof}
	\begin{corollary}\label{corollary:conditionalExpLift}
		Let $N'\ssubset N$ be a von Neumann subalgebra such that the restricted normal positive functional $\omega|_{N'}$ is semifinite. In addition, assume $N'$ is invariant under the modular automorphism $\sigma_t^\omega(N') = N'$. Then $\mathbb{P}_\omega N'\ssubset\mathbb{P}_\omega N$ and there exists a normal conditional expectation:
		\begin{equation}
			\mathcal{E}:\mathbb{P}_\omega N\rightarrow \mathbb{P}_\omega N'
		\end{equation}
		such that: $\phi_\omega\circ\mathcal{E} = \phi_\omega$. Moreover, $\mathcal{E} = \Gamma(E)$ where $E:N\rightarrow N'$ is the conditional expectation and $\Gamma(E)$ is the lift constructed in Proposition \ref{proposition:contractionLift}.
	\end{corollary}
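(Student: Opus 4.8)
The plan is to realize $\mathcal{E}$ as the lift of the canonical conditional expectation $E\colon N\to N'$ and then to identify this lift with a norm-one projection onto $\mathbb{P}_\omega N'$. First I would invoke Takesaki's theorem \cite{T}: since $N'$ is globally invariant under $\sigma^\omega_t$ and $\omega|_{N'}$ is semifinite, there is a unique normal conditional expectation $E\colon N\to N'$ with $\omega\circ E=\omega$. Dually, the inclusion $\iota\colon N'\hookrightarrow N$ is a normal $*$-homomorphism preserving the functional, and by hypothesis $\iota(N')=N'$ is $\sigma^\omega$-invariant; hence Corollary \ref{corollary:homomorphismLift} produces a normal, state-preserving $*$-homomorphism $\Gamma(\iota)\colon\mathbb{P}_\omega N'\to\mathbb{P}_\omega N$. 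Because $\iota$ is injective and $\Gamma(\iota)$ preserves the faithful state $\phi_\omega$, the map $\Gamma(\iota)$ is injective, which already yields the claimed inclusion $\mathbb{P}_\omega N'\ssubset\mathbb{P}_\omega N$.

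Next I would verify that $(E,\iota)$ is a dual pair in the sense of Proposition \ref{proposition:contractionLift}: for $x\in N$ and $y\in N'$ the module property of $E$ together with $\omega\circ E=\omega$ gives $\omega(E(x)y)=\omega(E(xy))=\omega(xy)$, which is exactly the required identity $\omega_{N'}(E(x)y)=\omega_N(x\,\iota(y))$. Thus $E$ (with $M=N'$ and $\hat T=\iota$) satisfies the hypotheses of Proposition \ref{proposition:contractionLift}, so $\Gamma(E)\colon\mathbb{P}_\omega N\to\mathbb{P}_\omega N'$ is a well-defined normal contraction with $\phi_\omega\circ\Gamma(E)=\phi_\omega$. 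I then set $\mathcal{E}:=\Gamma(\iota)\circ\Gamma(E)\colon\mathbb{P}_\omega N\to\mathbb{P}_\omega N$, which has range $\mathbb{P}_\omega N'$. On the ultra-weakly dense subalgebra of generators $\{\Gamma(a):\|a\|\le 1\}$ (Lemma \ref{lemma:ultraweakdense}) the defining rule $\Gamma(S)\Gamma(a)=\Gamma(Sa)$ gives the functoriality $\Gamma(E)\circ\Gamma(\iota)=\Gamma(E\circ\iota)=\Gamma(\mathrm{id}_{N'})=\mathrm{id}$, since $E\circ\iota=\mathrm{id}_{N'}$; by normality this extends to all of $\mathbb{P}_\omega N'$. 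Consequently $\mathcal{E}^2=\Gamma(\iota)\big(\Gamma(E)\Gamma(\iota)\big)\Gamma(E)=\Gamma(\iota)\Gamma(E)=\mathcal{E}$, so $\mathcal{E}$ is a normal idempotent of norm one onto the von Neumann subalgebra $\mathbb{P}_\omega N'$. Under the identification $\mathbb{P}_\omega N'\ssubset\mathbb{P}_\omega N$ furnished by $\Gamma(\iota)$, this is precisely the map $\Gamma(E)$ of the statement.

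Finally, I would upgrade $\mathcal{E}$ from a contractive idempotent to an honest conditional expectation by Tomiyama's theorem: a norm-one projection of a von Neumann algebra onto a subalgebra is automatically a unital, completely positive $\mathbb{P}_\omega N'$-bimodule map. State preservation then follows by composing two state-preserving maps, $\phi_\omega\circ\mathcal{E}=\phi_\omega\circ\Gamma(\iota)\circ\Gamma(E)=\phi_\omega\circ\Gamma(E)=\phi_\omega$, using that $\Gamma(\iota)$ preserves $\phi_\omega$ by Corollary \ref{corollary:homomorphismLift} and that $\Gamma(E)$ preserves $\phi_\omega$ by Proposition \ref{proposition:contractionLift}.

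The main obstacle is exactly the passage from the purely contractive lift of Proposition \ref{proposition:contractionLift} to a genuine conditional expectation; I expect Tomiyama's theorem to dispose of complete positivity and the bimodule property for free, so that the only real content is the dual-pair identity and the functoriality relation $\Gamma(E)\circ\Gamma(\iota)=\mathrm{id}$, both of which reduce to the module property of $E$ and the generator formula for $\Gamma$. A secondary point to handle with care is that the composition $\Gamma(E)\circ\Gamma(\iota)$ be controlled on a dense set and then closed up by normality, which is legitimate because both factors are normal and agree with $\mathrm{id}$ on the weakly dense algebra of generators.
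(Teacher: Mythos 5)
Your proof is correct, but it takes a genuinely different route from the paper's. The paper first lifts the modular invariance of $N'$ to the Poisson level via Corollary \ref{corollary:modularautoLift}, namely $\sigma^{\phi_\omega}_t(\mathbb{P}_\omega N') = \mathbb{P}_\omega\sigma^\omega_t(N') = \mathbb{P}_\omega N'$, and then invokes Takesaki's theorem a \emph{second} time, now for the inclusion $\mathbb{P}_\omega N'\ssubset \mathbb{P}_\omega N$ itself, to obtain an abstract state-preserving normal conditional expectation $\mathcal{E}$; the identification $\mathcal{E}=\Gamma(E)$ is then established a posteriori by pairing against the dense family $\Gamma(x)d_{\phi_\omega}^{1/2}$, $x\in N'$, using $\omega(x^*y)=\omega(x^*E(y))$. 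You instead construct the expectation explicitly as $\Gamma(\iota)\circ\Gamma(E)$, prove idempotency from the generator identity $\Gamma(E)\Gamma(\iota)=\Gamma(E\circ\iota)=\mathrm{id}$ closed up by normality, and appeal to Tomiyama's theorem to upgrade the resulting norm-one idempotent to a conditional expectation; Takesaki is used only at the level of $N$, where the statement already presupposes $E$. Your dual-pair verification $\omega(E(x)y)=\omega(E(xy))=\omega(xy)=\omega(x\,\iota(y))$ is exactly what is needed to apply Proposition \ref{proposition:contractionLift} with $T=E$, $\hat{T}=\iota$, and is the transpose of the pairing the paper itself checks in Corollary \ref{corollary:homomorphismLift}, so this step is sound. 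The trade-off: the paper's argument needs no Tomiyama---the bimodule and complete-positivity structure comes for free from Takesaki---and it records along the way the modular covariance of the sub-Poisson algebra, which is the pattern reused verbatim in the weight case (Proposition \ref{proposition:collectionFunctorialWeight}); your argument makes the equality $\mathcal{E}=\Gamma(E)$ true by construction rather than by a separate $L_2$-density computation, at the cost of importing Tomiyama's theorem and of checking (implicitly, but correctly) that the image of $\Gamma(\iota)$ is exactly $\mathbb{P}_\omega N'$.
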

	\begin{proof}
		By Corollary \ref{corollary:modularautoLift}, the Poisson algebra $\mathbb{P}_\omega N'$ is preserved under the modular automorphism of the Poisson state:
		\begin{equation*}
			\sigma_t^{\phi_\omega}\big(\mathbb{P}_\omega N'\big) = \Gamma(\sigma^\omega_t)\mathbb{P}_\omega N' = \mathbb{P}_\omega \sigma^\omega_t(N') = \mathbb{P}_\omega N'\pl.
		\end{equation*}	
		Since the Poisson state restricted to the subalgebra $\mathbb{P}_\omega N'$ is semifinite, by Takesaki's theorem \cite{T} there exists a normal conditional expectation:
		\begin{equation*}
			\mathcal{E}:\mathbb{P}_\omega N\rightarrow\mathbb{P}_\omega N'
		\end{equation*}
		that preserves the Poisson state. To see $\mathcal{E} = \Gamma(E)$, we again consider the ultra-weakly dense subalgebra $\mathcal{A}=\{\Gamma(a):||a||\leq 1\}$ and check that $\Gamma(E)$ gives the correct projection on the Hilbert space $L_2(\mathbb{P}_\omega N, \phi_\omega)$. For contractions $x\in N'$ and $y\in N$, we have:
		\begin{align*}
			\begin{split}
				\langle\Gamma(x)d^{1/2}_{\phi_\omega}, \Gamma(y)d^{1/2}_{\phi_\omega}\rangle &= \phi_\omega(\Gamma(x^*)\mathcal{E}\Gamma(y)) = \phi_\omega(\Gamma(x^*)\Gamma(y)) =\exp(\omega(x^*y -1)) \\&= \exp(\omega(x^*E(y) - 1))=\phi_\omega(\Gamma(x^*)\Gamma(Ey)) =\langle\Gamma(x)d_{\phi_\omega}^{1/2}, \Gamma(E)\Gamma(y)d_{\phi_\omega}^{1/2}\rangle\pl.
			\end{split}
		\end{align*}
		Since $\mathcal{A}$ is a subalgebra in the Poisson algebra and it is ultra-weakly dense, $\text{span}\{\Gamma(x)d_{\phi_\omega}^{1/2}: \Gamma(x)\in\mathcal{A}\}$ is dense in the Hilbert space $L_2(\mathbb{P}_\omega N, \phi_\omega)$. Hence $\Gamma(E)$ implements the orthogonal projection induced by the conditional expectation $\mathcal{E}$. Therefore by density and continuity, we have: $\mathcal{E} = \Gamma(E)$ on the Poisson algebra.
	\end{proof}
	\begin{corollary}\label{corollary:conditionalExpCalc}
		Using the same notation as Corollary \ref{corollary:conditionalExpLift}, the lifted conditional expectation induces an orthogonal projection on the Hilbert space $L_2(\mathbb{P}_\omega N, \phi_\omega)$ and its action on the $\lambda_\emptyset$-basis is given by:
		\begin{equation}\label{equation:conditionalExpCalc}
			\mathcal{E}\lambda_\emptyset(x_1,...,x_n)d_{\phi_\omega}^{1/2} = \lambda_\emptyset(Ex_1,...,Ex_n)d_{\phi_\omega}^{1/2}\pl.
		\end{equation}
	\end{corollary}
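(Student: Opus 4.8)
The plan is to identify $\mathcal{E}$, viewed on the Haagerup space, with the orthogonal projection onto $L_2(\mathbb{P}_\omega N', \phi_\omega)\ssubset L_2(\mathbb{P}_\omega N, \phi_\omega)$, and then to pin down Equation \eqref{equation:conditionalExpCalc} by testing against a dense family of vectors built from $N'$. Recall from Corollary \ref{corollary:conditionalExpLift} that $\mathcal{E} = \Gamma(E)$ is a normal $\phi_\omega$-preserving conditional expectation onto $\mathbb{P}_\omega N'$. Because it preserves the state (and is therefore symmetric in the standard representation), it extends to $L_2$ as the orthogonal projection $e$ onto the closed subspace $L_2(\mathbb{P}_\omega N', \phi_\omega)$. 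Under the identification $\xi_\omega = d_{\phi_\omega}^{1/2}$ of the GNS vector with the density square root, it therefore suffices to prove $e\,\lambda_\emptyset(x_1,\dots,x_n)\xi_\omega = \lambda_\emptyset(Ex_1,\dots,Ex_n)\xi_\omega$, i.e.\ to check the two defining properties of $e$: that the right-hand side lies in the range, and that it reproduces the correct pairings.

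First I would place $E\colon N\to N'$ inside the framework of Proposition \ref{proposition:contractionLift}. Taking $M = N'$ and $T = E$, the state-preservation $\omega\circ E = \omega$ together with the module property $E(xy) = E(x)y$ for $y\in N'$ gives $\omega_{N'}(E(x)y) = \omega(xy) = \omega_N(x\,\iota(y))$, so $(E,\iota)$ is a dual pair with $\hat T = \iota$ the inclusion $N'\hookrightarrow N$. This confirms that the map lifted in Corollary \ref{corollary:conditionalExpLift} is exactly this $\Gamma(E)$. Moreover, since $N'$ is invariant under $\sigma^\omega$, the expectation $E$ commutes with the modular group and hence sends $N^\omega_{ana}$ into $(N')^\omega_{ana}$; thus each $Ex_i$ is again an analytic element and the right-hand side of \eqref{equation:conditionalExpCalc} is a legitimate $\lambda_\emptyset$-vector.

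The heart of the argument is an inner-product computation against the spanning set of $L_2(\mathbb{P}_\omega N',\phi_\omega)$, namely $\{\lambda_\emptyset(y_1,\dots,y_m)\xi_\omega : y_j\in (N')^\omega_{ana}\}$, which is dense there by Lemma \ref{lemma:emptybasis} applied to $(N',\omega|_{N'})$. For such $y_j\in N'$ I would substitute into the explicit formula \eqref{equation:emptysetInnerProduct}: every factor involving an argument $x_i$ is either of the form $\omega(x_i^*y_j)$ or $\omega(x_i^*)$, while the remaining factors depend only on the $y_j$. Using $E(x^*)=E(x)^*$, the module property, and $\omega\circ E=\omega$ one gets $\omega(x_i^*y_j)=\omega(E(x_i^*y_j))=\omega((Ex_i)^*y_j)$ and $\omega(x_i^*)=\omega((Ex_i)^*)$, whereas the $\omega(y_j)$ factors are untouched. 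Hence every summand of \eqref{equation:emptysetInnerProduct} is unchanged when each $x_i$ is replaced by $Ex_i$, giving
\begin{equation*}
\langle\lambda_\emptyset(x_1,\dots,x_n)\xi_\omega,\lambda_\emptyset(y_1,\dots,y_m)\xi_\omega\rangle = \langle\lambda_\emptyset(Ex_1,\dots,Ex_n)\xi_\omega,\lambda_\emptyset(y_1,\dots,y_m)\xi_\omega\rangle
\end{equation*}
for all such $y_j$. Since $e$ is the orthogonal projection onto the closure of these vectors, this equality of pairings (together with membership in the range) forces $e\,\lambda_\emptyset(x_1,\dots,x_n)\xi_\omega = \lambda_\emptyset(Ex_1,\dots,Ex_n)\xi_\omega$, which is \eqref{equation:conditionalExpCalc}.

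The step I expect to be most delicate is not this algebra but the bookkeeping around the subspace $L_2(\mathbb{P}_\omega N',\phi_\omega)$: one must verify that the $\lambda_\emptyset$-vectors with $N'$-arguments computed inside $\mathbb{P}_\omega N$ coincide, under the inclusion $\mathbb{P}_\omega N'\ssubset\mathbb{P}_\omega N$, with those computed inside $\mathbb{P}_\omega N'$, and that this subspace is exactly the range of the $L_2$-extension of $\mathcal{E}$. Both points reduce to the compatibility of the ultraproduct model of Corollary \ref{corollary:grading} with the state-preserving conditional expectation $E$; concretely, one checks that applying the ultraproduct expectation $E$ of Corollary \ref{corollary:grading} commutes appropriately with the corner projection defining $\mathbb{P}_\omega N'$. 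Once this identification is secured, the inner-product calculation above closes the proof.
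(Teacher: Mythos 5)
Your proof is correct and takes essentially the same route as the paper's: both identify the $L_2$-extension of $\mathcal{E}=\Gamma(E)$ with the orthogonal projection onto $L_2(\mathbb{P}_\omega N',\phi_\omega)$, note that the candidate image vector lies in that range, and then force Equation \ref{equation:conditionalExpCalc} by pairing against a dense family of range vectors, using $\omega\circ E=\omega$ together with the $N'$-module property of $E$ to replace each $x_i$ by $Ex_i$. The only difference is the choice of test family --- you pair against $\lambda_\emptyset$-vectors with entries in $(N')^{\omega}_{ana}$ via Equation \ref{equation:emptysetInnerProduct}, while the paper pairs against the exponential vectors $\Gamma(y)d_{\phi_\omega}^{1/2}$ with $y\in N'$ a contraction via the pairing formula of Proposition \ref{proposition:contractionLift}, which incidentally sidesteps the compatibility bookkeeping you flag at the end, since $\Gamma(y)$ manifestly lies in $\mathbb{P}_\omega N'$ under the lifted inclusion.
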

	\begin{proof}
		This is a direct consequence of Proposition \ref{proposition:contractionLift} and Corollary \ref{corollary:conditionalExpLift}. The conditional expectation $E:N\rightarrow N'$ is dual to the inclusion $N'\ssubset N$ in the sense that for all $x\in N'$ and $y\in N$:
		\begin{equation*}
			\omega(x(Ey)) = \omega(E(xy)) =\omega(xy)\pl.
		\end{equation*}
		Therefore the $L_2$-version of the calculation in the proof of Proposition \ref{proposition:contractionLift} implies the following:
		\begin{align*}
			\begin{split}
				\langle&\mathcal{E}\lambda_\emptyset(x_1,...,x_n)d_{\phi_\omega}^{1/2}, \Gamma(y)d_{\phi_\omega}^{1/2}\rangle = \langle\Gamma(E)\lambda_\emptyset(x_1,...,x_n)d_{\phi_\omega}^{1/2}, \Gamma(y)d_{\phi_\omega}^{1/2}\rangle \\&= \langle\lambda_\emptyset(x_1,...,x_n)d_{\phi_\omega}^{1/2}, \Gamma(y)d_{\phi_\omega}^{1/2}\rangle
				=\exp(\omega(y - 1))\prod_{1\leq i \leq m}\omega(yx_i) \\
				&=\exp(\omega(y -1))\prod_{1\leq i \leq m}\omega(y(Ex_i))=\langle\lambda_\emptyset(Ex_1,...,Ex_n)d_{\phi_\omega}^{1/2}, \Gamma(y)d_{\phi_\omega}^{1/2}\rangle
			\end{split}
		\end{align*}
		where $y\in N'$ is a contraction. Then by density, we have the desired equation.
	\end{proof}
	In addition, we also need to consider lifting a normal projection onto a corner to the Poisson algebra. The result is given by the following corollary:
	\begin{corollary}\label{corollary:corner}
		Let $e\in N_\omega$ be a projection in the centralizer of $\omega$ (i.e. $\sigma^\omega_t(e) = e$) and let $N_e := eNe + \mathbb{C}(1 -e)$ be the subalgebra in $N$ generated by the corner $eNe$ and the projection $1 - e$. Then the conditional expectation $E_e:N\rightarrow N_e$ can be lifted to a normal conditional expectation on the Poisson algebra: $\Gamma(E_e):\mathbb{P}_\omega N\rightarrow \mathbb{P}_\omega N_e$. 
		
		Moreover, there exists an isometric embedding:
		\begin{equation}
			\Gamma(\iota_e):L_2(\mathbb{P}_{\omega_e}(eNe), \phi_{\omega_e}) \rightarrow L_2(\mathbb{P}_\omega N, \phi_\omega)\pl.
		\end{equation}
		And the conditional expectation $\Gamma(E_e)$ induces an orthogonal projection that projects onto the subspace: $\Gamma(\iota_e)\big(L_2(\mathbb{P}_{\omega_e}(eNe), \phi_{\omega_e})\big)$. In terms of the $\lambda_\emptyset$-basis, we have the following explicit description $\Gamma(E_e)$ projection:
		\begin{equation}
			\Gamma(E_e)\lambda_\emptyset(x_1,...,x_n)d^{1/2}_{\phi_\omega} = \sum_{A\subset [n]}\prod_{i\notin A}\omega((1-e)x_i)\lambda_\emptyset(ex_je: j\in A)d^{1/2}_{\phi_{\omega_e}}\pl.
		\end{equation}
		Here the affiliated operator $\lambda_\emptyset(ex_je: j\in A)$ is generated by $x_j$'s where $j\in A$.
	\end{corollary}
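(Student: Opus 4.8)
The plan is to deduce everything from the functorial machinery already assembled, reducing the corner to a direct-sum (independence) computation. First I would record that $N_e = eNe \oplus \mathbb{C}(1-e)$ is a genuine von Neumann subalgebra satisfying the hypotheses of Corollary \ref{corollary:conditionalExpLift}. Since $e$ lies in the centralizer $N_\omega$, one has $\sigma_t^\omega(e)=e$, hence $\sigma_t^\omega(exe)=e\sigma_t^\omega(x)e$ and $\sigma_t^\omega(1-e)=1-e$, so $N_e$ is globally modular-invariant; and because $\omega$ is a finite functional, $\omega|_{N_e}$ is automatically semifinite. Corollary \ref{corollary:conditionalExpLift} then yields the normal state-preserving conditional expectation $\Gamma(E_e)\colon\mathbb{P}_\omega N\to\mathbb{P}_\omega N_e$ as the lift of the $\omega$-preserving expectation $E_e(x)=exe+\frac{\omega((1-e)x(1-e))}{\omega(1-e)}(1-e)$. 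I would also extract the centralizer identity $\omega((1-e)x(1-e))=\omega((1-e)x)=\omega(x(1-e))$, which follows from $\omega((1-e)y)=\omega(y(1-e))$ and $(1-e)^2=1-e$; this is exactly what will convert the combinatorial coefficients into $\omega((1-e)x_i)$.

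Next I would establish the independence factorization underlying $\Gamma(\iota_e)$. Writing $\omega_e:=\omega|_{eNe}$ and letting $\omega_{1-e}$ be multiplication by $\lambda:=\omega(1-e)$ on $\mathbb{C}(1-e)$, the orthogonality $\omega((1-e)\,exe)=0$ (again from the centralizer) shows, through the $\lambda_\emptyset$-inner-product formula of Lemma \ref{lemma:emptybasis}(5), that the subalgebras generated by $\lambda(eNe)$ and by $\lambda(1-e)$ are strongly independent in the sense of Definition \ref{definition:strongIndependence}. Combined with the moment formula this identifies $\mathbb{P}_\omega N_e\cong\mathbb{P}_{\omega_e}(eNe)\,\overline{\otimes}\,\mathbb{P}_{\omega_{1-e}}(\mathbb{C})$ compatibly with the Poisson states, hence $L_2(\mathbb{P}_\omega N_e,\phi_\omega)\cong L_2(\mathbb{P}_{\omega_e}(eNe),\phi_{\omega_e})\otimes L_2(\mathbb{P}_{\omega_{1-e}}(\mathbb{C}))$, where by Theorem \ref{theorem:main}(4) the second factor is the Poisson $\ell_2$ of intensity $\lambda$. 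I would then define $\Gamma(\iota_e)$ as the isometry $\eta\mapsto\eta\otimes\xi_{\omega_{1-e}}$ into this tensor product followed by the isometric inclusion $L_2(\mathbb{P}_\omega N_e)\hookrightarrow L_2(\mathbb{P}_\omega N)$ coming from the state-preserving expectation of the first paragraph; isometry is immediate as $\xi_{\omega_{1-e}}$ is a unit state vector. I read the projection named in the statement as the $L_2$-compression of $\Gamma(E_e)$ (onto $L_2(\mathbb{P}_\omega N_e)$) further composed with the vacuum projection $1\otimes P_{\xi_{\omega_{1-e}}}$ of the auxiliary factor; its range is then precisely $\Gamma(\iota_e)\big(L_2(\mathbb{P}_{\omega_e}(eNe))\big)$.

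For the explicit formula I would combine Corollary \ref{corollary:conditionalExpCalc}, giving $\Gamma(E_e)\lambda_\emptyset(x_1,\dots,x_n)d_{\phi_\omega}^{1/2}=\lambda_\emptyset(E_ex_1,\dots,E_ex_n)d_{\phi_\omega}^{1/2}$, with multilinearity of $\lambda_\emptyset$ in its arguments. Expanding $E_ex_i=ex_ie+c_i(1-e)$ with $c_i=\omega((1-e)x_i)/\lambda$ produces a sum over subsets $A\subseteq[n]$ (the slots kept in $eNe$) with coefficient $\prod_{i\notin A}c_i$ and $\lambda_\emptyset$-vector carrying $ex_je$ in the slots $j\in A$ and $(1-e)$ in the others. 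Under the factorization each $(1-e)$-slot is a pure second-factor excitation orthogonal to the corner slots, so $1\otimes P_{\xi_{\omega_{1-e}}}$ replaces each such excitation by the scalar $\phi_{\omega_{1-e}}(\lambda(1-e))=\lambda$, turning $c_i\lambda$ into $\omega((1-e)x_i)$ and collapsing the remainder to $\lambda_\emptyset(ex_je:j\in A)d_{\phi_{\omega_e}}^{1/2}$, which is the claimed identity. As a cross-check I would rederive it by duality exactly as in the proof of Corollary \ref{corollary:conditionalExpCalc}: pair both sides against $\Gamma(y)d_{\phi_{\omega_e}}^{1/2}$ for contractions $y\in eNe$, using the corner Poisson moment formula together with $\omega(yx_i)=\omega_e\big(y\,(ex_ie)\big)$ for $y\in eNe$; the $(1-e)$-parts then drop out and only $ex_ie$ survives, bypassing the ultraproduct bookkeeping.

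The main obstacle I anticipate is the independence factorization together with the precise tracking of the $(1-e)$-insertions: I must verify that in $\lambda_\emptyset$ a slot occupied by $(1-e)$ genuinely decouples as an excitation of the auxiliary $\mathbb{C}$-Poisson factor, with no residual pairing against the corner arguments, so that the vacuum projection acts by clean scalar evaluation. This is exactly where the hypothesis $e\in N_\omega$ is indispensable, through the vanishing $\omega((1-e)\,exe)=0$ in the inner-product formula; without it the cross terms persist and neither the tensor splitting nor the closed-form coefficients $\omega((1-e)x_i)$ survive. A secondary delicate point is the reconciliation of the two descriptions of the named projection — as the $L_2$-compression of the algebraic expectation $\Gamma(E_e)$ and as $\Gamma(\iota_e)\Gamma(\iota_e)^*$ — which I would settle by checking both agree on the dense span of $\lambda_\emptyset$-vectors.
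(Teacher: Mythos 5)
Your proposal is correct, but its primary route is genuinely different from the paper's. The paper never uses Corollary \ref{corollary:conditionalExpCalc} or any tensor factorization: after obtaining $\Gamma(E_e)$ from modular invariance of $N_e$ and Takesaki's theorem, and defining $\Gamma(\iota_e)$ at the algebra level by $\Gamma(a)\mapsto\Gamma(a+(1-e))$ (checking $\phi_\omega\circ\Gamma(\iota_e)=\phi_{\omega_e}$), it proves the explicit formula in one stroke by duality: it pairs $\lambda_\emptyset(x_1,\dots,x_n)d_{\phi_\omega}^{1/2}$ against the vectors $\Gamma(a+1-e)d_{\phi_\omega}^{1/2}$ for contractions $a\in eNe$, expands $\exp(\omega(a-e))\prod_{i}\omega((a+1-e)x_i)$ binomially over subsets $A\subset[n]$, and concludes by density of $\{\Gamma(a):\|a\|\leq 1\}$ — this is exactly your ``cross-check,'' so your proposal in fact subsumes the paper's argument. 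Your main route — $\Gamma(E_e)\lambda_\emptyset(x_1,\dots,x_n)d^{1/2}_{\phi_\omega}=\lambda_\emptyset(E_ex_1,\dots,E_ex_n)d^{1/2}_{\phi_\omega}$, multilinear expansion of $E_ex_i=ex_ie+c_i(1-e)$, then the strong-independence splitting $L_2(\mathbb{P}_\omega N_e)\cong L_2(\mathbb{P}_{\omega_e}(eNe))\otimes L_2(\mathbb{P}_{\omega_{1-e}}\mathbb{C})$ followed by the vacuum projection — costs you one extra verification the paper avoids, namely that mixed $\lambda_\emptyset$-vectors split as elementary tensors (this does follow from the inner-product formula of Lemma \ref{lemma:emptybasis}, since all cross products $(ey^*e)\cdot c(1-e)$ vanish identically as operators), but it buys a structural clarification the paper glosses over: the $L_2$-extension of the honest conditional expectation $\Gamma(E_e)$ projects onto all of $L_2(\mathbb{P}_\omega N_e)$, which is strictly larger than $\Gamma(\iota_e)\big(L_2(\mathbb{P}_{\omega_e}(eNe))\big)$, and the operator computed in the stated formula is really $\Gamma(\iota_e)^*$, i.e. that projection composed with the vacuum projection $1\otimes P_{\xi_{\omega_{1-e}}}$ of the auxiliary Poisson factor — precisely your reading, and your reconciliation of the two descriptions is immediate since both are orthogonal projections with the same range. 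One small correction: the vanishing you single out as the crux, $\omega((1-e)\,exe)=0$, is automatic because $(1-e)\cdot exe=0$ as operators, with no hypothesis needed; where $e\in N_\omega$ is genuinely indispensable is in the modular invariance of $N_e$ (hence Takesaki's theorem and Corollary \ref{corollary:conditionalExpLift}) and in the identity $\omega((1-e)x(1-e))=\omega((1-e)x)$ that turns your coefficients $c_i\,\omega(1-e)$ into the stated $\omega((1-e)x_i)$.
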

	\begin{proof}
		Since $e$ is in the centralizer, the subalgebra $N_e$ is invariant under the modular automorphism. Therefore there exists a normal conditional expectation: $E_e:N\rightarrow N_e$. 
		
		The inclusion $N_e\ssubset N$ induces a normal $*$-homomorphism:
		\begin{equation}
			\Gamma(\iota_e):\mathbb{P}_{\omega_e}N_e \rightarrow \mathbb{P}_\omega N: \Gamma(a)\mapsto \Gamma(a + (1-e))
		\end{equation}
		where $\omega_e$ is the restriction of $\omega$ onto the corner $N_e$. In particular, $\Gamma(\iota_e)$ preserves the Poisson state and thus induces an isometric embedding of the Haagerup $L_2$-spaces:
		\begin{equation}
			\phi_\omega\big( \Gamma(\iota_e)\Gamma(a)\big) = \exp(\omega(a + (1- e) -1)) = \exp(\omega(a - e)) = \exp(\omega_e(a - 1)) = \phi_{\omega_e}(\Gamma(a))\pl.
		\end{equation} 
		Moreover for any contraction $y\in eNe$ and any contraction $x\in N$, we have:
		\begin{align}
			\begin{split}
				\langle&\Gamma(x)d_{\phi_\omega}^{1/2}, \Gamma(\iota_e)\Gamma(y)d_{\phi_{\omega_e}}^{1/2}\rangle = \phi_\omega(\Gamma(x^*(y + 1- e))) = \exp(\omega(x^*(y + 1 -e)))
				\\& = \exp(\omega(x^*(1-e)))\phi_{\omega_e}(\Gamma(ex^*e)\Gamma(y)) = \exp(\omega(x^*(1-e)))\langle\Gamma(exe)d_{\phi_{\omega_e}}^{1/2}, \Gamma(y)d_{\phi_{\omega_e}}^{1/2}\rangle
			\end{split}
		\end{align}
		Combining these observations, for any contraction $a$ in $eNe$ we have:
		\begin{align}
			\begin{split}
				\langle&\Gamma(E_e)\lambda_\emptyset(x_1,...,x_n)d_{\phi_\omega}^{1/2}, \Gamma(a)d_{\phi_{\omega_e}}^{1/2}\rangle =\langle\lambda_\emptyset(x_1,...,x_n)d_{\phi_\omega}^{1/2}, \Gamma(a + 1 -e)d_{\phi_\omega}^{1/2}\rangle
				\\
				&=\exp(\omega(a - e))\prod_{1\leq i \leq n}\omega((a + 1 - e)x_i) = \exp(\omega(a - e))\sum_{A\subset [n]}\prod_{i\in A}\omega(ax_i)\prod_{i\notin A}\omega((1-e)x_i)
				\\
				&=\sum_{A\subset [n]}\prod_{i\notin A}\omega((1-e)x_i)\bigg(\exp(\omega_e(a - 1))\prod_{i\in A}\omega_e(a(ex_ie))\bigg) \\&= \sum_{A\subset [n]}\prod_{i\in A}\omega((1-e)x_i)\langle\lambda_\emptyset(ex_ie: i\notin A)d_{\phi_{\omega_e}}^{1/2}, \Gamma(a)d_{\phi_{\omega_e}}^{1/2}\rangle\pl.
			\end{split}
		\end{align}
		Therefore by density of the subalgebra $\{\Gamma(a):||a||\leq 1\}$, we have the desired formula.
	\end{proof}
	\subsection{Standard Form of Poisson Algebra and Symmetric Fock Space}
	After the preparations, we are now ready to construct the Fock basis and prove that $L_2(\mathbb{P}_\omega N, \phi_\omega)$ is isomorphic to the symmetric Fock space $\mathcal{F}(L_2(N, \omega))$. For this purpose, we consider the embedding:
	\begin{equation}
		\iota: N\rightarrow N\bigoplus N\ssubset N\overline{\otimes}M_2(\mathbb{C}): x\mapsto x\otimes e_{11}
	\end{equation}
	where $e_{11}$ denotes the matrix unit at the $(1,1)$-th entry. On $N\bigoplus N$, we consider the diagonal state: $\omega_2:=\omega\otimes tr_2 = \omega\oplus\omega$ where $tr_2$ denotes the canonical trace on the $2\times 2$ matrix algebra. Then $\iota$ preserves the state: $\omega_2\circ\iota = \omega$. We can factorize $\iota(x)$ into a mean zero component and the remainder: $\iota(x) = x\otimes e_{11} = x\otimes \epsilon + x\otimes e_{22}$ where $\epsilon:=e_{11} - e_{22}\in M_2(\mathbb{C})$. Let $E:N\bigoplus N\rightarrow N: (x,y)\mapsto x$ be the normal projection onto the corner. The embedding $\iota$ induces an isometric embedding:
	\begin{equation}
		\iota:L_2(\mathbb{P}_\omega N, \phi_\omega)\rightarrow L_2(\mathbb{P}_{\omega_2}(N\oplus N), \phi_{\omega_2}): \lambda_\emptyset(x_1,...,x_n)d_{\phi_\omega}^{1/2}\mapsto \lambda_\emptyset(\iota(x_1),...,\iota(x_n))d_{\phi_{\omega_2}}^{1/2}\pl.
	\end{equation}
	To see that this is an isometry, we apply Lemma \ref{lemma:emptybasis} and the fact that $\omega_2(\iota(x_i^*)\iota(x_j)) = \omega(x_i^*x_j), \text{ }\omega_2(\iota(x)) = \omega(x)$. Then by multi-linearity, we have:
	\begin{equation}
		\lambda_\emptyset(\iota(x_1),...\iota(x_m))d_{\phi_{\omega_2}}^{1/2} = \sum_{A\subset [n]}\lambda_{\emptyset}\big(\{x_i\otimes \epsilon: i\in A\}, \{x_j \otimes e_{22}:j\notin A\}\big)d_{\phi_{\omega_2}}^{1/2}\pl.
	\end{equation}
	where the affiliated opeartor $\lambda_\emptyset\big(\{x_i\otimes \epsilon: i\in A\}, \{x_j \otimes e_{22}:j\notin A\}\big)$ is generated by $x_i\otimes \epsilon$ for $i\in A$ and $x_j\otimes e_{22}$ for $j\notin A$. 
	\begin{definition}\label{definition:FockBasis}
		The Fock basis is the set of vectors:
		\begin{equation}
			\lambda_{\emptyset\emptyset}(x_1,...,x_n)d_{\phi_\omega}^{1/2}:= \Gamma(E)\lambda_\emptyset(x_1\otimes\epsilon,...,x_n\otimes \epsilon)d_{\phi_{\omega_2}}^{1/2}\pl.
		\end{equation}
	\end{definition}
	We first give an alternative combinatorial formula for the Fock basis vectors:
	\begin{lemma}\label{lemma:FockTransform}
		By Corollary \ref{corollary:corner}, the following identity holds:
		\begin{equation}\label{equation:FockTransform}
			\lambda_{\emptyset\emptyset}(x_1,...,x_n)d_{\phi_\omega}^{1/2} = \sum_{0\leq i \leq n}(-1)^i\sum_{\substack{A\subset [n] \\ |A| = i}}\prod_{j\in A}\omega(x_j) \lambda_\emptyset(\{x_i : i\notin A\})d_{\phi_\omega}^{1/2}\pl.
		\end{equation}
		In particular, $\lambda_{\emptyset\emptyset}(x_1,...,x_n)d_{\phi_\omega}^{1/2}\in \text{span}\{\lambda_\emptyset(y_1,...,y_m)d_{\phi_\omega}^{1/2}: m \leq n\}$. Moreover for $m < n$, we have:
		\begin{equation}
			\langle\lambda_{\emptyset\emptyset}(x_1,...,x_n)d_{\phi_\omega}^{1/2}, \lambda_\emptyset(y_1,...,y_m)d_{\phi_\omega}^{1/2}\rangle = 0\pl.
		\end{equation}
	\end{lemma}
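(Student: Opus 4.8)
The plan is to read the combinatorial identity off Corollary \ref{corollary:corner} and then deduce the orthogonality from the fact that the lifted conditional expectation is a self-adjoint projection, together with the inner-product formula of Lemma \ref{lemma:emptybasis} (Equation \ref{equation:emptysetInnerProduct}). Concretely, I would apply the explicit formula of Corollary \ref{corollary:corner} inside $N\oplus N\ssubset N\overline{\otimes}M_2(\mathbb{C})$ with weight $\omega_2$, choosing the centralizer projection $e = 1\otimes e_{11}$, whose corner $e(N\oplus N)e\cong N$ carries the restricted weight $\omega_e = \omega$. Everything then reduces to two elementary matrix-unit computations: $e(x_i\otimes\epsilon)e = x_i\otimes(e_{11}\epsilon e_{11}) = x_i\otimes e_{11}$, which is $x_i$ under $eNe\cong N$, and $\omega_2\big((1-e)(x_i\otimes\epsilon)\big) = \omega_2(x_i\otimes e_{22}\epsilon) = \omega_2(-x_i\otimes e_{22}) = -\omega(x_i)$, using $e_{11}\epsilon e_{11} = e_{11}$, $e_{22}\epsilon = -e_{22}$ and $tr_2(e_{22}) = 1$. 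Substituting into the corner formula and reindexing the subset sum by the complementary set $A^{c} = [n]\setminus A$ of removed indices (so $|A^{c}| = i$) turns $\prod_{i\notin A}\omega_2((1-e)(x_i\otimes\epsilon))$ into $(-1)^{i}\prod_{j\in A^{c}}\omega(x_j)$ and yields Equation \ref{equation:FockTransform}. The containment in $\text{span}\{\lambda_\emptyset(y_1,\dots,y_m)d_{\phi_\omega}^{1/2}: m\leq n\}$ is then immediate, since the surviving $\lambda_\emptyset$-vector in each summand has $n-i\leq n$ entries.

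For the orthogonality when $m < n$, the decisive point is that the splitting $\iota(x) = x\otimes\epsilon + x\otimes e_{22}$ was engineered so that $x\otimes\epsilon$ is mean zero for $\omega_2$: indeed $\omega_2(x_i\otimes\epsilon) = \omega(x_i)\,tr_2(\epsilon) = 0$. Since $\Gamma(E)$ is a self-adjoint orthogonal projection onto the embedded corner space (Corollary \ref{corollary:corner}) and $\lambda_\emptyset(y_1,\dots,y_m)d_{\phi_\omega}^{1/2}$ lies in its range — being the image of the corner vector, i.e.\ $\lambda_\emptyset(y_1\otimes e_{11},\dots,y_m\otimes e_{11})d_{\phi_{\omega_2}}^{1/2}$ inside the ambient space — I would transpose $\Gamma(E)$ across the inner product and reduce to evaluating
\[
\langle \lambda_\emptyset(x_1\otimes\epsilon,\dots,x_n\otimes\epsilon)d_{\phi_{\omega_2}}^{1/2},\, \lambda_\emptyset(y_1\otimes e_{11},\dots,y_m\otimes e_{11})d_{\phi_{\omega_2}}^{1/2}\rangle
\]
by Equation \ref{equation:emptysetInnerProduct}. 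Because $\omega_2(x_i^*\otimes\epsilon) = 0$ for every $i$, each term carrying a nonempty unpaired set $A_n$ on the left vanishes; only the fully paired terms $|B_n| = n$ survive, forcing the pairing index $p = n$ and hence $n = |B_m|\leq m$, which is excluded when $m < n$. Thus the inner product is $0$.

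All the individual steps are routine; the only care needed is in the matrix-unit bookkeeping and in identifying the embedded corner vector $\lambda_\emptyset(\vec y)d_{\phi_\omega}^{1/2}$ with $\lambda_\emptyset(\vec y\otimes e_{11})d_{\phi_{\omega_2}}^{1/2}$ inside the ambient $L_2$-space, so that self-adjointness of $\Gamma(E)$ may be invoked. I do not anticipate a genuine obstacle: the mean-zero identity $\omega_2(\,\cdot\otimes\epsilon) = 0$ does all the real work in the orthogonality, while Equation \ref{equation:FockTransform} is a direct specialization of Corollary \ref{corollary:corner}.
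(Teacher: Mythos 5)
Your proposal is correct and follows essentially the same route as the paper: both read Equation \ref{equation:FockTransform} off Corollary \ref{corollary:corner} with the centralizer projection $1\otimes e_{11}$ in $N\overline{\otimes}M_2(\mathbb{C})$, and both obtain the orthogonality by reducing, via the projection $\Gamma(E)$, to the inner-product formula of Lemma \ref{lemma:emptybasis} in the ambient algebra and invoking $\omega_2(\,\cdot\otimes\epsilon)=0$ to kill every term with a nonempty unpaired set $A_n$. The only difference is cosmetic: you spell out the transposition of the self-adjoint projection $\Gamma(E)$ across the inner product and the identification of $\lambda_\emptyset(\vec y)d_{\phi_\omega}^{1/2}$ with the corner vector $\lambda_\emptyset(\vec y\otimes e_{11})d_{\phi_{\omega_2}}^{1/2}$, a step the paper performs implicitly.
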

	\begin{proof}
		This follows from a direct application of Corollary \ref{corollary:corner}. 
		\begin{align*}
			\begin{split}
				\lambda_{\emptyset\emptyset}(x_1,...,x_n)d_{\phi_\omega}^{1/2} &= \Gamma(E)\lambda_\emptyset(x_1\otimes \epsilon,...,x_n\otimes\epsilon)d_{\phi_{\omega_2}}^{1/2}\\
				&= \sum_{A\subset [n]}\prod_{i\in A}\omega_2((1-e_{11})x_i\otimes \epsilon) \lambda_\emptyset(\{e_{11}x_j\otimes \epsilon: j\notin A\})d_{\phi_\omega}^{1/2}
				\\
				&= \sum_{0\leq i \leq n}\sum_{\substack{A\subset [n] \\ |A| = i}}\prod_{i\in A}\omega(-x_i)\lambda_\emptyset(\{x_i: i\notin A\})d_{\phi_\omega}^{1/2}\pl.
			\end{split}
		\end{align*}
		It is clear that the Fock basis $\lambda_{\emptyset\emptyset}(x_1,...,x_n)d_{\phi_\omega}^{1/2}$ is in the subspace spanned by $\lambda_\emptyset$-basis generated no more than $n$-elements. Finally, we use Lemma \ref{lemma:emptybasis} to calculate the inner product:
		\begin{align}
			\begin{split}
				\langle&\lambda_{\emptyset\emptyset}(x_1,...,x_n)d_{\phi_\omega}^{\frac{1}{2}}, \lambda_{\emptyset}(y_1,...,y_m)d_{\phi_\omega}^{\frac{1}{2}}\rangle \\&=\langle\lambda_\emptyset(x_1\otimes \epsilon,...,x_n\otimes\epsilon)d_{\phi_{\omega_2}}^{\frac{1}{2}}, \lambda_{\emptyset}(y_1\otimes e_{11},...,y_m\otimes e_{11})d_{\phi_{\omega_2}}^{\frac{1}{2}}\rangle\\
				&= \sum_{0\leq p \leq m}\sum_{\substack{A_n \sqcup B_n = [n] \\ A_m\sqcup B_m = [m] \\ |B_n| = |B_m| = p}}\prod_{i\in B_n, j\in B_m}\omega_2(x^*_iy_j \otimes e_{11})\prod_{i\in A_n}\omega_2(x^*_i\otimes \epsilon)\prod_{j\in A_m}\omega_2(y_j\otimes e_{11})\pl.
			\end{split}
		\end{align}
		Since $\omega_2(x\otimes \epsilon) = 0$, the summands on the right hand side is non-zero only if $|A_n| = 0$. However, since $ m < n$, $|A_n| > 0$ for all summands. Therefore the sum is zero.
	\end{proof}
	\begin{corollary}\label{corollary:Fock}
		Using the same notation as Lemma \ref{lemma:FockTransform}, the Fock basis satisfies the quasi-free relation:
		\begin{align}\label{equation:quasifree}
			\begin{split}
				\langle\lambda_{\emptyset\emptyset}(x_1,...,x_n)d_{\phi_\omega}^{1/2}, \lambda_{\emptyset\emptyset}(y_1,...,y_m)d_{\phi_\omega}^{1/2}\rangle&=\delta_{n,m}\sum_{\pi \in S_n}\prod_{1\leq i \leq n}\omega(x_i^*y_{\pi(i)})\pl.
			\end{split} 
		\end{align}
	\end{corollary}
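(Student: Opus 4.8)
The plan is to combine the expansion of the Fock vectors in the $\lambda_\emptyset$-basis furnished by Lemma \ref{lemma:FockTransform} with the explicit bilinear inner-product formula of Lemma \ref{lemma:emptybasis} (Equation \ref{equation:emptysetInnerProduct}), and then to exhibit a sign cancellation that annihilates every term except the fully paired ones. Conceptually, the doubling $x\mapsto x\otimes\epsilon$ in Definition \ref{definition:FockBasis} is arranged precisely so that each $x_i\otimes\epsilon$ is mean zero for $\omega_2$, since $\omega_2(x\otimes\epsilon)=\omega(x)\,\mathrm{tr}_2(\epsilon)=\omega(x)(\mathrm{tr}_2(e_{11})-\mathrm{tr}_2(e_{22}))=0$; this is exactly the hypothesis under which Corollary \ref{corollary:meanzero} already yields the quasi-free form, while the conditional expectation $\Gamma(E)$ discards the $e_{22}$-contractions and leaves only the genuine $\omega(x_i^*y_j)$ pairings. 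I would make this rigorous through the combinatorics below, which handles $n\ne m$ and $n=m$ uniformly.

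First I would record, from Lemma \ref{lemma:FockTransform}, the expansion
\begin{equation*}
\lambda_{\emptyset\emptyset}(x_1,\dots,x_n)d_{\phi_\omega}^{1/2} = \sum_{S\subseteq[n]}(-1)^{\,n-|S|}\Big(\prod_{i\notin S}\omega(x_i)\Big)\lambda_\emptyset(\{x_i:i\in S\})d_{\phi_\omega}^{1/2},
\end{equation*}
and the analogous one for the $y_j$ over $[m]$; here $S$ is the retained index set and $S^c$ the dropped one. Substituting both into the inner product and expanding each $\langle\lambda_\emptyset(\{x_i:i\in S\}),\lambda_\emptyset(\{y_j:j\in T\})\rangle$ by Equation \ref{equation:emptysetInnerProduct} rewrites the left-hand side as one large sum indexed by the retained sets $S\subseteq[n]$, $T\subseteq[m]$, a common pairing size $p$, subsets $B_S\subseteq S$, $B_T\subseteq T$ with $|B_S|=|B_T|=p$ together with a bijection $B_S\to B_T$ contracting the paired factors as $\omega(x_i^*y_{\sigma(i)})$ (the reading consistent with Corollary \ref{corollary:meanzero}), and the unpaired-but-retained factors $\prod_{i\in S\setminus B_S}\omega(x_i^*)$ and $\prod_{j\in T\setminus B_T}\omega(y_j)$. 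The conjugation convention of the inner product turns the coefficient $\prod_{i\notin S}\omega(x_i)$ of the first vector into $\prod_{i\notin S}\omega(x_i^*)$, matching the pattern of Equation \ref{equation:emptysetInnerProduct}.

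The decisive step is to regroup this sum according to the pairing data alone, that is by the partial bijection $\sigma$ matching a subset of the $x$-indices to a subset of the $y$-indices, and then to sum over the remaining freedom. For each $x$-index $i$ that is \emph{not} paired there are exactly two possibilities: it can be dropped ($i\notin S$), contributing $-\omega(x_i^*)$ owing to the sign $(-1)^{\,n-|S|}$, or it can be retained but left unpaired ($i\in S\setminus B_S$), contributing $+\omega(x_i^*)$. These two contributions cancel, and the same cancellation occurs for every unpaired $y$-index. Hence any configuration with at least one unpaired index sums to zero, and only the configurations in which every $x$-index and every $y$-index is paired survive. Such configurations force $p=n=m$ (producing the factor $\delta_{n,m}$), $S=[n]$, $T=[m]$, and a genuine bijection $\pi\in S_n$, leaving exactly $\sum_{\pi\in S_n}\prod_{1\leq i\leq n}\omega(x_i^*y_{\pi(i)})$, which is the claimed quasi-free relation.

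The main obstacle is purely organizational: arranging the multi-indexed sum so that the pairing structure is separated cleanly from the binary choice ``dropped versus retained-and-unpaired,'' so that the sign cancellation becomes manifest, and keeping the conjugation convention straight so that every first-slot scalar and first-slot argument acquires its correct adjoint. As a consistency check, the $n\neq m$ case can be obtained independently from the orthogonality statement in Lemma \ref{lemma:FockTransform}, namely that a $\lambda_{\emptyset\emptyset}$-vector with $n$ arguments is orthogonal to every $\lambda_\emptyset$-vector with fewer than $n$ arguments; the combinatorial argument reproduces this automatically, since with $m<n$ some $x$-index is always unpaired.
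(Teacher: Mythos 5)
Your proof is correct, but it takes a genuinely different route from the paper. The paper expands only the second Fock vector in the $\lambda_\emptyset$-basis, invokes the orthogonality statement of Lemma \ref{lemma:FockTransform} (a $\lambda_{\emptyset\emptyset}$-vector with $n$ arguments is orthogonal to every $\lambda_\emptyset$-vector with fewer arguments) to produce the factor $\delta_{n,m}$ and kill all correction terms at once, and then transports the surviving term back to the doubled algebra $(N\oplus N,\omega_2)$, where the mean-zero property $\omega_2(x\otimes\epsilon)=0$ applied inside Equation \ref{equation:emptysetInnerProduct} forces complete pairing and yields the permutation sum. You instead expand \emph{both} Fock vectors, regroup the resulting quadruple sum by the partial-bijection (pairing) data, and observe that every unpaired index carries a binary choice --- dropped, weight $-\omega(x_i^*)$, versus retained-but-unpaired, weight $+\omega(x_i^*)$ --- whose contributions cancel; this is an inclusion--exclusion argument that never uses the orthogonality statement or the doubling trick in the final evaluation, and it treats $n\neq m$ and $n=m$ uniformly. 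What the paper's proof buys is brevity, since the orthogonality lemma and the $\epsilon$-doubling do the combinatorial work implicitly; what your proof buys is self-containedness and transparency --- once the two quoted formulas are accepted, the quasi-free relation follows from a purely multiplicative sign cancellation, and it makes clear exactly why only complete pairings survive. One point you handled correctly that deserves emphasis: the product notation in Equation \ref{equation:emptysetInnerProduct} is ambiguous as printed, and your explicit adoption of the bijection reading (the one consistent with Corollary \ref{corollary:meanzero}) is the right interpretation; your argument, like the paper's, depends on it.
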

	\begin{proof}
		Without loss of generality, assume $m\leq n$. Then we have:
		\begin{align*}
			\begin{split}
				\langle&\lambda_{\emptyset\emptyset}(x_1,...,x_n)d_{\phi_\omega}^{1/2}, \lambda_{\emptyset\emptyset}(y_1,...,y_m)d_{\phi_\omega}^{1/2}\rangle\\
				&=\sum_{0\leq \alpha\leq m}(-1)^\alpha\sum_{\substack{A\subset [m] \\ |A| = \alpha}}\prod_{i\in A}\omega(y_i)\langle\lambda_{\emptyset\emptyset}(x_1,...,x_n)d_{\phi_\omega}^{1/2}, \lambda_{\emptyset}(\{y_i: i\notin A\})d_{\phi_\omega}^{1/2}\rangle 
				\\
				& = \delta_{m,n}\langle\lambda_{\emptyset\emptyset}(x_1,...,x_n)d_{\phi_\omega}^{1/2}, \lambda_\emptyset(y_1,...,y_n)d_{\phi_\omega}^{1/2}\rangle
				\\
				&=\delta_{n,m}\langle\lambda_\emptyset(x_1\otimes\epsilon,...,x_m\otimes\epsilon)d_{\phi_{\omega_2}}^{1/2}, \lambda_\emptyset(y_1\otimes e_{11}, ...,y_n\otimes e_{11})d_{\phi_{\omega_2}}^{1/2}\rangle
				\\
				&=\delta_{n,m}\sum_{\pi\in S_n}\prod_{1\leq i\leq n}\omega(x^*_i y_{\pi(i)})
			\end{split}
		\end{align*}
		where the last equality follows from the fact that $x_i\otimes \epsilon$'s have zero mean and hence the only non-zero contribution comes from the complete pairing between $x_i$'s and $y_j$'s.
	\end{proof}
	\begin{corollary}\label{corollary:FockIso}
		Using the same notation as Lemma \ref{lemma:FockTransform}, there exists an isometric isomorphism:
		\begin{equation}
			\iota: L_2(\mathbb{P}_\omega N, \phi_\omega)\rightarrow \mathcal{F}(L_2(N,\omega)): \lambda_{\emptyset\emptyset}(x_1,...,x_n)d_{\phi_\omega}^{1/2}\mapsto \otimes^s_{1\leq i \leq n}x_i
		\end{equation}
		where $\otimes^s_{1\leq i \leq n}x_i$ is the symmetric tensor product of $x_i$'s.
	\end{corollary}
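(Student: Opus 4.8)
The plan is to realize $\iota$ as the unique continuous extension of an inner-product-preserving map defined on the dense subspace spanned by the Fock vectors. The essential observation is that the quasi-free relation established in Corollary \ref{corollary:Fock} is \emph{literally} the defining inner product of the symmetric Fock space. Recall that on $\mathcal{F}(L_2(N,\omega))=\bigoplus_{n\geq 0}\text{Sym}^n(L_2(N,\omega))$ the elementary symmetric tensors pair according to
\begin{equation*}
\langle \otimes^s_{1\leq i \leq n}x_i, \otimes^s_{1\leq j \leq m}y_j\rangle = \delta_{n,m}\sum_{\pi\in S_n}\prod_{1\leq i \leq n}\langle x_i, y_{\pi(i)}\rangle_{L_2(N,\omega)},
\end{equation*}
and under the identification $\langle x, y\rangle_{L_2(N,\omega)}=\omega(x^*y)$ this coincides term by term with Equation \ref{equation:quasifree}. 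Hence, defining $\iota$ on finite linear combinations of Fock vectors by $\lambda_{\emptyset\emptyset}(x_1,\dots,x_n)d_{\phi_\omega}^{1/2}\mapsto \otimes^s_{1\leq i \leq n}x_i$, both well-definedness and isometry follow simultaneously: if a combination $\sum_k c_k \lambda_{\emptyset\emptyset}(x^{(k)})d_{\phi_\omega}^{1/2}$ vanishes in $L_2(\mathbb{P}_\omega N,\phi_\omega)$, then the squared norm of its image, computed by the same permutation sum, equals the squared norm of the original vector and therefore vanishes as well.

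It then remains to verify that $\iota$ has dense domain and dense range. For the range, I note that the symmetric tensors $\otimes^s_{1\leq i \leq n}x_i$ with each $x_i\in N^\omega_{ana}$ are total in $\mathcal{F}(L_2(N,\omega))$, since $N^\omega_{ana}$ is $\|\cdot\|_2$-dense in $L_2(N,\omega)$ and such elementary tensors span each symmetric power $\text{Sym}^n$. For the domain, I invoke Lemma \ref{lemma:FockTransform}: Equation \ref{equation:FockTransform} expresses each Fock vector as a \emph{unitriangular} combination of $\lambda_\emptyset$-vectors, the leading term ($A=\emptyset$) being $\lambda_\emptyset(x_1,\dots,x_n)d_{\phi_\omega}^{1/2}$ with coefficient $1$ and every remaining term involving strictly fewer arguments. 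Filtering by the number of arguments, this change of basis is invertible degree by degree, so the Fock vectors span the same subspace as the $\lambda_\emptyset$-vectors; by Lemma \ref{lemma:emptybasis}(4) this subspace is dense in $L_2(\mathbb{P}_\omega N,\phi_\omega)$.

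Combining these facts, $\iota$ is an isometry with dense domain and dense range, so it extends uniquely to an isometric isomorphism $L_2(\mathbb{P}_\omega N,\phi_\omega)\cong \mathcal{F}(L_2(N,\omega))$. The Kronecker factor $\delta_{n,m}$ in Equation \ref{equation:quasifree} additionally forces the extension to respect the grading by number of arguments, matching it with the Fock grading $\bigoplus_n \text{Sym}^n$ rather than producing a merely abstract Hilbert-space isomorphism. I expect the only genuinely delicate point to be confirming that the triangular structure of Equation \ref{equation:FockTransform} is strict with invertible top block, so that no cancellation among the highest-degree $\lambda_\emptyset$-terms can occur; once this is in hand, everything else is the standard extension-by-continuity argument for an isometry between dense subspaces.
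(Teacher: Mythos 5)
Your proposal is correct and follows essentially the same route as the paper's proof: density of the Fock vectors via the triangular change of basis in Lemma \ref{lemma:FockTransform}, isometry via the quasi-free relation of Corollary \ref{corollary:Fock}, and extension by continuity with surjectivity from totality of symmetric tensors. Your explicit treatment of well-definedness and of the unitriangular inversion (which the paper leaves implicit, though it later records the inverse formula via inclusion-exclusion in Equation \ref{equation:abstractEmptyInverse}) only makes the same argument more careful.
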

	\begin{proof}
		Using the combinatorial formula in Lemma \ref{lemma:FockTransform}, we have:
		\begin{equation}
			\text{span}\{\lambda_{\emptyset\emptyset}(x_1,...,x_n)d_{\phi_\omega}^{1/2}\} = \text{span}\{\lambda_{\emptyset}(y_1,...,y_m)d_{\phi_\omega}^{1/2}\}\pl.
		\end{equation}
		Since the $\lambda_\emptyset$-vectors span a dense subspace in the Haagerup $L_2$-space, the Fock basis also span a dense subspace. Hence the formula $\iota(\lambda_{\emptyset\emptyset}(x_1,...,x_n)d_{\phi_\omega}^{1.2}) = \otimes^s_{1\leq i \leq n}x_i$ gives a well-defined linear map on a dense subspace of $L_2(\mathbb{P}_\omega N, \phi_\omega)$. By Corollary \ref{corollary:Fock}, $\iota$ is an isometry. Hence it extends to an isometry to the entire $L_2(\mathbb{P}_\omega N, \phi_\omega)$. Since the symmetric tensors form a basis for the symmetric Fock space, it is clear that $\iota$ is an isomorphism.
	\end{proof}
	With the Fock space isomorphism, it is reasonable to ask whether $\lambda(x)$ acts on the Fock basis by the familiar creation/annhiliation action. However this is not the case.
	\begin{lemma}\label{lemma:noQuasiParticle}
		$\lambda(x)$ acts on the Fock basis in the following way:
		\begin{align}
			\begin{split}
				\lambda(x)&\lambda_{\emptyset\emptyset}(y_1,...,y_n)d_{\phi_\omega}^{1/2} = \lambda_{\emptyset\emptyset}(x,y_1,...,y_n)d_{\phi_\omega}^{1/2} + \sum_{1\leq i \leq n}\lambda_{\emptyset\emptyset}(y_1,...,xy_i,...,y_n)d_{\phi_\omega}^{1/2}  \\&- \omega(x)\lambda_{\emptyset\emptyset}(y_1,...,y_n)d_{\phi_\omega}^{1/2} - \sum_{1\leq i \leq n}\omega(xy_i)\lambda_{\emptyset\emptyset}(y_1,...,y_{i-1},y_{i+1},...,y_n)d_{\phi_\omega}^{1/2}
			\end{split} 
		\end{align}
	\end{lemma}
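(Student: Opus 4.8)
The plan is to push everything down to the $\lambda_\emptyset$-basis, where the action of $\lambda(x)$ is already understood, and then translate back to the Fock basis. Two inputs do all the work. The first is the derivation-type formula \eqref{equation:emptysetaction}: $\lambda(x)$ acts on a $\lambda_\emptyset$-vector by prepending a new entry $x$ and, independently, by left-multiplying $x$ into each existing entry. The second is the Fock transform \eqref{equation:FockTransform}, which realizes the Fock vector as the signed mean-subtraction
\[
\lambda_{\emptyset\emptyset}(y_1,\dots,y_n)d_{\phi_\omega}^{1/2}=\sum_{B\subseteq[n]}(-1)^{|B|}\Big(\textstyle\prod_{j\in B}\omega(y_j)\Big)\,\lambda_\emptyset(y_k:k\notin B)d_{\phi_\omega}^{1/2}.
\]
I will also use its inverse $\lambda_\emptyset(y_1,\dots,y_n)d_{\phi_\omega}^{1/2}=\sum_{B\subseteq[n]}\big(\prod_{j\in B}\omega(y_j)\big)\lambda_{\emptyset\emptyset}(y_k:k\notin B)d_{\phi_\omega}^{1/2}$, which is the binomial inversion of \eqref{equation:FockTransform} and holds because the weight $B\mapsto\prod_{j\in B}\omega(y_j)$ is multiplicative over disjoint unions.

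First I would substitute the Fock transform into the left-hand side, move $\lambda(x)$ inside the finite sum, and apply \eqref{equation:emptysetaction} termwise. Writing $\omega_B=\prod_{j\in B}\omega(y_j)$, this yields two families of $\lambda_\emptyset$-vectors: a \emph{creation} family $\sum_{B\subseteq[n]}(-1)^{|B|}\omega_B\,\lambda_\emptyset(x,y_{[n]\setminus B})d_{\phi_\omega}^{1/2}$, and a \emph{multiplication} family $\sum_{B\subseteq[n]}(-1)^{|B|}\omega_B\sum_{i\notin B}\lambda_\emptyset(y_{[n]\setminus B}\text{ with }y_i\mapsto xy_i)d_{\phi_\omega}^{1/2}$.

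Next I would recognize each family as a Fock vector plus a mean-correction. For the creation family, I compare with the Fock transform of the $(n+1)$-tuple $(x,y_1,\dots,y_n)$ and split the sum defining $\lambda_{\emptyset\emptyset}(x,y_1,\dots,y_n)$ according to whether the index of $x$ lies in the mean-set $B$: the part with $x$ outside $B$ is exactly the creation family, while the part with $x$ inside $B$ reassembles, via \eqref{equation:FockTransform}, into a multiple of $\omega(x)\lambda_{\emptyset\emptyset}(y_1,\dots,y_n)$. For the multiplication family I reindex the double sum as $\sum_i\sum_{B\subseteq[n]\setminus\{i\}}$ and, for each fixed $i$, split according to whether $i$ belongs to the mean-set of $\lambda_{\emptyset\emptyset}(y_1,\dots,xy_i,\dots,y_n)$; the surviving part reproduces $\lambda_{\emptyset\emptyset}(y_1,\dots,xy_i,\dots,y_n)$ and the contracted part produces a multiple of $\omega(xy_i)\lambda_{\emptyset\emptyset}(y_1,\dots,\widehat{y_i},\dots,y_n)$. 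Collecting the four contributions gives the stated identity.

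The only genuine difficulty is the bookkeeping in this recombination: one must keep the signs from \eqref{equation:FockTransform} aligned across the two families and verify that the mean-corrections close up into honest $\lambda_{\emptyset\emptyset}$-vectors rather than leaving stray $\lambda_\emptyset$-terms. I would nail down the conventions (and in particular the precise signs of the two mean-correction terms) by checking the base cases $n=0$, where $\lambda_{\emptyset\emptyset}()d_{\phi_\omega}^{1/2}=d_{\phi_\omega}^{1/2}$ and $\lambda(x)d_{\phi_\omega}^{1/2}=\lambda_\emptyset(x)d_{\phi_\omega}^{1/2}$, and $n=1$, where $\lambda_{\emptyset\emptyset}(y_1)d_{\phi_\omega}^{1/2}=\lambda_\emptyset(y_1)d_{\phi_\omega}^{1/2}-\omega(y_1)d_{\phi_\omega}^{1/2}$. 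A conceptually cleaner route that sidesteps some of this combinatorics is to work through the doubling $N\oplus N$: realizing the Fock vector as $\Gamma(E)\lambda_\emptyset(y_i\otimes\epsilon)d_{\phi_{\omega_2}}^{1/2}$, one commutes $\lambda(\iota(x))=\lambda(x\otimes e_{11})$ past the Jones projection $\Gamma(E)$ (legitimate since $\lambda(x\otimes e_{11})$ is affiliated with the range subalgebra $\mathbb{P}_\omega N$), then splits $\lambda(x\otimes e_{11})=\lambda(x\otimes\epsilon)+\lambda(x\otimes e_{22})$ by linearity, applies \eqref{equation:emptysetaction} in $\mathbb{P}_{\omega_2}$, and finally pushes the outcome back through $\Gamma(E)$ using Corollary \ref{corollary:corner}; here the normalization $\omega_2(x\otimes\epsilon)=0$ together with $\epsilon^2=1$ is exactly what sorts the output into the four Fock terms.
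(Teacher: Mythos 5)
Both of your routes are methodologically fine, and your second (``conceptually cleaner'') route is verbatim the paper's own proof: commute $\lambda(x)=\lambda(x\otimes e_{11})$ past $\Gamma(E)$, apply Equation \ref{equation:emptysetaction} in the doubled Poisson algebra, and sort the result with Corollary \ref{corollary:corner}. Your first route is the same computation performed downstairs, after expanding the Fock vector by Equation \ref{equation:FockTransform}. So there is no difference of method between you and the paper.

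The genuine problem is your closing claim that ``collecting the four contributions gives the stated identity.'' It does not, and no bookkeeping can make it do so: executed correctly, both of your routes produce the identity with \emph{plus} signs on the last two terms, whereas the printed lemma has minus signs. Concretely, apply Equation \ref{equation:FockTransform} to the $(n+1)$-tuple $(x,y_1,\dots,y_n)$ and split according to whether $x$ is mean-subtracted: the terms in which $x$ is mean-subtracted carry one extra factor $-\omega(x)$ and reassemble into $-\omega(x)\lambda_{\emptyset\emptyset}(y_1,\dots,y_n)d_{\phi_\omega}^{1/2}$, so that
\begin{equation*}
\lambda_{\emptyset\emptyset}(x,y_1,\dots,y_n)d_{\phi_\omega}^{1/2}
=\sum_{B\subseteq[n]}(-1)^{|B|}\Bigl(\prod_{j\in B}\omega(y_j)\Bigr)\lambda_\emptyset\bigl(x,\{y_k\}_{k\notin B}\bigr)d_{\phi_\omega}^{1/2}
-\omega(x)\,\lambda_{\emptyset\emptyset}(y_1,\dots,y_n)d_{\phi_\omega}^{1/2}.
\end{equation*}
The sum on the right is exactly your creation family, which therefore equals $\lambda_{\emptyset\emptyset}(x,y_1,\dots,y_n)d_{\phi_\omega}^{1/2}+\omega(x)\lambda_{\emptyset\emptyset}(y_1,\dots,y_n)d_{\phi_\omega}^{1/2}$; the identical computation closes each multiplication term up as $\lambda_{\emptyset\emptyset}(y_1,\dots,xy_i,\dots,y_n)d_{\phi_\omega}^{1/2}+\omega(xy_i)\lambda_{\emptyset\emptyset}(y_1,\dots,y_{i-1},y_{i+1},\dots,y_n)d_{\phi_\omega}^{1/2}$. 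The doubling route gives the same plus signs: writing $x\otimes e_{11}=x\otimes\epsilon+x\otimes e_{22}$, Corollary \ref{corollary:corner} traces out the $e_{22}$-entry with the factor $\omega_2(x\otimes e_{22})=+\omega(x)$ (only the $\epsilon$-entries produce $-\omega(\,\cdot\,)$), so $\Gamma(E)\lambda_\emptyset(x\otimes e_{11},y_1\otimes\epsilon,\dots,y_n\otimes\epsilon)d_{\phi_{\omega_2}}^{1/2}=\lambda_{\emptyset\emptyset}(x,y_1,\dots,y_n)d_{\phi_\omega}^{1/2}+\omega(x)\lambda_{\emptyset\emptyset}(y_1,\dots,y_n)d_{\phi_\omega}^{1/2}$; the final equality of the paper's own proof is precisely where this sign slips. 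Your own proposed base case already detects the inconsistency: for $n=0$ one has $\lambda(x)d_{\phi_\omega}^{1/2}=\lambda_\emptyset(x)d_{\phi_\omega}^{1/2}=\lambda_{\emptyset\emptyset}(x)d_{\phi_\omega}^{1/2}+\omega(x)d_{\phi_\omega}^{1/2}$, while the printed formula asserts $\lambda_{\emptyset\emptyset}(x)d_{\phi_\omega}^{1/2}-\omega(x)d_{\phi_\omega}^{1/2}$, off by $2\omega(x)d_{\phi_\omega}^{1/2}$. As a further consistency check, for $N=\mathbb{C}$ and $\omega=\lambda>0$ the vectors $\lambda_{\emptyset\emptyset}(1,\dots,1)$ are the Charlier polynomials in the Poisson variable $\lambda(1)$, and the corrected identity is their three-term recurrence $x\,C_n=C_{n+1}+(n+\lambda)C_n+n\lambda\,C_{n-1}$, whose coefficients are all positive. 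So: finish the combinatorics and you will have proved the \emph{corrected} statement (both mean-correction terms entering with $+$); as written, your proposal asserts a conclusion that its own method, and its own base-case check, refute.
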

	\begin{proof}
		Using the same notation as Lemma \ref{lemma:FockTransform}, we have:\begin{align*}
			\begin{split}
				\lambda(x)&\lambda_{\emptyset\emptyset}(y_1,...,y_n)d_{\phi_\omega}^{1/2}\\&= \Gamma(E)\bigg(\lambda(x\otimes e_{11})\lambda_\emptyset(y_1\otimes \epsilon, ..., y_n\otimes\epsilon)d_{\phi_{\omega_2}}^{1/2}\bigg)
				\\
				&=\Gamma(E)\bigg(\lambda_\emptyset(x\otimes e_{11}, y_1\otimes \epsilon,...,y_n\otimes\epsilon)d_{\phi_{\omega_2}}^{1/2} + \sum_{1\leq i \leq n}\lambda_\emptyset(y_1\otimes\epsilon,...,xy_i\otimes e_{11},...,y_n\otimes\epsilon)d_{\phi_{\omega_2}}^{1/2}\bigg)
				\\
				&=\lambda_{\emptyset\emptyset}(x,y_1,...,y_n)d_{\phi_\omega}^{1/2} + \sum_{1\leq i \leq n}\lambda_{\emptyset\emptyset}(y_1,...,xy_i,...,y_n)d_{\phi_\omega}^{1/2}  - \omega(x)\lambda_{\emptyset\emptyset}(y_1,...,y_n)d_{\phi_\omega}^{1/2} \\&- \sum_{1\leq i \leq n}\omega(xy_i)\lambda_{\emptyset\emptyset}(y_1,...,y_{i-1},y_{i+1},...,y_n)d_{\phi_\omega}^{1/2}\pl.\qedhere
			\end{split}
		\end{align*}
	\end{proof}
	\begin{remark}
		If we consider the CCR algebra and its representation on a symmetric Fock space, the infinitesimal generators of the CCR algebra are the familiar creation / annhilation operators. In that case, the usual number operator is well-defined and generates a strongly continuous semi-group. A vector of the form: $\lambda(x_1)...\lambda(x_n)\xi_\omega$ is an eigenvector of the number operator and it can be interpreted as the state of a ensemble of $n$ quasi-particles. From the construction of the Fock basis and Lemma \ref{lemma:noQuasiParticle}, the usual number operator does not give the correct grading structure on $L_2(\mathbb{P}_\omega N, \phi_\omega)$ and the affiliated generators $\lambda(x)$'s cannot be interpreted as creation / annhilation operators. Therefore heuristically, there is no well-defined notion of "quasi-particles" in the Poisson algebra.
	\end{remark}
	\subsection{Poisson Algebra for Normal Faithful Semifinite Weight}
	In this section, we generalize the construction of Poisson algebra in the previous subsection to allow $\omega$ to be a normal faithful semifinite weight (n.s.f weight). This generalization is necessary for potential applications. For example, one of the applications of Poissonization is to construct local algebras of observables in the sense of algebraic quantum field theory (AQFT) \cite{haag}. There the modular automorphism group often comes from certain unitary representations of noncompact Lie groups and the corresponding functionals are typically unbounded. 
	
	We provide two constructions of Poisson algebra starting from a von Neumann algebra and a fixed n.s.f. weight. Both constructions utilize a spatial approach: Instead of constructing the Poisson algebra directly, we first construct the correct Hilbert space and then the Poisson algebra will be a subalgebra generated by the appropriate unitary operators. The resulting Poisson algebra will be in a standard form and the Poisson state will be represented by a "vacuum" vector in the Hilbert space. The inner product on this Hilbert space is defined using the moment formula and the uniform estimates in Lemma \ref{lemma:momentformulaPoisson} are crucial in the construction. The first construction relies on the fact that any n.s.f. weight can be written as a weak limit of normal positive functionals. Hence the Hilbert space of the Poisson algebra can be realized as a closed subspace of the ultraproduct of a net of Haagerup $L_2$-spaces. The second construction takes a more abstract algebraic approach and relies heavily on the combinatorial formulae derived in the previous subsection. The key observation here is that the combinatorial formulae make sense on a formal level. Hence it makes sense to use these formulae as definitions for the inner product and basis vectors. This approach is inspired by the field of quantum probability \cite{quantStoch}. After the constructions, we show that both constructions give isomorphic Poisson algebras.
	\subsubsection{Poisson Algebra for N.S.F. Weight as a Limit of States}
	We refer to \cite{T} for more information on n.s.f. weights. Recall that normal weights can be characterized as the supremum of dominated bounded positive functionals: $\omega(x)=\sup\{\varphi(x):\varphi \leq \omega\}$. The goal is to find a net of dominated normal positive functionals $\{\varphi_s\}_s$ such that the net weakly converges to $\omega$. Recall each normal faithful weight defines a left Hilbert algebra with a dense subalgebra given by the following left ideal:
	\begin{equation}
		n_\omega := \{x\in N: \omega(x^*x)< \infty\}\pl.
	\end{equation}
	In addition, recall the definition ideal of a weight is a two-sided ideal:
	\begin{equation}
		m_\omega := n_\omega^*n_\omega = \{\sum_{1\leq i \leq n}x_i^*y_i:x_i, y_i\in n_\omega\}\pl.
	\end{equation}
	The left ideal $n_\omega$ forms a dense subspace of the Haagerup $L_2$-space of $\omega$:
	\begin{equation}
		\iota: n_\omega\rightarrow L_2(N,\omega): x\mapsto \hat{x}
	\end{equation} 
	where the inner product on $\iota(n_\omega)$ is finite: $\langle\hat{x},\hat{x}\rangle = \omega(x^*x) < \infty$. In order for the Poissonization to be compatible with modular automorphism group, it is necessary to consider the analytic elements inside the definition ideal $m_\omega$. This is given by the well-known Tomita algebra \cite{T}. Recall that the Tomita algebra of $\omega$ is an ultra-weakly dense $*$-subalgebra of $N$ defined as:
	\begin{equation}
		\mathcal{U}^\omega_{ana} := \{x\in n_\omega: \hat{x}\in \cap_{n\geq 0}D(\Delta_\omega^n), \text{ }\forall n\in \mathbb{N}\}
	\end{equation}
	where $\Delta_\omega$ is the modular operator of $\omega$. If $x\in \mathcal{U}^\omega_{ana}$, the $x\in m_\omega$ and there exists constants $c,C>0$ such that:
	\begin{equation*}
		||\sigma_z^\omega(x)|| \leq C\exp(c|Im(z)|)\pl.
	\end{equation*}
	For later use, we define the following $L_1$-norm on $m_\omega$:
	\begin{equation}
		||x||_1 := \inf\{\sum_{1\leq i \leq n}\omega(x_i^*x_i)^{1/2}\omega(y_i^*y_i)^{1/2}: x = \sum_{1\leq i \leq n}x_i^*y_i\}\pl.
	\end{equation}
	It is clear that if $||x||_1 < \infty$ then $x\in m_\omega$ and $\omega(x) \leq \sum_{1\leq i\leq n}\omega(x_i^*x_i)^{1/2}\omega(y_i^*y_i)^{1/2} < \infty$. Heuristically, the $||\cdot||_1$-norm is the infimum of all possible $L_2$-factorization of an $L_1$-element. A uniform control on the moment formula is crucial to our construction (c.f. Lemma \ref{lemma:momentformulaPoisson}). This can be achieved by introducing the following norm on the Tomita algebra:
	\begin{equation}\label{equation:tripleNorm}
		|||x||| := \max\{||x||, \omega(x^*x)^{1/2}, \omega(xx^*)^{1/2}, ||x||_1\}\pl.
	\end{equation}
	With these notations and definitions, we are now ready to present the construction of Poissonization using an approximation of n.s.f weight by bounded positive functionals. 
	\begin{lemma}\label{lemma:limitWeight}
		Let $\omega$ be a n.s.f. weight, there exists a net of normal positive functionals $\{\varphi_s\}_s$ such that the net converges weakly to $\omega$: $\lim_s\varphi_s(x) = \omega(x)$. In addition, if $N$ has a separable predual, we can restrict to a sub-net where all $\varphi_s$'s are faithful. 
	\end{lemma}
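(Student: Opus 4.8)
The plan is to realize $\omega$ as an increasing limit of bounded ``pieces'' via Haagerup's structure theorem for normal weights, and then to upgrade those pieces to faithful ones once the predual is separable. First I would invoke Haagerup's representation of normal weights \cite{T}: in the standard representation there is a family of vectors $\{\xi_i\}_{i\in I}$ with $\omega=\sum_{i\in I}\omega_{\xi_i}$, where $\omega_\xi(x)=\langle x\xi,\xi\rangle$; equivalently, $\omega$ is the supremum of the normal positive functionals it dominates. Ordering the finite subsets $F\subset I$ by inclusion, set $\varphi_F:=\sum_{i\in F}\omega_{\xi_i}\in N^+_*$. Then $\varphi_F\le\varphi_{F'}\le\omega$ whenever $F\subseteq F'$, and by the very definition of the sum $\sup_F\varphi_F=\omega$. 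Monotone convergence on positive elements gives $\varphi_F(x)\uparrow\omega(x)$ for every $x\in N^+$ (in $[0,\infty]$), and writing an arbitrary $x\in m_\omega$ as a linear combination of positive elements of $m_\omega$ yields $\lim_F\varphi_F(x)=\omega(x)$. This already produces the required net, with the bonus that each $\varphi_F$ is dominated by $\omega$.

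For the faithful refinement, separability of the predual makes $N$ $\sigma$-finite, so the index set may be taken countable, say $I=\mathbb N$. Fix coefficients $c_n\in(0,1]$ with $\sum_n c_n\|\xi_n\|^2<\infty$ (for instance $c_n=\min\{1,\,2^{-n}(1+\|\xi_n\|^2)^{-1}\}$) and define
\[
\widetilde\varphi_N:=\sum_{n\le N}\omega_{\xi_n}+\sum_{n>N}c_n\,\omega_{\xi_n}\pl.
\]
The tail sum converges in norm because $\|\omega_{\xi_n}\|=\|\xi_n\|^2$, so $\widetilde\varphi_N\in N^+_*$; since every coefficient lies in $(0,1]$ we have $\widetilde\varphi_N\le\omega$; passing from $N$ to $N+1$ only raises the coefficient of $\omega_{\xi_{N+1}}$ from $c_{N+1}$ to $1$, so the sequence is increasing; and $\varphi_N\le\widetilde\varphi_N\le\omega$ forces $\widetilde\varphi_N(x)\to\omega(x)$. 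Finally each $\widetilde\varphi_N$ is faithful: its support projection is the join of the supports of all the $\omega_{\xi_n}$ (every term occurs with a strictly positive coefficient), and this join equals the support of $\omega=\sum_n\omega_{\xi_n}$, namely $1$. Thus $\{\widetilde\varphi_N\}$ is a sequence of faithful, $\omega$-dominated normal positive functionals converging weakly to $\omega$, which is the faithful (sub)net promised in the statement.

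The single genuine input is Haagerup's theorem that a normal weight is a sum of vector functionals (equivalently, the supremum of the positive functionals below it); the remainder is bookkeeping. The only points deserving care are the interpretation of weak convergence on elements outside $m_\omega$ (where both sides are $+\infty$ and monotonicity still applies) and the support computation giving faithfulness, which rests on the elementary identity $s\bigl(\sum_n a_n\varphi_n\bigr)=\bigvee_n s(\varphi_n)$ for normal positive functionals $\varphi_n$ and strictly positive scalars $a_n$. I expect no serious obstacle beyond correctly citing the structure theory of normal weights and keeping track of the domination $\widetilde\varphi_N\le\omega$, which is what will make these approximants usable in the subsequent construction of the Poisson algebra as a limit of states.
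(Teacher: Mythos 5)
Your proof is correct, but it follows a genuinely different route from the paper. The paper invokes Haagerup's characterization of $\omega$-dominated functionals through the opposite weight $\omega'$ on the commutant $\pi_\omega(N)'$: every $\varphi\le\omega$ has a Radon--Nikodym derivative $h_\varphi$ in the positive unit ball of $m_{\omega'}$, and the net is produced by taking an approximate unit $\{h_s\}\ssubset m_{\omega'}^+$, so that $\varphi_s=\omega(h_s\,\cdot)\nearrow\omega$; faithfulness is then extracted by a diagonal argument over a countable $L_1$-dense family. You instead use Haagerup's decomposition of a normal weight as a sum of vector functionals, $\omega=\sum_i\omega_{\xi_i}$, take partial sums over finite subsets for the net, and obtain faithful approximants by damping the tail, $\widetilde\varphi_N=\sum_{n\le N}\omega_{\xi_n}+\sum_{n>N}c_n\omega_{\xi_n}$, with the support identity $s\bigl(\sum_n a_n\varphi_n\bigr)=\bigvee_n s(\varphi_n)$ doing the work. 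Each approach buys something: the paper's construction hands you the approximants already in the form $\varphi_s=\omega(h_s\,\cdot)$ with $h_s$ in the commutant, which is precisely what the subsequent results (Proposition \ref{proposition:PoissonForWeight1}, Corollary \ref{corollary:modularAutoWeight}) exploit to compare $\sigma_t^{\varphi_s}$ with $\sigma_t^\omega$ and to identify the analytic elements; your functionals satisfy $\widetilde\varphi_N\le\omega$, so the same Radon--Nikodym theorem recovers that form, but you would need to say so before plugging them into the later arguments. Conversely, your faithfulness step is more explicit and more robust than the paper's diagonal argument (the paper's notion of a functional being ``faithful on $x_i$'' is not really well defined), and you get the bonus that the faithful approximants form an increasing sequence dominated by $\omega$. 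Two small points deserve a line of justification: the claim that separability of the predual lets you take the index set countable is true but not immediate (test each $\omega_{\xi_i}$ against a countable $\sigma$-weakly dense subset of $m_\omega^+$ to see that only countably many $\xi_i$ are nonzero), and your faithful family is a new sequence rather than literally a sub-net of $\{\varphi_F\}$, which is harmless for the way the lemma is used but deviates from the statement's phrasing.
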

	\begin{proof}
		Recall from \cite{T} that there exists an opposite weight $\omega'$ on the commutant $\pi_\omega(N)'$. And $\omega'$ is a n.s.f. weight such that the positive unit ball of its definition ideal gives all Radon-Nikodym derivatives of positive functionals that are subordinated to $\omega$ \cite{T}:
		\begin{equation}
			\{h_\varphi: \varphi(\cdot) = \omega(h_\varphi\cdot), \text{ }\varphi \leq \omega\} = \{x\in m_{\omega'}: ||x|| \leq 1, \text{ }x \geq 0\} \ssubset m_{\omega'}^+\pl.
		\end{equation}
		Since $m_{\omega'}$ is a hereditary ideal, we can fix a net of approximate unit for the open positive unit ball: $\{h_s\}_s\ssubset m_{\omega'}^+$. Then we have:
		\begin{equation*}
			\lim_s \varphi_s(x): = \lim_s \omega(h_s x) = \sup_s\varphi_s(x)  = \omega(x)\pl.
		\end{equation*}
		So far the construction does not guarantee that all positive functionals are faithful. However, if $N$ has a separable predual, there exists a countable subset $\{x_i\}_{i\in I}\ssubset m_\omega$ such that $\text{span}\{\omega(x_i\cdot)\in N_*\}$ is $L_1$-dense in the predual. In addition, the commutant $\pi_\omega(N)'$ also has a separable predual. Hence we can take the net $\{h_s\}_s\ssubset m_{\omega'}^+$ to be a sequence. In addition, for each $x_i$, since $\lim_s \varphi_s(x_i) = \omega(x_i)$, there exists a subsequence of functionals $\{\varphi_{s_i}\}_{s,i}$ that are faithful on $x_i$ and converge weakly to $\omega$. Then by a diagonal argument, we can replace $\{\varphi_s\}_s$ by a subsequence of faithful functionals that converges weakly to $\omega$.
	\end{proof}
	\begin{proposition}\label{proposition:PoissonForWeight1}
		Let $(N,\omega)$ be a von Neumann algebra with separable predual and a fixed normal faithful semifinite weight. Let $\mathcal{U}^\omega_{ana}$ be the Tomita algebra of $\omega$ with the norm $|||\cdot|||$ (c.f. Equation \ref{equation:tripleNorm}). Then the following statements are true:
		\begin{enumerate}
			\item There exists a Hilbert space $\mathcal{H}_\omega \ssubset \prod_s^w L_2(\mathbb{P}_{\varphi_s}N, \phi_s)$ where $w$ is a free ultrafilter on $\mathbb{N}$ and $\{\varphi_s\}_s$ is a sequence of normal faithful positive functionals weakly converging to $\omega$. $\mathbb{P}_{\varphi_s}N$ is the Poisson algebra induced by the functional $\varphi_s$ and $\phi_s$ is the corresponding Poisson state. The Hilbert space $H_\omega$ has basis vectors $\{\widetilde{\lambda}_\emptyset(x_1,...,x_n): x_i \in \mathcal{U}^\omega_{ana}\}$ where the basis vectors are represented by an ultraproduct of $\lambda_\emptyset$-basis vectors:
			\begin{equation}
				\widetilde{\lambda}_\emptyset(x_1,...,x_n) := (\lambda_\emptyset(x_1,...,x_n)d_s^{1/2})^\bullet
			\end{equation}
			where $d_s$ is the density of the Poisson state $\phi_s$.
			\item For $x\in \{x\in\mathcal{U}^\omega_{ana} : x^* = x\}$, the ultraproduct $(\Gamma(e^{ix}))^\bullet $ is a well-defined unitary operator on $H_\omega$:
			\begin{equation}
				(\Gamma(e^{ix}))^\bullet\widetilde{\lambda}_{\emptyset}(y_1,...,y_n) := (\Gamma_s(e^{ix})\lambda_\emptyset(y_1,...,y_n)d_s^{1/2})^\bullet\pl.
			\end{equation} 
			\item Define the Poisson algebra $\mathbb{P}_\omega N$ to be the von Neumann algebra generated by the unitaries: $\{(\Gamma(e^{ix}))^\bullet: x\in \mathcal{U}^\omega_{ana}, \text{ }x^* = x\}$ in $\mathbb{B}(H_\omega)$. The Poisson state $\phi_\omega$ is represented by the vector $(d_s^{1/2})^\bullet$ and $\phi_\omega$ is a normal faithful state. In addition, we have the isomorphism:
			\begin{equation}
				L_2(\mathbb{P}_\omega N, \phi_\omega) \cong H_\omega :\lambda_\emptyset(x_1,...,x_n)d_{\phi_\omega}^{1/2}\mapsto \widetilde{\lambda}_\emptyset(x_1,...x_n)
			\end{equation}
			where $d_{\phi_\omega}$ is the density of the Poisson state.
		\end{enumerate}
	\end{proposition}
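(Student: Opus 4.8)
The plan is to transport the entire $\lambda_\emptyset$-calculus of Lemma \ref{lemma:emptybasis} from each finite-functional Poisson space $L_2(\mathbb{P}_{\varphi_s}N,\phi_s)$ to the ultraproduct, and then to recognize the resulting Hilbert space together with the vacuum $(d_s^{1/2})^\bullet$ as the standard form of the generated von Neumann algebra. First I would invoke Lemma \ref{lemma:limitWeight} to fix a sequence $\{\varphi_s\}_s$ of normal faithful positive functionals with $\varphi_s\leq\omega$ and $\varphi_s\to\omega$ weakly. The crucial uniform input is that the domination $\varphi_s\leq\omega$ forces $\varphi_s(x^*x)\leq\omega(x^*x)$, $\varphi_s(xx^*)\leq\omega(xx^*)$, and (comparing factorizations term by term) $\|x\|_{1,\varphi_s}\leq\|x\|_{1,\omega}$, while the operator norm is independent of the functional; hence $|||x|||_{\varphi_s}\leq|||x|||_\omega$ for every $x\in\mathcal{U}^\omega_{ana}$, with $|||\cdot|||$ as in Equation \ref{equation:tripleNorm}. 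Because each $\varphi_s$ is bounded, the $\lambda_\emptyset$-calculus of Lemma \ref{lemma:emptybasis} applies verbatim with $\varphi_s$ in place of $\omega$ to elements of $\mathcal{U}^\omega_{ana}\subset m_\omega$, and the norm estimate of that lemma then shows that $(\lambda_\emptyset(x_1,\dots,x_n)d_s^{1/2})_s$ is uniformly bounded in $s$, so its class $\widetilde{\lambda}_\emptyset(x_1,\dots,x_n)$ is a legitimate vector of $\prod_s^w L_2(\mathbb{P}_{\varphi_s}N,\phi_s)$.

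For part (1), the inner-product formula (Equation \ref{equation:emptysetInnerProduct}) written for each $\varphi_s$ expresses $\langle\lambda_\emptyset(x_1,\dots,x_n)d_s^{1/2},\lambda_\emptyset(y_1,\dots,y_m)d_s^{1/2}\rangle$ as a fixed finite sum of products of scalars $\varphi_s(x_i^*y_j)$, $\varphi_s(x_i^*)$, $\varphi_s(y_j)$. Since the $x_i,y_j$ lie in $\mathcal{U}^\omega_{ana}$, all arguments lie in $m_\omega$, and weak convergence gives $\varphi_s(\cdot)\to\omega(\cdot)$ on each of them; taking the ultrafilter limit factor by factor reproduces exactly the $\omega$-version of Equation \ref{equation:emptysetInnerProduct}. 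This identifies $\langle\widetilde{\lambda}_\emptyset(x),\widetilde{\lambda}_\emptyset(y)\rangle$ with a limiting form that is automatically positive semidefinite, and lets me define $\mathcal{H}_\omega$ as the closed span of the $\widetilde{\lambda}_\emptyset$-vectors inside the ultraproduct.

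For part (2), each $\Gamma_s(e^{ix})$ is unitary on $L_2(\mathbb{P}_{\varphi_s}N,\phi_s)$, so $(\Gamma(e^{ix}))^\bullet$ is automatically unitary on the full ultraproduct; the only point is that it preserves the subspace $\mathcal{H}_\omega$. Here self-adjointness and analyticity of $x\in\mathcal{U}^\omega_{ana}$ ensure $e^{ix}-1\in m_\omega$, so the Poisson moment formula of Lemma \ref{lemma:momentformulaPoisson} survives passage to the limit. Differentiating the unitary recovers the affiliated generator $\lambda(x)$, and the action formula (Equation \ref{equation:emptysetaction}) shows that $\lambda(x)$ maps the $\lambda_\emptyset$-span into itself with combinatorial coefficients that are stable under $\lim_w$; exponentiating, $(\Gamma(e^{ix}))^\bullet$ carries $\widetilde{\lambda}_\emptyset$-vectors to $\widetilde{\lambda}_\emptyset$-vectors and hence restricts to a unitary on $\mathcal{H}_\omega$.

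For part (3), I would define $\mathbb{P}_\omega N$ as the von Neumann algebra generated by these unitaries in $\mathbb{B}(\mathcal{H}_\omega)$ and let $\phi_\omega$ be the vector state of $\Omega:=(d_s^{1/2})^\bullet$. Cyclicity of $\Omega$ is immediate: differentiating the generating unitaries produces the affiliated $\lambda(x)$, and part (3) of Lemma \ref{lemma:emptybasis} writes every $\lambda(x_1)\cdots\lambda(x_n)\Omega$ as a combination of $\lambda_\emptyset$-vectors, which are dense in $\mathcal{H}_\omega$. To see that $\phi_\omega$ is faithful and that $\mathcal{H}_\omega$ is exactly $L_2(\mathbb{P}_\omega N,\phi_\omega)$ in standard form, I would check that the limiting moment functional coincides with the abstract Poisson functional $\phi'$ of \cite{MAW}, exactly as in the proof of Proposition \ref{proposition:grading}; the uniqueness there identifies the GNS data and produces a normal faithful state, while the Tomita-algebra hypothesis makes the lifted modular group of Corollary \ref{corollary:modularautoLift} act by $(\Gamma(\sigma^\omega_t))^\bullet$, confirming the standard form. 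The main obstacle I anticipate is not any single computation but the interlocking of two limits: I must ensure simultaneously that the ultrafilter limit of the inner products remains strictly positive-definite (so $\mathcal{H}_\omega$ is a genuine Hilbert space rather than a degenerate pre-inner-product space, after quotienting by the null space) \emph{and} that the vacuum is separating for the generated algebra. Positivity is inherited from the $\varphi_s$-level, but domination $\varphi_s\leq\omega$ only controls norms and not the modular data, so faithfulness must be extracted from the moment-problem uniqueness of \cite{MAW} together with the KMS condition verified on $\mathcal{U}^\omega_{ana}$, rather than from the approximation itself.
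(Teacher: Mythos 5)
Your overall architecture is the same as the paper's: approximate $\omega$ by dominated normal faithful functionals via Lemma \ref{lemma:limitWeight}, use the uniform estimates of Lemma \ref{lemma:momentformulaPoisson} (made uniform in $s$ by the domination $\varphi_s\leq\omega$, exactly as you observe) to define the $\widetilde{\lambda}_\emptyset$-vectors in the ultraproduct, pass the moment formulas through the weak limit, and identify the GNS data; your route to faithfulness through the uniqueness theorem of \cite{MAW} and the KMS condition is likewise consistent with what the paper does in Proposition \ref{proposition:grading} and Corollary \ref{corollary:modularAutoWeight}.

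However, your part (2) has a genuine gap. You argue that since the generator $\lambda(x)$ maps the span of the $\lambda_\emptyset$-vectors into itself (Equation \ref{equation:emptysetaction}), "exponentiating" shows that $(\Gamma(e^{ix}))^\bullet$ preserves $\mathcal{H}_\omega$. That inference is invalid on its own: invariance of a dense subspace under the generator of a unitary group does not imply invariance of the subspace's closure under the group. A standard counterexample is $-i\,d/dx$ on $L^2(\mathbb{R})$, which preserves $C_c^\infty(0,\infty)$, while the closure $L^2(0,\infty)$ of that subspace is not invariant under the translation group. What rescues the claim — and is precisely where the paper does its work — is quantitative: by Lemma \ref{lemma:momentformulaPoisson} one has $\|\lambda(x)^k\lambda(y_1)\cdots\lambda(y_n)d_s^{1/2}\|_2\leq C^{n+k}(n+k)^{n+k}|||x|||^k\prod_{1\leq i\leq n}|||y_i|||$, so the $\lambda_\emptyset$-vectors are analytic vectors for $\lambda(x)$ with radius of convergence of order $1/(Ce|||x|||)$, uniformly in $s$ (the paper's Equation \ref{equation:uniformAction}). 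This gives invariance of $\mathcal{H}_\omega$ under $(\Gamma(e^{itx}))^\bullet$ only for $|t|<1/(Ce|||x|||)$; for general $t$ (in particular $t=1$, which is the operator in the statement) one must decompose $t$ into small increments and use the group law $\Gamma(e^{i(t+t')x})=\Gamma(e^{itx})\Gamma(e^{it'x})$. Without this two-step argument the exponential series you implicitly sum simply diverges once $|||x|||$ is large, so the step as written would fail; with it, your proof goes through and coincides with the paper's.
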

	\begin{proof}
		By Lemma \ref{lemma:momentformulaPoisson}, the norms of the family of vectors $\lambda_\emptyset(x_1,...,x_n)d_s^{1/2}$ are uniformly bounded by:
		\begin{equation}
			||\lambda_\emptyset(x_1,...,x_n)d_s^{1/2}||^2_2 \leq C n^{2n}\prod_{1\leq i \leq n}|||x_i|||^2
		\end{equation}
		where $C>0$ is an absolute constant. Therefore $\widetilde{\lambda}_\emptyset(x_1,...,x_n)$ is well-defined in the ultraproduct $\prod^w_s L_2(\mathbb{P}_{\varphi_s} N,\varphi_s)$. Hence the Hilbert space $H_\omega$ is well-defined and is spanned by $\{\widetilde{\lambda}_\emptyset(x_1,...,x_n): x_i\in \mathcal{U}^\omega_{ana}\}$. 
		
		To see the action is well-defined, we use the following uniform estimates:
		\begin{align}\label{equation:uniformAction}
			\begin{split}
				||\Gamma(e^{itx})\lambda(y_1)...\lambda(y_n)d_s^{1/2}||_2 &\leq \sum_{k\geq 0}\frac{t^k}{k!}||\lambda(x)^k\lambda(y_1)...\lambda(y_n)d_s^{1/2}||_2
				\\
				&\leq \sum_{k\geq 0}\frac{C^{n+k}t^k(n+k)^{n+k}|||x|||^k}{k!}\prod_{1\leq i \leq n}|||y_i|||
				\\
				&\leq \sum_{k \geq 0}(1 + \frac{n}{k})^k C^n(Cet|||x|||)^k\prod_{1\leq i \leq n}|||y_i|||
			\end{split}
		\end{align}
		where the second inequality follows from Lemma \ref{lemma:momentformulaPoisson}. Since the constant $C$ is a universal constant, for $|t| < \frac{1}{C|||x|||e}$, the operator is uniformly bounded. Since $\lambda$-vectors are finite linear combinations of $\lambda_\emptyset$-vectors, then for $|t| < \frac{1}{C|||x|||e}$, $(\Gamma(e^{itx})\lambda(y_1)...\lambda(y_n)d_s^{1/2})^\cdot$ defines a vector in $H_\omega$. In addition, for $t \geq \frac{1}{C|||x|||e}$, we can decompose $t = k\frac{1}{2C|||x|||e} + \delta t$ where $k\in \mathbb{Z}$ and $|\delta t| < \frac{1}{C|||x|||e}$. Therefore by decomposing $\Gamma(e^{itx}) = \prod_{1\leq i \leq |k|}\Gamma(e^{i\sgn{k}\frac{1}{2C|||x|||e}}x)\Gamma(e^{i\delta t x})$, we see that for all $t\in \mathbb{R}$, $(\Gamma(e^{itx}))^\bullet$ is a well-defined unitary operator on $H_\omega$. Moreover, by the differentiation formula given in the proof of Proposition \ref{proposition:symmetrictensor}, $\overline{\text{span}\{(\Gamma(e^{ix_1})...\Gamma(e^{ix_n})d_s^{1/2})^\bullet\}} = \overline{\text{span}\{(\lambda_\emptyset(x_1,...,x_n)d_s^{1/2})^\bullet\}} = \overline{\text{span}\{(\lambda(x_1)...\lambda(x_n)d_s^{1/2})^\bullet\}}$ where the last equation follows from the fact that $\lambda_\emptyset$-vector is a finite linear combination of $\lambda$-vectors and vice versa.
		
		Since the Poisson states $\phi_s$ are normalized for all $s$, $||d_s^{1/2}|| = 1$	and $\xi_\omega:=(d_s^{1/2})^\bullet$ is a well-defined vector in $H_\omega$. We need to verify the state defined by: $\langle \xi_\omega, \cdot \xi_\omega\rangle$ is a normal faithful state and satisfies the moment formula of the Poisson state. It is clear that this state is normal and is normalized. In addition, by definition $L_2(\mathbb{P}_\omega N, \phi_\omega) = \overline{\text{span}\{(\Gamma(e^{ix_1})...\Gamma(e^{ix_n})d_s^{1/2})^\bullet\}}$. As a vector space, this is isomorphic to $H_\omega$. To see this is an isometry, we need to verify the moment formula for the vector state $\langle\xi_\omega,\cdot\xi_\omega\rangle$:
		\begin{align}
			\begin{split}
				||(\lambda(x_1)...\lambda(x_n)d_s^{1/2})^\bullet||^2_2 &= \lim_s ||\lambda(x_1)...\lambda(x_n)d_s^{1/2}||^2_2 
				\\
				&= \lim_s \sum_{\sigma\in\mathcal{P}(n)}\prod_{A\in\sigma}\varphi_s(\overrightarrow{\prod}_{i\in A}x_i)
				\\
				&=  \sum_{\sigma\in\mathcal{P}(n)}\prod_{A\in\sigma}\omega(\overrightarrow{\prod}_{i\in A}x_i)
			\end{split}
		\end{align}
		where the limit exists by weak convergence and the fact that $x_i$'s are in the Tomita algebra. Therefore we have the desired isometry. In particular, the Poisson state is faithful.
	\end{proof}
	\begin{corollary}\label{corollary:modularAutoWeight}
		Using the same notation as Proposition \ref{proposition:PoissonForWeight1}, the modular automorphism of the Poisson state $\phi_\omega$ satisfies:
		\begin{equation}
			\sigma^{\phi_\omega}_t((\Gamma(e^{ix}))^\bullet) = (\sigma_t^{\varphi_s}\Gamma(e^{ix}))^\bullet =  (\Gamma(\sigma^\omega_t(e^{ix})))^\bullet\pl.			
		\end{equation}
		In particular, the modular operator acts on $L_2(\mathbb{P}_\omega N, \phi_\omega)$ by:
		\begin{equation}
			\Delta_{\phi_\omega}^{it}\lambda_\emptyset(x_1,...,x_n)\xi_\omega = \lambda_\emptyset(\sigma^\omega_t(x_1),...,\sigma^\omega_t(x_n))\xi_\omega\pl.
		\end{equation}
	\end{corollary}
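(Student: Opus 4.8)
The plan is to pin down $\sigma^{\phi_\omega}_t$ by the one property that characterizes a modular automorphism group, the KMS condition, and then to read off both alternative descriptions (the ultraproduct of the levelwise flows, and the weight flow $\sigma^\omega_t$) from that characterization. As a candidate I would set, on the generators, $\alpha_t((\Gamma(e^{ix}))^\bullet) := (\Gamma(\sigma^\omega_t(e^{ix})))^\bullet = (\Gamma(e^{i\sigma^\omega_t(x)}))^\bullet$ for $x = x^* \in \mathcal{U}^\omega_{ana}$. Since $\sigma^\omega_t$ is a normal $*$-automorphism of $N$ that preserves the Tomita algebra and the weight ($\omega\circ\sigma^\omega_t=\omega$), and since $e^{i\sigma^\omega_t(x)}-1=\sigma^\omega_t(e^{ix}-1)\in m_\omega$, each image is again a legitimate generator, so $\alpha_t$ maps generators to generators.

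First I would realize $\alpha_t$ spatially by the operator $U_t$ on $H_\omega\cong L_2(\mathbb{P}_\omega N,\phi_\omega)$ defined on the dense $\lambda_\emptyset$-basis by $U_t\lambda_\emptyset(x_1,\dots,x_n)\xi_\omega:=\lambda_\emptyset(\sigma^\omega_t x_1,\dots,\sigma^\omega_t x_n)\xi_\omega$. The inner-product formula (\ref{equation:emptysetInnerProduct}) of Lemma \ref{lemma:emptybasis} involves only the scalars $\omega(x_i^*y_j)$, $\omega(x_i^*)$ and $\omega(y_j)$, all preserved when every argument is replaced by its image under the $\omega$-preserving $*$-automorphism $\sigma^\omega_t$; hence $U_t$ is isometric with dense domain and range, and extends to a unitary. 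Because $\sigma^\omega_s\sigma^\omega_t=\sigma^\omega_{s+t}$ and $\sigma^\omega_t$ fixes $\xi_\omega$, the family $\{U_t\}$ is a one-parameter unitary group fixing $\xi_\omega$; strong continuity follows from that of $\sigma^\omega_t$ on $\mathcal{U}^\omega_{ana}$ together with the uniform bounds of Lemma \ref{lemma:momentformulaPoisson}. Checking $U_t(\Gamma(e^{ix}))^\bullet U_t^{-1}=\alpha_t((\Gamma(e^{ix}))^\bullet)$ then shows $\alpha_t=\mathrm{Ad}(U_t)$ is a normal automorphism group of $\mathbb{P}_\omega N$ preserving $\phi_\omega$.

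Next comes the KMS verification, which is the exact analogue of the computation in Corollary \ref{corollary:modularautoLift}, now for a weight. Using the two-generator characteristic formula $\phi_\omega(\Gamma(e^{ia})\Gamma(e^{ib}))=\exp(\omega(e^{ia}e^{ib}-1))$ and the analyticity of $t\mapsto\sigma^\omega_t(e^{ix}-1)$ guaranteed by $x\in\mathcal{U}^\omega_{ana}$, the function $t\mapsto\phi_\omega(\alpha_t(\Gamma(e^{ix}))\Gamma(e^{iy}))=\exp(\omega(\sigma^\omega_t(e^{ix})e^{iy}-1))$ continues analytically to the strip, and the KMS relation of $\omega$, namely $\omega(\sigma^\omega_{t+i}(a)b)=\omega(b\,\sigma^\omega_t(a))$, yields $\phi_\omega(\alpha_{t+i}(\Gamma(e^{ix}))\Gamma(e^{iy}))=\phi_\omega(\Gamma(e^{iy})\alpha_t(\Gamma(e^{ix})))$. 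Since the generators are ultra-weakly dense, $\{\alpha_t\}$ satisfies the modular condition for $\phi_\omega$, so by uniqueness \cite{T} we get $\sigma^{\phi_\omega}_t=\alpha_t$ and $\Delta^{it}_{\phi_\omega}=U_t$. This already delivers the outer equality $\sigma^{\phi_\omega}_t((\Gamma(e^{ix}))^\bullet)=(\Gamma(\sigma^\omega_t(e^{ix})))^\bullet$ and, by the definition of $U_t$, the displayed action $\Delta^{it}_{\phi_\omega}\lambda_\emptyset(x_1,\dots,x_n)\xi_\omega=\lambda_\emptyset(\sigma^\omega_t x_1,\dots,\sigma^\omega_t x_n)\xi_\omega$.

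It then remains to match this with the levelwise flows, where Corollary \ref{corollary:modularautoLift} applied at each finite level gives $\sigma^{\phi_s}_t(\Gamma(e^{ix}))=\Gamma(\sigma^{\varphi_s}_t(e^{ix}))$; by the Ando--Haagerup description of the modular flow of an Ocneanu ultraproduct state \cite{AH}, $\sigma^{\phi_\omega}_t$ is the restriction to the corner $\mathbb{P}_\omega N$ of $(\sigma^{\phi_s}_t)^\bullet$ once $\mathbb{P}_\omega N$ is seen to be globally invariant. Thus everything reduces to the identity $(\Gamma(\sigma^{\varphi_s}_t(e^{ix})))^\bullet=(\Gamma(\sigma^\omega_t(e^{ix})))^\bullet$ in $H_\omega$, and I expect this to be the principal obstacle, since $\sigma^{\varphi_s}_t\neq\sigma^\omega_t$ on $N$ in general (they differ by the Connes cocycle $[D\varphi_s:D\omega]_t\in N$). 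I see two ways to close it. The clean route is to strengthen Lemma \ref{lemma:limitWeight}, choosing the densities $h_s$ in the centralizer of the opposite weight $\omega'$, i.e. commuting with $\Delta_\omega$; the cocycle is then trivial, $\sigma^{\varphi_s}_t=\sigma^\omega_t$ on $N$ exactly, and the identity holds termwise. Alternatively one shows $[D\varphi_s:D\omega]_t\to 1$ as $\varphi_s\nearrow\omega$ and estimates, via the moment formula, $\|(\Gamma(u_s)-\Gamma(v))\,d_s^{1/2}\|_2^2=2-2\,\mathrm{Re}\exp(\varphi_s(u_s^*v-1))$ with $u_s=\sigma^{\varphi_s}_t(e^{ix})$ and $v=\sigma^\omega_t(e^{ix})$, which tends to $0$ along the ultrafilter because $u_s^*v-1\to 0$ and $\varphi_s\to\omega$. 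Either way the middle and right expressions coincide, completing the chain; the KMS argument above is robust and self-contained, so the cocycle bookkeeping of this last step is where the real care is needed.
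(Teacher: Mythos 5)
Your core argument is correct and is essentially the paper's own route: a unitary group $U_t$ implemented by $\sigma^\omega_t$ on the dense $\lambda_\emptyset$-span (unitarity from the $\omega$-invariance of the inner-product formula), conjugation sending generators $(\Gamma(e^{ix}))^\bullet$ to generators $(\Gamma(e^{i\sigma^\omega_t(x)}))^\bullet$, a KMS verification, and uniqueness of the modular group. If anything, your KMS check is more self-contained than the paper's: you run it directly against the KMS property of the weight $\omega$ on the Tomita algebra via $\phi_\omega(\Gamma(e^{ia})\Gamma(e^{ib}))=\exp(\omega(e^{ia}e^{ib}-1))$, whereas the paper performs the same manipulation levelwise in $\varphi_s$ and passes to the limit. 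This part fully establishes $\sigma^{\phi_\omega}_t((\Gamma(e^{ix}))^\bullet)=(\Gamma(\sigma^\omega_t(e^{ix})))^\bullet$ and the displayed action of $\Delta^{it}_{\phi_\omega}$.

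The genuine gap is in the middle equality $(\sigma^{\varphi_s}_t\Gamma(e^{ix}))^\bullet=(\Gamma(\sigma^\omega_t(e^{ix})))^\bullet$, which you rightly isolate as the crux but then propose to close by a route that cannot work. Your option (a) claims that choosing $h_s$ in the centralizer of the opposite weight $\omega'$ makes the Connes cocycle trivial and gives $\sigma^{\varphi_s}_t=\sigma^\omega_t$ exactly. Both claims are false. By Connes' theorem the map $\varphi\mapsto (D\varphi:D\omega)_t$ is injective, so the cocycle is trivial only when $\varphi_s=\omega$, impossible for a bounded functional dominated by an unbounded weight; and commutation of $h_s$ with $\Delta_\omega$ only forces $\varphi_s\circ\sigma^\omega_t=\varphi_s$, i.e. that the cocycle takes values in the centralizer $N_\omega$ -- it does not kill it. Concretely, take $N=\mathbb{B}(\ell_2)$ and $\omega=\Tr$: then $\Delta_\omega=1$, so every commutant density lies in the centralizer of $\omega'$, yet any faithful normal $\varphi_s\leq\Tr$ has a trace-class (hence non-scalar) density $k_s$, so $\sigma^{\varphi_s}_t=\mathrm{Ad}(k_s^{it})\neq\mathrm{id}=\sigma^\omega_t$ and $(D\varphi_s:D\Tr)_t=k_s^{it}\neq 1$. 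Everything therefore rests on your option (b) -- proving $(D\varphi_s:D\omega)_t\to 1$ along the increasing net and pushing that convergence through the moment and Powers--St{\o}rmer estimates uniformly enough to survive the ultrafilter -- and that you have only sketched; it is a genuine piece of analysis, not bookkeeping.

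For comparison, your instinct about where the danger lies is vindicated by the paper itself: the paper dispatches this very step with the assertion that, $h_s$ being in the commutant, $\varphi_s(x^*\sigma^{\varphi_s}_t(y))=\varphi_s(x^*\sigma^\omega_t(y))$, and the same $\mathbb{B}(\ell_2)$ example refutes that identity as well, since $\Tr(x^*k^{it}yk^{1-it})\neq\Tr(x^*yk)$ when $y$ and $k$ do not commute. So you correctly located the crux, and your treatment of the rest of the corollary is sound; but as submitted, the proof of the middle equality stands on one impossible route and one unexecuted one, and is therefore incomplete.
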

	\begin{proof}
		We define a one-parameter unitary group:
		\begin{equation}
			U_t: L_2(\mathbb{P}_\omega N, \phi_\omega) \rightarrow L_2(\mathbb{P}_\omega N, \phi_\omega): \lambda(x_1)...\lambda(x_n)\xi_\omega \mapsto \lambda(\sigma^\omega_t(x_1))...\lambda(\sigma^\omega_t(x_n))\xi_\omega\pl.
		\end{equation}
		Using the moment formula and the fact that modular automorphism preserves the weight, it is easy to see that $U_t$ is unitary:
		\begin{align}
			\begin{split}
				\langle U_t\lambda(x_1)...\lambda(x_n)\xi_\omega, U_t\lambda(x_1)...\lambda(x_n)\xi_\omega\rangle &= \sum_{\sigma\in\mathcal{P}(2n)}\sum_{\substack{A\in \sigma \\ B_A\sqcup B_A^* = A}}\omega(\overleftarrow{\prod}_{(i+n)\in B_A^*}\sigma^\omega_t(x_i^*)\overrightarrow{\prod}_{i\in B_A}\sigma^\omega_t(x_i))
				\\
				&=\sum_{\sigma\in\mathcal{P}(2n)}\sum_{\substack{A\in \sigma \\ B_A\sqcup B_A^* = A}}\omega(\sigma^\omega_t(\overleftarrow{\prod}_{(i+n)\in B_A^*}x_i^*\overrightarrow{\prod}_{i\in B_A}x_i))
				\\
				&=\langle\lambda(x_1)...\lambda(x_n)\xi_\omega, \lambda(x_1)...\lambda(x_n)\xi_\omega\rangle
			\end{split}
		\end{align}	
		where the overhead arrows indicate the order of multiplication. The unitary $U_t$ can also be represented as the ultraproduct: $U_t = (\Delta_s^{is})^\bullet$ where $\Delta_s$ is the modular operator of the Poisson state $\phi_s$. To see this, we calculate:
		\begin{align}
			\begin{split}
				\langle\lambda(x_1)...\lambda(x_n)\xi_\omega,& (\Delta_s^{it})^\bullet \lambda(y_1)...\lambda(y_m)\xi_\omega\rangle = \lim_s \langle\lambda(x_1)...\lambda(x_n)d_s^{1/2}, \Delta_s^{it}\lambda(y_1)...\lambda(y_m)d_s^{1/2}\rangle
				\\
				&=\lim_s\sum_{\sigma\in\mathcal{P}(m+n)}\sum_{\substack{A\in\sigma \\ B_A = A\cap [m]\\  B_A^* = A \cap B_A^c}}\varphi_s(\overleftarrow{\prod}_{(i+m)\in B_A^*}x_i^*\overrightarrow{\prod}_{i\in B_A}\sigma^{\varphi_s}_t(y_i))
				\\
				&=\lim_s\sum_{\sigma\in\mathcal{P}(n+m)}\sum_{\substack{A\in\sigma \\ B_A = A\cap [m]\\  B_A^* = A \cap B_A^c}}\varphi_s(\overleftarrow{\prod}_{(i+m)\in B_A^*}x_i^*\overrightarrow{\prod}_{i\in B_A}\sigma^\omega_t(y_i))
				\\
				& = \langle\lambda(x_1)...\lambda(x_n)\xi_\omega, \lambda(\sigma^\omega_t(y_1))...\lambda(\sigma^\omega_t(y_m))\xi_\omega\rangle\pl.
			\end{split}
		\end{align}
		where we used the fact that $\varphi_s(\cdot)= \omega(h_s\cdot)$ for some $h_s$ in the commutant $\pi_\omega(N)'$ and hence $\sigma^{\varphi_s}_t(\cdot) = h_s^{it/2}\sigma^\omega_t(\cdot) h_s^{-it/2}$. In particular since the Radon-Nikodym derivative $h_s$ is in the commutant, $\varphi_s(x^*\sigma^{\varphi_s}_t(y)) = \omega(h_sx^*h_s^{it/2}\sigma^\omega_t(y)h_s^{-it/2}) = \omega(h_sx^*\sigma^\omega_t(y)) = \varphi_s(x^*\sigma^\omega_t(y))$. 
		
		The adjoint action of this unitary group defines a strongly continuous one-parameter automorphism on $\mathbb{P}_\omega N$:
		\begin{equation}
			\alpha_t:\mathbb{P}_\omega N\rightarrow \mathbb{P}_\omega N: (\Gamma(e^{ix}))^\bullet\mapsto U_t^*(\Gamma(e^{ix}))^\bullet U_t = (\Delta_s^{it}\Gamma(e^{ix})\Delta_s^{-it})^\bullet = (\sigma^{\varphi_s}_t(\Gamma(e^{ix})))^\bullet\pl.
		\end{equation}
		To see $\{\alpha_t\}_t$ is the modular automorphism group of the Poisson state, we verify that $\alpha_t$ preserves $\phi_\omega$ and $\alpha_t$ satisfies the KMS condition. 
		
		To see that $\alpha_t$ preserves the Poisson state, we calculate:
		\begin{align}
			\begin{split}
				\phi_\omega\bigg(\alpha_t\big((\Gamma(e^{ix}))^\bullet\big)\bigg) &= \langle\xi_\omega, U_t^*(\Gamma(e^{ix}))^\bullet U_t\xi_\omega\rangle = \langle\xi_\omega, (\Gamma(e^{ix}))^\bullet\xi_\omega\rangle = \phi_\omega((\Gamma(e^{ix}))^\bullet)\pl.
			\end{split}
		\end{align}
		Since the Poisson algebra is generated by $\Gamma(e^{ix})$'s and $\alpha_t$ is an automorphism, the Poisson state is invariant under the $\alpha_t$-action.
		
		To check the KMS condition, we consider a slightly different generating set of the Poisson algebra. If $x - 1$ is in the Tomita algebra, then $\Gamma(e^{2\pi ix}) = \Gamma(e^{2 \pi i(x-1)})$ and hence $(\Gamma(e^{2\pi i x}))^\bullet$ is well-defined and forms a generating set of the Poisson algebra. We check the KMS condition on these generating unitaries:
		\begin{align}
			\begin{split}
				\phi_\omega\bigg((\Gamma(e^{iy}))^\bullet\alpha_t(\Gamma(e^{ix}))^\bullet\bigg) &= \lim_s \phi_s\bigg(\Gamma(e^{iy})\sigma^{\varphi_s}_t\Gamma(e^{ix})\bigg) = \lim_s\exp(\varphi_s(y\sigma^{\varphi_s}_t(x) - 1))
				\\
				&=\lim_s\exp(\varphi_s((y-1)(\sigma^{\varphi_s}_t(x) - 1)) + \varphi_s(y - 1) + \varphi_s(\sigma^{\varphi_s}_t(x) - 1))
				\\
				&=\lim_s\exp(\varphi_s((\sigma^{\varphi_s}_{t+i}(x) - 1)(y-1)) + \varphi_s(y - 1) + \varphi_s(\sigma^{\varphi_s}_{t+i}(x)- 1))
				\\
				&=\lim_s\exp(\varphi_s(\sigma^{\varphi_s}_{t+i}(x)y - 1)) = \phi_\omega\bigg(\alpha_{t+i}(\Gamma(e^{ix}))^\bullet(\Gamma(e^{iy}))^\bullet\bigg)
			\end{split}
		\end{align}
		where we have used the fact that $x \in \mathcal{U}^\omega_{ana}$ if and only if $x\in\mathcal{U}^{\varphi_s}_{ana}$ since $\varphi(\cdot) = \omega(h_s\cdot)$ and $h_s$ is in the commutant $\pi_\omega(N)'$. 
		
		Combining these calculations, $\alpha_t = \sigma^{\phi_\omega}_t$ by the uniqueness of modular automorphism group \cite{T}.
	\end{proof}
	In order to define the lift of unital completely positive maps, it is useful to record the following formula:
	\begin{corollary}\label{corollary:GammaAction}
		Consider the set of elements: $S := \{x \in N: ||a|| \leq 1, \text{ } a-1\in m_\omega\}$, the following equation holds for $y\in S$:
		\begin{align}
			\begin{split}
				\langle&\lambda_\emptyset(x_1,...,x_n)\xi_\omega, \Gamma(y)\lambda_\emptyset(z_1,...,z_m)\xi_\omega\rangle \\&= \exp(\omega(y-1))\sum_{0\leq p \leq \min\{m,n\}}\sum_{\substack{A_n \sqcup B_n = [n] \\ A_m\sqcup B_m = [m]\\ |B_n| = |B_m| = p}}\prod_{i\in B_n, j\in B_m}\omega(x^*_i yz_j)\prod_{i\in A_n}\omega(x^*_i)\prod_{j\in A_m}\omega(yz_j)\pl.
			\end{split}
		\end{align}
	\end{corollary}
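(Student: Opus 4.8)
The plan is to deduce the identity from the inner-product formula for $\lambda_\emptyset$-vectors already established in Lemma~\ref{lemma:emptybasis}, namely Equation~\ref{equation:emptysetInnerProduct}, by first describing how a generator $\Gamma(y)$ acts on a ket of the form $\lambda_\emptyset(z_1,\dots,z_m)\xi_\omega$. The key intermediate step is the action identity
\[
\Gamma(y)\,\lambda_\emptyset(z_1,\dots,z_m)\xi_\omega \;=\; \exp\!\big(\omega(y-1)\big)\,\lambda_\emptyset(yz_1,\dots,yz_m)\xi_\omega,
\]
valid for $y\in S$. Granting this, the corollary follows by pairing both sides with $\lambda_\emptyset(x_1,\dots,x_n)\xi_\omega$ and inserting Equation~\ref{equation:emptysetInnerProduct} for the right-hand side: the scalar $\exp(\omega(y-1))$ comes out in front, the paired indices $B_n,B_m$ produce the factors $\omega(x_i^* y z_j)$ (because each $z_j$ has been replaced by $yz_j$), the unpaired $z$-indices in $A_m$ produce $\omega(yz_j)$, and the unpaired $x$-indices in $A_n$ produce $\omega(x_i^*)$. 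This is exactly the claimed expression, so the whole content is concentrated in the action identity.

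To prove that identity I would work in the Bernoulli/ultraproduct model of Proposition~\ref{proposition:PoissonForWeight1}. Since the statement is a closed algebraic expression in $x_i,y,z_j$ and $\omega$, it is enough to verify it for a bounded faithful functional $\varphi_s$ and then pass to the weak limit $\varphi_s\to\omega$ along the ultrafilter, exactly as in the realization of $\mathbb{P}_\omega N$ inside $\prod_s^w L_2(\mathbb{P}_{\varphi_s}N,\phi_s)$. At tensor level $l$ the operator $\Gamma(y)$ is represented by $\bigotimes_{k=1}^l\big(y\otimes\chi_{[0,1/l]} + 1\otimes(1-\chi_{[0,1/l]})\big)$; this representative is multiplicative in $y$ and agrees with $\Gamma(e^{ix})$ on the unitary generators, so it is the correct one. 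Applying it to the representative of $\lambda_\emptyset(z_1,\dots,z_m)$ and using $\chi^2=\chi$ together with $\chi(1-\chi)=0$, each occupied slot $z_j\otimes\chi$ is turned into $yz_j\otimes\chi$, while the remaining $l-m$ slots each carry the factor $y\otimes\chi+1\otimes(1-\chi)$; the content of the action identity is that, in the limit, these empty slots collectively contribute only the scalar $\exp(\omega(y-1))$, coming from the product of the normalized expectations $1+\tfrac1l\,\omega(y-1)$.

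The main obstacle is precisely this $l\to\infty$ bookkeeping: one must check that the combinatorial weights counting the injective maps into $[l]$, together with the $l^{-1}$ normalizations from $\omega_\infty(\,\cdot\otimes\chi_{[0,1/l]})=l^{-1}\omega(\cdot)$, converge to $1$ on the complete matchings and vanish otherwise, so that in the limit only the diagonal pairings between occupied slots survive and the empty slots decouple into the single scalar. This is the same factorial cancellation $|\sigma|!\binom{l}{|\sigma|}l^{-|\sigma|}\to 1$ that drives Lemma~\ref{lemma:emptybasis} and Proposition~\ref{proposition:grading}, so the estimate is available; the delicate point is organizing it uniformly inside the ultraproduct so that the limit may be exchanged with the finite sum over partitions. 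The hypothesis $y\in S$, i.e. $\|y\|\le1$ and $y-1\in m_\omega$, is used exactly here: it makes $\Gamma(y)$ a well-defined contraction on $L_2(\mathbb{P}_\omega N,\phi_\omega)$ (cf. the bound in Lemma~\ref{lemma:momentformulaPoisson}) and guarantees that $\omega(y-1)$, $\omega(yz_j)$ and $\omega(x_i^* y z_j)$ are all finite, so the final expression converges and no term is lost in the passage to the weight.
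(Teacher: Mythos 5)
Your reduction would be fine if the intermediate action identity held, but that identity is false, and since (as you note yourself) the whole content of your argument is concentrated there, the proof collapses. Test it in the scalar case $(N,\omega)=(\mathbb{C},c\,\cdot)$, where $\mathbb{P}_\omega\mathbb{C}\cong\ell_\infty(\mathbb{N})$ carries the Poisson state of intensity $c$: here $\Gamma(y)$ is the sequence $(y^k)_k$ and $\lambda_\emptyset(z)\xi_\omega$ is represented by $k\mapsto zk$, so $\Gamma(y)\lambda_\emptyset(z)\xi_\omega$ is $k\mapsto zky^k$, which is not a scalar multiple of $k\mapsto yzk$. Concretely,
\begin{equation*}
\langle\lambda_\emptyset(x)\xi_\omega,\Gamma(y)\lambda_\emptyset(z)\xi_\omega\rangle=\bar{x}z\,\mathbb{E}[k^2y^k]=\bar{x}z\,(cy+c^2y^2)\,e^{c(y-1)},
\end{equation*}
whereas your identity predicts $e^{c(y-1)}\bar{x}z(cy+c^2y)$; these differ whenever $y\neq 0,1$. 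The error sits exactly where you flagged the "main obstacle": the $l-m$ empty slots do not decouple into the scalar $\exp(\omega(y-1))$ at the vector level. The one-site factor $y\otimes\chi+1\otimes(1-\chi)$ applied to the site vacuum is not a multiple of the site vacuum; it reduces to $1+\tfrac1l\omega(y-1)$ only after being paired against the vacuum at that site. When you subsequently pair with $\lambda_\emptyset(x_1,\dots,x_n)\xi_\omega$, every site occupied by some $x_i$ on the bra side but by no $z_j$ still sees the leftover $y$ and contributes $\omega(x_i^*y)$, not $\omega(x_i^*)$. So there is no vector identity to factor through: the corollary must be proved the way the paper does it, namely by inserting the representative $\bigotimes_{k=1}^l\big(y\otimes\chi_{[0,1/l]}+1\otimes(1-\chi_{[0,1/l]})\big)$ between the two $\lambda^w_{\sigma_\emptyset}$-representatives and evaluating $\omega_\infty^{\otimes l}$ site by site, exactly as in the proof of Lemma~\ref{lemma:emptybasis}, with the hypothesis $y-1\in m_\omega$ guaranteeing finiteness of $\omega(y-1)$, $\omega(yz_j)$, $\omega(x_i^*y)$, $\omega(x_i^*yz_j)$ and convergence of the empty-slot product $(1+\omega(y-1)/l)^{l-O(1)}$.

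A caveat you should be aware of: carried out correctly, this sandwich computation produces $\prod_{i\in A_n}\omega(x_i^*y)$ for the unpaired bra indices, not $\prod_{i\in A_n}\omega(x_i^*)$ as printed. The case $n=1$, $m=0$ already forces this, since $\langle\lambda_\emptyset(x)\xi_\omega,\Gamma(y)\xi_\omega\rangle=\overline{\phi_\omega(\Gamma(y^*)\lambda_\emptyset(x))}=e^{\omega(y-1)}\omega(x^*y)$ by Lemma~\ref{lemma:GammaLiftWeight}, and the scalar model above confirms it (note also the suspicious asymmetry of the printed formula: $y$ survives in the unpaired $z$-factors but not in the unpaired $x$-factors). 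So the printed right-hand side contains a typo, and the fact that your false action identity reproduces the printed formula exactly is an artifact of that typo rather than a confirmation of your argument. The peripheral parts of your write-up are sound and are precisely the ingredients of the paper's proof — the representative of $\Gamma(y)$, the one-site relations $\chi^2=\chi$ and $\chi(1-\chi)=0$, the factorial cancellation $|\sigma|!\binom{l}{|\sigma|}l^{-|\sigma|}\to1$ from Proposition~\ref{proposition:grading}, and the role of $\|y\|\le1$, $y-1\in m_\omega$ — but they must be assembled into the direct computation of the full matrix element, not routed through a vector-level action formula.
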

	\begin{proof}
		By definition $\omega(y - 1) < \infty$, the remaining calculation is analogous to the proof of Lemma \ref{lemma:emptybasis}. 
	\end{proof}
	\subsubsection{Abstract Algebraic Construction of Poisson Algebra}
	In this subsection, we provide an alternative construction of Poissonization based on an abstract algebraic approach. The key observation is that the Hilbert space $L_2(\mathbb{P}_\omega N, \phi_\omega)$ can be defined using purely algebraic constructions and the moment formula. In particular, the positivity of the inner product follows from the positivity of certain combinatorial formulae on $N$. It does not depend on other analytic information. Moreover, we will use the same generators as the previous construction to generate the unitary group of the Poisson algebra. These unbounded generators must be essentially self-adjoint. This can be checked using the classical result that a symmetric operator is essentially self-adjoint if and only if $\pm i$ are not its eigenvalues. Again this condition can be checked purely algebraically using the moment formula. 
	
	The main object in the abstract construction is the Tomita algebra $\mathcal{U}^\omega_{ana}$. Since it is an associative $*$-algebra, it has a canonical Lie algebra structure. Recall given a Lie algebra $\mathfrak{g}$ we can always consider its universal enveloping algebra: $\mathbb{U}(\mathfrak{g}) = \bigoplus_{n\geq 0}\mathfrak{g}^{\otimes n} / I$ where $I$ is the two-sided ideal generated by $\{x\otimes y - y\otimes x - [x,y]:x,y\in \mathfrak{g}\}$. Moreover, there exists a Lie algebra homomorphism: $\lambda:\mathfrak{g}\rightarrow \mathbb{U}(\mathfrak{g})$. We apply this canonical construction to the Tomita algebra and consider the universal enveloping algebra $\mathbb{U}(\mathcal{U}^\omega_{ana})$. The Lie algebra homomorphism $\lambda:\mathcal{U}^\omega_{ana}\rightarrow \mathbb{U}(\mathcal{U}_{ana}^\omega)$ is the analog of the quantization map (c.f. Equation \ref{equation:quanatization}).
	
	Since the Tomita algebra is a $*$-algebra, we can also define an anti-linear map on the universal enveloping algebra $\mathbb{U}(\mathcal{U}^\omega_{ana})$:	\begin{align}
		\begin{split}
			&*:\mathbb{U}(\mathcal{U}^\omega_{ana})\rightarrow \mathbb{U}(\mathcal{U}^\omega_{ana}):
			(\frac{1}{n!}\sum_{\sigma\in S_n}
			\lambda(x_1)...\lambda(x_n))^* \mapsto \frac{1}{n!}\sum_{\sigma\in S_n}\lambda(x_n^*)...\lambda(x_1^*)\pl.
		\end{split}
	\end{align}
	Now we use the following combinatorial formulae to define abstract $\lambda_\emptyset$-basis and $\lambda_{\emptyset\emptyset}$-basis.
	\begin{definition}
		For $n \geq 2$, inductively define the following elements in $\mathbb{U}(\mathcal{U}^\omega_{ana})$:
		\begin{align}\label{equation:abstractEmptyBasis}
			\begin{split}
				&\lambda_\emptyset(x_1) := \lambda(x_1)
				\\&\lambda_\emptyset(x_1,...,x_n):=\lambda(x_1)\lambda_\emptyset(x_2,...,x_n) - \sum_{2\leq i \leq n}\lambda_\emptyset(x_2,...,x_1x_i,...,x_n)\pl.
			\end{split}
		\end{align}
		Using the $\lambda_\emptyset$-vectors, we define the $\lambda_{\emptyset\emptyset}$-vectors:
		\begin{align}\label{equation:abstractEmptyEmptyBasis}
			\begin{split}
				&\lambda_{\emptyset\emptyset}(x_1) :=\lambda_\emptyset(x_1) - \omega(x_1)\\
				&
				\lambda_{\emptyset\emptyset}(x_1,...,x_n) := \sum_{0\leq i \leq n}\sum_{\substack{A\subset[n]\\ |A| = i}}\prod_{i\in A}\omega(-x_i)\lambda_\emptyset(\{x_i:i\notin A\})
			\end{split}
		\end{align}
		where $\omega(x_1)$ is in $\mathbb{C}\ssubset\mathbb{U}(\mathcal{U}^\omega_{ana})$.
	\end{definition}
	Both $\lambda_\emptyset$-basis and $\lambda_{\emptyset\emptyset}$-basis are well-defined in $\mathbb{U}(\mathcal{U}^\omega_{ana})$. Using the abstract $\lambda_{\emptyset\emptyset}$-vectors, we can define the inner product:
	\begin{definition}
		$\langle\cdot,\cdot\rangle:\mathbb{U}(\mathcal{U}^\omega_{ana})\times\mathbb{U}(\mathcal{U}^\omega_{ana})\rightarrow \mathbb{C}$ is a sesquilinear form such that:
		\begin{equation}\label{equation:abstractQuasiFree}
			\langle\lambda_{\emptyset\emptyset}(x_1,...,x_n),\lambda_{\emptyset\emptyset}(y_1,...,y_m)\rangle = \delta_{n,m}\sum_{\pi\in S_n}\prod_{1\leq i \leq n}\omega(x^*_i y_{\pi(i)})\pl.
		\end{equation}
	\end{definition}
	\begin{lemma}\label{lemma:abstractHilbert}
		The sesquilinear form is positive and non-degenerate. And the map:
		\begin{equation}
			\iota: \mathbb{U}(\mathcal{U}^\omega_{ana})\rightarrow\mathcal{F}(L_2(N,\omega)):\lambda_{\emptyset\emptyset}(x_1,...,x_n)\mapsto \otimes_{1\leq i \leq n}\widehat{x_i}
		\end{equation}
		is an isometric embedding and the image is dense in the Fock space. Here $\widehat{x}\in L_2(N,\omega)$ is the usual embedding: $\widehat{\cdot}:n_\omega\rightarrow L_2(N,\omega): x\mapsto \hat{x}$ where $n_\omega$ is the left ideal of the right bounded vectors of the weight $\omega$.
	\end{lemma}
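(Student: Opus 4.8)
The plan is to reduce everything to the Poincaré–Birkhoff–Witt (PBW) structure of $\mathbb{U}(\mathcal{U}^\omega_{ana})$ together with the symmetry of the $\lambda_{\emptyset\emptyset}$-vectors, so that the abstract sesquilinear form of Equation \ref{equation:abstractQuasiFree} becomes literally the pullback, under a linear isomorphism onto the algebraic symmetric Fock space, of the genuine (positive-definite) Fock inner product.

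First I would establish that $\lambda_\emptyset(x_1,\dots,x_n)$, and hence $\lambda_{\emptyset\emptyset}(x_1,\dots,x_n)$, is a \emph{symmetric} multilinear function of its arguments. Multilinearity is immediate from the recursion Equation \ref{equation:abstractEmptyBasis} by induction on $n$. Symmetry under the transpositions $(j\,\,j{+}1)$ with $j\ge 2$ follows from the inductive symmetry of $\lambda_\emptyset(x_2,\dots,x_n)$, while symmetry under $(1\,2)$ uses that $\lambda$ is a Lie algebra homomorphism, $\lambda(xy-yx)=[\lambda(x),\lambda(y)]$; indeed for $n=2$ one checks $\lambda_\emptyset(x_1,x_2)-\lambda_\emptyset(x_2,x_1)=[\lambda(x_1),\lambda(x_2)]-\lambda(x_1x_2-x_2x_1)=0$, and the general case propagates this identity through the recursion. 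Since adjacent transpositions generate $S_n$, both families are symmetric.

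Next I would invoke PBW. By multilinearity and symmetry the assignment $x_1\odot\cdots\odot x_n\mapsto\lambda_{\emptyset\emptyset}(x_1,\dots,x_n)$ defines a linear map out of $\operatorname{Sym}^n(\mathcal{U}^\omega_{ana})$. Filtering $\mathbb{U}(\mathcal{U}^\omega_{ana})$ by word length, the recursion shows $\lambda_{\emptyset\emptyset}(x_1,\dots,x_n)=\lambda(x_1)\cdots\lambda(x_n)+(\text{terms of length}<n)$, so its PBW symbol in $\gr_n\mathbb{U}(\mathcal{U}^\omega_{ana})\cong\operatorname{Sym}^n(\mathcal{U}^\omega_{ana})$ is exactly $x_1\odot\cdots\odot x_n$. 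Hence the direct sum of these maps is a graded linear isomorphism $\operatorname{Sym}(\mathcal{U}^\omega_{ana})\xrightarrow{\sim}\mathbb{U}(\mathcal{U}^\omega_{ana})$; in particular the $\lambda_{\emptyset\emptyset}$-vectors span $\mathbb{U}(\mathcal{U}^\omega_{ana})$ and the only linear relations among them are the symmetric ones. This is the step that makes the form of Equation \ref{equation:abstractQuasiFree} well defined on all of $\mathbb{U}(\mathcal{U}^\omega_{ana})$, and not merely on formal symbols.

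With the isomorphism in hand I would define $\iota$ as the composite $\mathbb{U}(\mathcal{U}^\omega_{ana})\xrightarrow{\sim}\operatorname{Sym}(\mathcal{U}^\omega_{ana})\to\mathcal{F}(L_2(N,\omega))$, where the second map is induced on each symmetric power by $\mathcal{U}^\omega_{ana}\ni x\mapsto\widehat{x}\in L_2(N,\omega)$. This is manifestly well defined and sends $\lambda_{\emptyset\emptyset}(x_1,\dots,x_n)$ to $\otimes^s_{1\le i\le n}\widehat{x_i}$. The isometry is then a one-line verification: with the permanent normalisation used in Corollary \ref{corollary:Fock} and Corollary \ref{corollary:FockIso}, the Fock inner product of symmetric tensors is $\langle\otimes^s_i\widehat{x_i},\otimes^s_j\widehat{y_j}\rangle=\delta_{n,m}\sum_{\pi\in S_n}\prod_i\omega(x_i^*y_{\pi(i)})$, which is exactly the right-hand side of Equation \ref{equation:abstractQuasiFree}. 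Consequently the abstract form is the pullback of a positive-definite inner product, hence positive; since $\omega$ is faithful the map $\widehat{\cdot}$ is injective, so each $\operatorname{Sym}^n(\widehat{\cdot})$ and therefore $\iota$ is injective (we work over $\mathbb{C}$), forcing non-degeneracy. Density of the image follows from density of $\mathcal{U}^\omega_{ana}$ in $L_2(N,\omega)$, which gives density of the symmetric tensors in each $n$-particle sector and hence of their span in $\mathcal{F}(L_2(N,\omega))$. The main obstacle is precisely the symmetry-plus-PBW backbone of the second and third paragraphs: once one knows the $\lambda_{\emptyset\emptyset}$-vectors are symmetric and as \emph{free} as $\operatorname{Sym}(\mathcal{U}^\omega_{ana})$, the inner-product identity, positivity, and density are formal; the symmetry computation, where the bracket relation $\lambda([x,y])=[\lambda(x),\lambda(y)]$ is genuinely used, is the delicate combinatorial part and mirrors the concrete calculations of Lemma \ref{lemma:emptybasis} and Lemma \ref{lemma:FockTransform}.
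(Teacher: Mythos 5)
Your proposal is correct, and it ends where the paper's own (two-sentence) proof ends: the form in Equation \ref{equation:abstractQuasiFree} is, in its permanent normalisation, exactly the symmetric Fock inner product, so isometry and positivity are read off, and density holds because the symmetric tensors with entries in $\widehat{\mathcal{U}^\omega_{ana}}$, which is dense in $L_2(N,\omega)$, lie in the image. The genuine difference is your PBW backbone, which the paper skips entirely: you show that the $\lambda_\emptyset$- and $\lambda_{\emptyset\emptyset}$-vectors are symmetric multilinear functions of their entries (using $[\lambda(x),\lambda(y)]=\lambda([x,y])$ in $\mathbb{U}(\mathcal{U}^\omega_{ana})$), that $\lambda_{\emptyset\emptyset}(x_1,\dots,x_n)$ has top-order symbol $x_1\odot\cdots\odot x_n$, and hence that $x_1\odot\cdots\odot x_n\mapsto\lambda_{\emptyset\emptyset}(x_1,\dots,x_n)$ is a linear isomorphism $\operatorname{Sym}(\mathcal{U}^\omega_{ana})\to\mathbb{U}(\mathcal{U}^\omega_{ana})$. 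This buys something real: both the form and $\iota$ are prescribed only on $\lambda_{\emptyset\emptyset}$-vectors, so their sesquilinear/linear extensions to all of $\mathbb{U}(\mathcal{U}^\omega_{ana})$ are well defined only if one knows that these vectors span and that the prescription is compatible with every linear relation among them; your argument identifies the relations as exactly the symmetric multilinear ones, which the permanent formula respects, and this is what makes ``positive and non-degenerate on $\mathbb{U}(\mathcal{U}^\omega_{ana})$'' a theorem rather than a definition chasing its own tail. The paper's brevity comes from leaving this implicit. Two points to tighten: the inductive step of the symmetry claim, which you only assert propagates, does close up --- applying the recursion Equation \ref{equation:abstractEmptyBasis} twice and cancelling the terms symmetric under relabelling, the difference under swapping $x_1,x_2$ collapses to
\begin{equation*}
\lambda([x_1,x_2])\lambda_\emptyset(x_3,\dots,x_n)-\lambda_\emptyset([x_1,x_2],x_3,\dots,x_n)-\sum_{3\leq i \leq n}\lambda_\emptyset(x_3,\dots,[x_1,x_2]x_i,\dots,x_n)\pl,
\end{equation*}
which vanishes by one further application of the recursion --- and you should record that the unit $1\in\mathbb{C}\subset\mathbb{U}(\mathcal{U}^\omega_{ana})$ is sent to the Fock vacuum, so the $n=0$ sector is covered and $\otimes_{1\leq i\leq n}\widehat{x_i}$ in the statement is read, as you do, as the symmetric tensor consistent with Corollary \ref{corollary:FockIso}.
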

	\begin{proof}
		By Equation \ref{equation:abstractQuasiFree}, it is clear that the linear map is an isometry. In particular, the basis of the Fock space $\{\otimes_{1\leq i \leq n}\widehat{x_i}: n\in\mathbb{N}\}$ is in the image of $\iota$ and hence $\iota(\mathbb{U}(\mathcal{U}^\omega_{ana}))$ is a dense subspace of the Fock space. 
	\end{proof}
	The closure of $\mathbb{U}(\mathcal{U}^\omega_{ana})$ under the sesquilinear form $\langle\cdot,\cdot\rangle$ is isometrically isomorphic to the symmetric Fock space $\mathcal{F}(L_2(N,\omega))$. And we will denote this closure as $\mathcal{H}_\omega$.
	\begin{corollary}\label{corollary:abstractEmptyBasis}
		$\lambda_\emptyset$-vectors form another basis for $H_\omega$ and we have:
		\begin{equation}\label{equation:abstractEmpty}
			\langle\lambda_\emptyset(x_1,...,x_n),\lambda_\emptyset(y_1,...,y_m)\rangle = \sum_{\substack{0\leq p \leq \min\{m,n\}\\ A_n\sqcup B_n = [n]\\A_m\sqcup B_m = [m]\\|B_n| = |B_m| = p}}\prod_{i\in B_n, j\in B_m}\omega(x_i^*y_j)\prod_{i\in A_n}\omega(x_i^*)\prod_{j\in A_m}\omega(y_j)\pl.
		\end{equation}
		In addition we have the upper bound:
		\begin{align}\label{equation:abstractUpperBound}
			\begin{split}
				|\langle\lambda_\emptyset(x_1,...,x_n),\lambda_\emptyset(y_1,...,y_m)\rangle| \leq Cn^nm^m\prod_{1\leq i \leq n}|||x_i|||^2\prod_{1\leq j \leq m}|||y_j|||^2\pl.
			\end{split}
		\end{align}
	\end{corollary}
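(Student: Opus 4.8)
The plan is to treat the statement as a purely combinatorial consequence of the two defining relations of the abstract construction: the change of basis \ref{equation:abstractEmptyEmptyBasis} expressing $\lambda_{\emptyset\emptyset}$ in terms of $\lambda_\emptyset$, and the quasi-free relation \ref{equation:abstractQuasiFree} \emph{defining} the inner product on $\lambda_{\emptyset\emptyset}$-vectors. None of the ultraproduct or analytic machinery used in the concrete construction is needed here; the content is precisely that the identities relating the two bases in Lemma \ref{lemma:emptybasis} and Corollary \ref{corollary:Fock} persist once \ref{equation:abstractQuasiFree} is taken as a definition.

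First I would invert \ref{equation:abstractEmptyEmptyBasis}. Since that relation writes $\lambda_{\emptyset\emptyset}(x_1,\ldots,x_n)$ as $\lambda_\emptyset(x_1,\ldots,x_n)$ plus a combination of strictly lower-order $\lambda_\emptyset$-vectors with coefficients $\prod_{i\in A}(-\omega(x_i))$, the transformation is unitriangular in the number of arguments. Because the multiplicative weight $\prod_{i\in A}(-\omega(x_i))$ is inverse to $\prod_{i\in A}\omega(x_i)$ under subset convolution, Möbius inversion on the Boolean lattice of subsets of $[n]$ gives
\[
\lambda_\emptyset(x_1,\ldots,x_n) = \sum_{A\subseteq[n]}\prod_{j\notin A}\omega(x_j)\,\lambda_{\emptyset\emptyset}(\{x_i : i\in A\}),
\]
expressing each $\lambda_\emptyset$-vector as a finite combination of $\lambda_{\emptyset\emptyset}$-vectors of order at most $n$. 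Combined with Lemma \ref{lemma:abstractHilbert}, which identifies the closed span of the $\lambda_{\emptyset\emptyset}$-vectors with all of $\mathcal{F}(L_2(N,\omega))$, the invertibility of the transformation shows the $\lambda_\emptyset$-vectors span the same dense subspace and hence form another basis of $H_\omega$.

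Next I would establish \ref{equation:abstractEmpty}. Substituting the inverse expansion into $\langle\lambda_\emptyset(x_1,\ldots,x_n),\lambda_\emptyset(y_1,\ldots,y_m)\rangle$, expanding by sesquilinearity, and applying the defining relation \ref{equation:abstractQuasiFree} to each pair $\langle\lambda_{\emptyset\emptyset}(\{x_i\}_{i\in A}),\lambda_{\emptyset\emptyset}(\{y_j\}_{j\in B})\rangle$ reduces everything to bookkeeping of permutations and the scalar factors $\omega(x_i)$, $\omega(y_j)$. The Kronecker delta in \ref{equation:abstractQuasiFree} forces $|A|=|B|=:p$, the surviving permutation sum over $A$ and $B$ produces the matched factors $\omega(x_i^*y_j)$, and re-summing over the complementary subsets $A_n=[n]\setminus A$, $A_m=[m]\setminus B$ attaches the unmatched factors $\omega(x_i^*)$, $\omega(y_j)$; collecting terms yields exactly the right-hand side of \ref{equation:abstractEmpty}. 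This is the identity already verified in the reverse direction in Corollary \ref{corollary:Fock}, where \ref{equation:emptysetInnerProduct} was shown to imply the quasi-free relation; since the change of basis is invertible the implication runs both ways, so taking \ref{equation:abstractQuasiFree} as the definition recovers \ref{equation:abstractEmpty}. I expect this collapse to be the main obstacle, in the sense that the real point to confirm is that the cancellation is \emph{formal} and uses only the combinatorial relations, with no appeal to the analytic/ultraproduct input of the concrete model; once one checks this, the computation is a transcription of the proof of Lemma \ref{lemma:emptybasis}.

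Finally the upper bound \ref{equation:abstractUpperBound} is routine once \ref{equation:abstractEmpty} is in hand. Each matched factor satisfies $|\omega(x_i^*y_j)|\le\omega(x_i^*x_i)^{1/2}\omega(y_j^*y_j)^{1/2}\le|||x_i|||\,|||y_j|||$ by Cauchy--Schwarz, while each unmatched factor satisfies $|\omega(x_i^*)|\le|||x_i|||$ and $|\omega(y_j)|\le|||y_j|||$, exactly as in Lemma \ref{lemma:momentformulaPoisson}. Estimating the number of terms in \ref{equation:abstractEmpty} by the same partition/Stirling count used in Lemma \ref{lemma:emptybasis}(6) then produces the prefactor $Cn^nm^m$; alternatively one can invoke Cauchy--Schwarz together with the diagonal norm estimate of Lemma \ref{lemma:emptybasis}(6) applied to $\lambda_\emptyset(x_1,\ldots,x_n)$ and $\lambda_\emptyset(y_1,\ldots,y_m)$ separately.
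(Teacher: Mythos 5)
Your proposal is correct and takes essentially the same route as the paper: both arguments invert Equation \ref{equation:abstractEmptyEmptyBasis} by inclusion--exclusion (your M\"obius inversion is exactly the paper's Equation \ref{equation:abstractEmptyInverse}), substitute into the inner product so that the defining quasi-free relation \ref{equation:abstractQuasiFree} collapses the sum to \ref{equation:abstractEmpty}, and obtain the bound \ref{equation:abstractUpperBound} by the same Cauchy--Schwarz-plus-counting estimate as in Lemma \ref{lemma:emptybasis}. The only cosmetic difference is that you evaluate the general sesquilinear form directly, whereas the paper computes $||\lambda_\emptyset(x_1,\ldots,x_n)||^2$ and appeals to polarization.
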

	\begin{proof}
		By the definition of $\lambda_{\emptyset\emptyset}$-vectors, it is clear that for all $n\in\mathbb{N}$, we have $\lambda_{\emptyset\emptyset}(x_1,...,x_n)\in \text{span}\{\lambda_\emptyset(y_1,...,y_m): y_i \in \mathcal{U}^\omega_{ana}\}$. Thus $\lambda_{\emptyset}$-vectors form a basis for $H_\omega$. 
		
		To prove Equation \ref{equation:abstractEmpty}, it suffices to prove the corresponding formula for $||\lambda_\emptyset(x_1,...,x_n)||^2$ and use the polarization identity. The formula for $||\lambda_\emptyset(x_1,...,x_n)||^2$ can be proved by reversing Equation \ref{equation:abstractEmptyEmptyBasis}. By the inclusion-exclusion principle, we have:
		\begin{align}\label{equation:abstractEmptyInverse}
			\begin{split}
				\lambda_\emptyset(x_1,...,x_{n}) &= \sum_{0\leq i \leq n}\sum_{\substack{A\subset[n]\\ |A| = i}}\prod_{i\in A}\omega(x_i)\lambda_{\emptyset\emptyset}(\{x_i:i\notin A\})
			\end{split}
		\end{align}
		Hence we have:
		\begin{align*}
			\begin{split}
				||\lambda_\emptyset(x_1,...,x_{n})||^2 &= \sum_{0\leq i \leq n}\sum_{\substack{A, B\subset [n] \\ |A| = |B| = i}}\prod_{i\in A}\omega(x^*_i)\prod_{j\in B}\omega(x_j)\langle\lambda_{\emptyset\emptyset}(\{x_i: i\notin A\}), \lambda_{\emptyset\emptyset}(\{x_j: j\notin B\})\rangle
				\\
				&=\sum_{\substack{0\leq i \leq n \\ A,B\subset[n]\\|A| = |B| = i}}\prod_{i\notin A, j\notin B}\omega(x^*_ix_j)\prod_{i\in A}\omega(x_i^*)\prod_{j\in B}\omega(x_j)\pl.
			\end{split}
		\end{align*}
		This is the equation we want to show. Finally, the derivation of the upper bound follows the exact same calculation as Lemma \ref{lemma:emptybasis}.
	\end{proof}
	We will construct the Poisson algebra in $\mathbb{B}(\mathcal{H}_\omega)$. The Poisson algebra will be generated by a set of unitaries whose generators are unbounded affiliated operators. These generators have controlled actions on the dense subspace $\mathbb{U}(\mathcal{U}^\omega_{ana})$.
	\begin{lemma}\label{lemma:unbounded}
		For $y\in\mathcal{U}^\omega_{ana}$, the left multiplication by $\lambda(y)$ on the Hilbert space $\mathcal{H}_\omega$ is a densely defined operator and its domain includes the universal enveloping algebra. $\mathbb{U}(\mathcal{U}^\omega_a)\ssubset D(\lambda(y))$. 
	\end{lemma}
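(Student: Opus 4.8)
The plan is to identify $\mathbb{U}(\mathcal{U}^\omega_{ana})$ as an invariant dense subspace and verify directly, through the action formula built into the recursive definition of the $\lambda_\emptyset$-vectors, that left multiplication by $\lambda(y)$ does not leave it. First I would recall from Lemma \ref{lemma:abstractHilbert} that $\mathbb{U}(\mathcal{U}^\omega_{ana})$, equipped with the positive non-degenerate form $\langle\cdot,\cdot\rangle$, embeds isometrically onto a dense subspace of $\mathcal{H}_\omega$; in particular every element of $\mathbb{U}(\mathcal{U}^\omega_{ana})$ is already a genuine vector of finite norm in $\mathcal{H}_\omega$. By Corollary \ref{corollary:abstractEmptyBasis} the $\lambda_\emptyset$-vectors $\lambda_\emptyset(y_1,\dots,y_n)$ with $y_i\in\mathcal{U}^\omega_{ana}$ span this dense subspace, so it suffices to check that $\lambda(y)$ carries each such vector back into $\mathcal{H}_\omega$.

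The key step is the action formula obtained by transposing the defining recursion in Equation \ref{equation:abstractEmptyBasis}:
\begin{equation*}
\lambda(y)\lambda_\emptyset(y_1,\dots,y_n) = \lambda_\emptyset(y,y_1,\dots,y_n) + \sum_{1\leq i \leq n}\lambda_\emptyset(y_1,\dots,yy_i,\dots,y_n).
\end{equation*}
Because the Tomita algebra $\mathcal{U}^\omega_{ana}$ is a $*$-subalgebra of $N$, it is closed under multiplication, so each product $yy_i$ again belongs to $\mathcal{U}^\omega_{ana}$ and every summand on the right is a bona fide $\lambda_\emptyset$-vector with Tomita-algebra arguments. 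Hence $\lambda(y)\lambda_\emptyset(y_1,\dots,y_n)$ is a finite linear combination of elements of $\mathbb{U}(\mathcal{U}^\omega_{ana})$, and is therefore itself an element of $\mathbb{U}(\mathcal{U}^\omega_{ana})\ssubset\mathcal{H}_\omega$. Extending by linearity, $\lambda(y)$ is defined on all of $\mathbb{U}(\mathcal{U}^\omega_{ana})$, and since this subspace is dense, $\lambda(y)$ is densely defined, which is the assertion.

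If one additionally wants the quantitative finiteness made explicit — which is what the subsequent development (essential self-adjointness of $\lambda(y)$, checked by excluding $\pm i$ from the point spectrum) will need — one applies the upper bound of Corollary \ref{corollary:abstractEmptyBasis} to each of the $n+1$ summands, bounding $\|\lambda(y)\lambda_\emptyset(y_1,\dots,y_n)\|$ in terms of $|||y|||$ and the $|||y_i|||$ via the triangle inequality. The one point that requires care here, and the only real obstacle, is controlling the triple norm $|||yy_i|||$ of the products: finiteness is immediate from closure of the Tomita algebra, but obtaining a clean estimate in terms of $|||y|||$ and $|||y_i|||$ — in particular the behaviour of the $||\cdot||_1$-component under multiplication — is the delicate bookkeeping. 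For the present statement, however, only finiteness of each term is needed, so the argument closes without any analytic difficulty.
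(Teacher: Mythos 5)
Your proof is correct, but it reaches the key finiteness step by a genuinely different (and more elementary) route than the paper. Both proofs rest on the same action formula $\lambda(y)\lambda_\emptyset(x_1,\dots,x_n)=\lambda_\emptyset(y,x_1,\dots,x_n)+\sum_{1\leq i\leq n}\lambda_\emptyset(x_1,\dots,yx_i,\dots,x_n)$ obtained by transposing Equation \ref{equation:abstractEmptyBasis}; the difference is how one sees that the right-hand side is a finite-norm vector of $\mathcal{H}_\omega$. You argue algebraically: the Tomita algebra is closed under products, so each summand is a $\lambda_\emptyset$-vector with Tomita-algebra entries, hence an element of $\mathbb{U}(\mathcal{U}^\omega_{ana})$, which by Lemma \ref{lemma:abstractHilbert} consists of finite-norm vectors. (Indeed, for the bare domain statement the action formula is not strictly needed: $\mathbb{U}(\mathcal{U}^\omega_{ana})$ is an algebra containing $\lambda(y)$, hence invariant under left multiplication by it.) The paper instead proves finiteness quantitatively: it expands $\|\lambda(y)\lambda_\emptyset(x_1,\dots,x_n)\|^2$ into inner products of $\lambda_\emptyset$-vectors, each of the form of Equation \ref{equation:abstractEmpty}, bounds each one by Equation \ref{equation:abstractUpperBound}, and records the resulting estimate $\|\lambda(y)\lambda_\emptyset(x_1,\dots,x_n)\|^2\leq C\binom{n+1}{2}(n+1)^{2(n+1)}|||y|||^2\prod_{1\leq i\leq n}|||x_i|||^2$ as Equation \ref{equation:abstractActionNorm}. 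Your route is shorter and suffices for the lemma as stated; what the paper's route buys is precisely the labeled degree-by-degree growth bound, which is the kind of uniform control used elsewhere to exponentiate the generators and pass to ultraproducts (compare Equation \ref{equation:uniformAction} in the proof of Proposition \ref{proposition:PoissonForWeight1}) --- this is what your optional final paragraph gestures at. One clarification: the $|||yx_i|||$ bookkeeping you flag as delicate is in fact unproblematic, since the norm of Equation \ref{equation:tripleNorm} is submultiplicative on the Tomita algebra: $\|yx_i\|\leq\|y\|\,\|x_i\|$, $\omega(x_i^*y^*yx_i)^{1/2}\leq\|y\|\,\omega(x_i^*x_i)^{1/2}$, $\omega(yx_ix_i^*y^*)^{1/2}\leq\|x_i\|\,\omega(yy^*)^{1/2}$, and $\|yx_i\|_1\leq\omega(yy^*)^{1/2}\omega(x_i^*x_i)^{1/2}$, so that $|||yx_i|||\leq|||y|||\,|||x_i|||$, which is what the paper's estimate implicitly uses.
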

	\begin{proof}
		For any $x_1,...,x_n\in \mathcal{U}^\omega_{ana}$, we have:
		\begin{align*}
			\begin{split}
				||\lambda(y)\lambda_\emptyset(x_1,...,x_n)||^2&= ||\lambda_\emptyset(y,x_1,...,x_n) + \sum_{1\leq i \leq n}\lambda_{\emptyset}(x_1,...,yx_i,...,x_n)||^2
			\end{split}
		\end{align*}
		There are $\binom{n+1}{2}$ inner products in the summation. Each summand has the form of Equation \ref{equation:abstractEmpty}. Therefore each term has an upper bound given by Equation \ref{equation:abstractUpperBound}. Hence we have:
		\begin{align}\label{equation:abstractActionNorm}
			\begin{split}
				||\lambda(y)\lambda_\emptyset(x_1,...,x_n)||^2 &\leq C\binom{n+1}{2}(n+1)^{2(n+1)}|||y|||^2\prod_{1\leq i \leq n}|||x_i|||^2\pl.
			\end{split}
		\end{align}
		This upper bound holds for all $x_i\in \mathcal{U}^\omega_{ana}$. Therefore $\text{span}\{\lambda_\emptyset(x_1,...,x_n):x_i\in\mathcal{U}^\omega_{ana}\} \ssubset D(\lambda(y))$. $\lambda(y)$ is a densely defined operator on $\mathcal{H}_\omega$ with $\mathbb{U}(\mathcal{U}^\omega_{ana})$ inside its domain.
	\end{proof}
	For positive $y\in\mathcal{U}^\omega_{ana}$, $\lambda(y)$ is a symmetric operator. In fact, it is essentially self-adjoint:
	\begin{lemma}\label{lemma:abstractEssentialSA}
		For non-zero positive elements in the Tomita algebra: $y^* = y\in \mathcal{U}^\omega_a$, the left multiplication by $\lambda(y)$ is an essentially self-adjoint operator on $\mathcal{H}_\omega$. 
	\end{lemma}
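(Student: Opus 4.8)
The plan is to move the problem onto the symmetric Fock space, where $\lambda(y)$ becomes a standard field-plus-number operator, and then invoke Nelson's commutator theorem. First I would use the isometric isomorphism of Lemma \ref{lemma:abstractHilbert} to identify $\mathcal{H}_\omega$ with $\mathcal{F}(L_2(N,\omega))$, under which $\lambda_{\emptyset\emptyset}(x_1,\dots,x_n)$ maps to the symmetric tensor $\hat x_1\vee\cdots\vee\hat x_n$; in particular the algebraic domain $\mathbb{U}(\mathcal{U}^\omega_{ana})$ is carried precisely onto the finite-particle subspace $\mathcal{F}_0$, which is the natural core for the number operator.

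Next I would rewrite the action of $\lambda(y)$ in this picture. The same combinatorial computation that proves Lemma \ref{lemma:noQuasiParticle} shows that, for $y=y^*\in\mathcal{U}^\omega_{ana}$, on $\mathcal{F}_0$ one has
\[
\lambda(y)=a^*(\hat y)+a(\hat y)+d\Gamma(L_y)-\omega(y),
\]
where $a^*,a$ are the creation/annihilation operators, $L_y$ is left multiplication by $y$ on $L_2(N,\omega)$ --- bounded and self-adjoint with $\|L_y\|=\|y\|$ because $y=y^*$ --- and $\hat y\in L_2(N,\omega)$ since $y\in n_\omega$. The four summands in Lemma \ref{lemma:noQuasiParticle} correspond, in order, to creation, the second quantization $d\Gamma(L_y)$, the scalar $-\omega(y)$, and annihilation; the creation and annihilation terms combine into the self-adjoint Segal field operator, consistently with $\lambda(y)$ being symmetric.

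Finally I would apply Nelson's commutator theorem with comparison operator $\mathsf{N}_0:=d\Gamma(1)+1\ge 1$, whose core is exactly $\mathcal{F}_0$. The two required estimates are routine consequences of the elementary $\sqrt{n+1}$ bounds on $a^*,a$ together with $\|d\Gamma(L_y)\Psi_n\|\le \|y\|\,n\,\|\Psi_n\|$: one checks the relative bound $\|\lambda(y)\psi\|\le c\,\|\mathsf{N}_0\psi\|$ with $c$ controlled by $|||y|||$, and, since $[\mathsf{N}_0,\lambda(y)]=a^*(\hat y)-a(\hat y)$ is $\mathsf{N}_0^{1/2}$-bounded, the commutator estimate
\[
\bigl|\langle\lambda(y)\psi,\mathsf{N}_0\psi\rangle-\langle\mathsf{N}_0\psi,\lambda(y)\psi\rangle\bigr|=\bigl|\langle\psi,[\mathsf{N}_0,\lambda(y)]\psi\rangle\bigr|\le c\,\|\mathsf{N}_0^{1/2}\psi\|^2 .
\]
Nelson's theorem then yields essential self-adjointness of $\lambda(y)$ on any core of $\mathsf{N}_0$, in particular on $\mathbb{U}(\mathcal{U}^\omega_{ana})$.

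I expect the main technical point to be the careful identification of the combinatorial action of $\lambda(y)$ on the $\lambda_{\emptyset\emptyset}$-basis with the genuine Fock operators $a^*(\hat y)$, $a(\hat y)$ and $d\Gamma(L_y)$, together with the verification that $\mathcal{F}_0$ is a core for $\mathsf{N}_0$ and that all constants are finite precisely because $y$ lies in the Tomita algebra (so $|||y|||<\infty$, giving $\hat y\in L_2(N,\omega)$ and $L_y$ bounded). I also note that only $y=y^*$ is used, so positivity is not essential; and although an alternative via Nelson's analytic vector theorem is tempting, the crude norm bounds of Corollary \ref{corollary:abstractEmptyBasis} and Lemma \ref{lemma:unbounded} iterate to a super-exponential growth that defeats analyticity, which is exactly why the commutator theorem --- exploiting the number-operator grading --- is the more robust route.
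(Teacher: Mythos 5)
You take a genuinely different route from the paper, and your route is sound. The paper never passes to the Fock picture: working entirely with $\lambda$-vectors, it uses the moment formula and the \emph{positivity} of $y$ (through $y^{1/2}$) to prove the algebraic identity
\begin{equation*}
\langle\xi,\lambda(y)\xi\rangle=\omega(y)\,\|\xi\|^2+\Big\|\sum_{j}c_j\sum_{k}\lambda(x^{(j)}_1)\cdots\lambda(y^{1/2}x^{(j)}_k)\cdots\lambda(x^{(j)}_{n_j})\Big\|^2
\end{equation*}
for every finite linear combination $\xi$ in the algebraic domain, and then argues that a vector in $\ker(\lambda(y)\pm i)$ forces $\omega(y)\|\xi\|^2=0$, so the deficiency spaces vanish. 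Your proof instead identifies $\lambda(y)$ with a Segal field plus $d\Gamma(L_y)$ plus a scalar and runs Nelson's commutator theorem against the number operator. What each buys: the paper's argument is self-contained and purely combinatorial, but it needs $y\geq 0$ and, as written, it applies the identity to putative deficiency vectors, which live in the domain of the adjoint $\lambda(y)^*$ rather than in the algebraic domain where the identity was established (recall a positive symmetric operator need not be essentially self-adjoint, so this step genuinely needs an extra approximation). Your argument needs only $y=y^*$, and the commutator theorem is built precisely to work from estimates on a core, so it avoids that domain subtlety altogether.

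Three corrections to your write-up. First, $\iota(\mathbb{U}(\mathcal{U}^\omega_{ana}))$ is not the whole finite-particle subspace, only a graded dense subspace of it; this is harmless, since on the $n$-particle sector $\mathsf{N}_0$ acts as the scalar $n+1$ and hence any graded dense subspace is automatically a core for $\mathsf{N}_0$, but the claim should be stated that way. Second, do not cite Lemma \ref{lemma:noQuasiParticle} verbatim for your decomposition: with the two minus signs appearing there, the right-hand side is not even symmetric (and it would give $\langle\xi_\omega,\lambda(x)\xi_\omega\rangle=-\omega(x)$, contradicting Lemma \ref{lemma:momentformulaPoisson}); re-deriving the action directly from Equations \ref{equation:abstractEmptyBasis} and \ref{equation:abstractEmptyEmptyBasis} gives
\begin{equation*}
\lambda(y)=a^*(\hat y)+a(\hat y)+d\Gamma(L_y)+\omega(y)
\end{equation*}
with all plus signs. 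Neither the scalar nor the overall sign of the field term matters for Nelson's theorem, but this decomposition is the backbone of your proof and must be proved, in the weight setting, rather than quoted. Third, your closing remark about analytic vectors is backwards: since $n^n/n!\leq e^n$, the bounds of Lemma \ref{lemma:momentformulaPoisson} show that every vector of $\mathbb{U}(\mathcal{U}^\omega_{ana})$ \emph{is} an analytic vector for $\lambda(y)$ (this geometric-series convergence is exactly what Equation \ref{equation:uniformAction} exploits), so Nelson's analytic vector theorem is also available once symmetry is known; preferring the commutator theorem is fine, but not for the reason you give.
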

	\begin{proof}
		By the recursive definition of $\lambda_\emptyset$-vectors (c.f. Equation \ref{equation:abstractEmptyBasis}), $\text{span}\{\lambda_\emptyset(x_1,...,x_n)\} = \text{span}\{\lambda(x_1)...\lambda(x_n)\}$. Hence to see $\lambda(y)$ is essentially self-adjoint, it suffices to work with the $\lambda$-basis. 
		
		We first derive some lengthly formulae describing the action of $\lambda(y)$. First, we have:
		\begin{align}\label{equation:abstractActionSA}
			\begin{split}
				\langle&\lambda(x_1)...\lambda(x_n), \lambda(y)\lambda(x_1)...\lambda(x_n)\rangle = \sum_{\substack{\sigma\in P(2n+1) \\ \{y\}\in\sigma}}\sum_{\substack{A\in \sigma \\ B_A\cup B_A^* = A}}\omega(y)\omega(\overleftarrow{\prod_{(i+n)\in B_A^*}}x_i^*\overrightarrow{\prod_{i\in B_A}}x_i) \\&+ \sum_{\substack{\sigma\in P(2n+1) \\ \{y\}\notin \sigma}}\bigg[\big[\sum_{\substack{y\notin A\in \sigma \\ B_A\cup B_A^* = A}}\omega(\overleftarrow{\prod_{(i+n)\in B_A^*}}x_i^*\overrightarrow{\prod_{i\in B_A}}x_i)\big]\times\big[\sum_{\substack{y\in A_0 \\ B_{A_0}\cup B_{A_0}^* = A_0}}\omega(\overleftarrow{\prod_{(i+n)\in B_{A_0}^*}}x_i^*\cdot y \cdot\overrightarrow{\prod_{i\in B_{A_0}}}x_i)\big]\bigg]
			\end{split}
		\end{align}
		where the first summation is over the partitions that include $\{y\}$ as a singleton and the second summation is over the rest of the partitions. Since $\{y\}$ is a singleton in the first term, we can rewrite this term as:
		\begin{align*}
			\begin{split}
				\sum_{\substack{\sigma\in P(2n+1) \\ \{y\}\in\sigma}}\sum_{\substack{A\in \sigma \\ B_A\cup B_A^* = A}}\omega(y)\omega(\overleftarrow{\prod_{(i+n)\in B_A^*}}x_i^*\overrightarrow{\prod_{i\in B_A}}x_i) &= \omega(y)\sum_{\sigma\in P(2n)}\sum_{\substack{A\in\sigma \\ B_A\cup B_A^* = A}}\omega(\overleftarrow{\prod_{(i+n)\in B_A^*}}x_i^*\overrightarrow{\prod_{i\in B_A}}x_i)\\&=\omega(y)||\lambda(x_1)...\lambda(x_n)||^2 \pl.
			\end{split}
		\end{align*}
		In particular, since $\omega(y) \geq 0$, this term is non-negative and it is zero if and only if $y = 0$.
		
		For the second term, we have the following identity:
		\begin{align}
			\begin{split}
				\sum_{\substack{\sigma\in P(2n+1) \\ \{y\}\notin \sigma}}&\bigg[\big[\sum_{\substack{y\notin A\in \sigma \\ B_A\cup B_A^* = A}}\omega(\overleftarrow{\prod_{(i+n)\in B_A^*}}x_i^*\overrightarrow{\prod_{i\in B_A}}x_i)\big]\times\big[\sum_{\substack{y\in A_0 \\ B_{A_0}\cup B_{A_0}^* = A_0}}\omega(\overleftarrow{\prod_{(i+n)\in B_{A_0}^*}}x_i^*\cdot y \cdot\overrightarrow{\prod_{i\in B_{A_0}}}x_i)\big]\bigg] \\&= ||\sum_{1\leq j \leq n}\lambda(x_1)...\lambda(y^{1/2}x_j)...\lambda(x_n)||^2 \geq 0 \pl.
			\end{split}
		\end{align}
		Therefore we have: $\langle\lambda(x_1)...\lambda(x_n), \lambda(y)\lambda(x_1)...\lambda(x_n)\rangle = ||\lambda(y^{1/2})\lambda(x_1)...\lambda(x_n)||^2$. By polarization identity, it follows that $\lambda(y)$ is a symmetric operator.
		
		More generally, for any finite linear combination of the form $\xi:=\sum_{1\leq j \leq m}c_j \lambda(x^{(j)}_1)...\lambda(x^{(j)}_{n_j})$ where $x^{(j)}_1,...,x^{(j)}_{n_j}\in\mathcal{U}^\omega_{ana}$ and $c_j\in\mathbb{C}$, we have:
		\begin{equation}\label{equation:abstractActionLinearComb}
			\langle\xi, \lambda(y)\xi\rangle = \omega(y)||\xi||^2 + ||\sum_{1\leq j \leq m}c_j \sum_{1\leq k \leq n_j}\lambda(x^{(j)}_1)...\lambda(y^{1/2}x^{(j)}_k)...\lambda(x^{(j)}_{n_j})||^2\pl.
		\end{equation}
		Using Equation \ref{equation:abstractActionSA} and polarization identity, Equation \ref{equation:abstractActionLinearComb} can be derived by a simple induction on the index $j$. 
		
		If $\xi\in ker(\lambda(y)\pm i)$, then $\langle\xi,\lambda(y)\xi\rangle = \pm i ||\xi||^2_{\phi_\omega}\in i\mathbb{R}$. Hence by equation \ref{equation:abstractActionLinearComb}, it is necessary that $\omega(y)||\xi||^2 = 0$. Hence $\omega(y) = 0$ and by faithfulness of $\omega$, this implies $y = 0$. Therefore for non-zero positive operator $y\in\mathcal{U}^\omega_{ana}$, $\lambda(y)$ is an essentially self-adjoint operator.
	\end{proof}
	\begin{corollary}\label{corollary:affiliatedGenerators}
		Using the same notation as Lemma \ref{lemma:abstractEssentialSA}, $\{\exp(it\lambda(y))\}_{t\in\mathbb{R}}$ is a strongly continuous one-parameter unitary subgroup in $\mathbb{B}(\mathcal{H}_\omega)$.
	\end{corollary}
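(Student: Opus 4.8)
The plan is to invoke Stone's theorem on one-parameter unitary groups, treating this corollary as essentially a packaging of Lemma \ref{lemma:abstractEssentialSA}. By Lemma \ref{lemma:unbounded} the operator $\lambda(y)$ is densely defined on $\mathcal{H}_\omega$ with the universal enveloping algebra $\mathbb{U}(\mathcal{U}^\omega_{ana})$ inside its domain, and by Lemma \ref{lemma:abstractEssentialSA} it is essentially self-adjoint on that domain, so its closure $\overline{\lambda(y)}$ is a genuine self-adjoint operator. These are precisely the hypotheses of Stone's theorem. Thus I would define $\exp(it\lambda(y)) := \exp(it\overline{\lambda(y)})$ through the Borel functional calculus of the self-adjoint operator $\overline{\lambda(y)}$ and then read off the three required properties: each $\exp(it\overline{\lambda(y)})$ is unitary because its exponent is self-adjoint (so $(\exp(it\overline{\lambda(y)}))^* = \exp(-it\overline{\lambda(y)})$ is its inverse); the group law $\exp(is\overline{\lambda(y)})\exp(it\overline{\lambda(y)}) = \exp(i(s+t)\overline{\lambda(y)})$ follows from the multiplicativity of the functional calculus; and strong continuity $\lim_{t\to 0}\exp(it\overline{\lambda(y)})\xi = \xi$ follows by dominated convergence against the spectral measure of $\overline{\lambda(y)}$.

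First I would emphasize that essential self-adjointness, not mere symmetry, is what legitimizes the exponentiation: Stone's theorem requires a self-adjoint generator, and Lemma \ref{lemma:abstractEssentialSA} guarantees a \emph{unique} self-adjoint extension, so the exponentiated operator is unambiguous and independent of the chosen core $\mathbb{U}(\mathcal{U}^\omega_{ana})$. I would then record that the dense subspace $\mathbb{U}(\mathcal{U}^\omega_{ana})$ on which the abstract $\lambda_\emptyset$- and $\lambda_{\emptyset\emptyset}$-bases live is a common core for all these generators; this is exactly the setup needed so that the unitaries $\exp(it\lambda(y))$ can, in the next step of the construction, be matched with the Poissonized unitaries $\Gamma(e^{ity})^\bullet$ appearing in Proposition \ref{proposition:PoissonForWeight1}.

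Since the entire analytic content was already carried out in Lemma \ref{lemma:abstractEssentialSA}, there is no substantial new obstacle in this corollary. The only point requiring a little care is the standard but essential distinction between $\lambda(y)$ on its original algebraic domain and its self-adjoint closure $\overline{\lambda(y)}$: one must genuinely pass to the closure before applying the spectral calculus, and it is precisely the essential self-adjointness supplied by the preceding lemma that makes this passage both possible and canonical.
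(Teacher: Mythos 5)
Your proposal is correct and follows exactly the paper's own route: the paper proves this corollary by a one-line appeal to Stone's theorem, relying on the essential self-adjointness established in Lemma \ref{lemma:abstractEssentialSA}, which is precisely what you do (with the additional, accurate, spelling-out of the passage to the closure $\overline{\lambda(y)}$ and the standard consequences of the spectral calculus).
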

	\begin{proof}
		The claim follows directly from Stone's theorem \cite{T2}.
	\end{proof}
	\begin{definition}\label{definition:abstractPoisson}
		The (abstract) Poisson algebra $\mathbb{P}^{ab}_\omega(N)\ssubset \mathbb{B}(\mathcal{H}_\omega)$ is the von Neumann algebra generated by the unitaries $\{\exp(it\lambda(x)): x\geq 0,\text{ }x\in\mathcal{U}^\omega_{ana}\}$. The Poisson state is given by $\langle\xi_0, \cdot\xi_0\rangle$ where $\xi_0\in\mathbb{C}\ssubset\mathbb{U}(\mathcal{U}_{ana}^\omega)$ is the unit vector (unique upto a phase factor) in the constant component of the universal enveloping algebra.
	\end{definition}
	\begin{proposition}\label{proposition:identicalPoisson}
		There exists a normal $*$-isomorphism:
		\begin{equation}
			\pi:\mathbb{P}_\omega N\rightarrow \mathbb{P}^{ab}_\omega N: x\mapsto U^*xU
		\end{equation}
		which is implemented by the unitary isomorphism:
		\begin{equation}
			U: \mathcal{H}_\omega \rightarrow L_2(\mathbb{P}_\omega N, \phi_\omega):\lambda(x_1)...\lambda(x_n)\mapsto \lambda(x_1)...\lambda(x_1)d_{\phi_\omega}^{1/2}\pl.
		\end{equation}	
		Therefore the two constructions of Poisson algebra coincide. 
	\end{proposition}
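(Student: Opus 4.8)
The plan is to exhibit the unitary $U$ explicitly on the common algebraic core and then check that conjugation by $U$ carries the generating unitaries of one algebra onto those of the other. The structural fact driving everything is that both $\mathcal{H}_\omega$ and $L_2(\mathbb{P}_\omega N, \phi_\omega)$ are completions of \emph{the same} algebraic object, namely the span of the products $\lambda(x_1)\cdots\lambda(x_n)$ with $x_i\in\mathcal{U}^\omega_{ana}$, with respect to \emph{one and the same} inner product. On the abstract side this inner product is computed on the $\lambda_\emptyset$-basis by Equation \ref{equation:abstractEmpty} of Corollary \ref{corollary:abstractEmptyBasis}; on the analytic side the ultraproduct of Proposition \ref{proposition:PoissonForWeight1} yields, through the weak convergence $\varphi_s\to\omega$ and the uniform bounds of Lemma \ref{lemma:momentformulaPoisson}, exactly the same moment formula on the $\lambda_\emptyset$-vectors. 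Hence the assignment $\lambda(x_1)\cdots\lambda(x_n)\mapsto \lambda(x_1)\cdots\lambda(x_n)d_{\phi_\omega}^{1/2}$ is inner-product preserving on a dense subspace (well-defined because the Lie-algebra relations in $\mathbb{U}(\mathcal{U}^\omega_{ana})$ are respected by $\lambda$ as a homomorphism into the affiliated operators) and therefore extends to a unitary $U:\mathcal{H}_\omega\to L_2(\mathbb{P}_\omega N,\phi_\omega)$. Since the empty product maps to $d_{\phi_\omega}^{1/2}$, we get $U\xi_0 = \xi_\omega$, matching the two vacuum vectors.

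Next I would show that $U$ intertwines the unbounded generators. For each $y\in\mathcal{U}^\omega_{ana}$ the operator $\lambda(y)$ acts by left multiplication on both $\mathcal{H}_\omega$ and $L_2(\mathbb{P}_\omega N,\phi_\omega)$, and by the very definition of $U$ these two actions agree on the common core $\mathbb{U}(\mathcal{U}^\omega_{ana})$, i.e.\ $U\lambda(y) = \lambda(y)U$ there. For $y\geq 0$, Lemma \ref{lemma:abstractEssentialSA} guarantees that $\lambda(y)$ is essentially self-adjoint with core $\mathbb{U}(\mathcal{U}^\omega_{ana})$, while on the analytic side essential self-adjointness is automatic because each $\Gamma_s(e^{ity})$ is already a genuine unitary. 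Consequently $U$ intertwines the closures $\overline{\lambda(y)}$, and by Stone's theorem (Corollary \ref{corollary:affiliatedGenerators}) it intertwines the unitary groups, so that
\begin{equation*}
	U^*\,\Gamma(e^{iy})\,U = \exp(i\lambda(y)) \quad\text{on } \mathcal{H}_\omega,
\end{equation*}
where I have used $\Gamma(e^{iy}) = \exp(i\lambda(y))$ from Lemma \ref{lemma:nameexplainPoissonState}. It then follows that $\pi(x):=U^*xU$ is a normal $*$-isomorphism of $\mathbb{B}(L_2(\mathbb{P}_\omega N,\phi_\omega))$ onto $\mathbb{B}(\mathcal{H}_\omega)$ which carries each generator $\Gamma(e^{iy})$ of $\mathbb{P}_\omega N$ to the generator $\exp(i\lambda(y))$ of $\mathbb{P}^{ab}_\omega N$; hence it restricts to a normal $*$-isomorphism of the two generated von Neumann algebras. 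Because $U\xi_0=\xi_\omega$, this isomorphism also intertwines the two Poisson states.

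The step I expect to be the main obstacle is the reconciliation of the two generating families. The algebra $\mathbb{P}_\omega N$ is generated by $\Gamma(e^{ix})$ for all \emph{self-adjoint} $x\in\mathcal{U}^\omega_{ana}$, whereas the clean intertwining above rests on the essential self-adjointness of Lemma \ref{lemma:abstractEssentialSA}, which is proved only for \emph{positive} $x$ (the proof uses $y^{1/2}$). One inclusion is immediate, since positive elements are self-adjoint, so every defining generator $\exp(i\lambda(x))$ of $\mathbb{P}^{ab}_\omega N$ lies in $\pi(\mathbb{P}_\omega N)$. For the reverse inclusion one must show $U^*\Gamma(e^{ix})U\in\mathbb{P}^{ab}_\omega N$ for every self-adjoint $x$. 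The delicate point is that, for a weight, the unit $1$ need not lie in $\mathcal{U}^\omega_{ana}$, so one cannot simply shift $x$ by a multiple of $1$ to make it positive. Instead I would argue directly: since $\Gamma(e^{ix})$ is unitary, $\overline{\lambda(x)}$ is self-adjoint on the $L_2$ side, so $A:=U^*\,\overline{\lambda(x)}\,U$ is a self-adjoint extension of $\lambda(x)|_{\mathbb{U}(\mathcal{U}^\omega_{ana})}$ on $\mathcal{H}_\omega$ and $U^*\Gamma(e^{ix})U=\exp(iA)$; the remaining task is to verify that $\lambda(x)$ is in fact essentially self-adjoint on $\mathcal{H}_\omega$ for self-adjoint $x$ as well (so that $A=\overline{\lambda(x)}$) and that the resulting $\exp(i\lambda(x))$ lies in the von Neumann algebra generated by the positive-element exponentials. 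This is the one place where the bounded-operator bookkeeping must be replaced by genuine care about domains and cores of the affiliated generators.
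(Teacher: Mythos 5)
Your construction of $U$ is sound and is, in substance, the same unitary as in the paper: both Hilbert spaces complete the span of the products $\lambda(x_1)\cdots\lambda(x_n)$, $x_i\in\mathcal{U}^\omega_{ana}$, with respect to the same moment formula (Corollary \ref{corollary:abstractEmptyBasis} on the abstract side, Proposition \ref{proposition:PoissonForWeight1} together with Lemma \ref{lemma:momentformulaPoisson} on the analytic side), so the identity map on the common core extends to a unitary intertwining the two vacua. The genuine gap is the one you flag in your final paragraph and then leave open: essential self-adjointness of $\lambda(x)$ for merely \emph{self-adjoint} $x\in\mathcal{U}^\omega_{ana}$, which is exactly what is needed to match the generators of $\mathbb{P}_\omega N$ (indexed by self-adjoint elements, Proposition \ref{proposition:PoissonForWeight1}) with those of $\mathbb{P}^{ab}_\omega N$ (indexed by positive elements, Definition \ref{definition:abstractPoisson}). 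Lemma \ref{lemma:abstractEssentialSA} genuinely does not cover this case (its proof uses $y^{1/2}$), and, as you observe, one cannot shift by a multiple of $1$ because $1\notin\mathcal{U}^\omega_{ana}$ for a weight. Moreover, your interim claim that essential self-adjointness is ``automatic'' on the analytic side because each $\Gamma_s(e^{ity})$ is a genuine unitary is not correct as stated: the generator of that unitary group is merely \emph{some} self-adjoint extension of $\lambda(y)\vert_{\mathbb{U}(\mathcal{U}^\omega_{ana})}$, and a symmetric operator can admit self-adjoint extensions without being essentially self-adjoint; you still have to prove that the common core is a core for the generator. So as written the argument does not close.

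The missing ingredient, and the reason the paper's proof is structured differently, is the growth estimate of Lemma \ref{lemma:momentformulaPoisson} (in the forms of Equation \ref{equation:uniformAction} and Corollary \ref{corollary:abstractEmptyBasis}): it shows that every vector of $\mathbb{U}(\mathcal{U}^\omega_{ana})$ is an \emph{analytic vector} for $\lambda(x)$, $x=x^*$, since $\sum_k t^k\Vert\lambda(x)^k\xi\Vert/k!\lesssim \sum_k (1+\tfrac{n}{k})^k\,(Cet\,|||x|||)^k<\infty$ for small $t$. Nelson's analytic vector theorem then yields essential self-adjointness on both sides simultaneously, for all self-adjoint (not only positive) $x$, and identifies the generator of $t\mapsto\Gamma(e^{itx})$ with $\overline{\lambda(x)}$; after that your intertwining argument and the identification of the two generated von Neumann algebras do go through. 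The paper packages precisely this step by invoking the uniqueness theorem for the noncommutative moment problem in \cite[Section 5]{MAW}, whose hypothesis is this growth condition: it produces directly a normal $*$-homomorphism between the von Neumann algebras generated by the two representations of the $*$-probability space $(\mathbb{U}(\mathcal{U}^\omega_{ana}),\langle\xi_0,\cdot\,\xi_0\rangle)$, which is an isomorphism because both states are faithful, and which is unitarily implemented by your $U$. Either route is legitimate, but one of them must actually be carried out; the domain issue you isolate is not a loose end but the entire content of the proposition.
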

	\begin{proof}
		This is a direct consequence of the isomorphism result in \cite[Section 5]{MAW}. The tuple $(\mathbb{U}(\mathcal{U}^\omega_{ana}), \langle\xi_0, \cdot\xi_0\rangle)$ is a noncommutative $*$-probability space (c.f. \cite[Section 5]{MAW}). Let $S:=\{x^* = x: x\in \mathcal{U}^\omega_{ana}\}\ssubset\mathbb{U}(\mathcal{U}^\omega_{ana})$ be a generating set for representations of the $*$-probability space. Then consider the linear map:
		\begin{equation*}
			\alpha:\mathbb{U}(\mathcal{U}^\omega_{ana})\rightarrow L_2(\mathbb{P}_\omega N, \phi_\omega): \lambda(x_1)...\lambda(x_n)\mapsto \lambda(x_1)...\lambda(x_n)d_{\phi_\omega}^{1/2}\pl.
		\end{equation*}
		And consider the $*$-representation of the generating set:
		\begin{equation*}
			\pi:S\rightarrow \mathcal{L}_{s.a.}(L_2(\mathbb{P}_\omega N, \phi_\omega)): x\mapsto \lambda(x)
		\end{equation*}
		where $\mathcal{L}_{s.a.}(\mathcal{H})$ denotes the set of self-adjoint densely defined operators on $\mathcal{H}$. Then using the language of \cite[Section 5]{MAW}, it is clear that $(\pi,\alpha, d_{\phi_\omega}^{1/2}, L_2(\mathbb{P}_\omega N, \phi_\omega))$ is a representation of the noncommutative $*$-probability space $(\mathbb{U}(\mathcal{U}^\omega_{ana}), \langle\xi_0, \cdot\xi_0\rangle)$. On the other hand, it is clear that the identity maps:
		\begin{equation*}
			\iota_\mathcal{A}: \mathbb{U}(\mathcal{U}^\omega_{ana})\rightarrow \mathcal{H}_\omega: \lambda(x_1)...\lambda(x_n)\mapsto\lambda(x_1)...\lambda(x_n)\xi_0
		\end{equation*}
		together with identity representation:
		\begin{equation*}
			\iota_S:S\rightarrow \mathcal{L}_{s.a.}(\mathcal{H}_\omega): \lambda(x)\mapsto\lambda(x)
		\end{equation*}
		form quadruple $(\iota_S, \iota_\mathcal{A}, \xi_0, \mathcal{H}_\omega)$ that is also a representation of the probability space $\mathbb{U}(\mathcal{U}^\omega_{ana})$. 
		
		Since the moment formula satisfies the growth condition (c.f. Lemma \ref{lemma:momentformulaPoisson} and \cite[Section 5]{MAW}), then there exists a normal $*$-homomorphism: $\pi:\mathbb{P}_\omega N\rightarrow \mathbb{P}^{ab}_\omega N$. Since the Poisson state on $\mathbb{P}_\omega N$ and the state $\langle\xi_0, \cdot\xi_0\rangle$ on $\mathbb{P}^{ab}_\omega N$ are both faithful, then the map $\pi$ is an isomorphism. Moreover it can be implemented by a unitary isomorphism:
		\begin{equation}
			U: \mathcal{H}_\omega\rightarrow L_2(\mathbb{P}_\omega N, \phi_\omega)\pl.
		\end{equation}
		Therefore both constructions of Poisson algebra coincide.  
	\end{proof}
	\subsection{Functorial Properties of Poissonization with N.S.F. Weight}
	In this subsection, we collect some important functorial properties of Poissonization with n.s.f. weights. These properties have been proved in previous sections for Poissonization with bounded functionals.
	\begin{lemma}\label{lemma:GammaLiftWeight}
		Let $(N,\omega)$ be a von Neumann algebra along with a n.s.f. weight and let $\mathcal{U}^\omega_{ana}$ be the Tomita algebra of $\omega$. Moreover, let $S^\omega_{ana}:=\{x\in \mathcal{U}^\omega_{ana}: x - 1\in m_\omega, \text{ }||x||\leq 1\}$. This is a multiplicative subgroup of $N$ that is invariant under conjugation.
		
		Consider the map:
		\begin{equation}\label{equation:GammaUnitary}
			\Gamma: \{\exp(ix)\in N: x^* = x, \text{ }x \in \mathcal{U}^\omega_{ana}\}\rightarrow \{\exp(i\lambda(x))\in\mathbb{P}_\omega N: \lambda(x)\eta\mathbb{P}_\omega N\}
		\end{equation}
		where $\lambda(x)\eta\mathbb{P}_\omega N$ indicates that $\lambda(x)$ is an affiliated operator to $\mathbb{P}_\omega N$. 
		Then this map extends to a $*$-group homomorphism: $\Gamma: S^\omega_{ana}\rightarrow \mathbb{P}_\omega N^\times$. In addition, the following familiar formula holds:
		\begin{equation}\label{equation:GammaLiftWeight}
			\phi_\omega(\Gamma(a)\lambda_\emptyset(x_1,...,x_n)) = \exp(\omega(a - 1))\prod_{1\leq i \leq n}\omega(ax_i)\pl.
		\end{equation}
	\end{lemma}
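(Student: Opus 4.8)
The plan is to construct $\Gamma(a)$ for an arbitrary $a\in S^\omega_{ana}$ inside the ultraproduct model of Proposition~\ref{proposition:PoissonForWeight1}, and then to read off the algebraic properties fibrewise from the (elementary) behaviour of the symmetric quantization at the level of the approximating bounded functionals $\{\varphi_s\}_s$ supplied by Lemma~\ref{lemma:limitWeight}. First I would verify that $S^\omega_{ana}$ really is closed under the two operations in question. Since $\mathcal{U}^\omega_{ana}$ is a $*$-subalgebra and $m_\omega$ is a two-sided $*$-ideal, for $a,b\in S^\omega_{ana}$ one has $\|ab\|\leq 1$, $ab\in\mathcal{U}^\omega_{ana}$, and $ab-1=a(b-1)+(a-1)\in m_\omega$, while $a^*\in\mathcal{U}^\omega_{ana}$ and $a^*-1=(a-1)^*\in m_\omega$. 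This gives the asserted multiplicative, conjugation-invariant structure (I would flag that, as $S^\omega_{ana}$ consists of contractions, the word ``group'' is read here as the generated multiplicative $*$-structure, and $\mathbb{P}_\omega N^\times$ as the corresponding multiplicative $*$-semigroup).

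For each $\varphi_s$ the symmetric quantization $\Gamma_s(a)=\bigoplus_{n\geq 0}a^{\otimes n}$ is, because $\|a\|\leq 1$, a contraction in $\mathbb{P}_{\varphi_s}N\cong M_s(N)$: its block-diagonal norm is $\sup_n\|a^{\otimes n}\|=\sup_n\|a\|^n\leq 1$. Hence the family $(\Gamma_s(a))_s$ is uniformly bounded and defines a contraction $\Gamma(a):=(\Gamma_s(a))^\bullet$ on $\prod_s^w L_2(\mathbb{P}_{\varphi_s}N,\phi_s)$. The one genuinely technical point is that this operator preserves the subspace $\mathcal{H}_\omega$ and is affiliated to $\mathbb{P}_\omega N$ rather than to the ambient ultraproduct. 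Here I would expand each vector $\Gamma_s(a)\lambda_\emptyset(z_1,\dots,z_m)d_s^{1/2}$ in the $\lambda_\emptyset$-basis using the finite-functional identity (the $\varphi_s$-version of Corollary~\ref{corollary:GammaAction}), and invoke the uniform moment bounds of Lemma~\ref{lemma:momentformulaPoisson} to guarantee that these expansions converge along $w$ to a vector of $\mathcal{H}_\omega$. Membership $\Gamma(a)\in\mathbb{P}_\omega N$ then follows from Lemma~\ref{lemma:ultraweakdense}: in each fibre $\Gamma_s(a)$ lies in the ultra-weak closure of the algebra generated by the unitaries $\Gamma_s(e^{ix})$, and this persists in the ultraproduct. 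Since on the unitaries $e^{ix}$ the construction reproduces the generators $(\Gamma(e^{ix}))^\bullet$ of Proposition~\ref{proposition:PoissonForWeight1}, the map $\Gamma$ genuinely extends the one in \eqref{equation:GammaUnitary}.

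With $\Gamma(a)$ in hand the homomorphism properties are formal and fibrewise. Multiplicativity $\Gamma(a)\Gamma(b)=\Gamma(ab)$ and $\Gamma(a)^*=\Gamma(a^*)$ hold because $a^{\otimes n}b^{\otimes n}=(ab)^{\otimes n}$ and $(a^{\otimes n})^*=(a^*)^{\otimes n}$ in each $N^{\otimes n}$, and the Ocneanu ultraproduct is a $*$-homomorphism on uniformly bounded sequences; thus $\Gamma\colon S^\omega_{ana}\to\mathbb{P}_\omega N^\times$ is multiplicative and $*$-preserving. Finally, formula~\eqref{equation:GammaLiftWeight} is the vacuum case of the finite-functional identity combined with the weak approximation. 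Writing $\xi_\omega=(d_s^{1/2})^\bullet$ one has
\[
\phi_\omega(\Gamma(a)\lambda_\emptyset(x_1,\dots,x_n))=\lim_{s\to w}\phi_s(\Gamma_s(a)\lambda_\emptyset(x_1,\dots,x_n))=\lim_{s\to w}\exp(\varphi_s(a-1))\prod_{1\leq i\leq n}\varphi_s(ax_i),
\]
where the last equality is the directly verifiable bounded-state formula $\phi_s(\Gamma_s(a)\lambda_\emptyset(x_1,\dots,x_n))=\exp(\varphi_s(a-1))\prod_i\varphi_s(ax_i)$. Since $a-1\in m_\omega$ and each $ax_i\in m_\omega$ (as $x_i\in\mathcal{U}^\omega_{ana}\subset m_\omega$ and $m_\omega$ is an ideal), the weak convergence $\varphi_s(y)=\omega(h_sy)\to\omega(y)$ on $m_\omega$ from Lemma~\ref{lemma:limitWeight} yields $\exp(\omega(a-1))\prod_i\omega(ax_i)$, as claimed.

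The main obstacle is the middle step: confirming that the fibrewise contractions $\Gamma_s(a)$ assemble to an operator that stabilizes the Fock-type subspace $\mathcal{H}_\omega$ and descends to a genuine element of $\mathbb{P}_\omega N$. Everything else is either a one-line closure computation in $\mathcal{U}^\omega_{ana}$ and $m_\omega$, an elementary property of $\bigoplus_n a^{\otimes n}$, or an application of the already-established finite-functional formulae. It is precisely the uniform estimates of Lemma~\ref{lemma:momentformulaPoisson} that make the passage to the ultraproduct legitimate and prevent the limit from escaping $\mathcal{H}_\omega$.
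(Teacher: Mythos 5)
Your proposal is correct and follows essentially the same route as the paper's own proof: define $\Gamma(a):=(\Gamma_s(a))^\bullet$ in the ultraproduct model of Proposition \ref{proposition:PoissonForWeight1}, verify multiplicativity and the $*$-compatibility fibrewise, and obtain Equation \ref{equation:GammaLiftWeight} from the bounded-functional formula together with the weak convergence $\varphi_s \to \omega$ on $m_\omega$ supplied by Lemma \ref{lemma:limitWeight}. Two small remarks: the paper justifies $ab-1\in m_\omega$ via $ab-1=(a-1)(b-1)+(a-1)+(b-1)$, which uses only that $m_\omega$ is a $*$-subalgebra (your appeal to $m_\omega$ being a two-sided ideal of $N$ is not valid for non-tracial weights, though your conclusion survives by the same rearrangement, since $a(b-1)=(a-1)(b-1)+(b-1)$); and the affiliation of $\Gamma(a)$ to $\mathbb{P}_\omega N$, which you rightly single out as the technical crux, is asserted in the paper at essentially the same level of detail as in your sketch, so your proposal is no less complete than the published argument on that point.
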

	\begin{proof}
		By the abstract construction of Poisson algebra, the map $\Gamma$ is well-defined on the set of unitaries generated by self-adjoint analytic elements. For any contraction $x\in S^\omega_{ana}$, define $\Gamma(x)$ to be the ultraproduct $(\Gamma_s(x))^\bullet$. For each $s$, we have (c.f. Proposition \ref{proposition:PoissonForWeight1}):
		\begin{align}
			\begin{split}
				\phi_s(\Gamma_s(x)\lambda_\emptyset(x_1,...,x_n)) &= \exp(\varphi_s(x-1))\prod_{1\leq i \leq n}\varphi_s(xx_i)\pl.
			\end{split}
		\end{align}
		For $x\in S^\omega_{ana}$, we can take the limit: $\lim_s\exp(\varphi_s(x-1))\prod_{1\leq i \leq n}\varphi_s(xx_i) = \exp(\omega(x-1))\prod_{1\leq i\leq n}\omega(xx_i)$ since $x_i\in\mathcal{U}^\omega_{ana}\ssubset m_\omega$ and hence $xx_i\in m_\omega$. Thus Equation \ref{equation:GammaLiftWeight} holds.
		
		For self-adjoint analytic element $x$, we have: $|\omega(e^{ix} - 1)| \leq \sum_{n\geq 1}\frac{|\omega(x^n)|}{n!} \leq \exp(|||x|||) < \infty$. Therefore $e^{ix} -1 \in m_\omega$. Hence $\{\exp(ix):x^* = x,\text{ }x\in\mathcal{U}^\omega_{ana}\}\ssubset S^\omega_{ana}$. In addition, using the ultraproduct construction, $\Gamma(e^{ix})$ is the ultraproduct $(\Gamma_s(e^{ix}))^\bullet$ where each component $\Gamma_s(e^{ix})$ acts on $L_2(\mathbb{P}_{\varphi_s} N, \phi_s)$ (c.f. Proposition \ref{proposition:PoissonForWeight1}). Hence $\Gamma: S^\omega_{ana}\rightarrow \mathbb{P}_\omega N^\times: x\mapsto \Gamma(x) = (\Gamma_s(x))^\bullet$ is an extension of Equation \ref{equation:GammaUnitary}.
		
		Finally we observe that $S^\omega_{ana}$ is a multiplicative group (We have already did this calculation in the proof of Corollary \ref{corollary:modularAutoWeight}):
		\begin{equation*}
			xy - 1 = (x - 1)(y - 1) + (x - 1) + (y - 1)
		\end{equation*}
		where $x,y\in S^{\omega}_{ana}$. Since $x -1 , y-1 $ are both in the Tomita algebra, their product is in the Tomita algebra as well. Hence $xy\in S^{\omega}_{ana}$. In addition, for each $s$ using the symmetric tensor product model, it is clear that $\Gamma_s(x)\Gamma_s(y) = \Gamma_s(xy)$ for $x,y\in S^\omega_{ana}$. Then passing to the ultraproduct, we have $\Gamma(x)\Gamma(y) = \Gamma(xy)$. Thus $\Gamma$ is a group homomorphism.
	\end{proof}
	\begin{lemma}\label{lemma:contractionLiftWeight}
		Let $(N,\omega_N)$ and $(M,\omega_M)$ be two von Neumann algebras with fixed n.s.f. weights. Let $T:N\rightarrow M$ be a normal unital contraction such that $\omega_M\circ T = \omega_N$. Assume there exists a normal unital linear map: $\hat{T}:M\rightarrow N$ such that $(T,\hat{T})$ forms a dual pair: $\omega_M(T(x)y) = \omega_N(x\hat{T}(y))$. Then there exists a normal contraction:
		\begin{equation}
			\Gamma(T):\mathbb{P}_\omega N\rightarrow \mathbb{P}_\omega M: \Gamma(a)\mapsto \Gamma(Ta)
		\end{equation}
		where $a$ is in the group $S^{\omega_N}_{ana} $. In addition $\phi_{\omega_M}\circ\Gamma(T) = \phi_{\omega_N}$ and the following formula holds in the predual:
		\begin{equation}
			\Gamma(T)^*\lambda_\emptyset(x_1,...,x_n)d_{\phi_M} = \lambda_\emptyset(\hat{T}x_1,...,\hat{T}x_n)d_{\phi_N}\pl.
		\end{equation}
	\end{lemma}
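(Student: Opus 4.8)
The plan is to follow the proof of Proposition~\ref{proposition:contractionLift} in outline, replacing the bounded moment formula by its weight analogue (Equation~\ref{equation:GammaLiftWeight} of Lemma~\ref{lemma:GammaLiftWeight}) and supplying the integrability and analyticity bookkeeping that is vacuous when $\omega$ is bounded. First I would record the domain compatibilities: that $\hat T$ maps $\mathcal{U}^{\omega_M}_{ana}$ into $\mathcal{U}^{\omega_N}_{ana}$, that $T$ maps $S^{\omega_N}_{ana}$ into $S^{\omega_M}_{ana}$, and that both are contractive for the triple norm $|||\cdot|||$ of Equation~\ref{equation:tripleNorm}. Operator-norm contractivity is part of the hypothesis; the $L_2$-contraction $\omega_M((Tx)^*(Tx))\le\omega_N(x^*x)$ comes from Kadison--Schwarz (the morphisms of $\mathbf{vNa}_w$ being unital completely positive) together with $\omega_M\circ T=\omega_N$; and control of $||\cdot||_1$ follows from the dual pairing $\omega_M(T(x)y)=\omega_N(x\hat T(y))$ by passing to the infimum over factorisations. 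These checks guarantee that every weight expression below is finite: for $y\in S^{\omega_N}_{ana}$ one has $Ty-1=T(y-1)\in m_{\omega_M}$, and for $x_i\in\mathcal{U}^{\omega_M}_{ana}$ one has $\hat Tx_i\in\mathcal{U}^{\omega_N}_{ana}\subset m_{\omega_N}$.

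With finiteness secured, the core computation is the weight version of the key identity~\ref{equation:keyidentity}. Using Equation~\ref{equation:GammaLiftWeight} and then the two structural relations $\omega_M\circ T=\omega_N$ (on $m_{\omega_N}$) and $\omega_M(T(y)x_i)=\omega_N(y\hat T(x_i))$, I would compute, for $y\in S^{\omega_N}_{ana}$ and $x_i\in\mathcal{U}^{\omega_M}_{ana}$,
\begin{equation*}
\langle\lambda_\emptyset(x_1,\dots,x_m)d_{\phi_M},\Gamma(Ty)\rangle=\exp(\omega_M(Ty-1))\prod_{1\le i\le m}\omega_M((Ty)x_i)=\exp(\omega_N(y-1))\prod_{1\le i\le m}\omega_N(y\hat Tx_i),
\end{equation*}
and the right-hand side equals $\langle\lambda_\emptyset(\hat Tx_1,\dots,\hat Tx_m)d_{\phi_N},\Gamma(y)\rangle$. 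Since by Lemma~\ref{lemma:predualstructure} the vectors $\lambda_\emptyset(x_1,\dots,x_m)d_{\phi_M}$ span a dense subspace of the predual, and by Lemma~\ref{lemma:ultraweakdense} the $\Gamma(y)$ are ultra-weakly dense in $\mathbb{P}_\omega M$, this single identity simultaneously shows that the assignment $\Gamma(a)\mapsto\Gamma(Ta)$ is well defined on the generators and that the densely defined map $R:\lambda_\emptyset(x_1,\dots,x_m)d_{\phi_M}\mapsto\lambda_\emptyset(\hat Tx_1,\dots,\hat Tx_m)d_{\phi_N}$ is its pre-adjoint. Once $R$ is known to be contractive, setting $\Gamma(T):=R^{*}$ produces a weak-$*$ continuous, hence normal, contraction $\mathbb{P}_\omega N\to\mathbb{P}_\omega M$; state preservation is then the one-line computation $\phi_{\omega_M}(\Gamma(Ta))=\exp(\omega_M(Ta-1))=\exp(\omega_N(a-1))=\phi_{\omega_N}(\Gamma(a))$, extended by density.

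The main obstacle I anticipate is precisely establishing that $R$ (equivalently $\Gamma(T)$) is a contraction. In the bounded case this was read off immediately from the explicit estimate $||\Gamma(sTa)||\le||\Gamma(sa)||$ on the symmetric tensor algebra $M_s(N)$, but that global tensor-product model is not literally available for a weight, and the fibrewise reduction to Proposition~\ref{proposition:contractionLift} is blocked because a dual pair for $\omega$ need not restrict to a dual pair for the approximating functionals $\varphi_s$ of Proposition~\ref{proposition:PoissonForWeight1}. My plan is therefore to recover contractivity from the uniform moment bounds of Lemma~\ref{lemma:momentformulaPoisson}, which are stable under the weak limit defining $\phi_\omega$: one bounds $|\langle R\rho,\Gamma(y)\rangle|=|\langle\rho,\Gamma(Ty)\rangle|$ using that $||\Gamma(Ty)||\le1$ whenever $y$ is a contraction in $S^{\omega_N}_{ana}$, and propagates the $L_1$-contractivity along the net $\{\varphi_s\}$. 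I expect the delicate analysis to lie exactly here and in the analyticity-preservation step of the first paragraph, where one must verify that $T$ and $\hat T$ intertwine the modular flows on the relevant subspaces (as in Corollary~\ref{corollary:modularAutoWeight}); both points have no counterpart in the bounded setting and are the genuinely new content of the lemma.
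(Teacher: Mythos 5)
Your first two paragraphs follow exactly the paper's route: the paper's entire proof of this lemma is the remark that the argument of Proposition~\ref{proposition:contractionLift} goes through verbatim, all pairings being well-defined by Corollary~\ref{corollary:GammaAction} (the same formula you invoke as Equation~\ref{equation:GammaLiftWeight}); your key identity, the density arguments via Lemmas~\ref{lemma:predualstructure} and~\ref{lemma:ultraweakdense}, the definition of $\Gamma(T)$ as the adjoint of $R$, and the state-preservation computation all coincide with the paper's. The genuine gap lies in the step you yourself defer: contractivity. Your plan is circular. The bound $|\langle R\rho,\Gamma(y)\rangle|=|\langle\rho,\Gamma(Ty)\rangle|\le\|\rho\|_1\,\|\Gamma(Ty)\|\le\|\rho\|_1$ controls the pairing of $R\rho$ against \emph{single generators} only, whereas $\|R\rho\|_1$ is the supremum of $|\langle R\rho,z\rangle|$ over the whole unit ball of $\mathbb{P}_{\omega_N}N$. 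The only access to that unit ball from the generators is through finite linear combinations $z=\sum_k c_k\Gamma(y_k)$ (followed by Kaplansky density), and for such $z$ your estimate needs $\|\sum_k c_k\Gamma(Ty_k)\|\le\|\sum_k c_k\Gamma(y_k)\|$ --- which is precisely the contractivity of $\Gamma(T)$ being proved. Propagating along the net $\{\varphi_s\}$ cannot repair this: the same circle recurs at every $s$, and, as you note yourself, $(T,\hat T)$ is not even a dual pair for the approximating functionals.

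A secondary problem is that your preliminary reductions invoke hypotheses the lemma does not grant. Kadison--Schwarz requires (complete) positivity, but $T$ is assumed only to be a normal unital contraction; complete positivity enters the paper only in Proposition~\ref{proposition:quantumChannelLift}, which is proved later by a Stinespring dilation, not from this lemma. Likewise nothing forces $\hat T$ (assumed merely normal and unital) to map $\mathcal{U}^{\omega_M}_{ana}$ into $\mathcal{U}^{\omega_N}_{ana}$ or to intertwine the modular flows. None of this is actually needed, which is why the paper can claim a verbatim carry-over: the key identity requires only that $\|Ty\|\le 1$ and $Ty-1=T(y-1)\in m_{\omega_M}$ (positivity of the unital contraction $T$, together with $\omega_M\circ T=\omega_N$), and that $\omega_M((Ty)x_i)<\infty$, which holds because $x_i\in\mathcal{U}^{\omega_M}_{ana}\subset m_{\omega_M}$ and $m_{\omega_M}$ is a two-sided ideal --- this is exactly what ``all calculations are well-defined thanks to Corollary~\ref{corollary:GammaAction}'' means; the right-hand side $\lambda_\emptyset(\hat Tx_1,\dots,\hat Tx_n)d_{\phi_N}$ is then the normal functional determined by those pairings, with no analyticity of the $\hat Tx_i$ required. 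So the bookkeeping you emphasize as new is either automatic or unnecessary, while the contractivity step, which the paper disposes of by the same duality argument as in the bounded case, is the one your proposal does not close.
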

	\begin{proof}
		This is a direct generalization of Proposition \ref{proposition:contractionLift}. The proof of Proposition \ref{proposition:contractionLift} holds verbatim because all calculations are well-defined thanks to Corollary \ref{corollary:GammaAction}.
	\end{proof}
	Using this lemma, we can now collect a number of functorial properties of Poissonization with n.s.f. weights.
	\begin{proposition}\label{proposition:collectionFunctorialWeight}
		Using the same notation as Lemma \ref{lemma:contractionLiftWeight}, the following statements are true:
		\begin{enumerate}
			\item Let $\pi:N\rightarrow M$ be a normal $*$-homomorphism that preserves the weight: $\omega_M\circ\pi = \omega_N$. Then there exists a normal $*$-homomorphism:
			\begin{equation}
				\Gamma(\pi):\mathbb{P}_\omega N\rightarrow \mathbb{P}_\omega M: \Gamma(x)\mapsto\Gamma(\pi(x))
			\end{equation}
			where $x\in S^{\omega_N}_{ana}$. In addition, $\phi_{\omega_M}\circ\Gamma(\pi) = \phi_{\omega_N}$.
			\item Let $N'\ssubset N$ be a von Neumann subalgebra such that the restricted weight $\omega|_{N'}$ is semifinite. In addition, assume $N'$ is invariant under the modular automorphism $\sigma^\omega_t(N') = N'$. Then $\mathbb{P}_{\omega}N'\ssubset \mathbb{P}_{\omega}N$ and there exists a normal conditional expectation: $\mathcal{E}:\mathbb{P}_\omega N \rightarrow\mathbb{P}_\omega N'$ such that $\phi_\omega\circ\mathcal{E} = \phi_\omega$. Moreover $\mathcal{E} = \Gamma(E)$ where $E:N\rightarrow N'$ is the conditional expectation and $\Gamma(E)$ is the lift constructed in Lemma \ref{lemma:contractionLiftWeight}.
		\end{enumerate}
	\end{proposition}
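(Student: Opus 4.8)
The plan is to reduce both statements to the contraction-lift Lemma \ref{lemma:contractionLiftWeight}, mirroring the bounded-functional arguments of Corollary \ref{corollary:homomorphismLift} and Corollary \ref{corollary:conditionalExpLift}, and to check that the only genuinely new ingredient---compatibility of the maps with the Tomita algebra $\mathcal{U}^\omega_{ana}$ and the finiteness ideal $m_\omega$---survives the passage from bounded functionals to weights. In both parts the algebraic identities will be verified on the ultra-weakly dense generating set and then extended by density and normality.

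For the first statement, I would first note that weight-preservation together with faithfulness of $\omega_N$ forces $\pi$ to be injective, so we may identify $N$ with $\pi(N)\ssubset M$ and read $\omega_N$ as $\omega_M|_{\pi(N)}$. To place ourselves in the hypotheses of Lemma \ref{lemma:contractionLiftWeight} I must produce the dual map $\hat{T}$: assuming $\pi(N)$ is invariant under $\sigma^{\omega_M}_t$ (the invariance inherited from the bounded case, Corollary \ref{corollary:homomorphismLift}), Takesaki's theorem \cite{T} supplies a normal weight-preserving conditional expectation $E_\pi:M\rightarrow\pi(N)$, and setting $\hat{T}:=\pi^{-1}\circ E_\pi$ gives a normal unital map with $\omega_M(\pi(x)y)=\omega_M(\pi(x)E_\pi(y))=\omega_N(x\hat{T}(y))$. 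Lemma \ref{lemma:contractionLiftWeight} then yields a normal state-preserving contraction $\Gamma(\pi)$. To upgrade it to a $*$-homomorphism, the key point is that $\pi$ carries $S^{\omega_N}_{ana}$ into $S^{\omega_M}_{ana}$: modular invariance gives $\sigma^{\omega_M}_t(\pi(x))=\pi(\sigma^{\omega_N}_t(x))$, so analyticity and the triple norm are preserved, while $\pi(x)-1=\pi(x-1)\in m_{\omega_M}$ whenever $x-1\in m_{\omega_N}$. Since $\Gamma$ is a group homomorphism on $S^\omega_{ana}$ by Lemma \ref{lemma:GammaLiftWeight}, the identities $\Gamma(\pi)(\Gamma(x)\Gamma(y))=\Gamma(\pi(x))\Gamma(\pi(y))$, $\Gamma(\pi)\Gamma(x^*)=\Gamma(\pi(x))^*$ and $\Gamma(\pi(1))=1$ hold verbatim on generators, exactly as in Corollary \ref{corollary:homomorphismLift}.

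For the second statement, I would first lift the inclusion $\iota:N'\hookrightarrow N$ by the first part to realize $\mathbb{P}_\omega N'\ssubset\mathbb{P}_\omega N$. The conditional expectation $E:N\rightarrow N'$, which exists by Takesaki's theorem \cite{T} since $\omega|_{N'}$ is semifinite and $N'$ is $\sigma^\omega_t$-invariant, is a normal unital contraction forming a dual pair with $\iota$, because $\omega(E(x)y)=\omega(E(xy))=\omega(xy)=\omega(x\iota(y))$ for $y\in N'$; hence Lemma \ref{lemma:contractionLiftWeight} produces a normal state-preserving contraction $\Gamma(E):\mathbb{P}_\omega N\rightarrow\mathbb{P}_\omega N'$. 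To identify $\Gamma(E)$ with the conditional expectation, I would invoke Corollary \ref{corollary:modularAutoWeight}: since $\sigma^{\phi_\omega}_t$ acts on generators by $\Gamma(\sigma^\omega_t)$ and $\sigma^\omega_t(N')=N'$, the subalgebra $\mathbb{P}_\omega N'$ is globally invariant under $\sigma^{\phi_\omega}_t$, and $\phi_\omega$ restricts there to a faithful normal (hence semifinite) state. Takesaki's theorem then gives a unique normal conditional expectation $\mathcal{E}:\mathbb{P}_\omega N\rightarrow\mathbb{P}_\omega N'$ with $\phi_\omega\circ\mathcal{E}=\phi_\omega$, and the equality $\mathcal{E}=\Gamma(E)$ follows by showing both implement the same orthogonal projection on $L_2(\mathbb{P}_\omega N,\phi_\omega)$, via the pairing $\langle\Gamma(x)d_{\phi_\omega}^{1/2},\Gamma(y)d_{\phi_\omega}^{1/2}\rangle=\langle\Gamma(x)d_{\phi_\omega}^{1/2},\Gamma(E)\Gamma(y)d_{\phi_\omega}^{1/2}\rangle$ on generators $\Gamma(x)$ with $x\in N'$, now justified by Corollary \ref{corollary:GammaAction} instead of the bounded-case computation of Corollary \ref{corollary:conditionalExpLift}.

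The main obstacle, and the only substantive difference from the bounded case, is controlling the interaction of the maps with $m_\omega$ and the Tomita algebra. One must ensure that $\pi$ (respectively $E$) sends $S^{\omega_N}_{ana}$ into $S^{\omega_M}_{ana}$ (respectively the group for $N$ into that for $N'$) and that the dual maps preserve $m_\omega$-finiteness, so that all pairings in Corollary \ref{corollary:GammaAction} and the defining formula \eqref{equation:GammaLiftWeight} stay finite and the ultraproduct representation of $\Gamma$ remains available. This is precisely where modular invariance is essential, since it is what transports analyticity and the triple norm $|||\cdot|||$ across the map; once this compatibility is secured, every algebraic identity reduces formally to the computation already carried out for bounded functionals.
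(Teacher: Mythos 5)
Your proposal is correct and takes essentially the same approach as the paper: the paper's own proof simply states that, thanks to Lemma \ref{lemma:contractionLiftWeight} and Corollary \ref{corollary:modularAutoWeight}, the bounded-functional arguments of Corollary \ref{corollary:homomorphismLift} and Corollary \ref{corollary:conditionalExpLift} carry over verbatim, and your write-up is precisely that transfer with the Tomita-algebra/$m_\omega$ compatibility checks made explicit. Your explicitly stated assumption that $\pi(N)$ is $\sigma^{\omega_M}_t$-invariant is exactly the hypothesis implicitly inherited from Corollary \ref{corollary:homomorphismLift} (the proposition's statement omits it, but the paper's verbatim-copy proof requires it too), so nothing essential is missing.
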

	\begin{proof}
		For the first statement, using Lemma \ref{lemma:contractionLiftWeight}, the proof can be copied verbatim from Corollary \ref{corollary:homomorphismLift}.
		
		For the second statement, by Corollary \ref{corollary:modularAutoWeight} and Lemma \ref{lemma:contractionLiftWeight}, the proof can be copied verbatim from Corollary \ref{corollary:conditionalExpLift}. 
	\end{proof}
	To end this subsection, we collect all the facts about Poissonization in the following theorem. This is the main result of this paper.
	\begin{theorem}\label{theorem:Poisson}
		Let $\textbf{vNa}_w$ be the category where the objects are $(N,\omega)$ where $N$ is a von Neumann algebra and $\omega$ is a normal faithful semifinite weight and the morphisms are normal $*$-homomorphisms that preserve the n.s.f. weights. Let $\textbf{vNa}_s$ be the category where the objects are $(N',\varphi')$ where $N'$ is a von Neumann algebra and $\varphi'$ is a normal faithful state and the morphisms are normal $*$-homomorphisms that preserve the states. Then Poissonization is a functor:
		\begin{equation}
			\textbf{Poiss}:\textbf{vNa}_w\Rightarrow\textbf{vNa}_s: (N,\omega)\rightarrow (\mathbb{P}_\omega N, \phi_\omega)
		\end{equation}
		such that the following diagram holds:
		\begin{center}
			\begin{tikzcd}
				N \arrow[r, "\pi"] \arrow[d, "\textbf{Poiss}"] & M \arrow[d, "\textbf{Poiss}"] \\
				\mathbb{P}_{\omega_N} N \arrow[r, "\Gamma(\pi)"] & \mathbb{P}_{\omega_M} M
			\end{tikzcd}
		\end{center}
		where $\omega_M\circ\pi = \omega_N$ and $\phi_{\omega_M}\circ\Gamma(\pi) = \phi_{\omega_N}$.
		
		In addition, Poissonization induces a functor on the category of Haagerup $L_2$-spaces $\textbf{Hilb}_{vNa}$:
		\begin{equation}
			\textbf{Poiss}_{Hilb}: \textbf{Hilb}_{vNa} \Rightarrow \textbf{Hilb}_{vNa}: L_2(N, \omega)\rightarrow L_2(\mathbb{P}_\omega N, \phi_\omega)
		\end{equation}
		such that $\textbf{Poiss}_{Hilb} = \textbf{Fock}_{Sym}$ where $\textbf{Fock}_{Sym}$ is the functor on the category of Hilbert spaces that implements the symmetric Fock space construction.
	\end{theorem}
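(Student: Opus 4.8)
The plan is to assemble Theorem~\ref{theorem:Poisson} from the constructions and functorial lemmas already established, and then to supply the two genuinely new verifications: the functor axioms and the naturality of the Fock identification. On objects, Proposition~\ref{proposition:PoissonForWeight1} together with Proposition~\ref{proposition:identicalPoisson} produces for each $(N,\omega)\in\textbf{vNa}_w$ the pair $(\mathbb{P}_\omega N,\phi_\omega)$ with $\phi_\omega$ a normal faithful state, so the object assignment lands in $\textbf{vNa}_s$. On morphisms, given a weight-preserving normal $*$-homomorphism $\pi:N\to M$, Proposition~\ref{proposition:collectionFunctorialWeight}(1) supplies the lift $\Gamma(\pi):\mathbb{P}_{\omega_N}N\to\mathbb{P}_{\omega_M}M$, a normal $*$-homomorphism with $\phi_{\omega_M}\circ\Gamma(\pi)=\phi_{\omega_N}$, which is exactly the commuting square and state-compatibility asserted in the statement.

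It remains to check the functor axioms. First I would recall from Lemma~\ref{lemma:ultraweakdense} that the unital $*$-subalgebra $\mathcal{A}=\{\Gamma(a):\|a\|\le 1\}$ (together with the analytic generators $S^{\omega}_{ana}$ in the weight case) is ultra-weakly dense in the Poisson algebra, and that each lift acts by $\Gamma(a)\mapsto\Gamma(\pi(a))$. Since $\Gamma(\mathrm{id}_N)(\Gamma(a))=\Gamma(a)$ and $\Gamma(\pi_2\circ\pi_1)(\Gamma(a))=\Gamma(\pi_2\pi_1(a))=\Gamma(\pi_2)(\Gamma(\pi_1(a)))=\bigl(\Gamma(\pi_2)\circ\Gamma(\pi_1)\bigr)(\Gamma(a))$ on this generating set, normality forces $\Gamma(\mathrm{id}_N)=\mathrm{id}$ and $\Gamma(\pi_2\circ\pi_1)=\Gamma(\pi_2)\circ\Gamma(\pi_1)$ on all of $\mathbb{P}_{\omega}N$. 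This establishes that $\textbf{Poiss}$ is a functor.

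For the Hilbert-space statement I would use Corollary~\ref{corollary:FockIso}, which provides for each object a unitary $\iota_N:L_2(\mathbb{P}_\omega N,\phi_\omega)\to\mathcal{F}(L_2(N,\omega))$ sending $\lambda_{\emptyset\emptyset}(x_1,\dots,x_n)d_{\phi_\omega}^{1/2}$ to the symmetric tensor $\otimes^s_{1\le i\le n}\hat{x_i}$. A weight-preserving $*$-homomorphism $\pi$ induces an isometry $V_\pi:L_2(N,\omega)\to L_2(M,\omega_M)$, $\hat{x}\mapsto\widehat{\pi(x)}$ (isometric because $\omega_M(\pi(x)^*\pi(y))=\omega_N(x^*y)$), and I would show that under $\iota$ the $L_2$-implementation $L_2(\Gamma(\pi))$ of $\Gamma(\pi)$ is precisely the second quantization $\mathcal{F}(V_\pi)$. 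The key step is the entrywise formula
\[
L_2(\Gamma(\pi))\bigl(\lambda_{\emptyset\emptyset}(x_1,\dots,x_n)d_{\phi_{\omega_N}}^{1/2}\bigr)=\lambda_{\emptyset\emptyset}(\pi(x_1),\dots,\pi(x_n))d_{\phi_{\omega_M}}^{1/2}.
\]
To prove it, I would first note that $\Gamma(\pi)\Gamma(e^{itx})=\Gamma(\pi(e^{itx}))=\Gamma(e^{it\pi(x)})$ because $\pi$ is a homomorphism, so differentiating (as in the proof of Proposition~\ref{proposition:symmetrictensor} and Lemma~\ref{lemma:nameexplainPoissonState}) shows $\Gamma(\pi)$ carries each $\lambda_\emptyset$-vector to the corresponding one with every entry replaced by its image under $\pi$. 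Since the Fock-basis vectors are the fixed inclusion--exclusion combinations of $\lambda_\emptyset$-vectors from Lemma~\ref{lemma:FockTransform}, with coefficients $\prod_{j\in A}\omega(x_j)$, and weight-preservation gives $\omega_M(\pi(x_j))=\omega_N(x_j)$, those coefficients are unchanged, so the combination transforms entrywise as well. Composing with the Fock isomorphisms then yields $\iota_M\circ L_2(\Gamma(\pi))\circ\iota_N^{-1}=\mathcal{F}(V_\pi)$.

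The main obstacle is this naturality verification: one must ensure the entrywise action on the $\lambda_\emptyset$-level genuinely propagates to the $\lambda_{\emptyset\emptyset}$-level, and this hinges precisely on weight-preservation keeping the coefficients of Lemma~\ref{lemma:FockTransform} intact. Having checked that, I would finally observe that $V_{\mathrm{id}}=\mathrm{id}$ and $V_{\pi_2\circ\pi_1}=V_{\pi_2}\circ V_{\pi_1}$, so that $\mathcal{F}$ is itself functorial and its composition law matches the already-established functoriality of $\Gamma$; this upgrades the family of unitaries $\{\iota_N\}$ to a natural isomorphism and secures the equality of functors $\textbf{Poiss}_{Hilb}=\textbf{Fock}_{Sym}$.
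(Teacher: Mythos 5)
Your proposal is correct and follows essentially the same route as the paper: Theorem \ref{theorem:Poisson} is stated there as a summary that simply assembles Propositions \ref{proposition:PoissonForWeight1}, \ref{proposition:identicalPoisson}, and \ref{proposition:collectionFunctorialWeight} with the Fock-space identification (Corollary \ref{corollary:FockIso} in the bounded case, Lemma \ref{lemma:abstractHilbert} together with Proposition \ref{proposition:identicalPoisson} in the weight case), and these are exactly the ingredients you invoke. Your two added verifications — checking the functor axioms on the ultra-weakly dense generating set, and propagating the entrywise action of $\Gamma(\pi)$ from $\lambda_\emptyset$-vectors to $\lambda_{\emptyset\emptyset}$-vectors (using multiplicativity of $\pi$ for the recursion and weight-preservation for the coefficients of Lemma \ref{lemma:FockTransform}) to conclude $\iota_M\circ L_2(\Gamma(\pi))\circ\iota_N^{-1}=\mathcal{F}(V_\pi)$ — are correct and supply details the paper leaves implicit.
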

	\subsection{Poissonization and Normal Unital Completely Positive Maps}
	In this section, we generalizes the functor $\textbf{Poiss}$ to the category of von Neumann algebras where the morphisms are now weight-preserving unital completely positive maps. We shall denote this category as $\textbf{vNa}_{OpSys}$ emphasizing the fact that this is a subcategory of the category of operator systems.
	\begin{proposition}\label{proposition:quantumChannelLift}
		Let $T:N\rightarrow M$ be a unital completely positive map that preserves the reference functional: $\omega_M\circ T = \omega_N$. Then there exists a normal unital completely positive map:
		\begin{equation}
			\Gamma(T):\mathbb{P}_{\omega_N}N\rightarrow\mathbb{P}_{\omega_M}M
		\end{equation}
		such that for all $x\in S^{\omega_N}_{ana}$ (c.f. Lemma \ref{lemma:GammaLiftWeight}), we have: $\Gamma(T)\Gamma(x) = \Gamma(T(x))$. In addition, $\Gamma(T)$ preserves the Poisson states: $\phi_{\omega_M}\circ\Gamma(T) = \phi_{\omega_N}$.
	\end{proposition}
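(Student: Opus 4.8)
The plan is to first reduce the statement to a single outstanding property---complete positivity---and then establish that property through a weight-preserving dilation, exploiting the functoriality already proved for $*$-homomorphisms and conditional expectations. First I would observe that a unital completely positive $T$ is in particular a normal unital contraction carrying a dual pair: since $\omega_M\circ T=\omega_N$, the predual adjoint defines a normal unital linear map $\hat{T}:M\rightarrow N$ characterised by $\omega_M(T(x)y)=\omega_N(x\hat{T}(y))$, and setting $x=1$ (resp.\ $y=1$) together with faithfulness of the weights forces $\hat{T}(1)=1$. Thus the hypotheses of Lemma~\ref{lemma:contractionLiftWeight} are met, and that lemma already produces a normal map $\Gamma(T):\mathbb{P}_{\omega_N}N\rightarrow\mathbb{P}_{\omega_M}M$ with $\Gamma(T)\Gamma(x)=\Gamma(T(x))$ for $x\in S^{\omega_N}_{ana}$, the state-preservation $\phi_{\omega_M}\circ\Gamma(T)=\phi_{\omega_N}$, and unitality (because $\Gamma(T)\Gamma(1)=\Gamma(T(1))=\Gamma(1)=1$). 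Hence the only genuinely new assertion is that $\Gamma(T)$ is completely positive.

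To obtain complete positivity I would factor $T=E\circ\pi$ through a homomorphism and a conditional expectation. Concretely, via the normal, weight-preserving Stinespring--Kasparov dilation one realizes $P:=M\overline{\otimes}\mathbb{B}(H)$ with the product weight $\omega_P:=\omega_M\otimes\psi$ for a suitably chosen normal faithful $\psi$, a normal $*$-homomorphism $\pi:N\rightarrow P$ with $\omega_P\circ\pi=\omega_N$, and the slice map $E=\id_M\otimes\psi:P\rightarrow M\otimes 1\cong M$, so that $E\circ\pi=T$. The subalgebra $M\otimes 1$ is invariant under $\sigma^{\omega_P}_t=\sigma^{\omega_M}_t\otimes\sigma^{\psi}_t$ and $\omega_P$ restricts to $\omega_M$ there, so by Takesaki's theorem \cite{T} $E$ is an $\omega_P$-preserving normal conditional expectation. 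Applying Proposition~\ref{proposition:collectionFunctorialWeight}(1) to $\pi$ and part~(2) to $E$ yields a normal $*$-homomorphism $\Gamma(\pi)$ and a normal conditional expectation $\Gamma(E)$, both completely positive. On the ultra-weakly dense generating set $\{\Gamma(x):x\in S^{\omega_N}_{ana}\}$ one checks $\Gamma(E)\Gamma(\pi)\Gamma(x)=\Gamma(E\pi(x))=\Gamma(T(x))=\Gamma(T)\Gamma(x)$, so $\Gamma(T)=\Gamma(E)\circ\Gamma(\pi)$ by normality and density. As a composition of completely positive maps, $\Gamma(T)$ is completely positive, which completes the argument.

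The hard part will be the construction of the dilation in the weight setting: when $\omega_N,\omega_M$ are genuine n.s.f.\ weights rather than states, one must choose $\psi$ (possibly itself a weight) so that $\omega_P\circ\pi=\omega_N$ holds on the definition ideal and so that $\pi$ carries the Tomita algebra $\mathcal{U}^{\omega_N}_{ana}$ into $\mathcal{U}^{\omega_P}_{ana}$, guaranteeing that $\Gamma(\pi)$ is defined on the correct generating set $S^{\omega_N}_{ana}$; the slice $E$ must likewise respect the analytic structure. An alternative that avoids the dilation is to verify complete positivity on the Fock model: under the isomorphism $L_2(\mathbb{P}_\omega N,\phi_\omega)\cong\mathcal{F}(L_2(N,\omega))$ of Corollary~\ref{corollary:FockIso}, $\Gamma(T)$ is the exponential second quantization of the $L_2$-contraction $\hat{x}\mapsto\widehat{T(x)}$ induced by $T$ (a contraction by Kadison--Schwarz together with $\omega_M\circ T=\omega_N$), and the Schoenberg-type principle that second quantization sends completely positive maps to completely positive maps yields the claim. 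I would use the dilation route as the primary argument and this Fock-space computation as a consistency check.
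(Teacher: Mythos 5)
Your overall skeleton is the one the paper uses: factor $T=E\circ\pi$ through a Stinespring-type dilation, lift $\pi$ and $E$ by the functorial results, and get complete positivity of $\Gamma(T)=\Gamma(E)\circ\Gamma(\pi)$ by composition. The paper does exactly this with Paschke's $W^*$-module dilation, $E(S)=eSe$ on $\mathcal{L}(H_M)$, and then identifies the composite on the predual using the $L_2$-adjoint $\Gamma(T^\dagger)$. However, your write-up has two genuine gaps, one in each half.

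First, the opening reduction is unjustified. The predual adjoint $T_*$ maps $M_*$ to $N_*$, but to obtain a normal unital $\hat{T}:M\to N$ with $\omega_M(T(x)y)=\omega_N(x\hat{T}(y))$ you must know that $T_*$ carries the embedded copy $\{\omega_M(\,\cdot\,y):y\in M\}$ of $M$ inside $M_*$ into the corresponding embedded copy of $N$, and that the resulting map is bounded. This bilinear (KMS-type) dual is not automatic: in finite dimensions it is $\hat{T}(y)=T^\dagger(y\,d_M)\,d_N^{-1}$, whose norm blows up as $d_N$ degenerates, and in the weight setting its existence as a bounded normal map is an Accardi--Cecchini/Petz-type theorem that in this bilinear form essentially requires $T$ to intertwine the modular automorphism groups. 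This is precisely why the paper states the existence of $\hat{T}$ as a \emph{hypothesis} of Lemma~\ref{lemma:contractionLiftWeight} and never applies that lemma to $T$ directly; your appeal to it begs the question.

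Second, and more seriously, the dilation you invoke is not known to exist in the form you need, and you concede this ("the hard part") without resolving it. The normal Stinespring/Paschke construction produces $\pi$ landing in a corner $p\big(M\overline{\otimes}\mathbb{B}(\ell_2)\big)p$ together with a compression $S\mapsto eSe$, not a unital homomorphism into all of $M\overline{\otimes}\mathbb{B}(H)$ composed with a slice map $\mathrm{id}_M\otimes\psi$. If you take $\psi$ to be a vector state the slice form is attainable, but then $\omega_M\otimes\psi$ is not faithful and Poissonization cannot be applied to $(P,\omega_P)$ at all; if you insist $\psi$ be faithful, then faithfulness of $\mathrm{id}_M\otimes\psi$ forces $\pi$ to be unital on the full amplification, and producing such a $\pi$ with $T=(\mathrm{id}_M\otimes\psi)\circ\pi$ is exactly the unresolved point. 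This is the spot where the paper inserts its perturbed weight $\omega_\epsilon(S)=\omega_M(eSe)+\epsilon\varphi\big((1-e)S(1-e)\big)$ to restore faithfulness on the Paschke algebra. In your favor: had the slice dilation existed with faithful $\psi$, weight-compatibility would be automatic, since $\omega_P\circ\pi=\omega_M\circ E\circ\pi=\omega_M\circ T=\omega_N$, so the difficulty is not the identity you flag but the existence of the dilation itself; and your construction, if completed, would be cleaner than the paper's, since it avoids the $\epsilon$-perturbation entirely. Finally, the Fock-space "consistency check" cannot substitute for this: second quantization of the $L_2$-contraction gives a map of Hilbert spaces, and the assertion that it restricts to a completely positive map of the Poisson \emph{algebras} is the content of the proposition, not an independent principle you can cite.
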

	\begin{proof}
		The proof relies on an extension of Stinespring dilation due to Paschke \cite{Pash1,Lance}. Given a unital completely positive map $T:N\rightarrow M$, there exists a Hilbert $W^*$-bimodule $H_M$ over the von Neumann algebra $M$ and a normal $*$-homomorphism: $\pi: N\rightarrow \mathcal{L}(H_M)$ where $\mathcal{L}(H_M)$ is the von Neumann algebra of adjointable bimodule maps on $H_M$ such that the following diagram commutes:
		\begin{center}
			\begin{tikzcd}
				N \arrow[rd,"T"] \arrow[r, "\pi"]  & \mathcal{L}(H_M)\arrow[d,"E"] \\ &M
			\end{tikzcd}
		\end{center}
		where there exists a projection $e\in \mathcal{L}(H_M)$ such that $E(S) = eSe$. 
		
		Using this commutative diagram, $\omega_M\circ E$ defines a normal weight on $\mathcal{L}(H_M)$. The difficulty in lifting this diagram via Poissonization is the fact that this weight $\omega_M\circ E$ needs not be faithful. To fix this, fix a normal faithful state $\varphi$ on $\mathcal{L}(H_M)$ and consider the perturbed weight:
		\begin{equation}
			\omega_\epsilon(S):= \omega_M(eSe) + \epsilon \phi((1-e)S(1-e))
		\end{equation}
		where $\epsilon >0$ is a perturbative parameter. Note $\omega_\epsilon$ is a normal faithful semifinite weight on $\mathcal{L}(H_M)$. And $\omega_\epsilon\circ\pi = \omega_M\circ E\circ\pi = \omega_M\circ T = \omega_N$. Hence for each $\epsilon > 0$, Poissonization gives us lifts:
		\begin{align*}
			\begin{split}
				&\Gamma(\pi):\mathbb{P}_{\omega_N}N\rightarrow \mathbb{P}_{\omega_\epsilon}\mathcal{L}(H_M)\\
				&\Gamma(E):\mathbb{P}_{\omega_\epsilon}\mathcal{L}(H_M)\rightarrow \mathbb{P}_{\omega_M} M
			\end{split}
		\end{align*}
		where the lift $\Gamma(E)$ is well-defined by Lemma \ref{lemma:contractionLiftWeight}. Since for any $\epsilon>0$ the projection $e$ is in the centralizer algebra $\mathcal{L}(H_M)_{\omega_\epsilon}$, then for all $x$ in the definition ideal of $\omega_\epsilon$ we have:
		\begin{equation}
			\omega_\epsilon(ex) = \omega_\epsilon(xe)\pl.
		\end{equation}
		In particular, for any $\epsilon > 0$, for all $x\in m_{\omega_M}$ we have $exe\in m_{\omega_\epsilon}$ and for all $y\in m_{\omega_N}$ we have $\pi(y)\in m_{\omega_\epsilon}$. Hence $\omega_\epsilon(ex\pi(y)) = \omega_\epsilon(x\pi(y)e)$. 
		
		Moreover, using the $L_2$-adjoint of $T$ we can define a completely positive contraction:
		\begin{equation}
			\Gamma(T^\dagger):L_2(\mathbb{P}_{\omega_M}M, \phi_M)\rightarrow L_2(\mathbb{P}_{\omega_N}N, \phi_N):d_{\phi_M}^{1/4}\lambda_\emptyset(x_1,...,x_n)d_{\phi_M}^{1/4}\mapsto d_{\phi_N}^{1/4}\lambda_\emptyset(T^\dagger x_1,...,T^\dagger x_n)d_{\phi_N}^{1/4}\pl.
		\end{equation}
		Combining these observations, we now consider the adjoint action of $\Gamma(\pi)_*\circ\Gamma(E)_*$ on the predual of the Poisson algebra:
		\begin{align}\label{equation:predualLiftPart1}
			\begin{split}
				\big(&\Gamma(\pi)_*\circ\Gamma(E)_*d_{\phi_M}^{1/2}\lambda_\emptyset(x_1,...,x_n)d_{\phi_M}^{1/2}, \Gamma(y)\big) = \big(d_{\phi_{\omega_\epsilon}}^{1/2}\lambda_\emptyset(ex_1e,...,ex_ne)d_{\phi_{\omega_\epsilon}}^{1/2}, \Gamma(\pi(y))\big) \\&=\exp(\omega_\epsilon(\pi(y) - 1))\prod_{1\leq i \leq n}\omega_\epsilon(ex_ie\pi(y))
				=\exp(\omega_N(y - 1))\prod_{1\leq i \leq n}\omega_\epsilon(ex_i\pi(y))
				\\&=\exp(\omega_N(y - 1))\prod_{1\leq i \leq n}\omega_M(x_i T(y)) = \exp(\omega_N(y - 1))\prod_{1\leq i \leq n}\langle T^\dagger\widehat{x_i}, \widehat{y}\rangle 
				\\
				&=\langle \Gamma(T^\dagger)d_{\phi_M}^{1/4}\lambda_\emptyset(x_1,...,x_n)d_{\phi_M}^{1/4}, d_{\phi_N}^{1/4}\Gamma(y)d_{\phi_N}^{1/4}\rangle = \big(d_{\phi_N}^{1/4}(\Gamma(T^\dagger)d_{\phi_M}^{1/4}\lambda_\emptyset(x_1,...,x_n)d_{\phi_M}^{1/4})d_{\phi_N}^{1/4}, \Gamma(y)\big)
			\end{split}
		\end{align}
		where $x_i\in \mathcal{U}_{ana}^{\omega_M}$, and $y\in S^{\omega_N}_{ana}\ssubset N$ (c.f. Lemma \ref{lemma:GammaLiftWeight}) and $\big(\cdot,\cdot\big)$ denotes the canonical pairing between the Poisson algebra and its predual while $\langle\cdot,\cdot\rangle$ denotes the $L_2$-inner product. Therefore the adjoint of $\Gamma(\pi)_*\circ\Gamma(E)_*$ extends to the adjoint map $\Gamma(T^\dagger)^\dagger = \Gamma(T): L_2(\mathbb{P}_{\omega_N}N, \phi_N)\rightarrow L_2(\mathbb{P}_{\omega_M}M, \phi_M)$. Restricted to the Poisson algebra, we have the desired unital completely positive map. 
	\end{proof}
	\section{Some Properties of Poisson Algebra}\label{section:properties}
	In this section, we discuss some interesting properties of Poisson algebras. First we determine the type of a Poisson algebra $\mathbb{P}_\omega N$ using the Arveson spectrum of the modular automorphism group $\{\sigma^\omega_t\}_{t\in \mathbb{R}}$ on $N$. Using this result, we construct type $III_\lambda$ factors from unitary principal series representations of real semisimple Lie groups. The value of $\lambda\in[0,1]$ depends on the root of the restricted representation on the maximal Abelian subgroup \cite{Knapp}. 
	
	Second, we show that under some general conditions, inclusions of a pair of type III Poisson algebras are split \cite{DL1} if and only if the Poisson algebras are hyperfinite. The splitting property is an important property in the algebraic formulation of quantum field theory \cite{haag}. Typically, the splitting property holds in algebraic quantum field theories because certain "partition function" is finite for all "temperature" \cite{BDL1, BDL2}. In the case of Poisson algebras, the analogs of these "partition functions" are rarely finite, but splitting property hold in general when the algebras are hyperfinite. This is due to the special properties of Poisson algebras. 
	
	Lastly, we calculate the quantum relative entropy \cite{Ara1, Ara2} of some states on the Poisson algebra. When the algebra is type III, it is generally impossible to calculate the relative entropy for a pair of arbitrary states. Here our calculation holds when one state is the Poisson state and the other is a bounded perturbation of the Poisson state. The formula relates the (Araki) relative entropy of the Poisson algebra $\mathbb{P}_\omega N$ to the Lindblad relative entropy \cite{Lin} of the original von Neumann algebra $N$. Since the original von Neumann algebra $N$ is typically semi-finite in actual applications, the Lindblad entropy on $N$ can be easily calculated using well-known formula. This result will be extended and studied in depth in a companion paper.
	\subsection{Factoriality and Type of Poisson Algebras}\label{subsection:type}
	In this subsection, we determine the type of some Poisson algebras. The key observation is the following proposition.
	\begin{proposition}\label{proposition:typeII1}
		The Poisson algebra $\mathbb{P}_{\tr}\mathbb{B}(\ell_2)$ is the hyperfinite $II_1$ factor where $\ell_2$ denotes the separably infinite dimensional Hilbert space $\ell_2(\mathbb{Z})$. 
	\end{proposition}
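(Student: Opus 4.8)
The plan is to establish, in order, that $\phi_{\tr}$ is a trace (so the algebra is finite), that the algebra is hyperfinite, that it is a factor, and finally that it is infinite-dimensional; together these force it to be the unique hyperfinite $II_1$ factor.

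First I would observe that since $\tr$ is a trace on $\mathbb{B}(\ell_2)$ its modular automorphism group is trivial, $\sigma^{\tr}_t = \id$. By the compatibility of modular automorphisms (Corollary \ref{corollary:modularAutoWeight}) this gives $\sigma^{\phi_{\tr}}_t = \Gamma(\sigma^{\tr}_t) = \id$, so the faithful normal state $\phi_{\tr}$ satisfies the trivial KMS condition and is therefore a trace; in particular $\mathbb{P}_{\tr}\mathbb{B}(\ell_2)$ is a finite von Neumann algebra. For hyperfiniteness, fix finite-rank projections $f_n \nearrow 1$ with $f_n\mathbb{B}(\ell_2)f_n \cong M_n(\mathbb{C})$. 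By the inductive-limit description of Section \ref{section:review}, $\mathbb{P}_{\tr}\mathbb{B}(\ell_2)$ is the weak closure of the increasing union of the $\mathbb{P}_{\tr}M_n(\mathbb{C})$, and by Proposition \ref{proposition:symmetrictensor} each $\mathbb{P}_{\tr}M_n(\mathbb{C}) \cong M_s(M_n(\mathbb{C})) = \bigoplus_{m\geq 0}\bigotimes_s^m M_n(\mathbb{C})$ is a countable direct sum of finite-dimensional algebras, hence type I and approximately finite-dimensional. Since the connecting maps preserve the (tracial) Poisson state, the weak closure of this increasing union is again AFD, so $\mathbb{P}_{\tr}\mathbb{B}(\ell_2)$ is hyperfinite. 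Infinite-dimensionality is immediate, as it already contains the infinite-dimensional subalgebra $\mathbb{P}_{\tr}M_n(\mathbb{C})$.

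The core of the argument is factoriality. Let $G_0 = \{u \in \mathcal{U}(\mathbb{B}(\ell_2)) : u - 1 \text{ is finite rank}\}$. Each such $u$ lies in $S^{\tr}_{ana}$, so $\Gamma(u)$ is a unitary of $\mathbb{P}_{\tr}\mathbb{B}(\ell_2)$, and because $\Gamma$ is multiplicative on $S^{\tr}_{ana}$ (Lemma \ref{lemma:GammaLiftWeight}) the lifted automorphism $\Gamma(\mathrm{Ad}\,u)$ equals $\mathrm{Ad}(\Gamma(u))$ and is therefore inner. Consequently, for any central $z$ the vector $\zeta = z\,\xi_{\tr} \in L_2(\mathbb{P}_{\tr}\mathbb{B}(\ell_2),\phi_{\tr})$ is fixed by the implementing unitary of $\Gamma(\mathrm{Ad}\,u)$. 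Under the identification $L_2(\mathbb{P}_{\tr}\mathbb{B}(\ell_2)) \cong \mathcal{F}_s(L_2(\mathbb{B}(\ell_2),\tr)) = \mathcal{F}_s(\mathrm{HS})$ (Corollary \ref{corollary:FockIso}) together with the fact that Poissonization on Hilbert spaces is the symmetric Fock functor (Theorem \ref{theorem:Poisson}), this implementing unitary is $\mathcal{F}_s(\mathrm{Ad}\,u)$, where $\mathrm{Ad}\,u$ acts on the Hilbert--Schmidt space $\mathrm{HS}$ by $x \mapsto uxu^*$. Thus $\zeta$ is a common fixed vector of $\{\mathcal{F}_s(\mathrm{Ad}\,u) : u \in G_0\}$.

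I would then decompose $\zeta = \bigoplus_m \zeta_m$ along the Fock grading and show that $\zeta_m = 0$ for $m \geq 1$, using the orthonormal matrix-unit basis $\{e_{ij}\}$ of $\mathrm{HS}$. Testing against finitely-supported diagonal unitaries $u = \mathrm{diag}(z_k)$, which act by $e_{ij}\mapsto z_i\bar z_j e_{ij}$, forces $\zeta_m$ to be supported on symmetrized monomials $e_{i_1 j_1}\cdots e_{i_m j_m}$ whose row and column index multisets coincide; testing against finite permutation unitaries, which act by $e_{ij}\mapsto e_{\sigma(i)\sigma(j)}$, forces the coefficient function of $\zeta_m$ to be constant along permutation orbits. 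For $m \geq 1$ every such orbit is infinite, so a finite-norm invariant vector must have vanishing coefficients, giving $\zeta_m = 0$. Hence $\zeta \in \mathbb{C}\,\xi_{\tr}$, and since $\xi_{\tr}$ is separating, $z$ is scalar, so $\mathbb{P}_{\tr}\mathbb{B}(\ell_2)$ is a factor; being an infinite-dimensional finite hyperfinite factor, it is the hyperfinite $II_1$ factor. The main obstacle is exactly this last step: transporting centrality into a fixed-vector condition on the symmetric Fock space (via the inner-ness of $\Gamma(\mathrm{Ad}\,u)$ and the Fock implementation) and then excluding invariant vectors in positive Fock degree, for which the infinite-orbit versus finite-norm dichotomy is the decisive device.
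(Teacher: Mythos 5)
Your argument is correct, but its core step --- factoriality --- follows a genuinely different route from the paper's. The paper stays entirely at the algebra level: it realizes $\mathbb{P}_{\tr}\mathbb{B}(\ell_2)$ as the state-preserving inductive limit of the corner algebras $\mathbb{P}_{\tr_k}\mathbb{B}(\ell_2^k)$ and then conjugates by an explicit sequence of block-swap unitaries $u_{2n}$ converging weakly to zero; a direct computation with the moment formula shows that the sesquilinear form $\big(\Gamma(x),\Gamma(y)\big)_u = \lim_n \phi_{\tr_{2n}}\big(\Gamma(x^*)\Gamma(u_{2n}^*)\Gamma(y)\Gamma(u_{2n})\big)$ factorizes as $\phi_{\tr}(\Gamma(x^*))\,\phi_{\tr}(\Gamma(y))$, while for central $\Gamma(y)$ it equals $\phi_{\tr}(\Gamma(x^*)\Gamma(y))$, forcing $\Gamma(y)=\phi_{\tr}(\Gamma(y))1$. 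This is an asymptotic-factorization (mixing) argument, self-contained and independent of the Fock-space identification. You instead transport centrality to the $L_2$ level: a central $z$ yields a vector $z\xi_{\tr}$ fixed by the standard implementations of the inner automorphisms $\mathrm{Ad}(\Gamma(u))$, $u-1$ finite rank (inner-ness follows from the group homomorphism property in Lemma \ref{lemma:GammaLiftWeight}), and under Corollary \ref{corollary:FockIso} and Theorem \ref{theorem:Poisson} these implementations become the second quantizations $\mathcal{F}_s(\mathrm{Ad}\,u)$ on $\mathcal{F}_s(\mathrm{HS})$; your diagonal-unitary and finite-permutation tests then show the action is ergodic in every positive Fock degree, since permutation orbits of matrix-unit monomials are infinite and an invariant $\ell_2$-vector must have constant coefficients along each orbit, hence vanish. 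Both proofs are sound, and both treat traciality and hyperfiniteness essentially identically (though your derivation of traciality from $\sigma^{\phi_{\tr}}_t=\Gamma(\sigma^{\tr}_t)=\id$ via Corollary \ref{corollary:modularAutoWeight} is cleaner than the paper's bare assertion). What each buys: the paper's route is more economical, using only the moment formula and the inductive limit, and avoids any appeal to the standard implementation or the Fock functor; your route uses heavier machinery but proves a stronger, reusable fact --- triviality of the fixed vectors of the second-quantized unitary group action --- which makes conceptually transparent that factoriality comes from $\mathbb{B}(\ell_2)$ having an abundance of trace-preserving inner symmetries, and which would adapt directly to other Poisson algebras whose weight admits a large group of weight-preserving inner automorphisms.
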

	\begin{proof}
		From straight-forward calculations, it is clear that the Poisson state $\phi_{\tr}$ is a tracial state on $\mathbb{P}_{\tr} \mathbb{B}(\ell_2)$. Therefore we only need to show that the algebra is a factor. 
		
		To see this we need to consider the following sequence of subalgebras: $\{\mathbb{P}_{\tr_k}\mathbb{B}(\ell_2^k)\}_{k\geq 1}$ where $\ell_2^k\ssubset \ell_2$ is the $k$-dimensional subspace of $\ell_2$ and $\tr_k$ is the canonical trace on the subalgebra $\mathbb{B}(\ell^k_2)$. Let $\{e_k\in\mathbb{B}(\ell_2)\}_k$ be the increasing family of orthogonal projections such that $e_k\ell_2 = \ell^k_2$. Then for each pair $k < n$, there exists a normal $*$-homomorphism:
		\begin{equation}
			\pi_{n,k}:\mathbb{P}_{\tr_k}\mathbb{B}(\ell^k_2) \rightarrow\mathbb{P}_{\tr_n}\mathbb{B}(\ell_2^n): \Gamma(x)\mapsto \Gamma(x + e_n - e_k)\pl.
		\end{equation}
		In addition, $\phi_{\tr_n}\circ \pi_{n,k} = \phi_{\tr_k}$:
		\begin{equation}
			\phi_{\tr_n}(\pi_{n,k}(\Gamma(x))) = \phi_{\tr_n}(\Gamma(x + e_n - e_k)) = e^{\tr_n(x + e_n - e_k - e_n)} = e^{\tr_k(x - e_k)} = \phi_{\tr_k}(\Gamma(x))\pl.
		\end{equation}
		Hence the Poisson algebra $\mathbb{P}_{\tr}\mathbb{B}(\ell_2)$ is isomorphic to the inductive limit of $\{(\mathbb{P}_{\tr_k}\mathbb{B}(\ell^k_2), \phi_{\tr_k})\}_{k\geq 1}$. 
		
		On $\mathbb{B}(\ell_2)$, we fix a sequence of unitaries weakly converging to 0: $\{u_{2n}:=\id_n \otimes \begin{pmatrix}0 & 1 \\ 1 & 0\end{pmatrix}\in \mathbb{B}(\ell^{2n}_2)\}_{n\geq 1}$. Then on the weakly dense $*$-subalgebra $\cup_{k\geq 1}\mathbb{P}_{\tr_k}\mathbb{B}(\ell^k_2)\ssubset \mathbb{P}_{\tr}\mathbb{B}(\ell_2)$, we consider the following sesquilinear form:
		\begin{equation}
			\big(\Gamma(x), \Gamma(y))\big)_u:=\lim_{n\rightarrow \infty}\phi_{\tr_{2n}}(\Gamma(x^*)\Gamma(u^*_{2n})\Gamma(y)\Gamma(u_{2n}))\pl.
		\end{equation}
		For any pair of $\Gamma(x),\Gamma(y)$ in the dense subalgebra, by the upwards closedness of the inductive sequence, there exists $m\in\mathbb{N}$ such that $\Gamma(x),\Gamma(y)\in\mathbb{P}_{\tr_m}\mathbb{B}(\ell_2^m)$. Then for all $n\in\mathbb{N}$ such that $n > m$, we have the following factorization:
		\begin{align}
			\begin{split}
				\phi&_{\tr_{2n}}(\Gamma(x^*)\Gamma(u^*_{2n})\Gamma(y)\Gamma(u_{2n})) = \exp\tr_{2n}\big((x^* + e_{2n} - e_m)u^*_{2n}(y + e_{2n} - e_m)u_{2n} - e_{2n}\big)\\
				&=\exp\tr_{2n}\big(\begin{pmatrix}
					x^* + e_n - e_m & 0 \\ 0 & e_n
				\end{pmatrix}\begin{pmatrix}
				0 & e_n \\ e_n & 0
			\end{pmatrix}\begin{pmatrix}
			y + e_n - e_m & 0 \\ 0 & e_n
		\end{pmatrix}\begin{pmatrix}
		0 & e_n \\ e_n & 0
	\end{pmatrix} - e_{2n}\big)
			\\
			&=\exp\big(\tr_m(x^* - e_m)\big)\exp\big(\tr_m(y - e_m)\big) = \phi_{\tr_m}(\Gamma(x^*))\phi_{\tr_m}(\Gamma(y)) = \phi_{\tr}(\Gamma(x^*))\phi_{\tr}(\Gamma(y))
			\end{split}
		\end{align}
		where the last equation follows from the state-preserving property of the maps $\{\pi_{n,m}\}_{n\in\mathbb{N}}$. Hence on the dense subalgebra $\cup_{k\geq 1}\mathbb{P}_{\tr_k}\mathbb{B}(\ell^k_2)$, the sesquilinear form factorizes:
		\begin{equation}
			\big(\Gamma(x), \Gamma(y)\big)_u = \phi_{\tr}(\Gamma(x^*))\phi_{\tr}(\Gamma(y))\pl.
		\end{equation}
		Now suppose $\Gamma(y)$ is in the center of the Poisson algebra $\mathbb{P}_{\tr}\mathbb{B}(\ell_2)$, we can calculate the sesquilinear form in another way. For each $n\in\mathbb{N}$, we have:
		\begin{equation}
			\phi_{\tr_{2n}}(\Gamma(x^*)\Gamma(u_{2n}^*)\Gamma(y)\Gamma(u_{2n})) = \phi_{\tr_{2n}}(\Gamma(x^*)\Gamma(y)\Gamma(u^*_{2n})\Gamma(u_{2n})) =\phi_{\tr_{2n}}(\Gamma(x^*)\Gamma(y))\pl.
		\end{equation}
		Again by the state-preserving property of the embeddings $\{\pi_{n,m}\}_{n,m}$, we have:
		\begin{equation}
			\big(\Gamma(x), \Gamma(y)\big)_u = \phi_{\tr}(\Gamma(x^*)\Gamma(y))\pl.
		\end{equation}
		Since the equation $\phi_{\tr}(\Gamma(x^*)\Gamma(y)) = \phi_{\tr}(\Gamma(x^*))\phi_{\tr}(\Gamma(y))$ holds for all $\Gamma(x^*)$ in the dense subalgebra, $\Gamma(y) = \phi_{\tr}(\Gamma(y))1$. Hence the center of the Poisson algebra is trivial.
	\end{proof}
	\begin{corollary}\label{corollary:typeIII}
		Using the same notation as Proposition \ref{proposition:typeII1} and let $\omega$ be a n.s.f. weight on a von Neumann algebra $N$. Then the Poisson algebra $\mathbb{P}_{\omega\otimes\tr}(N\overline{\otimes}\mathbb{B}(\ell_2))$ is a factor and its centralizer is also a factor $\mathbb{P}_{\omega\otimes\tr}(N_\omega\overline{\otimes}\mathbb{B}(\ell_2))$. Consequently, the type of the Poisson algebra $\mathbb{P}_{\omega\otimes\tr}(N\overline{\otimes}\mathbb{B}(\ell_2))$ is determined by the Arveson spectrum $Sp(\Delta_{\phi_\omega})$.
	\end{corollary}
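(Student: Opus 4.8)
The plan is to reduce everything to the behaviour of the modular flow and to the factoriality argument already carried out for $\mathbb{B}(\ell_2)$ in Proposition \ref{proposition:typeII1}. Write $M := N\overline{\otimes}\mathbb{B}(\ell_2)$ and $\mu := \omega\otimes\tr$. Since $\tr$ is a trace, the modular flow is $\sigma^\mu_t = \sigma^\omega_t\otimes\id$, so its fixed-point algebra --- the centralizer --- is $M_\mu = N_\omega\overline{\otimes}\mathbb{B}(\ell_2)$, and the restriction $\mu|_{M_\mu} = (\omega|_{N_\omega})\otimes\tr$ is a semifinite trace; this is precisely why one tensors with $\mathbb{B}(\ell_2)$, as it renders $\mu$ strictly semifinite. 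First I would identify the centralizer of the Poisson state. By the modular-compatibility statement (Corollary \ref{corollary:modularAutoWeight}, in the weight form recorded in Theorem \ref{theorem:Poisson}), $\sigma^{\phi_\mu}_t = \Gamma(\sigma^\mu_t)$ acts on a generator $\Gamma(u)$ by $\Gamma(\sigma^\mu_t(u))$; hence the $\phi_\mu$-fixed generators are exactly the $\Gamma(u)$ with $u\in S^\mu_{ana}\cap M_\mu$, and the modular-invariant inclusion $M_\mu\ssubset M$ lifts (Proposition \ref{proposition:collectionFunctorialWeight}) to give $(\mathbb{P}_\mu M)_{\phi_\mu} = \mathbb{P}_\mu(N_\omega\overline{\otimes}\mathbb{B}(\ell_2))$, with $\phi_\mu$ restricting to a tracial state there.

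Next I would prove that this centralizer is a factor by adapting the asymptotic-independence argument of Proposition \ref{proposition:typeII1}, now with the weakly null unitaries $v_{2n} := 1\otimes u_{2n}$ sitting inside $1\otimes\mathbb{B}(\ell_2)\ssubset M_\mu$. For a putative central element $\Gamma(y)$ of the centralizer and $\Gamma(x)$ in the weakly dense subalgebra generated by contractions supported on a corner $N_\omega\overline{\otimes}\mathbb{B}(\ell_2^m)$, I would evaluate the sesquilinear form $\lim_n\phi_\mu(\Gamma(x^*)\Gamma(v_{2n}^*)\Gamma(y)\Gamma(v_{2n}))$ in two ways. Using that $\Gamma$ is a group homomorphism on $S^\mu_{ana}$ (Lemma \ref{lemma:GammaLiftWeight}) and the two-point Poisson formula, the form equals $\lim_n\exp(\mu(x^*\,v_{2n}^*y v_{2n} - 1))$; writing $x = 1+\tilde x$ and $y = 1+\tilde y$ with $\tilde x,\tilde y\in m_\mu$ and using the trace property, this exponent splits as $\mu(\tilde x^*) + \mu(\tilde y) + \mu(\tilde x^*\,v_{2n}^*\tilde y v_{2n})$, where the cross term vanishes for large $n$ because conjugation by $v_{2n}$ moves the $\mathbb{B}(\ell_2)$-support of $\tilde y$ onto a block orthogonal to that of $\tilde x^*$. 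This yields the factorized value $\phi_\mu(\Gamma(x^*))\phi_\mu(\Gamma(y))$. On the other hand, centrality of $\Gamma(y)$ cancels $\Gamma(v_{2n}^*)\Gamma(v_{2n}) = 1$ and gives $\phi_\mu(\Gamma(x^*)\Gamma(y))$. Equating and letting $\Gamma(x)$ range over the dense subalgebra forces $\Gamma(y) = \phi_\mu(\Gamma(y))1$, so the centralizer has trivial center; carrying the tracial $\phi_\mu$, it is a finite factor of type $II_1$.

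Factoriality of $\mathbb{P}_\mu M$ itself then follows from a soft argument: the modular automorphism group fixes the center pointwise, so $Z(\mathbb{P}_\mu M)\ssubset (\mathbb{P}_\mu M)_{\phi_\mu}$ and is central there; since the centralizer is a factor, $Z(\mathbb{P}_\mu M) = \mathbb{C}$. For the type, I would invoke that for a factor with a faithful normal state whose centralizer is a factor the Connes invariant $S(\mathbb{P}_\mu M)$ coincides with $Sp(\Delta_{\phi_\mu})$; through the Fock-space identification (Corollary \ref{corollary:FockIso}) together with Corollary \ref{corollary:modularAutoWeight}, $\Delta_{\phi_\mu}$ is the multiplicative second quantization $\Gamma(\Delta_\omega\otimes 1)$, whose spectrum is the closed multiplicative subgroup of $\mathbb{R}_+$ generated by $Sp(\Delta_\omega)\cup\{1\}$. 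This pins down the type exactly as in part (5) of Theorem \ref{theorem:main}: type $II_1$ when $Sp(\Delta_\omega) = \{1\}$, and type $III_\lambda$, $III_0$, or $III_1$ according to whether this subgroup is $\lambda^{\mathbb{Z}}$, trivial, or dense in $\mathbb{R}_+$.

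The step I expect to be the main obstacle is the factoriality of the centralizer: Proposition \ref{proposition:typeII1} is stated only over $\mathbb{B}(\ell_2)$, and one must check carefully that the support orthogonality killing the cross term survives when the base carries a nontrivial tracial factor $N_\omega$ --- that is, that only the $\mathbb{B}(\ell_2)$-components of $\tilde x^*$ and $v_{2n}^*\tilde y v_{2n}$ enter the vanishing while the $N_\omega$-components remain harmless. A secondary technical point to nail down is the strict semifiniteness of $\mu$ and the consequent identification of the centralizer $M_\mu = N_\omega\overline{\otimes}\mathbb{B}(\ell_2)$ together with its modular-invariant conditional expectation, for which the standard structure theory of a weight tensored with a trace is required.
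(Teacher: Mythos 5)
Your proposal is correct, and its ingredients coincide with those of the paper's proof: the weakly null unitaries $1\otimes u_{2n}$ and the sesquilinear-form factorization of Proposition \ref{proposition:typeII1}, the identification of the centralizer of the Poisson state with $\mathbb{P}_{\omega\otimes\tr}(N_\omega\overline{\otimes}\mathbb{B}(\ell_2))$, the fact that factoriality of the centralizer forces the Connes invariant to be the group generated by the Arveson spectrum of $\Delta_{\phi_{\omega\otimes\tr}}$, and the Fock-space isomorphism to reduce $Sp(\Delta_{\phi_\omega})$ to $Sp(\Delta_\omega)$. Where you genuinely reorganize the logic is factoriality: writing $M = N\overline{\otimes}\mathbb{B}(\ell_2)$ and $\mu = \omega\otimes\tr$ as you do, the paper runs the asymptotic-independence computation directly on $\mathbb{P}_\mu M$ (and then, implicitly, a second time on the centralizer), whereas you run it only once, on the centralizer --- where $\phi_\mu$ is tracial, so the computation is literally that of Proposition \ref{proposition:typeII1} carrying a harmless $N_\omega$ tensor leg --- and then deduce factoriality of the full algebra from the standard facts that $Z(\mathbb{P}_\mu M)$ is fixed pointwise by $\sigma^{\phi_\mu}_t$, hence lies in the centralizer, and commutes with the centralizer, hence lies in its center, which is trivial. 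Your route is slightly more economical (one hard estimate instead of two, done in the tracial setting) at the cost of invoking the center-in-centralizer fact, which the paper's self-contained repetition avoids. One caveat applies equally to both arguments: the inclusion $\mathbb{P}_\mu(M_\mu)\ssubset(\mathbb{P}_\mu M)_{\phi_\mu}$ is the easy direction, obtained from $\sigma^{\phi_\mu}_t(\Gamma(u)) = \Gamma(\sigma^\mu_t(u))$ on generators, while the reverse inclusion --- that the centralizer of the Poisson state is no larger than the Poisson algebra of the centralizer of the weight --- is asserted rather than proved in the paper; your proposal inherits exactly that same gap and no more.
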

	\begin{proof}
		Using the same construction as in Proposition \ref{proposition:typeII1} and replacing $\{u_{2n}\}$ by $\{1\otimes u_{2n}\}$, we have that $\mathbb{P}_{\omega\otimes\tr}(N\overline{\otimes}\mathbb{B}(\ell_2))$ is a factor. The centralizer of the Poisson algebra is a von Neumann algebra generated by $\{\Gamma(e^{ix}): e^{ix}\in (N\overline{\otimes}\mathbb{B}(\ell_2))_{\omega\otimes\tr}, e^{ix}\in \mathcal{U}^{\omega\otimes\tr}_{ana}\}$ where $e^{ix}$ is a unitary in the centralizer $(N\overline{\otimes}\mathbb{B}(\ell_2))_{\omega\otimes\tr}\cong N_\omega\overline{\otimes}\mathbb{B}(\ell_2)$. Hence it is isomorphic to $\mathbb{P}_{\omega\otimes\tr}(N_\omega\overline{\otimes}\mathbb{B}(\ell_2))$ and it is a factor. 
		
		It is well-known that the Arveson spectrum $Sp(\Delta_{\phi_{\omega\otimes\tr}})$ is minimal if the centralizer is a factor \cite{T}. Hence the Connes invariant of the Poisson algebra is given by the group generated by the Arveson spectrum of the Poisson state:
		\begin{equation*}
			S(\mathbb{P}_{\omega\otimes\tr}(N\overline{\otimes}\mathbb{B}(\ell_2))) = \Gamma\big(Sp(\Delta_{\phi_{\omega\otimes\tr}}) - \{0\}\big)\pl.
		\end{equation*}
		Since the modular automorphism group of the Poisson state $\phi_{\omega\otimes\tr}$ is a tensor product: $\sigma^{\phi_\omega}_t\otimes id = \Gamma(\sigma^\omega_t)\otimes id$, the Arveson spectrum equals to its restriction to the first component: $Sp(\Delta_{\phi_\omega})$. Lastly, since the $L_2$-space of the Poisson algebra is isomorphic to the symmetric Fock space of the $L_2$-space of the original algebra, the multiplicative subgroup generated by $Sp(\Delta_{\phi_\omega})$ equals to the multiplicative subgroup generated by $Sp(\Delta_\omega)$. Therefore the type of the Poisson algebra $\mathbb{P}_{\omega\otimes\tr}(N\overline{\otimes}\mathbb{B}(\ell_2))$ is determined by the spectrum $Sp(\Delta_\omega)$.
	\end{proof}
	\subsection{Type III Algebras from Principal Series Representation}\label{subsection:representation}
	Using the result on the type of Poisson algebra, in this section we construct examples of type III$_\lambda$ Poisson factors from unitary principal series representations. In addition, we will present a framework to construct algebraic quantum field theories using unitary principal series representations and Poissonization. The examples from this construction satisfy certain axioms in AQFT \cite{haag}. A dedicated study of Poissonization and AQFT with more constructions will come in a separate paper.
	
	\subsubsection{Unitary Principal Series Representations and Parabolic Induction}\label{subsubsection:induction}We first recall the construction of unitary principal series representation as an induced representation \cite{Knapp1, bern, CH, Rep}. Let $G$ be a real semisimple Lie group and let $G\cong K A  N$ be the Iwasawa decomposition of $G$. Let $M$ be the centralizer of $A$ in $K$, then $P = MAN$ is a minimal parabolic subgroup of $G$. In addition, let $\mathfrak{a}$ be the Lie algebra of $A$ such that $A = \exp(\mathfrak{a})$. Let $\Sigma^+$ be the set of positive roots of $\mathfrak{a}$ in the Lie algebra $\mathfrak{g}$. Fix a purely imaginary root $\nu\in\mathfrak{a}^*$ and fix a unitary irreducible representation $\sigma:M\rightarrow U(H_\nu)$ (since $K$ is compact, $M$ is compact and $H_\nu$ is a finite dimensional Hilbert space), then there exists a unitary representation of the parabolic group $P$ on $H_\nu$:
	\begin{equation}
		\mu_P:=\sigma\otimes e^\nu \otimes id: P = MAN\rightarrow U(H_\nu): man\mapsto \sigma(m)e^{\nu(\log a)}
	\end{equation}
	where $\log a\in\mathfrak{a}$ is the generator of $a\in A$. Let $\rho_G := \frac{1}{2}\sum_{\alpha \in\Sigma^+}\alpha$ be the half-sum of the roots in $\Sigma^+$. Then the induced unitary principal series representation of $G$ acts on the Hilbert space of half-densities over $G/P$ with value in the Hilbert space $H_\nu$:
	\begin{equation}
		\Gamma^2(G/P, H_\nu) \cong \{F: G\rightarrow H_\nu: F(gman) = \sigma(m^{-1})e^{-(\nu + \rho_G)(\log a)}F(g), \int_{G/P}|F|^2_{H_\nu} < \infty\}
	\end{equation} 
	where $G/P \cong K/M\cap K$ is a compact manifold and $|\cdot|_{H_\nu}$ is the $P$-invariant inner-product on $H_\nu$. The integration on $G/P$ is over the density $|F|^2_{H_\nu}$. The group acts on the half-density sections by left-multiplication: $\mathcal{P}^{\sigma, \nu}(g)F(h):=F(g^{-1}h)$. If the representation $\sigma$ is irreducible, then $\mathcal{P}^{\sigma,\nu}$ is also irreducible. This procedure of constructing irreducible unitary representations is called parabolic induction.  
	\subsubsection{Type III Poisson Factors from Parabolic Induction}\label{subsubsection:PoissonParabolic}
	Given a irreducible unitary principal series representation $\mathcal{P}^{\sigma,\nu}$ of a real semisimple Lie group $G$, we construct a type III$_\lambda$ Poisson factor with strongly continuous $G$-action. Let $\mathcal{P}^{\sigma,\nu}$ be an irreducible unitary principal series representation of $G$ induced by the finite dimensional unitary representation: $\mu_P:=\sigma\otimes e^\nu \otimes id$ (i.e. $\mathcal{P}^{\sigma,\nu} = Ind^G_P(\mu_P)$). Fix an element $a\in A$ and let $\log\delta_\nu$ be the generator of the unitary $\mu_P(a)$ (i.e. $\delta_\nu^{it} = \mu_P(e^{t\log a})$ and hence $\log \delta_\nu = |\nu(\log a)|$). And consider the weight $\omega_\nu = \tr(\delta\cdot)$ on $\mathbb{B}(H_\nu)$. Then we have:
	\begin{lemma}\label{lemma:PoissonParabolic}
		 The following statements are true:
		 \begin{enumerate}
		 	\item $\mathbb{P}_{\omega_\nu\otimes\tr}\mathbb{B}(H_\nu\otimes\ell_2)$ is the hyperfinite II$_1$ factor;
		 	\item Fix a unitary irreducible representation $\sigma:M\rightarrow U(H_\sigma)$. For two different purely imaginary roots $\nu,\mu\in\mathfrak{a}^*$, $\mathbb{B}(H_\nu\oplus H_\mu) = \mathbb{B}(H_\sigma)\otimes M_2(\mathbb{C})$ and the normalization of the weight $\tr(\delta_\nu \oplus\delta_\mu\cdot)$ is given by $ \tr\otimes \tr_2(\begin{pmatrix}
		 			\frac{e^{|\nu(\log a)|}}{e^{|\nu(\log a )|} + e^{|\mu(\log a)|}} & 0 \\ 0 & \frac{e^{|\mu(\log a)|}}{e^{|\nu(\log a )|} + e^{|\mu(\log a)|}}
		 		\end{pmatrix}\cdot)$. Denote this weight $\omega_{\nu,\mu}$. Then if $\lambda = \min\{e^{|\nu\log a| - |\mu\log a|}, e^{|\mu\log a| - |\nu\log a|}\} \in (0,1)$, the Poisson algebra $\mathbb{P}_{\omega_{\nu,\mu}\otimes\tr}\mathbb{B}\big((H_\nu\oplus H_\mu)\otimes\ell_2\big)$ is a type III$_\lambda$ factor;
	 		\item For three different purely imaginary roots $\nu,\mu,\eta\in\mathfrak{a}^*$, $\mathbb{B}(H_\nu\oplus H_\mu\oplus H_\eta)  = \mathbb{B}(H_\sigma)\otimes M_3(\mathbb{C})$ and the normalization of the weight $\tr(\delta_\nu\oplus\delta_\mu\oplus\delta_\eta\cdot)$ is given by
	 		\begin{equation*}
	 			\tr\otimes\tr_3(\begin{pmatrix}
	 				\frac{e^{|\nu\log a|}}{e^{|\nu\log a|} + e^{|\mu \log a|} + e^{|\eta\log a|}} & 0 &0 \\ 0 & \frac{e^{|\mu\log a|}}{e^{|\nu\log a|} + e^{|\mu \log a|} + e^{|\eta\log a|}} & 0 \\ 0 & 0  & \frac{e^{|\eta\log a|}}{e^{|\nu\log a|} + e^{|\mu \log a|} + e^{|\eta\log a|}}
	 			\end{pmatrix}\cdot)\pl.
	 		\end{equation*}
 			And the Poisson algebra $\mathbb{P}_{\omega_{\nu,\mu,\eta}\otimes\tr}\mathbb{B}\big((H_\nu\oplus H_\mu\oplus H_\eta)\otimes \ell_2\big)$ is of type III$_1$ if the triplets $\lambda_{\nu,\mu} := \min\{e^{|\nu\log a| - |\mu\log a|}, e^{|\mu\log a| - |\nu\log a|}\}$, $\lambda_{\nu,\eta} := \min\{e^{|\nu\log a| - |\eta\log a|}, e^{|\eta\log a| - |\nu\log a|}\}$ and $\lambda_{\mu,\eta} := \min\{e^{|\eta\log a| - |\mu\log a|}, e^{|\mu\log a| - |\eta\log a|}\}$ generate the multiplicative group $\mathbb{R}_+$.
		 \end{enumerate}
	\end{lemma}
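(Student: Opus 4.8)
The plan is to reduce all three assertions to a single spectral computation on the finite-dimensional matrix algebra indexing the chosen roots, and then feed the result into Corollary \ref{corollary:typeIII}, which identifies the Connes invariant of $\mathbb{P}_{\omega\otimes\tr}(N\overline{\otimes}\mathbb{B}(\ell_2))$ with the closed multiplicative subgroup of $\mathbb{R}_+$ generated by $Sp(\Delta_\omega)$. Everything then turns on computing $Sp(\Delta_\omega)$ for the weight attached to each configuration of roots.

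The key preliminary observation is that $\delta_\nu$ is a \emph{scalar}. Indeed, for the purely $A$-part $e^{t\log a}$ one has $\mu_P(e^{t\log a}) = e^{t\nu(\log a)}\,\mathrm{id}_{H_\nu}$, so $\delta_\nu = e^{|\nu(\log a)|}\,\mathrm{id}_{H_\sigma}$ is a positive multiple of the identity on each summand $H_\nu\cong H_\sigma$. Consequently, writing the total one-particle algebra as $\mathbb{B}(H_\sigma)\otimes M_k(\mathbb{C})$ (with $k=1,2,3$ the number of roots), the density of the relevant weight is $1_{H_\sigma}\otimes D$, where $D$ is the diagonal matrix whose entries are the (normalized) numbers $e^{|\nu\log a|}, e^{|\mu\log a|},\dots$. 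Since $\mathbb{B}(H)$ is a type I factor, the point spectrum of the modular operator $\Delta_\omega$ on its Haagerup $L_2$-space is exactly the set of ratios $\{D_{ii}/D_{jj}\}$ of the eigenvalues of $D$; this is the only analytic input needed, after which the type is determined by the subgroup these ratios generate.

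For (1) there is a single root, so $D$ is a single scalar and $\omega_\nu\otimes\tr$ is a positive multiple of the canonical semifinite trace on $\mathbb{B}(H_\nu\otimes\ell_2)\cong\mathbb{B}(\ell_2)$. The modular automorphism group is then trivial, the Poisson state is tracial, and the factoriality-and-inductive-limit argument of Proposition \ref{proposition:typeII1} applies verbatim (the scalar factor is harmless), yielding the hyperfinite II$_1$ factor. For (2), $D=\mathrm{diag}(p,1-p)$ with $p = e^{|\nu\log a|}/(e^{|\nu\log a|}+e^{|\mu\log a|})$, so the ratios are $\{1, e^{\pm(|\nu\log a|-|\mu\log a|)}\}$; the group they generate is $\lambda^{\mathbb{Z}}$ with $\lambda = \min\{e^{|\nu\log a|-|\mu\log a|}, e^{|\mu\log a|-|\nu\log a|}\}\in(0,1)$, and Corollary \ref{corollary:typeIII} gives type III$_\lambda$. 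For (3), $D$ is a $3\times 3$ diagonal matrix, the nontrivial ratios are $e^{\pm(|\nu\log a|-|\mu\log a|)}$, $e^{\pm(|\nu\log a|-|\eta\log a|)}$, $e^{\pm(|\mu\log a|-|\eta\log a|)}$, and these generate precisely $\langle\lambda_{\nu,\mu},\lambda_{\nu,\eta},\lambda_{\mu,\eta}\rangle$ (a rank at most two subgroup, since the exponents satisfy $(|\nu\log a|-|\mu\log a|)+(|\mu\log a|-|\eta\log a|)=|\nu\log a|-|\eta\log a|$); when this subgroup is dense, i.e. generates $\mathbb{R}_+$, Corollary \ref{corollary:typeIII} forces $S(\mathbb{P}_{\omega\otimes\tr}\cdots)=[0,\infty)$ and the factor is type III$_1$.

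The step I expect to be most delicate is the bookkeeping at the interface between the abstract invariant and the concrete computation. One must confirm that Corollary \ref{corollary:typeIII} equates the type with the \emph{closed} subgroup generated by $Sp(\Delta_\omega)$ --- this matters precisely in (3), where the finitely generated subgroup is merely dense in $\mathbb{R}_+$ rather than equal to it, so that ``generate $\mathbb{R}_+$'' must be read as generating a dense subgroup. A secondary point is to make the identification $Sp(\Delta_\omega)=\{D_{ii}/D_{jj}\}$ precise in the Haagerup $L_2$-formalism, to verify that normalizing $\tr(\delta\cdot)$ to a state rescales $D$ uniformly and hence leaves the ratio spectrum unchanged, and to check that the asymptotically central sequence $\{1\otimes u_{2n}\}$ used in Proposition \ref{proposition:typeII1} still certifies factoriality for these weighted corners.
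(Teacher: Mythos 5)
Your proposal is correct and takes essentially the same route as the paper: factoriality and the type classification both come from Corollary \ref{corollary:typeIII}, and the type is then read off from the Arveson spectrum of the modular operator, which you compute explicitly as the eigenvalue ratios of the diagonal density (the paper compresses this step to ``a simple calculation of the corresponding Arveson spectrums,'' and your reading of ``generates $\mathbb{R}_+$'' as generating a dense, hence closed-equal-to-$\mathbb{R}_+$, subgroup is the intended one). The only part of the paper's proof you omit is the $P$-invariance of the weight $\omega_\nu$, which is not needed for the lemma itself but only for the subsequent corollary on the induced $G$-action.
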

	\begin{remark}
		Therefore for generic pairs of irreducible unitary principal series representations, the induced Poisson algebras are of type III$_\lambda$. And for generic multiplets of irreducible unitary principal series representations, the induced Poisson algebras are of type III$_1$.
		
		The construction we gave here is nothing but an adaptation of the Araki-Woods factor to the Poissonization functor.
	\end{remark}
	\begin{proof}
		The key observation is that the weight $\omega_\nu$ is invariant under $P$-action. Using the Levi decomposition of $P = LN$, $L$-action preserves $\omega_\nu$ since $A$ is Abelian and $M$ commutes with $A$. Since $N$ acts trivially on $H_\nu$, $\omega_\nu$ is preserved by the entire $P$-action. Therefore the automorphism on $\mathbb{B}(H_\nu\otimes\ell_2)$ given by the adjoint action $\mu_P(g)\cdot\mu_P(g)^*$ (where $g\in P$) preserves the weight $\omega_\nu$. Hence by Proposition \ref{proposition:collectionFunctorialWeight}, there exists a strongly continuous $P$-action on the Poisson algebra $\mathbb{P}_{\omega_\nu\otimes\tr}\mathbb{B}(H_\nu\otimes\ell_2)$. The unitary implementation of this automorphism gives a unitary representation of $P$ on the Haagerup $L_2$-space of the Poisson algebra:
		\begin{equation}
			U_{\Gamma(\mu_P)}:P\rightarrow U(L_2(\mathbb{P}_{\omega_\nu\otimes\tr}\mathbb{B}(H_\nu\otimes\ell_2), \phi_{\omega_\nu\otimes\tr}))\pl.
		\end{equation}
		In addition, by Corollary \ref{corollary:typeIII} the Poisson algebra is a factor. Then all three statements follow directly from a simple calculation of the corresponding Arveson spectrums.
	\end{proof}
	\begin{corollary}\label{corollary:PoissonParabolic}
		The unitary representation $U_{\Gamma(\mu_P)}$ constructed in the proof of Lemma \ref{lemma:PoissonParabolic} induces a unitary representation of $G$ via parabolic induction. In addition, this group action induces a strongly continuous automorphism on the Poisson algebra:
		\begin{equation}
			\mathbb{B}(L^2(G/P, \Omega_P^{1/2}))\overline{\otimes}\mathbb{P}_{\omega_{\nu,\mu}\otimes\tr}\mathbb{B}((H_\nu\oplus H_\mu)\otimes\ell_2) \cong \mathbb{P}_{\omega_{\nu,\mu}\otimes\tr}\mathbb{B}((H_\nu\oplus H_\mu)\otimes\ell_2)
		\end{equation}
		where the Hilbert space $L^2(G/P, \Omega_P^{1/2}) := \{f:G \rightarrow \mathbb{C}: f(gman) = e^{-\rho_G(\log a)}f(g), \int_{G/P}|f|^2 < \infty\}$ is the space of half-densities over $G/P$. The isomorphism of the von Neumann algebras follows from the absorption property of type III factors.
	\end{corollary}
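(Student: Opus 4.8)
The plan is to perform parabolic induction on the \emph{already constructed} unitary implementation $U_{\Gamma(\mu_P)}$, to recognise the resulting covariant von Neumann algebra as a tensor product with $\mathbb{B}(L^2(G/P,\Omega_P^{1/2}))$, and then to collapse that tensor factor by absorption. First I would record the input. By Lemma \ref{lemma:PoissonParabolic} together with Corollary \ref{corollary:typeIII}, the algebra $\mathbb{P} := \mathbb{P}_{\omega_{\nu,\mu}\otimes\tr}\mathbb{B}((H_\nu\oplus H_\mu)\otimes\ell_2)$ is a type III$_\lambda$ factor; by Proposition \ref{proposition:collectionFunctorialWeight} the weight-preserving adjoint action $\mathrm{Ad}\,\mu_P(p)$ lifts to a strongly continuous automorphic $P$-action $\alpha_p := \Gamma(\mathrm{Ad}\,\mu_P(p))$ on $\mathbb{P}$, and on the Haagerup space $L_2(\mathbb{P})$ this action is spatially implemented by unitaries $U_{\Gamma(\mu_P)}(p)$.

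Next I would repeat the induction of Subsection \ref{subsubsection:induction} verbatim, with the finite-dimensional fibre $H_\nu$ replaced by $L_2(\mathbb{P})$ and the $P$-representation $\mu_P$ replaced by $U_{\Gamma(\mu_P)}$. This yields a strongly continuous unitary representation $\widetilde{U} := \mathrm{Ind}_P^G(U_{\Gamma(\mu_P)})$ of $G$ on $\mathcal{H} := L^2(G/P,\Omega_P^{1/2})\otimes L_2(\mathbb{P})$, acting by $(\widetilde{U}(g)F)(x) = c(g,x)F(g^{-1}x)$, where up to the scalar half-density Jacobian $e^{-\rho_G}$ the cocycle $c(g,x)$ takes values in $U_{\Gamma(\mu_P)}(P)$. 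The cocycle identity $c(g,x)\,c(g^{-1},g^{-1}x)=1$ then gives, for every $a\in\mathbb{P}$, that $\mathrm{Ad}(\widetilde{U}(g))(1\otimes a)$ is the decomposable field $x\mapsto \mathrm{Ad}(c(g,x))(a)=\alpha_{p_{g,x}}(a)$, which again lies in $\mathbb{P}$ precisely because each $U_{\Gamma(\mu_P)}(p)$ normalises $\mathbb{P}$.

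The central step is to set $\mathcal{M} := (1\otimes\mathbb{P}\ \vee\ \widetilde{U}(G))''$ and to identify $\mathcal{M}\cong \mathbb{B}(L^2(G/P,\Omega_P^{1/2}))\overline{\otimes}\mathbb{P}$. The fibre computation above shows that conjugation by $\widetilde{U}(g)$ keeps the fibrewise generators inside $\mathbb{P}$, while the geometric part of $\widetilde{U}(G)$ is the quasi-regular representation of $G$ on the transitive homogeneous space $G/P$; a von Neumann algebraic imprimitivity argument (the Mackey imprimitivity theorem adapted to covariant systems) then identifies the algebra generated by these two families with the full tensor product $\mathbb{B}(L^2(G/P,\Omega_P^{1/2}))\overline{\otimes}\mathbb{P}$. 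Since $\widetilde{U}(g)\in\mathcal{M}$ by construction, the $G$-action $\beta_g := \mathrm{Ad}(\widetilde{U}(g))|_{\mathcal{M}}$ is inner on $\mathcal{M}$, hence a genuine strongly continuous automorphic action, whose normality and continuity are routine by comparison with Subsection \ref{subsubsection:induction}. Finally I would invoke absorption: as $\mathbb{P}$ is a type III, hence properly infinite, factor and $L^2(G/P,\Omega_P^{1/2})$ is separable and infinite-dimensional ($G/P$ being a compact manifold of positive dimension), the isomorphism $\mathbb{B}(K)\overline{\otimes}M\cong M$ for properly infinite $M$ and separable $K$ gives $\mathbb{B}(L^2(G/P,\Omega_P^{1/2}))\overline{\otimes}\mathbb{P}\cong\mathbb{P}$, and transporting $\beta$ along this isomorphism produces the asserted strongly continuous $G$-action on $\mathbb{P}$, i.e. the covariance action for the local net.

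I expect the main obstacle to be exactly the identification $\mathcal{M}\cong \mathbb{B}(L^2(G/P,\Omega_P^{1/2}))\overline{\otimes}\mathbb{P}$. The delicate point is that the implementing unitaries $U_{\Gamma(\mu_P)}(p)$ need \emph{not} belong to $\mathbb{P}$, so one must argue that, after adjoining the transitive geometric action on $G/P$, the generated algebra is precisely the tensor product: neither smaller (so that the fibre does collapse to $\mathbb{P}$ rather than to its $\alpha$-fixed subalgebra) nor larger (so that one does not inadvertently generate the implementing crossed product in the fibre). Once this imprimitivity identification is in place, the remaining steps -- strong continuity of $\widetilde{U}$, normality of $\beta$, and the absorption isomorphism -- are standard.
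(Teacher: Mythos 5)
Your preliminary steps are fine (the $P$-action from Proposition \ref{proposition:collectionFunctorialWeight}, factoriality from Corollary \ref{corollary:typeIII}, and the computation that $\mathrm{Ad}(\widetilde{U}(g))(1\otimes a)$ is a decomposable field with fibres $\alpha_{p_{g,x}}(a)\in\mathbb{P}$), but your central step fails, and it fails exactly at the point you flagged as delicate. The unitaries $U_{\Gamma(\mu_P)}(p)$ are the \emph{canonical} standard-form implementations of the automorphisms $\alpha_p$: they fix the vector $\phi_{\omega_{\nu,\mu}\otimes\tr}^{1/2}$ (take $n=0$ in the defining formula), hence commute with the modular conjugation $J$ of $\mathbb{P}$ in its standard form. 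For a factor this forces them outside the algebra: if $U\in\mathbb{P}$ and $UJ=JU$, then $U=JUJ\in\mathbb{P}\cap\mathbb{P}'=\mathbb{C}1$, so a canonical implementation lies in $\mathbb{P}$ only when the automorphism is trivial. Since a decomposable operator belongs to $\mathbb{B}(L^2(G/P,\Omega_P^{1/2}))\overline{\otimes}\mathbb{P}$ precisely when its fibres lie in $\mathbb{P}$ almost everywhere, the fibrewise cocycle part of $\widetilde{U}(g)$, whose fibres are the unitaries $U_{\Gamma(\mu_P)}(p_{g,x})$, is not in the tensor product, and therefore $\widetilde{U}(g)\notin\mathbb{B}(L^2(G/P,\Omega_P^{1/2}))\overline{\otimes}\mathbb{P}$ for generic $g$. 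Consequently $\mathcal{M}=(1\otimes\mathbb{P}\vee\widetilde{U}(G))''$ is \emph{not} the tensor product: fibrewise it generates (an image of) the crossed product of $\mathbb{P}$ by the $P$-action rather than $\mathbb{P}$ itself. No imprimitivity theorem can repair this, because the obstruction is not one of technique: adjoining $\widetilde{U}(G)$ genuinely enlarges the algebra. For the same reason your conclusion that the $G$-action is inner on $\mathcal{M}\cong\mathbb{P}$ cannot be right; the covariance action is spatial, in general outer.

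The paper avoids this trap by never adjoining the group unitaries to the algebra. Its proof has two ingredients: first, the explicit check $U_{\Gamma(\mu_P)}(man)\lambda_{\emptyset\emptyset}(x_1,\dots,x_n)\phi^{1/2}_{\omega_\nu\otimes\tr}=U_{\Gamma(\mu_P)}(ma)\lambda_{\emptyset\emptyset}(x_1,\dots,x_n)\phi^{1/2}_{\omega_\nu\otimes\tr}$, i.e.\ the lifted representation kills the nilpotent subgroup exactly as $\mu_P$ does, so that induction on half-densities $\Gamma^2(G/P,\mathcal{H}_\mathbb{P})$ is well defined (you skipped this verification by invoking the induction ``verbatim''); second, the tensor product algebra is taken as given -- the type I factor acting by the left regular action on the base, the Poisson factor acting fibrewise in standard form -- and the induced $G$-representation acts on it only by conjugation, as automorphisms implemented by unitaries that are \emph{not} required to lie in the algebra. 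Covariance plus the absorption isomorphism for properly infinite factors then give the statement. If you want to salvage your route, drop $\widetilde{U}(G)$ from the generating set, work with the algebra of equivariant (equivalently, decomposable with fibres in $\mathbb{P}$) operator fields as in Proposition \ref{proposition:PoissonNet}, and treat $\mathrm{Ad}(\widetilde{U}(g))$ as an action \emph{on} that algebra rather than as inner automorphisms of a larger one.
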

	\begin{proof}
		To see that we have an induced representation, we need to calculate that the unitary representation $U_{\Gamma(\mu_P)}$:
		\begin{align}
			\begin{split}
				U_{\Gamma(\mu_P)}(man)\lambda_{\emptyset\emptyset}(x_1,...,x_n)\phi_{\omega_\nu\otimes\tr}^{1/2} &= \lambda_{\emptyset\emptyset}(Ad_{\mu_P(man)}x_1,...,Ad_{\mu_P(man)}x_n)\phi_{\omega_\nu\otimes\tr}^{1/2}
				\\
				&=\lambda_{\emptyset\emptyset}(Ad_{\sigma(m)e^{\nu(\log a)}}x_1, ..., Ad_{\sigma(m)e^{\nu(\log a)}}x_n)\phi_{\omega_\nu\otimes\tr}^{1/2}
				\\
				& =U_{\Gamma(\mu_P)}(ma)\lambda_{\emptyset\emptyset}(x_1,...,x_n)\phi_{\omega_\nu\otimes\tr}^{1/2}\pl.
			\end{split}
		\end{align}
		Hence the nilpotent subgroup $N$ does not act on the Haagerup $L_2$-space of the Poisson algebra. Denote this Hilbert space as $\mathcal{H}_\mathbb{P}$, we consider the space of half-densities on $G/P$ with value in $\mathcal{H}_\mathbb{P}$: 
		\begin{equation}
			\Gamma^2(G/P, \mathcal{H}_\mathbb{P}) := \{F:G\rightarrow \mathcal{H}_\mathbb{P}: F(gman) = U_{\Gamma(\mu_P)}(ma)^*e^{-\rho_G(\log a)}F(g), \int_{G/P}|F|^2_{\mathcal{H}_\mathbb{P}} < \infty\}\pl.
		\end{equation}
		The type I factor $\mathbb{B}(L^2(G/P, \Omega^{1/2}_P))$ acts on $\Gamma^2(G/P, \mathcal{H}_\mathbb{P})$ be left regular action, while the Poisson factor acts on $\mathcal{H}_\mathbb{P}$ by standard representation. Finally, the unitary representation of $G$ on $\Gamma^2(G/P, \mathcal{H}_\mathbb{P})$ induces a strongly continuous automorphism on $\mathbb{B}(L^2(G/P, \Omega^{1/2}_P))\overline{\otimes}\mathbb{P}_{\omega_\nu\otimes\tr}\mathbb{B}(H_\nu\otimes\ell_2)$. 
	\end{proof}
	\begin{example}\label{example:ConfGroup}
		Let $G = SO_0(n+1,1)$ be the (connected) conformal group on $\mathbb{R}^n$. Then the Iwasawa decomposition of $G$ is given by $K = SO(n)$ (the rotation), $A = SO_0(1,1)\cong \{\begin{pmatrix}
			\cosh\theta & \sinh\theta \\ \sinh\theta & \cosh\theta
		\end{pmatrix}:\theta\in \mathbb{R}\}$ (the boost or dilation), and $N = \mathbb{R}^n$ (the translation). The centralizer of $A$ in $K$ is given by $M = SO(n-1)$. Since the real Lie algebra of $A$ is $\mathfrak{a}_\mathbb{R} = \mathbb{R}\begin{pmatrix}0&1\\1&0\end{pmatrix}$, a purely imaginary root on $\mathfrak{a}$ is given by an imaginary number $\nu = it_\nu \in i\mathbb{R}$. And for an arbitrary element $a = \begin{pmatrix}
		\cosh\theta & \sinh\theta \\ \sinh\theta &\cosh\theta
		\end{pmatrix}\in SO_0(1,1)\cong A$, the pairing can be calculated explicity: $\nu(\log a) = it_\nu \theta$. In this example, we fix the boost: $\theta = 2\pi$. Then the key parameter $\nu(\log a) = 2\pi i t_\nu$ where $t_\nu\in\mathbb{R}$. The quotient space is the sphere $G/P = K/ M\cap K = SO(n) / SO(n-1) \cong \mathbb{S}^n$. Then for any unitary irreducible representation $U_\nu:SO(n-1)\rightarrow U(H_\nu)$, there exists a Poisson algebra $\mathbb{B}(L^2(\mathbb{S}^n, \Omega^{1/2}_P))\overline{\otimes}\mathbb{P}_{\omega_\nu\otimes\tr}\mathbb{B}(H_\nu\otimes\ell_2)$ where the weight on $\mathbb{B}(H_\nu)$ is given by $\omega_\nu = \tr(e^{2\pi t_\nu}\cdot)$. For a pair $t_\nu, t_\mu\in\mathbb{R} $ such that $\lambda_{\nu,\mu} = \min\{e^{2\pi(|t_\nu| - |t_\mu|)}, e^{2\pi (|t_\mu| - |t_\nu|)}\} \in (0,1)$, then $\mathbb{P}_{\omega_{\nu,\mu}\otimes\tr}\mathbb{B}((H_\nu\oplus H_\mu)\otimes \ell_2)$ is a type III$_\lambda$ factor. Similar results hold for general multiplets of $\{t_\nu\}$'s.
	\end{example}
	\subsubsection{Nets of Poisson Algebras from Unitary Principal Series Representations}\label{subsubsection:toyAQFT}
	As an application of the constructions so far, we briefly discuss algebraic quantum field theories (AQFT) \cite{haag}. The AQFT approach studies the local algebras of observables in a quantum field theory. It is an axiomatic framework that emphasizes on the necessary properties these algebras must satisfy. Briefly these properties are isotony, covariance under a (strongly-continuous) group action, the existence of a group invariant state, and locality. The axioms are often modified to accommodate different physical situations. For example, one would often require there to be a unique group-invariant state in order to study the so-called vacuum sector of the underlying AQFT. As another example, one would often require the group representation to satisfy certain spectral conditions (i.e. positive energy condition) in order to ensure the existence of well-defined "Hamiltonian". \footnote{Note however, there are natural examples where the positive energy condition is not satisfied \cite{BDH,KLM} even in the case of conformal nets (i.e. $G = SU(1,1)$). The positive energy condition requires the group action on the local algebras to come from discrete series representation. In our examples, although the group action is not discrete series representation, the modular Hamiltonian is still well-defined.} 
	
	Here we only consider the case of conformal group $G = SO(n+1,1)$ which is the group of conformal transformations on $\mathbb{R}^n$. Using principal series representation and Poissonization, we construct nets of type III$_1$ local Poisson algebras and check that the nets satisfy the desired axioms. Our local algebras will be defined on the one-point compactification of $\mathbb{R}^n$, namley the sphere $\mathbb{S}^n \cong G/P$ (c.f. Example \ref{example:ConfGroup}). Mathematically, the local algebras form a pre-cosheaf over the category of connected open subsets on $\mathbb{S}^n$. \footnote{If the group $G$ is the conformal group acting on the Minkowski space $\mathbb{R}^{n,1}$, then we must consider local algerbas on certain distinguished open subsets (i.e. proper open double cones). One can still define such cones on $\mathbb{R}^{n+1}$. But since our example is a "Euclidean" field theory model, there is no causal constraint that prevents us from considering more general open subsets. } To state the proposition, we fix a unitary representation of the minimal parabolic group $\mu_P: P \rightarrow U(H_\mu)$ where the spectrum of the generator $\log a $ generates the multiplicative group $\mathbb{R}_+$.\footnote{This assumption is not mathematically unnecessary. We make this assumption only to ensure the local algebras are all type III$_1$ factors. It is believed that the local algebras of QFT should be of type III$_1$.} Denote the Poisson algebra $\mathbb{P}_{\omega_\mu\otimes\tr}\mathbb{B}(H_\mu\otimes\ell_2)$ as $\mathbb{P}_\mu$ with Poisson state $\phi_\mu$. And denote its Haagerup $L_2$-space as $\mathcal{H}_\mathbb{P}$. Then we have:
	\begin{proposition}\label{proposition:PoissonNet}
		Consider the twisted Poisson algebra bundle $G\times_P \mathbb{P}_\mu$ over $G/P = \mathbb{S}^n$. The smooth section of this bundle is given by:
		\begin{equation}
			C^\infty(G/P,\mathbb{P}_\mu):=\{f\in C^\infty(G,\mathbb{P}): f(gp) = \alpha_p^{-1}f(g)\}
		\end{equation}
		where $\alpha:P\rightarrow Aut(\mathbb{P}_\mu)$ is the strongly continuous induced action of $P$ on the Poisson algebra. Analogously consider the twisted Hilbert bundle $G\times_P \mathcal{H}_\mathbb{P} $ over $G/P = \mathbb{S}^n$. Then the $L^2$-section of the tensor bundle of $\big(G\times_P\mathcal{H}_\mathbb{P}\big)\otimes\Omega^{1/2}_P$ is given by:
		\begin{equation}
			\Gamma^2(G/P, \mathcal{H}_\mathbb{P}) :=\{f\in L^2(G, \mathcal{H}_\mathbb{P}): f(gman) = U_{\mu_P}(ma)^*e^{-\rho_G(\log a)}f(g), \int_{G/P}|f|^2_{\mathcal{H}_\mathbb{P}}<\infty\}
		\end{equation}
		where $U_{\mu_P}:P\rightarrow U(\mathcal{H}_\mathbb{P})$ is the unitary implementation of the automorphism $\alpha$. This is induced by the unitary representation $\mu_P$. $C^\infty(G/P, \mathbb{P}_\mu)$ is a unital $*$-algebra acting on $\Gamma^2(G/P,\mathcal{H}_\mathbb{P})$. Denote its weak closure as $\mathcal{A}(\mathbb{S}^n)$. 
		
		Then for each connected open set $\mathcal{O}$ there exists a von Neumann algebra $\mathcal{A}(\mathcal{O}):= \{f\in\mathcal{A}(\mathbb{S}^n): supp(f) \ssubset\mathcal{O}\}$ acting on $\Gamma^2(G/P,\mathcal{H}_\mathbb{P})$. The net of local algebras $\{\mathcal{A}(\mathcal{O})\}_{\mathcal{O}\subset\mathbb{S}^n}$ satisfies the following properties:
		\begin{enumerate}
			\item For any $\mathcal{O}$, $\mathcal{A}(\mathcal{O})$ is type III$_1$ with nontrivial center $L^\infty(\mathcal{O})$;
			\item(Isotony) $\mathcal{A}(\mathcal{O}_1)\ssubset\mathcal{A}(\mathcal{O}_2)$ if $\mathcal{O}_1\ssubset\mathcal{O}_2$;
			\item($G$-covariance) There exists a unitary induced representation given by left regular action:
			\begin{equation}
				V:G\rightarrow U(\Gamma^2(G/P, \mathcal{H}_\mathbb{P})): (V_gF)(h) = F(g^{-1}h)\pl.
			\end{equation}
			By conjugation, this unitary representation induces a strongly continuous action $\widehat{\alpha}:G\rightarrow Aut(\mathcal{A}(\mathbb{S}^n))$ such that for any connect open set $\mathcal{O}$: $\alpha_g(\mathcal{A}(\mathcal{O})) = \mathcal{A}(g\mathcal{O})$ where $g\mathcal{O} = \{\xi\in\mathbb{S}^n: g^{-1}\xi\in\mathcal{O}\}$;
			\item(Existence of vacuum state) The normal state $\varphi:=\int_{G/P}d\xi \otimes\phi_\mathbb{P}$ where $d\xi$ is the unique $K$-invariant measure on $G/P = K/(M\cap K) = \mathbb{S}^n$\footnote{Note there is no $G$-invariant measure on $G/P$.} is invariant under the group action and it is represented by the vector $1\otimes\phi_\mathbb{P}^{1/2}\in \Gamma^2(G/P,\mathcal{H}_\mathbb{P})$. In addition, the expectation under this vacuum state does not split into a product of two-point functions (i.e. the vacuum state is not quasi-free and the consequently the model cannot be regarded as "generalized free fields");
			\item(Locality) $[\mathcal{A}(\mathcal{O}_1), \mathcal{A}(\mathcal{O}_2)] = 0$ if $\mathcal{O}_1\cap \mathcal{O}_2 = \emptyset$.
		\end{enumerate}
	\end{proposition}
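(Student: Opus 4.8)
The plan is to reduce all five assertions to a single structural identification: that the global algebra $\mathcal{A}(\mathbb{S}^n)$ is the direct integral (equivalently the tensor product) $\int_{\mathbb{S}^n}^\oplus \mathbb{P}_\mu\, d\xi \cong L^\infty(\mathbb{S}^n)\,\overline{\otimes}\,\mathbb{P}_\mu$, and that the local algebra $\mathcal{A}(\mathcal{O})$ is the restriction $\int_{\mathcal{O}}^\oplus\mathbb{P}_\mu\,d\xi \cong L^\infty(\mathcal{O})\,\overline{\otimes}\,\mathbb{P}_\mu$. First I would establish this identification. Although the bundle $G\times_P\mathbb{P}_\mu$ is genuinely twisted (its transition maps are the nontrivial induced automorphisms $\alpha_p$), a direct integral only records the measurable field of fibres, and over the connected base $\mathbb{S}^n$ every fibre is a copy of the same factor $\mathbb{P}_\mu$. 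Hence a measurable (though not continuous) trivialization exists, and the von Neumann algebra generated by $C^\infty(G/P,\mathbb{P}_\mu)$ acting fibrewise on $\Gamma^2(G/P,\mathcal{H}_\mathbb{P})$ coincides with the full direct integral. Once this is in hand, \textbf{isotony} (property 2) is immediate from monotonicity of support, and \textbf{locality} (property 5) is immediate as well: sections supported in disjoint $\mathcal{O}_1,\mathcal{O}_2$ act on disjoint fibres and therefore commute pointwise.

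For property 1, I would read off the center and type from the tensor description. Since $\mathbb{P}_\mu$ is a factor by Corollary \ref{corollary:typeIII}, the center of $L^\infty(\mathcal{O})\,\overline{\otimes}\,\mathbb{P}_\mu$ is $L^\infty(\mathcal{O})\,\overline{\otimes}\,Z(\mathbb{P}_\mu) = L^\infty(\mathcal{O})$, which is nontrivial. Under the standing assumption that the spectrum of $\log a$ generates $\mathbb{R}_+$, Corollary \ref{corollary:typeIII} (via Lemma \ref{lemma:PoissonParabolic}) gives that $\mathbb{P}_\mu$ is of type III$_1$; since type is a pointwise measurable invariant of a direct integral, the constant field with III$_1$ fibres yields a type III$_1$ algebra. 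Thus each $\mathcal{A}(\mathcal{O})$ is type III$_1$ with center $L^\infty(\mathcal{O})$.

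For \textbf{covariance} (property 3), I would first verify that $V_g F(h)=F(g^{-1}h)$ is a well-defined unitary on $\Gamma^2(G/P,\mathcal{H}_\mathbb{P})$: this is the standard induced (left regular) representation, and the half-density twist by $e^{-\rho_G(\log a)}$ is precisely the Radon--Nikodym factor compensating the $G$-quasi-invariance of the measure on $G/P$, so that $\langle V_gF_1,V_gF_2\rangle=\langle F_1,F_2\rangle$. Setting $\widehat{\alpha}_g=\mathrm{Ad}(V_g)$, one checks on the generating sections $f\in C^\infty(G/P,\mathbb{P}_\mu)$ that conjugation by left translation transports the support, $\operatorname{supp}(\widehat{\alpha}_g f)=g\operatorname{supp}(f)$, whence $\widehat{\alpha}_g(\mathcal{A}(\mathcal{O}))=\mathcal{A}(g\mathcal{O})$. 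Strong continuity of $\widehat{\alpha}$ descends from that of the induced representation, which in turn follows from strong continuity of $U_{\mu_P}$ (Corollary \ref{corollary:PoissonParabolic}).

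Property 4 is where I expect the real work. Normality of $\varphi=\int_{\mathbb{S}^n}d\xi\otimes\phi_\mathbb{P}$ and its representation by $1\otimes\phi_\mathbb{P}^{1/2}$ follow from the direct-integral picture: the vacuum vector is the $\mathcal{H}_\mathbb{P}$-valued half-density that is constant on $K$ with value the GNS vector $\phi_\mathbb{P}^{1/2}$, and evaluating $\langle 1\otimes\phi_\mathbb{P}^{1/2},\,A\,(1\otimes\phi_\mathbb{P}^{1/2})\rangle$ fibrewise reproduces $\int_{\mathbb{S}^n}\phi_\mathbb{P}(A(\xi))\,d\xi$. The two delicate points are invariance and non-quasi-freeness. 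Invariance must be reconciled with the absence of a $G$-invariant measure on $G/P$ (noted in the footnote): the mechanism is that $U_{\mu_P}$ fixes the cyclic vector $\phi_\mathbb{P}^{1/2}$, since it implements the $\phi_\mathbb{P}$-preserving automorphisms $\alpha_p$ by Proposition \ref{proposition:collectionFunctorialWeight}, while the $e^{-\rho_G}$ half-density twist must exactly absorb the Radon--Nikodym cocycle of the $G$-action, so that the density $|{\cdot}|^2$ paired against $\phi_\mathbb{P}(A(\cdot))$ is carried to itself. Making this cancellation precise --- checking that the half-density normalization renders $\varphi$ fully $G$-invariant rather than merely $K$-invariant --- is the main obstacle, and I expect it to require careful bookkeeping in Iwasawa coordinates. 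Non-quasi-freeness, by contrast, is clean: the Poisson moment formula of Lemma \ref{lemma:momentformulaPoisson} expresses $\phi_\mathbb{P}$ as a sum over \emph{all} partitions, not merely pair partitions, so the connected four-point function is nonzero; hence $\varphi$ is not determined by its two-point function and the model is not a generalized free field, consistent with Theorem \ref{theorem:main} and the discussion in the introduction.
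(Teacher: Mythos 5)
Your route is genuinely different from the paper's and, for most of the claims, more complete. The paper never invokes a direct-integral decomposition; its proof consists of checking that left multiplication by a smooth section is well defined on $\Gamma^2(G/P,\mathcal{H}_\mathbb{P})$ --- the equivariance twist $\alpha_{man}^{-1}$ on the section cancels against $U_{\mu_P}(ma)^*$ on the fibre, since $\alpha_p^{-1}(T)U_{\mu_P}(p)^* = U_{\mu_P}(p)^*T$ --- that this action is unitary because $\phi_\mu$ is $P$-invariant, and then asserting that all five properties are straightforward, adding only that $\varphi$ is not quasi-free because $\phi_\mu$ is not. Your identification $\mathcal{A}(\mathcal{O})\cong L^\infty(\mathcal{O})\overline{\otimes}\mathbb{P}_\mu$ (local triviality of the associated bundle plus a Borel trivialization of a constant field of factors) is sound, gives a precise meaning to the support condition defining $\mathcal{A}(\mathcal{O})$, and yields properties (1), (2), (5) and the support-transport part of (3) cleanly; the type and center statements are consistent with Corollary \ref{corollary:typeIII} and Lemma \ref{lemma:PoissonParabolic}, and your non-quasi-freeness argument from the all-partitions moment formula of Lemma \ref{lemma:momentformulaPoisson} is the same as the paper's, just spelled out.

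The gap is exactly where you flagged it, but it cannot be closed by bookkeeping in Iwasawa coordinates: full $G$-invariance of $\varphi$ in (4) is false, not merely delicate. The Radon--Nikodym cocycle absorbed by the $e^{-\rho_G}$ half-density twist is precisely what makes $V_g$ unitary; it does nothing for invariance of the vector state. Restrict to the center: for scalar $h\in L^\infty(\mathbb{S}^n)$ the fibre twist and the density factor cancel under conjugation, so $\widehat{\alpha}_g(M_h)=M_{h\circ g^{-1}}$, and hence $\varphi(\widehat{\alpha}_g(M_h))=\int_{\mathbb{S}^n}h(g^{-1}\xi)\,d\xi$, which differs from $\int_{\mathbb{S}^n}h\,d\xi$ for $g\notin K$ because $d\xi$ is only $K$-invariant. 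Worse, any normal $\widehat{\alpha}$-invariant state would restrict on the center to a $G$-invariant probability measure on $\mathbb{S}^n$ absolutely continuous with respect to $d\xi$, and no such measure exists --- the paper's own footnote records the absence of a $G$-invariant measure on $G/P$ (the minimal parabolic $P$ is not unimodular while $G$ is). So the strongest true statement is $K$-invariance of $\varphi$, which your framework gives immediately since $d\xi$ is $K$-invariant and $U_{\mu_P}$ fixes $\phi_\mathbb{P}^{1/2}$, or else an invariance of the GNS data in the sense of covariance under $V$ rather than state invariance. You did not miss an argument available in the paper: its proof is silent on this point, and the defect lies in the statement of property (4) itself rather than in your reconstruction.
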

	\begin{proof}
		The statements are all straight-forward based on previous discussions. Using the strongly continuous automorphism $\alpha:P\rightarrow Aut(\mathbb{P}_\mu)$, the Poisson algebra bundle is formed as the quotient: $G\times_P \mathbb{P}_\mu:= \{[(g,T)]: (gp, T) \sim (g,\alpha_p(T))\text{ for }g\in G\text{ , }T\in\mathbb{P}_\mu\}$.
		
		Then the smooth section of this bundle acts on $\Gamma^2(G/P, \mathcal{H}_\mathbb{P})$ by left multiplication. We only need to check that this action is well-defined:
		\begin{align}
			\begin{split}
				(f\xi)(gman) &= f(gman)\xi(gman) = \alpha_{man}^{-1}f(g)\big(U_{\mu_P}(ma)^*e^{-\rho_G(\log a)}\xi(g)\big) 
				\\&= U_{\mu_P}(ma)^*e^{-\rho_G(\log a)}f(g)\xi(g)
			\end{split}
		\end{align}
		where $f\in C^\infty(G/P,\mathbb{P}_\mu)$ and $\xi\in\Gamma^2(G/P, \mathcal{H}_\mathbb{P})$. Using the same proof that shows the usual principal series representations are unitary and the fact that the Poisson state $\phi_\mu$ is invariant under $P$-action, it follows that the left multiplication action is also unitary. All the properties are straight-forward. The state $\varphi$ is not quasi-free because the Poisson state $\phi_\mu$ is not quasi-free.  
	\end{proof}
	\subsection{Split Inclusion and Hyperfiniteness of Poisson Algebras}\label{subsection:split}
	In this subsection, we study the structure of Poisson algebra inclusions. Specifically, we will focus on a pair of von Neumann algebras $N\ssubset M$ with a normal semifinite faithful weight $\omega$ on $M$ such that its restriction to $N$ remains semifinite. Then Poissonization constructs a pair of Poisson algebras $(\mathbb{P}_\omega N \ssubset \mathbb{P}_\omega M, \phi_\omega)$, where the Poisson state on $\mathbb{P}_\omega M$ restricts to the Poisson state on $\mathbb{P}_\omega N$. For simplicity, we focus on the case where both Poisson algebras are factors. There are two situations to consider. If $\phi_\omega$ is tracial, then the Poisson algebras are type II$_1$ and we are in the classical set-up of the type II subfactor theory \cite{Jones1, PP1, Popa1, Ocn1}. On the other hand, if $\phi_\omega$ is not tracial, then the Poisson algebras are properly infinite (either containing a type I$_\infty$ factor or are of type III). By a classical theorem of Dixmier and Maréchal \cite{DM}, there exists a dense subset of vectors in $L_2(\mathbb{P}_\omega M, \phi_\omega)$ such that each vector is simultaneously cyclic and separating for both Poisson algebras. This is the classical set-up of type III subfactor theory \cite{DL1, Longo3, Kos1, KL1}. Unfortunately, for the Poisson algebra inclusion, the subfactor index is typically infinite. There is no definitive construction for the standard invariant of infinite index subfactors, although some possible answers have been provided \cite{Burns, P1, JP1} by considering the Jones construction with type II$_\infty$ algebras. Following this approach, we calculate directly the centralizer algebras and the central vectors \cite{Burns, P1, JP1} of type II$_1$ Poisson subfactors. A planar algebra calculus has also been developed in the case of infinite index \cite{P1}. We shall reserve this and a general study of bimodules over Poisson algebras for a future paper.
	
	The key observation for Poisson subfactors is the following lemma.
	\begin{lemma}\label{lemma:centralProj}
		Let $M$ be a von Neumann algebras and let $\omega$ be a n.s.f. weight on $M$. In addition, let $e\in M_\omega$ be a projection in the centralizer. Then the inclusion of the corner $N:= M_e \ssubset M$ induces Poisson algebra inclusion $\mathbb{P}_\omega N\ssubset \mathbb{P}_\omega M$ such that $\phi_\omega$ on $\mathbb{P}_\omega M$ restricts to the Poisson state on $\mathbb{P}_\omega N$. If $\mathbb{P}_\omega N$ is a factor, then the relative commutant is given by $\mathbb{P}_\omega M \cap (\mathbb{P}_\omega N)' = \mathbb{P}_\omega (M_{1-e})$. In addition, we have split isomorphism:
		\begin{equation}
			\mu_e:\mathbb{P}_\omega N \overline{\otimes}\mathbb{P}_\omega(M_{1-e}) \rightarrow \mathbb{P}_\omega N\vee\mathbb{P}_\omega(M_{1-e})\cong \mathbb{P}_\omega M: \Gamma(e^{ix})\otimes\Gamma(e^{iy})\mapsto \Gamma(e^{i(x+y)})\pl.
		\end{equation}
		In particular, the Poisson state $\phi_\omega$ splits:
		\begin{equation}
			\phi_\omega(\Gamma(e^{ix})\Gamma(e^{iy})) = \phi_\omega(\Gamma(e^{ix}))\phi_\omega(\Gamma(e^{iy}))
		\end{equation}
		where $e^{ix}\in N$ and $e^{iy}\in M_{1-e}$.
	\end{lemma}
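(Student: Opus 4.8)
The plan is to reduce every assertion to the single exponential moment identity $\phi_\omega(\Gamma(u))=\exp(\omega(u-1))$ together with the group homomorphism property of $\Gamma$ on $S^\omega_{ana}$ (Lemma \ref{lemma:GammaLiftWeight}), and then to repackage the resulting product structure through the state-preserving conditional expectations of Corollary \ref{corollary:corner}. First I would set up the inclusions. Since $e\in M_\omega$ we have $\sigma^\omega_t(e)=e$, so both the unital corner $N=M_e=eMe+\mathbb{C}(1-e)$ and $M_{1-e}=(1-e)M(1-e)+\mathbb{C}e$ are $\sigma^\omega$-invariant subalgebras on which $\omega$ restricts to a semifinite weight. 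Corollary \ref{corollary:corner} and Proposition \ref{proposition:collectionFunctorialWeight} then lift the two corner inclusions to normal, state-preserving inclusions $\mathbb{P}_\omega N\ssubset\mathbb{P}_\omega M$ and $\mathbb{P}_\omega(M_{1-e})\ssubset\mathbb{P}_\omega M$, each carrying a $\phi_\omega$-preserving normal conditional expectation; this gives the first assertion and, in particular, faithfulness of the restricted states.

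Next I would prove the commutation and the state factorization together from orthogonality of supports. For self-adjoint analytic $x\in eMe$ and $y\in(1-e)M(1-e)$ one has $xy=yx=0$, so $e^{ix}e^{iy}=e^{i(x+y)}=e^{iy}e^{ix}$ in $M$; since $\Gamma$ is a group homomorphism on $S^\omega_{ana}$ this yields $\Gamma(e^{ix})\Gamma(e^{iy})=\Gamma(e^{i(x+y)})=\Gamma(e^{iy})\Gamma(e^{ix})$, and as such unitaries generate the two Poisson algebras we obtain $[\mathbb{P}_\omega N,\mathbb{P}_\omega(M_{1-e})]=0$. The crucial point for the state is that $e^{ix}-1\in eMe$ and $e^{iy}-1\in(1-e)M(1-e)$ have orthogonal supports, whence $(e^{ix}-1)(e^{iy}-1)=0$ and $\omega(e^{ix}e^{iy}-1)=\omega(e^{ix}-1)+\omega(e^{iy}-1)$; feeding this into $\phi_\omega(\Gamma(\cdot))=\exp(\omega(\cdot-1))$ gives exactly $\phi_\omega(\Gamma(e^{ix})\Gamma(e^{iy}))=\phi_\omega(\Gamma(e^{ix}))\phi_\omega(\Gamma(e^{iy}))$. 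This is the same factorization used for strong independence throughout the paper.

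With commutation and factorization in hand, I would define $\mu_e$ on the algebraic tensor product by $\Gamma(e^{ix})\otimes\Gamma(e^{iy})\mapsto\Gamma(e^{ix})\Gamma(e^{iy})$: multiplicativity follows from the commutation of the two factors, and $*$-preservation is immediate. The factorization shows that $\mu_e$ intertwines the product state $\phi_\omega|_{\mathbb{P}_\omega N}\otimes\phi_\omega|_{\mathbb{P}_\omega(M_{1-e})}$ with $\phi_\omega$ on the join, and since all three states are faithful, $\mu_e$ extends to a normal $*$-isomorphism onto $\mathbb{P}_\omega N\vee\mathbb{P}_\omega(M_{1-e})$, implemented by the induced unitary of Haagerup $L_2$-spaces. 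For the relative commutant the inclusion $\mathbb{P}_\omega(M_{1-e})\subseteq(\mathbb{P}_\omega N)'\cap\mathbb{P}_\omega M$ is immediate from commutation. For the reverse inclusion I would pass to the Fock model $L_2(\mathbb{P}_\omega M)\cong\mathcal{F}(L_2(M,\omega))$ of Corollary \ref{corollary:FockIso}, in which $L_2(M,\omega)$ splits into the four corners $eMe$, $(1-e)M(1-e)$ and the two off-diagonal pieces, and use the fact that an off-diagonal $z\in eM(1-e)$ has $[x,z]\neq 0$ for suitable $x\in eMe$ and hence cannot lie in the commutant; factoriality of $\mathbb{P}_\omega N$ then eliminates the remaining central ambiguity and pins the relative commutant to $\mathbb{P}_\omega(M_{1-e})$.

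The step I expect to be the main obstacle is the final identification $\mathbb{P}_\omega N\vee\mathbb{P}_\omega(M_{1-e})\cong\mathbb{P}_\omega M$, precisely because of the off-diagonal corners $eM(1-e)$ and $(1-e)Me$: the two diagonal Poisson algebras act cyclically only on the sub-Fock-space generated by $L_2(eMe)\oplus L_2((1-e)M(1-e))$, so the identification cannot come from naive generation and must be forced through the factor hypothesis. I would handle it by combining the cyclic-and-separating vectors supplied by the Dixmier--Mar\'echal theorem \cite{DM} with the relative-commutant equality just obtained, so that $\mathbb{P}_\omega N$ and its relative commutant are in standard form on a common cyclic-separating vector; a standardly represented factor together with its relative commutant that admit a joint cyclic-separating vector and reproduce the full Poisson state can then only generate $\mathbb{P}_\omega M$ itself. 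Verifying that this machinery indeed subsumes the off-diagonal degrees of freedom is the delicate part of the argument, and I would isolate it as the key lemma before concluding the split isomorphism and the product formula for $\phi_\omega$.
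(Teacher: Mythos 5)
Your setup, the commutation $[\mathbb{P}_\omega N,\mathbb{P}_\omega(M_{1-e})]=0$, and the state factorization via $(e^{ix}-1)(e^{iy}-1)=0$ are correct and coincide with the corresponding half of the paper's proof, as does your construction of $\mu_e$ onto the join. The genuine gap is the step you yourself flag: the identification $\mathbb{P}_\omega N\vee\mathbb{P}_\omega(M_{1-e})\cong\mathbb{P}_\omega M$ is never proved, and the repair you sketch cannot work. The Dixmier--Mar\'echal theorem \cite{DM} supplies vectors that are jointly cyclic and separating, but there is no generation principle attached to them: for any irreducible inclusion of type III factors $A\subsetneq B$ with separable preduals (which admit joint cyclic-separating vectors by that very theorem) one has $A\vee(A'\cap B)=A\vee\mathbb{C}1\neq B$. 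Your relative-commutant argument has the same defect, since excluding the off-diagonal corners from $(\mathbb{P}_\omega N)'\cap\mathbb{P}_\omega M$ requires exactly the control you postpone to the "key lemma."

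What the comparison reveals, however, is that your diagnosis of the obstacle is sharper than the paper's treatment of it. The paper resolves this step through the chain $L_2(\mathbb{P}_\omega M,\phi_\omega)\cong\mathcal{F}_s(L_2(M,\omega))=\mathcal{F}_s\big(\overline{en_\omega\eta_\omega}\oplus\overline{(1-e)n_\omega\eta_\omega}\big)\cong\mathcal{F}_s(L_2(N,\omega))\otimes\mathcal{F}_s(L_2(M_{1-e},\omega))$, and the middle identification commits precisely the error you anticipated: $\overline{en_\omega\eta_\omega}$ is a one-sided compression of $L_2(M,\omega)$, which contains the off-diagonal corner $\widehat{eM(1-e)}$, whereas $L_2(N,\omega)=L_2(eMe,\omega_e)$ does not; the off-diagonal degrees of freedom are silently discarded. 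In fact your cyclicity observation, pushed one step further, shows that no argument can close this gap: since $\Gamma$ is multiplicative on $S^\omega_{ana}$ (Lemma \ref{lemma:GammaLiftWeight}), every product of generators of the two subalgebras equals $\Gamma(w)$ for a block-diagonal unitary $w$, and $\Gamma(w)\xi_\omega$ is a multiple of the exponential vector of $\widehat{w-1}$, so $\overline{\big(\mathbb{P}_\omega N\vee\mathbb{P}_\omega(M_{1-e})\big)\xi_\omega}\subseteq\mathcal{F}_s\big(L_2(eMe)\oplus L_2((1-e)M(1-e))\big)$, a proper subspace of $\mathcal{F}_s(L_2(M,\omega))$ whenever $eM(1-e)\neq 0$. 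Since $\xi_\omega$ is cyclic for $\mathbb{P}_\omega M$ in its standard form, the join is then a proper subalgebra; concretely, for $M=M_2(\mathbb{C})$, $\omega=\tr$, $e=e_{11}$, the join is abelian while $\mathbb{P}_{\tr}M_2(\mathbb{C})$ is not. So the identification with $\mathbb{P}_\omega M$ (and the relative-commutant formula derived from it) holds only when $e$ is central in $M$; what your correct calculations actually establish is the split isomorphism $\mathbb{P}_\omega N\overline{\otimes}\mathbb{P}_\omega(M_{1-e})\cong\mathbb{P}_\omega N\vee\mathbb{P}_\omega(M_{1-e})$, and the statement itself, not just your proof, needs either the centrality hypothesis or $\mathbb{P}_\omega M$ replaced by this join.
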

	\begin{proof}
		We first show that the Haagerup $L_2$ space of $\mathbb{P}_\omega M$ factorizes into a tensor product. This is a simple consequence of the well-known result that the symmetric Fock space construction turns direct sum to tensor product. Let $n_\omega$ be the left ideal of $\omega$-finite elements in $M$. Then $en_\omega$ is the left ideal of $\omega$-finite elements in $N$:
		\begin{equation*}
			\omega(ex^*xe) < \infty
		\end{equation*}
		where we used the fact that the definition ideal $m_\omega = n_\omega^*n_\omega$ is a bimodule over the centralizer $M_\omega$ \cite{T}. In addition, we have orthogonality:
		\begin{equation*}
			\omega(ex^*y(1-e)) = \omega(x^*y(1-e)e) = 0
		\end{equation*}
		where $x,y\in n_\omega$ \cite{T}. Then by the Fock space model, we have:
		\begin{align}
			\begin{split}
				L_2(\mathbb{P}_\omega M, \phi_\omega) &\cong \mathcal{F}_s(L_2(M, \omega)) = \mathcal{F}_s(\overline{n_\omega\eta_\omega}) = \mathcal{F}_s(\overline{en_\omega \eta_\omega \oplus (1-e)n_\omega\eta_\omega}) \\&\cong \mathcal{F}_s(\overline{en_\omega\eta_\omega}) \otimes\mathcal{F}_s(\overline{(1-e)n_\omega \eta_\omega}) = \mathcal{F}_s(L_2(N, \omega))\otimes \mathcal{F}_s(L_2(M_{1-e}, \omega)) \\&\cong L_2(\mathbb{P}_\omega N, \phi_\omega)\otimes L_2(\mathbb{P}_\omega M_{1-e}, \phi_\omega)
			\end{split}
		\end{align}
		where $\eta_\omega$ is the representing vector in the semi-cyclic representation of $\omega$. Using the same calculation, we can directly show that the Poisson state factorizes:
		\begin{align}
			\begin{split}
				\phi_\omega(\Gamma(e^{ix})\Gamma(e^{iy})) &= \phi_\omega(\Gamma(e^{i(x+y)})) = \exp(\omega(e^{i(x+y)} - 1))
				\\
				&=\exp(\omega\big((e^{ix} - 1)(e^{iy} - 1)\big))\exp(\omega(e^{ix} - 1))\exp(\omega(e^{iy} - 1))\\& = \exp(\omega(e^{ix} - 1))\exp(\omega(e^{iy} - 1)) = \phi_\omega(\Gamma(e^{ix}))\phi_\omega(\Gamma(e^{iy}))
			\end{split}
		\end{align}
		where $e^{ix}-1\in em_\omega e = m_{\omega_e}\ssubset N$ and $e^{iy} - 1\in (1-e)m_\omega(1-e) = m_{\omega_{1-e}}\ssubset M_{1-e}$. Then the factorization follows by density. Combining these two calculations, we see that the multiplication map is a split isomorphism and it is unitarily implemented by the Fock space isomorphism. Finally, when $\mathbb{P}_\omega N$ is a factor, using the Fock space isomorphism we have: $(\mathbb{P}_\omega N)'\cap \mathbb{P}_\omega M = (\mathbb{P}_\omega N)' \cap (\mathbb{P}_\omega N\vee\mathbb{P}_\omega M_{1-e}) \cong J_N\mathbb{P}_\omega N J_N\otimes \mathbb{B}(L_2(\mathbb{P}_\omega M_{1-e}, \phi_\omega))\cap \mathbb{P}_\omega N\otimes \mathbb{P}_\omega M_{1-e} \cong \mathbb{P}_\omega M_{1-e}$ where $J_N$ is the modular conjugation of the Poisson algebra $\mathbb{P}_\omega N$ in its standard form. \qedhere
	\end{proof}
	\begin{corollary}\label{corollary:infIndex}
		Using the same set-up as Lemma \ref{lemma:centralProj}. If both $\mathbb{P}_\omega M$ and $\mathbb{P}_\omega N$ are factors, then the index $[\mathbb{P}_\omega M: \mathbb{P}_\omega N] = \infty$.
	\end{corollary}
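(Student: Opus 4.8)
The plan is to read off the infinitude of the index directly from the structure of the relative commutant already computed in Lemma \ref{lemma:centralProj}, and then to invoke the standard subfactor principle that a \emph{finite} index inclusion of factors must have a \emph{finite dimensional} relative commutant. Concretely, Lemma \ref{lemma:centralProj} gives
\[
(\mathbb{P}_\omega N)' \cap \mathbb{P}_\omega M = \mathbb{P}_\omega(M_{1-e}),
\]
so the whole argument reduces to showing that $\mathbb{P}_\omega(M_{1-e})$ is infinite dimensional and contradicting finiteness of the index.

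First I would dispose of the degenerate case: if $e = 1$ the inclusion is not proper, $N = M$, and the index is $1$, so I assume $e \neq 1$, i.e. $(1-e)M(1-e) \neq 0$. Then $M_{1-e} = (1-e)M(1-e) + \mathbb{C}e$ is a nontrivial von Neumann algebra on which $\omega$ restricts to a semifinite weight, whence $L_2(M_{1-e}, \omega) \neq 0$. By the Fock space description of Poissonization (property (3) of Theorem \ref{theorem:main}, made precise in Corollary \ref{corollary:FockIso}),
\[
L_2(\mathbb{P}_\omega(M_{1-e}), \phi_\omega) \cong \mathcal{F}_s(L_2(M_{1-e}, \omega)),
\]
and the symmetric Fock space over any nonzero Hilbert space is infinite dimensional, since it contains a nonzero $n$-particle sector for every $n$. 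Hence the standard space of $\mathbb{P}_\omega(M_{1-e})$ is infinite dimensional, and as a finite dimensional von Neumann algebra would have a finite dimensional standard space, $\mathbb{P}_\omega(M_{1-e})$ must itself be infinite dimensional.

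To conclude I would argue by contradiction, assuming $[\mathbb{P}_\omega M : \mathbb{P}_\omega N] < \infty$. Since $e$ lies in the centralizer $M_\omega$, the subalgebra $N = M_e$ is invariant under $\{\sigma^\omega_t\}$, so by Corollary \ref{corollary:conditionalExpLift} and its weight version in Proposition \ref{proposition:collectionFunctorialWeight} there is a normal $\phi_\omega$-preserving conditional expectation $\mathcal{E}: \mathbb{P}_\omega M \to \mathbb{P}_\omega N$; the index is then understood in the sense of Jones \cite{Jones1} when $\phi_\omega$ is tracial and of Pimsner--Popa/Kosaki \cite{PP1, Kos1} in the properly infinite / type III case. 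The finite index theory then yields
\[
\dim\big((\mathbb{P}_\omega N)' \cap \mathbb{P}_\omega M\big) \leq [\mathbb{P}_\omega M : \mathbb{P}_\omega N] < \infty,
\]
contradicting the infinite dimensionality of $\mathbb{P}_\omega(M_{1-e})$ established above. Therefore $[\mathbb{P}_\omega M : \mathbb{P}_\omega N] = \infty$.

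The main obstacle is the bookkeeping of \emph{which} index is in force: for type III Poisson factors one cannot use the Jones index literally and must instead appeal to the Kosaki index of the conditional expectation $\mathcal{E}$ (or Longo's minimal index \cite{Longo3}), together with the corresponding statement that finite index forces a finite dimensional relative commutant in that generality. Once that citation is in place, the remainder is an immediate consequence of the Fock space picture of the Poisson algebra, so no further computation is needed.
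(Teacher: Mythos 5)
Your proof is correct and takes essentially the same route as the paper's: both reduce the claim to the computation of the relative commutant $(\mathbb{P}_\omega N)'\cap\mathbb{P}_\omega M = \mathbb{P}_\omega(M_{1-e})$ from Lemma \ref{lemma:centralProj}, and then invoke the standard fact that a finite index inclusion of factors has finite dimensional relative commutant \cite{Longo3}, which the infinite dimensional Poisson algebra violates. Your writeup merely makes explicit what the paper leaves implicit --- the infinite dimensionality of $\mathbb{P}_\omega(M_{1-e})$ via the symmetric Fock space picture, the exclusion of the degenerate case $e=1$, and the choice of index notion (Jones versus Kosaki/Longo) in the non-tracial setting.
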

	\begin{proof}
		If the index is finite, then the relative commutant is finite dimensional \cite{Longo3}. However in our case, the relative commutant is a Poisson algebra which is always infinite dimensional. Therefore the index is always infinite.
	\end{proof}
	We pause to comment on the infinite index type II$_1$ subfactors. By Proposition \ref{proposition:typeII1} and Corollary \ref{corollary:infIndex}, for any projection $e \sim id\in \mathbb{B}(\ell_2)$ that is Murray-von-Neumann equivalent to the identity, we have an infinite index type II$_1$ subfactor $\mathbb{P}_{\tr}\mathbb{B}(e\ell_2)\ssubset\mathbb{P}_{\tr}\mathbb{B}(\ell_2)$. It is possible to carry out Jones' construction on infinite index type II$_1$ subfactors. But it is not clear that the construction produces the standard invariant as in the finite index case. One candidate for the standard invariant of the infinite index subfactor was proposed in \cite{Burns, JP1, P1} in terms of the centralizer algebras and the central $L_2$-vectors and their corresponding planar calculi.
	\begin{definition}\label{definition:infIndexStdInv}
		Given an infinite index type II$_1$ subfactor $(N\ssubset M,\tr)$ where $\tr$ is the canonical trace. For $n\geq 0$, the centralizer algebras are defined by:
		\begin{equation}
			\mathcal{Q}_n:= N'\cap (N^{op})'\cap \mathbb{B}(\otimes^n_N L_2(M))
		\end{equation}
		where $\otimes^n_N L_2(M)$ is the Connes fusion tensor product of $N$-bimodule $L_2(M)$ and $N'$ is the commutant of the left $N$-action on the bimodule $\otimes^n_N L_2(M)$ while $(N^{op})'$ is the commutant of the right $N$-action. 
		
		The central $L_2$-vectors are defined by:
		\begin{equation}
			\mathcal{P}_n:=N'\cap \otimes^n_N L_2(M)
		\end{equation}
		where $N'$ is the commutant of the left $N$-action.
	\end{definition}
	For infinite index subfactors, the centralizer algebras do not coincide with the central vectors \cite{Burns}. However, they admit compatible planar calculi \cite{Jones2, P1}. For type II$_1$ Poisson subfactors, we can calculate these "invariants" explicitly.
	\begin{corollary}\label{corollary:planarPoisson}
		Let $e\sim id\in\mathbb{B}(\ell_2)$ be a projection that is Murray-von-Neumann equivalent to the identity. Then as an $\mathbb{P}_{\tr}\mathbb{B}(e\ell_2)$-bimodule, the Connes fusion product is given by:
		\begin{equation}\label{equation:PoissonBimod}
			\otimes^n_{\mathbb{P}_{\tr}\mathbb{B}(e\ell_2)} L_2(\mathbb{P}_{\tr}\mathbb{B}(\ell_2), \phi_{\tr}) \cong L_2(\mathbb{P}_{\tr}\mathbb{B}(e\ell_2), \phi_{\tr})\otimes\bigg(\otimes^n_\mathbb{C} \mathcal{F}_s(S_2((1-e)\ell_2))\bigg)
		\end{equation}
		where $S_2((1-e)\ell_2)$ is the Schatten 2-class on $(1-e)\ell_2$. On the right hand side, the Poisson algebra only acts on the bimodule $L_2(\mathbb{P}_{\tr}\mathbb{B}(e\ell_2), \phi_{\tr})$ while the symmetric Fock space is the multiplicity. The tensor product on the right hand side is the usual tensor product over $\mathbb{C}$.
		
		In addition, the centralizer algebras are the matrix algebra on the symmetric Fock space:
		\begin{equation}
			\mathcal{Q}_n = \mathbb{B}\bigg(\otimes^n_\mathbb{C}\mathcal{F}_s(S_2((1-e)\ell_2))\bigg)\pl.
		\end{equation}
		And the central vectors are given by:
		\begin{equation}
			\mathcal{P}_n = \widehat{\mathbb{P}_{\tr}\mathbb{B}(e\ell_2)'}\otimes\bigg(\otimes^n_\mathbb{C}\mathcal{F}_s(S_2((1-e)\ell_2))\bigg)
		\end{equation}
		where $\widehat{\mathbb{P}_{\tr}\mathbb{B}(e\ell_2)'}\ssubset L_2(\mathbb{P}_{\tr}\mathbb{B}(e\ell_2), \phi_{\tr})$ are the bounded vectors that commute with the left Poisson algebra action on its standard representation.
	\end{corollary}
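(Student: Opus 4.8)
The plan is to reduce the entire computation to the split isomorphism of Lemma \ref{lemma:centralProj} combined with the Fock space description of the $L_2$-space, after which all three assertions become statements about the standard bimodule ${}_N L_2(N)_N$ tensored with a trivial-action multiplicity space. Write $N := \mathbb{P}_{\tr}\mathbb{B}(e\ell_2)$ and $M := \mathbb{P}_{\tr}\mathbb{B}(\ell_2)$. Since $\tr$ is tracial, every projection lies in the centralizer, so $e$ satisfies the hypotheses of Lemma \ref{lemma:centralProj}, giving a split isomorphism $M \cong N \overline{\otimes} \mathbb{P}_{\tr}\mathbb{B}((1-e)\ell_2)$ under which $N$ embeds as $N \otimes 1$, together with the factorization
\[
L_2(M,\phi_{\tr}) \cong L_2(N,\phi_{\tr}) \otimes L_2\big(\mathbb{P}_{\tr}\mathbb{B}((1-e)\ell_2),\phi_{\tr}\big).
\]
By the Fock space isomorphism (Corollary \ref{corollary:FockIso}) the second factor equals $\mathcal{F}_s(L_2(\mathbb{B}((1-e)\ell_2),\tr)) = \mathcal{F}_s(S_2((1-e)\ell_2)) =: K$. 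First I would record that since the modular conjugation splits as $J_M = J_N \otimes J_K$, both the left action $N \otimes 1$ and the right action $J_M(N\otimes 1)J_M = J_N N J_N \otimes 1$ are supported on the first tensor leg; hence $L_2(M)$ is isomorphic, as an $N$--$N$ bimodule, to ${}_N L_2(N)_N \otimes K$ with $N$ acting trivially on the multiplicity space $K$.

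Next I would compute the Connes fusion. The standard bimodule ${}_N L_2(N)_N$ is the unit for relative tensor product over $N$, and fusion commutes with tensoring by a trivial-action multiplicity leg; thus for bimodules of the form ${}_N L_2(N)_N \otimes K_i$ the fusion of ${}_N L_2(N)_N \otimes K_1$ with ${}_N L_2(N)_N \otimes K_2$ is isomorphic to ${}_N L_2(N)_N \otimes (K_1 \otimes_{\mathbb{C}} K_2)$. Iterating $n$ times yields
\[
\otimes^n_N L_2(M) \cong L_2(N,\phi_{\tr}) \otimes \big(\otimes^n_{\mathbb{C}} K\big),
\]
with $N$ acting only on the $L_2(N)$ leg, which is exactly Equation \ref{equation:PoissonBimod}.

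With the bimodule in this normal form, the remaining two identities follow from factoriality of $N$ (Proposition \ref{proposition:typeII1}). On $L_2(N) \otimes (\otimes^n K)$ the commutant of the left action $N \otimes 1$ is $J_N N J_N \otimes \mathbb{B}(\otimes^n K)$ and the commutant of the right action $J_N N J_N \otimes 1$ is $N \otimes \mathbb{B}(\otimes^n K)$; intersecting and using $N \cap J_N N J_N = Z(N) = \mathbb{C}$ gives
\[
\mathcal{Q}_n = \mathbb{C} \otimes \mathbb{B}\big(\otimes^n_{\mathbb{C}} K\big) = \mathbb{B}\big(\otimes^n_{\mathbb{C}} \mathcal{F}_s(S_2((1-e)\ell_2))\big).
\]
Likewise the $N$-central bounded vectors of the bimodule factorize as (central bounded vectors of ${}_N L_2(N)_N$)$\otimes(\otimes^n K)$, and by Definition \ref{definition:infIndexStdInv} the former are the vectors $\widehat{N'}$, giving $\mathcal{P}_n = \widehat{N'} \otimes (\otimes^n_{\mathbb{C}} K)$. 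The main obstacle is the second paragraph: one must justify carefully that the relative tensor product distributes over the trivial multiplicity leg and that the standard bimodule acts as the identity, tracking the spatial identifications so that after $n$ fusions the left $N$-action genuinely survives on a single copy of $L_2(N)$; the factoriality arguments producing $\mathcal{Q}_n$ and $\mathcal{P}_n$ are then purely formal.
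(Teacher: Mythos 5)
Your proposal is correct and takes essentially the same route as the paper: both invoke Lemma \ref{lemma:centralProj} to put $L_2(\mathbb{P}_{\tr}\mathbb{B}(\ell_2),\phi_{\tr})$ in the $\mathbb{P}_{\tr}\mathbb{B}(e\ell_2)$-bimodule normal form $L_2(\mathbb{P}_{\tr}\mathbb{B}(e\ell_2),\phi_{\tr})\otimes\mathcal{F}_s(S_2((1-e)\ell_2))$ with trivial action on the multiplicity leg, and then use the unit property of the standard bimodule under Connes fusion. The details you add — the splitting $J_M = J_N\otimes J_K$ coming from the product form of the Poisson state, the tensor-product commutation theorem, and factoriality of $\mathbb{P}_{\tr}\mathbb{B}(e\ell_2)$ to trivialize the intersection — are precisely the steps the paper compresses into ``the claims follow easily.''
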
 
	\begin{proof}
		The claims follow easily once we proved the isomorphism \ref{equation:PoissonBimod}. However this is a simple consequence of Lemma \ref{lemma:centralProj}. As $\mathbb{P}_{\tr}\mathbb{B}(e\ell_2)$-bimodule, $L_2(\mathbb{P}_{\tr}\mathbb{B}(\ell_2), \phi_{\tr})\cong L_2(\mathbb{P}_{\tr}\mathbb{B}(e\ell_2), \phi_{\tr})\otimes \mathcal{F}_s(S_2((1-e)\ell_2))$ where the tensor product on the right hand side is the usual tensor product over $\mathbb{C}$ and the Poisson algebra only acts on the $L_2(\mathbb{P}_{\tr}\mathbb{B}(e\ell_2), \phi_{\tr})$ component. Since $L_2(\mathbb{P}_{\tr}\mathbb{B}(e\ell_2), \phi_{\tr})\otimes_{\mathbb{P}_{\tr}\mathbb{B}(e\ell_2)}L_2(\mathbb{P}_{\tr}\mathbb{B}(e\ell_2), \phi_{\tr}) = L_2(\mathbb{P}_{\tr}\mathbb{B}(e\ell_2), \phi_{\tr})$, then the claim follows.
	\end{proof}
	If $rk(1-e):=r < \infty$ has finite rank, then the tensor product can be written as $\otimes^n\mathcal{F}_s(S^{r}_2) = \mathcal{F}_s(\oplus^nS^{r}_2)$. In the work of \cite{JP1}, the authors showed that there exists a faithful embedding of the gauge-invariant CAR algebras into the centralizer algebras. In the case of Poisson subfactors, Corollary \ref{corollary:planarPoisson} gives faithful representations of "fermionic" algebras on "bosonic" (symmetric) Fock space. The relation to the Jordan-Wigner transformation and a more detailed study of the type II$_1$ Poisson subfactors will be conducted in a seperate paper. 
	
	In the final part of this subsection, we turn our attention to the type III Poisson subfactors. To fix the notation, we consider a pair of von Neumann algerbas $(N\ssubset M, \omega)$ where the group generated by the Arveson spectrum $Sp(\omega_N)$ in $\mathbb{R}_+$ is either $\mathbb{R}_+$ or $\langle\lambda^n\rangle$. Then by Corollary \ref{corollary:typeIII}, both $\mathbb{P}_{\omega\otimes\tr}(N\overline{\otimes}\mathbb{B}(\ell_2))$ and $\mathbb{P}_{\omega\otimes\tr}(M\overline{\otimes}\mathbb{B}(\ell_2))$ are type III factors. One easy corollary of Lemma \ref{lemma:centralProj} is the following observation:
	\begin{corollary}\label{corollary:PoissonAbsorption}
		Given a type III Poisson factor of the form $\mathbb{P}_{\omega\otimes\tr}(M\overline{\otimes}\mathbb{B}(\ell_2))$, then for any $k\in \mathbb{N}$, we have normal isomorphism:
		\begin{equation}
			\mathbb{P}_{\omega\otimes\tr}(M\overline{\otimes}\mathbb{B}(\ell_2))^{\otimes k}\cong \mathbb{P}_{\omega\otimes\tr}(M\overline{\otimes}\mathbb{B}(\ell_2))\pl.
		\end{equation}
	\end{corollary}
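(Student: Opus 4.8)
The plan is to deduce the absorption directly from the splitting isomorphism of Lemma \ref{lemma:centralProj}, exploiting the fact that the auxiliary tensor factor $\mathbb{B}(\ell_2)$ supplies a decomposition of the unit into $k$ mutually equivalent infinite projections lying in the centralizer of $\omega\otimes\tr$. Concretely, since $\ell_2=\ell_2(\mathbb{Z})$ is separably infinite-dimensional, I would first partition $\mathbb{Z}=\bigsqcup_{j=1}^{k}Z_j$ into $k$ infinite subsets and let $q_j\in\mathbb{B}(\ell_2)$ be the orthogonal projection onto $\overline{\operatorname{span}}\{\delta_z:z\in Z_j\}$. Then $q_1,\dots,q_k$ are mutually orthogonal, $\sum_{j=1}^{k}q_j=1$, and each $q_j$ has infinite rank. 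Because $\tr$ is a trace, its centralizer is all of $\mathbb{B}(\ell_2)$, so the centralizer of $\omega\otimes\tr$ in $M\overline{\otimes}\mathbb{B}(\ell_2)$ equals $M_\omega\overline{\otimes}\mathbb{B}(\ell_2)$; in particular each $1\otimes q_j$ is a projection in $(M\overline{\otimes}\mathbb{B}(\ell_2))_{\omega\otimes\tr}$, which is exactly the hypothesis needed to invoke Lemma \ref{lemma:centralProj}.

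Next I would iterate the two-fold splitting. Writing $A:=M\overline{\otimes}\mathbb{B}(\ell_2)$, Lemma \ref{lemma:centralProj} applied to the centralizer projection $e=1\otimes q_1$ gives $\mathbb{P}_{\omega\otimes\tr}(A)\cong\mathbb{P}(eAe)\overline{\otimes}\mathbb{P}((1-e)A(1-e))$ with the Poisson state factorizing accordingly. Since $1-e=1\otimes\sum_{j\geq 2}q_j$ again lies in the centralizer, I would apply the lemma once more inside the second corner, and so on. A straightforward induction on $k$ then yields
\[
\mathbb{P}_{\omega\otimes\tr}(M\overline{\otimes}\mathbb{B}(\ell_2)) \cong \bigotimes_{j=1}^{k}\mathbb{P}_{\omega\otimes\tr}\big(M\overline{\otimes}(q_j\mathbb{B}(\ell_2)q_j)\big),
\]
where each tensor factor carries the restriction of $\omega\otimes\tr$ to the corner; the semifiniteness of these restricted weights is automatic because the $q_j$ lie in the centralizer.

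Finally I would identify each corner with the whole. The corner $q_j\mathbb{B}(\ell_2)q_j$ is $\mathbb{B}(q_j\ell_2)$, and since $q_j\ell_2$ is separably infinite-dimensional there is a unitary $q_j\ell_2\cong\ell_2$ carrying the restriction of $\tr$ to the canonical trace; hence $(M\overline{\otimes}(q_j\mathbb{B}(\ell_2)q_j),\,\omega\otimes\tr)\cong(M\overline{\otimes}\mathbb{B}(\ell_2),\,\omega\otimes\tr)$ as weighted von Neumann algebras. Functoriality of Poissonization (Theorem \ref{theorem:Poisson} together with Proposition \ref{proposition:collectionFunctorialWeight}) then turns each factor into a copy of $\mathbb{P}_{\omega\otimes\tr}(M\overline{\otimes}\mathbb{B}(\ell_2))$, and the displayed tensor decomposition becomes the desired isomorphism $\mathbb{P}_{\omega\otimes\tr}(M\overline{\otimes}\mathbb{B}(\ell_2))^{\otimes k}\cong\mathbb{P}_{\omega\otimes\tr}(M\overline{\otimes}\mathbb{B}(\ell_2))$.

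The routine points to verify are the inductive passage from the two-fold Lemma \ref{lemma:centralProj} to the $k$-fold splitting and the weight-preserving corner isomorphism above. The conceptual heart — and the only place the tensor factor $\mathbb{B}(\ell_2)$ genuinely enters — is that $\mathbb{B}(\ell_2)$ can be cut into $k$ mutually equivalent infinite corners, each isomorphic \emph{with its trace} to the whole. This properly-infinite self-similarity is precisely the absorption property referenced for type III factors (and equally valid in the type II$_1$ case of Proposition \ref{proposition:typeII1}), and it is the reason the tensor factor $\mathbb{B}(\ell_2)$ cannot be dropped: $M$ alone need not admit such a splitting of its unit.
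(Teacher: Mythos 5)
Your proof is correct and follows essentially the same route as the paper: the paper reduces to $k=2$ by induction, picks a projection $e\sim 1-e\sim 1$ in $\mathbb{B}(\ell_2)$ (hence $1\otimes e$ lies in the centralizer of $\omega\otimes\tr$), applies Lemma \ref{lemma:centralProj} to split the Poisson algebra, and identifies $\mathbb{B}(e\ell_2)\cong\mathbb{B}((1-e)\ell_2)\cong\mathbb{B}(\ell_2)$. Your only deviations — splitting into $k$ corners at once rather than inducting on the two-fold case, and spelling out that the corner identification is weight-preserving so that functoriality applies — are harmless refinements of the same argument.
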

	\begin{proof}
		By induction, it suffices to prove the statement for $k = 2$. Fix a projection $e\in \mathbb{B}(\ell_2)$ such that $e\sim 1 - e \sim 1$. Then since $e$ is in the centralizer $(M\overline{\otimes}\mathbb{B}(\ell_2))_{\omega\otimes\tr}$, by Lemma \ref{lemma:centralProj} we have normal isomorphism: $\mathbb{P}_{\omega\otimes\tr}(M\overline{\otimes}\mathbb{B}(e\ell_2))\overline{\otimes}\mathbb{P}_{\omega\otimes\tr}(M\overline{\otimes}\mathbb{B}((1-e)\ell_2))\cong \mathbb{P}_{\omega\otimes\tr}(M\overline{\otimes}\mathbb{B}(\ell_2))$. Since $\mathbb{B}(e\ell_2) \cong \mathbb{B}((1-e)\ell_2)\cong\mathbb{B}(\ell_2)$, then we have the desired isomorphism.
	\end{proof}
	Recall the definition of a split inclusion \cite{DL1, Longo2}:
	\begin{definition}\label{definition:split}
		Let $N\ssubset M$ be two von Neumann algebras acting on the same Hilbert space $\mathcal{H}$. Then the inclusion split if there exists a normal isomorphism: $N\vee M' \cong N\overline{\otimes}M'$. 
	\end{definition}
	The split inclusion is an important concept in algebraic quantum field theory. Briefly, splitting property can be understood as a mathematical characterization of physical locality. Here we show that type III Poisson subfactors of the form $\mathbb{P}_{\omega\otimes\tr}(N\overline{\otimes}\mathbb{B}(\ell_2))\ssubset\mathbb{P}_{\omega\otimes\tr}(M\overline{\otimes}\mathbb{B}(\ell_2))$. 
	\begin{lemma}\label{lemma:PoissonSplit}
		Let $N\ssubset M$ be two hyperfinite von Neumann algebras and let $\omega$ be a n.s.f. weight on $M$ such that the group generated of the Arveson spectrum $Sp(\omega_N)$ is either $\mathbb{R}_+$ or $\langle\lambda^n\rangle$. Then the inclusion $\mathbb{P}_{\omega\otimes\tr}(N\overline{\otimes}\mathbb{B}(\ell_2))\ssubset\mathbb{P}_{\omega\otimes\tr}(M\overline{\otimes}\mathbb{B}(\ell_2))$ is a split inclusion. In particular, there exists an intermediate type I subfactor:
		\begin{equation}
			\mathbb{P}_{\omega\otimes\tr}(N\overline{\otimes}\mathbb{B}(\ell_2))\ssubset\mathbb{B}\bigg(L_2\big(\mathbb{P}_{\omega\otimes\tr}(M\overline{\otimes}\mathbb{B}(\ell_2)), \phi_{\omega\otimes\tr}\big)\bigg)\ssubset\mathbb{P}_{\omega\otimes\tr}(M\overline{\otimes}\mathbb{B}(\ell_2))\pl.
		\end{equation}
	\end{lemma}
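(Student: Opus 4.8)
The plan is to deduce the split property from the Doplicher--Longo characterization \cite{DL1} together with the modular nuclearity criterion of Buchholz--D'Antoni--Longo, using the Fock-space description of Poissonization as the bridge to a one-particle estimate. Write $\mathcal{A}_N := \mathbb{P}_{\omega\otimes\tr}(N\overline{\otimes}\mathbb{B}(\ell_2))$ and $\mathcal{A}_M := \mathbb{P}_{\omega\otimes\tr}(M\overline{\otimes}\mathbb{B}(\ell_2))$. First I would record the standing structural facts. By Corollary \ref{corollary:typeIII} both $\mathcal{A}_N$ and $\mathcal{A}_M$ are type III factors; since $N,M$ are hyperfinite and Poissonization is compatible with inductive limits (the finite-dimensional Poisson algebras $M_s(M_n)$ are countable direct sums of matrix algebras, hence approximately finite dimensional), both $\mathcal{A}_N$ and $\mathcal{A}_M$ are hyperfinite, and with the Arveson-spectrum hypothesis they are the unique hyperfinite type III$_\lambda$ (resp. III$_1$) factor. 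The Poisson vector $\xi_{\phi_\omega}$ is simultaneously cyclic and separating for both algebras (Dixmier--Maréchal, as already invoked in the text), so the inclusion is standard and, by \cite{DL1}, the split property is equivalent to the existence of the intermediate type I factor asserted in the statement.

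Next I would transport the problem to the one-particle space. By part (3) of Theorem \ref{theorem:main} the Haagerup space $L_2(\mathcal{A}_M,\phi_\omega)$ is the symmetric Fock space $\mathcal{F}_s(L_2(M\overline{\otimes}\mathbb{B}(\ell_2),\omega\otimes\tr))$, and by Corollary \ref{corollary:modularAutoWeight} the modular operator of the Poisson state is the second quantization $\Delta_{\phi_\omega}^{it}=\Gamma(\Delta_{\omega\otimes\tr}^{it})$ of the one-particle modular operator. The subspace $L_2(N\overline{\otimes}\mathbb{B}(\ell_2))\subset L_2(M\overline{\otimes}\mathbb{B}(\ell_2))$ is closed with orthogonal complement, whence the functorial identity $\mathcal{F}_s(H_1\oplus H_2)\cong\mathcal{F}_s(H_1)\otimes\mathcal{F}_s(H_2)$ gives a factorization of $L_2(\mathcal{A}_M,\phi_\omega)$, and by Lemma \ref{lemma:emptybasis} the subspace $\overline{\mathcal{A}_N\,\xi_{\phi_\omega}}$ is exactly the first tensor factor $\mathcal{F}_s(L_2(N\overline{\otimes}\mathbb{B}(\ell_2)))$. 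I would then check modular nuclearity: the map $\Xi\colon x\mapsto\Delta_{\phi_\omega}^{1/4}x\,\xi_{\phi_\omega}$ on the unit ball of $\mathcal{A}_N$ factors through $\Gamma(\Delta_{\omega\otimes\tr}^{1/4})$ restricted to $\mathcal{F}_s(L_2(N\overline{\otimes}\mathbb{B}(\ell_2)))$, so that via the second-quantization trace formula $\Tr\,\Gamma_s(T)=\det(1-T)^{-1}$ (valid for $0\le T<1$) the nuclearity of $\Xi$ reduces to the one-particle statement that $\Delta_{\omega\otimes\tr}^{1/4}$, restricted to $L_2(N\overline{\otimes}\mathbb{B}(\ell_2))$ and composed with the embedding into $L_2(M\overline{\otimes}\mathbb{B}(\ell_2))$, is of trace class with operator norm strictly below $1$.

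The one-particle estimate is where hyperfiniteness and the factor $\mathbb{B}(\ell_2)$ enter. I would approximate $N\subset M$ by an increasing net of finite-dimensional inclusions $N_k\subset M_k$ compatible with $\omega$, using hyperfiniteness together with the conditional expectations furnished by modular invariance (Proposition \ref{proposition:collectionFunctorialWeight}); on each approximant the relevant one-particle operator is finite rank, and the Arveson-spectrum hypothesis controls the spectrum of $\Delta_{\omega\otimes\tr}^{1/4}$ so that the trace norms stay summable in the limit. Second-quantizing this estimate yields modular nuclearity of the inclusion $\mathcal{A}_N\subset\mathcal{A}_M$, hence the split property, and \cite{DL1} then produces the intermediate type I factor. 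An essentially equivalent route, which I would mention as a cross-check, builds the intermediate factor directly as a funnel: Lemma \ref{lemma:centralProj} shows every corner inclusion is split with an explicit Fock factorization, while Corollary \ref{corollary:PoissonAbsorption} absorbs the ambient $\mathbb{B}(\ell_2)$, so the hyperfinite inclusion can be exhausted by split corner inclusions with a common intermediate type I factor.

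The main obstacle is the one-particle trace-class estimate and its promotion through the Fock functor. Unlike the free-field case the Poisson state is \emph{not} quasi-free, so even though the modular data do second-quantize, I must verify that the finite-dimensional approximants produce uniformly summable singular values and that the limiting operator remains strictly inside the region where $\det(1-T)^{-1}$ converges; controlling this infinite product is the delicate analytic point, and it is precisely here that hyperfiniteness of the pair $N\subset M$, rather than mere factoriality, is indispensable.
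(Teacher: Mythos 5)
Your main route---reducing the split property to Buchholz--D'Antoni--Longo modular nuclearity and then to the one-particle claim that $\Delta_{\omega\otimes\tr}^{1/4}$, restricted to $L_2(N\overline{\otimes}\mathbb{B}(\ell_2))$ and composed with the embedding into $L_2(M\overline{\otimes}\mathbb{B}(\ell_2))$, is trace class with norm strictly below $1$---cannot be carried out in this setting, and the paper warns of exactly this at the start of Section \ref{section:properties} (``the analogs of these partition functions are rarely finite''). Concretely, since $\tr$ is a trace its modular operator is the identity, so $\Delta_{\omega\otimes\tr}^{1/4}=\Delta_\omega^{1/4}\otimes \mathrm{id}$ on $L_2(N,\omega)\otimes S_2(\ell_2)$: every point of its spectrum has infinite multiplicity, hence the operator is not even compact, a fortiori not trace class, and the Fock determinant $\det(1-T)^{-1}$ you want to invoke diverges. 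The norm condition fails as well: $J\Delta_\omega J=\Delta_\omega^{-1}$ makes $Sp(\Delta_\omega)\setminus\{0\}$ inversion-symmetric, and under the lemma's hypothesis this spectrum contains points $\neq 1$, hence points $>1$, so $\|\Delta_{\omega\otimes\tr}^{1/4}\|>1$ (in the case generating $\mathbb{R}_+$ the operator is unbounded). No finite-dimensional approximation scheme can repair this: uniform summability of singular values along the approximants would force the limit operator to be trace class, and the infinite multiplicity is produced precisely by the $\mathbb{B}(\ell_2)$ leg that the statement needs for factoriality (Corollary \ref{corollary:typeIII}). Since nuclearity is only a \emph{sufficient} criterion for split, its failure does not contradict the lemma, but it does collapse your argument; note also that your reduction of the nuclearity of $x\mapsto\Delta_{\phi_\omega}^{1/4}x\xi_{\phi_\omega}$ to a one-particle condition is a quasi-free-state argument, and, as you yourself observe, the Poisson state is not quasi-free.

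What actually proves the lemma is the route you relegate to a ``cross-check,'' and even there your description does not match the paper. The paper does not exhaust the inclusion by a funnel of corner inclusions. Writing $\mathcal{A}_N=\mathbb{P}_{\omega\otimes\tr}(N\overline{\otimes}\mathbb{B}(\ell_2))$ and $\mathcal{A}_M=\mathbb{P}_{\omega\otimes\tr}(M\overline{\otimes}\mathbb{B}(\ell_2))$, it takes the unitary $U\colon L_2(\mathcal{A}_M,\phi_{\omega\otimes\tr})^{\otimes 2}\rightarrow L_2(\mathcal{A}_M,\phi_{\omega\otimes\tr})$ implementing the absorption isomorphism of Corollary \ref{corollary:PoissonAbsorption} (itself a consequence of Lemma \ref{lemma:centralProj}), observes that the left action of $\mathcal{A}_N$ on the tensor square lies inside $\mathbb{B}\big(L_2(\mathcal{A}_M,\phi_{\omega\otimes\tr})\big)\otimes\mathbb{C}$ while $U\big(\mathbb{B}(L_2(\mathcal{A}_M,\phi_{\omega\otimes\tr}))\otimes\mathbb{C}\big)U^*$ lies inside $(J\mathcal{A}_MJ)'=\mathcal{A}_M$, so conjugation by $U$ produces the intermediate type I factor in a single step; it then checks that the relative commutant $\mathcal{A}_N'\cap\mathcal{A}_M$ is properly infinite (again by absorption) and invokes the result of \cite{DL1} that, for properly infinite relative commutant, an inclusion is split if and only if it admits an intermediate type I factor. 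This is a purely structural argument with no analytic estimate anywhere, so presenting it as ``essentially equivalent'' to the nuclearity route is misleading: the structural route works, and the nuclearity route provably cannot.
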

	\begin{proof}
		The normal isomorphism in Corollary \ref{corollary:PoissonAbsorption} $\mathbb{P}_{\omega\otimes\tr}(M\overline{\otimes}\mathbb{B}(\ell_2))^{\otimes 2}\cong\mathbb{P}_{\omega\otimes\tr}(M\overline{\otimes}\mathbb{B}(\ell_2)) $ is implemented by the unitary isomorphism:
		\begin{equation}
			U: L_2(\mathbb{P}_{\omega\otimes\tr}(M\overline{\otimes}\mathbb{B}(\ell_2)),\phi_{\omega\otimes\tr})^{\otimes 2} \rightarrow L_2(\mathbb{P}_{\omega\otimes\tr}(M\overline{\otimes}\mathbb{B}(\ell_2)), \phi_{\omega\otimes\tr})\pl.
		\end{equation}
		Notice that the tensor product is over $\mathbb{C}$ not the Connes fusion product. Then on the left hand side, the faithful left action of the Poisson subalgebra $\mathbb{P}_{\omega\otimes\tr}(N\overline{\otimes}\mathbb{B}(\ell_2))$ is contained in the type I factor:
		\begin{equation}
			\pi_l\bigg(\mathbb{P}_{\omega\otimes\tr}(N\overline{\otimes}\mathbb{B}(\ell_2))\bigg) \cong \mathbb{P}_{\omega\otimes\tr}(N\overline{\otimes}\mathbb{B}(\ell_2))\otimes \mathbb{C} \ssubset \mathbb{B}\bigg(L_2\big(\mathbb{P}_{\omega\otimes\tr}(M\overline{\otimes}\mathbb{B}(\ell_2)), \phi_{\omega\otimes\tr}\big)\bigg)\otimes\mathbb{C}\pl.
		\end{equation}
		Similarly on the tensor product, the faithful right action of the Poisson algebra $\mathbb{P}_{\omega\otimes\tr}(M\overline{\otimes}\mathbb{B}(\ell_2))$ has a commutant that contains the same type I factor:
		\begin{equation}
			\pi_r\bigg(\mathbb{P}_{\omega\otimes\tr}(M\overline{\otimes}\mathbb{B}(\ell_2))\bigg)' \cong \mathbb{B}\bigg(L_2\big(\mathbb{P}_{\omega\otimes\tr}(M\overline{\otimes}\mathbb{B}(\ell_2)), \phi_{\omega\otimes\tr}\big)\bigg)\otimes \mathbb{P}_{\omega\otimes\tr}(M\overline{\otimes}\mathbb{B}(\ell_2))\pl.
		\end{equation}
		Since $U\pi_r\bigg(\mathbb{P}_{\omega\otimes\tr}(M\overline{\otimes}\mathbb{B}(\ell_2))\bigg)U^* = J\mathbb{P}_{\omega\otimes\tr}(M\overline{\otimes}\mathbb{B}(\ell_2)) J$ where $J$ is the modular conjugation of the Poisson state $\phi_{\omega\otimes\tr}$, then $U\mathbb{B}\bigg(L_2\big(\mathbb{P}_{\omega\otimes\tr}(M\overline{\otimes}\mathbb{B}(\ell_2)), \phi_{\omega\otimes\tr}\big)\bigg)\otimes\mathbb{C}U^*$ is the intermediate type I factor we are looking for. 
		
		To finish the proof, the relative commutant $\mathbb{P}_{\omega\otimes\tr}(N\overline{\otimes}\mathbb{B}(\ell_2))'\cap \mathbb{P}_{\omega\otimes\tr}(M\overline{\otimes}\mathbb{B}(\ell_2))$ is properly infinite. Again using the unitary isomorphism $U$, both the commutant $\mathbb{P}_{\omega\otimes\tr}(N\overline{\otimes}\mathbb{B}(\ell_2))'$ and the Poisson algebra $\mathbb{P}_{\omega\otimes\tr}(M\overline{\otimes}\mathbb{B}(\ell_2))$ absorb the Poisson algebra $\mathbb{P}_{\omega\otimes\tr}(M\overline{\otimes}\mathbb{B}(\ell_2))$ itself. Hence the relative commutant absorbs the type III Poisson factor as well. Hence as an infinite von Neumann algebra, the relative commutant must be properly infinite. Hence by a result in \cite{DL1}, when the relative commutant is properly infinite, an inclusion is split if and only if there exists an intermediate type I factor. Hence we have the desired result.
	\end{proof}
	
	\subsection{Relative Entropy in Poisson Algebras}\label{subsection:relEnt}
	As a last application of Poissonization, we show how to calculate quantum relative entropy for certain states in Poisson algebras. When the Poisson algebra is of type III, this calculation gives rigorous formula for the relative entropy in a type III algebra.
	
	To state the main mathematical result in full generality, we first recall Araki's definition of relative entropy and Lindblad's generalization. 
	\begin{definition}[Araki \cite{Ara1, Ara2}]
		Let $N$ be a von Neumann algebra and $\rho,\psi$ be two states on $N$. Then the relative entropy is given by:
		\begin{equation}\label{equation:relEnt}
			D(\rho | \psi) := \langle\eta_\psi, -\log\Delta_{\rho, \psi}\eta_\psi\rangle	
		\end{equation}
		where $\eta_\psi\in L_2(N,\psi)$ is the vector representation of the state $\psi$ and $\Delta_{\rho,\psi}$ is the relative modular operator between the state $\rho$ and $\psi$.
	\end{definition}
	\begin{definition}[Lindblad \cite{Lin}]
		Let $N$ be a finite dimensional von Neumann algebra and $\rho,\psi\in L_1(N)$ be two positive densities (not necessarily normalized). Then the Lindblad relative entropy is given by:
		\begin{equation}\label{equation:LindEnt}
			D_{Lin}(\rho|\psi):= Tr(\rho(\log\rho - \log\psi)) + Tr(\psi) - Tr(\rho)
		\end{equation}
		where $Tr(\cdot)$ is the canonical trace on $N$.
	\end{definition}
	In the case where $N$ is finite dimensional and both $\rho$ and $\psi$ are normalized, the Lindblad relative entropy reduces to the Araki's definition. It is not clear how to define relative entropy when $\rho$ and $\psi$ are unbounded weights. However in the case where one weight is a "small" perturbation of the other, the Lindblad relative entropy can be generalized:
	\begin{definition}\label{definition:genLindbladEntropy}
		Let $N$ be a semifinite von Neumann algebra and let $\rho,\psi$ be two n.s.f weigths such that $\rho \leq \psi$ and $\psi - \rho\in N_*$ is a normal bounded linear functional. Then the generalized Lindblad relative entropy is given by:
		\begin{equation}\label{equation:genLindEnt}
			D_{Lin}(\rho | \psi) := \langle\eta_\psi, -\log\Delta_{\rho, \psi}\eta_\psi\rangle + (\psi - \rho)(1)\pl.
		\end{equation}
	\end{definition}
	When $N$ is finite dimensional and $\rho,\psi$ are bounded functionals, the generalized Lindblad relative entropy reduces to the familiar Lindblad relative entropy. Just as compact operators are treated as small perturbations around bounded operators in noncommutative geometry, the perturbation $\psi - \rho$ can be understood as a small bounded perturbation around an unbounded weight. 
	
	Using this generalized notion of relative entropy, we will calculate the relative entropy between two Poisson states $\phi_\rho$ and $\phi_\psi$. One difficulty in defining this relative entropy is that on a Poisson algebra $\mathbb{P}_\psi N$, it is not clear that $\phi_\rho$ is a normal state because Poissonization only constructs $\phi_\rho$ as a state on another Poisson algebra, namely $\mathbb{P}_\rho N$. However, when $\rho$ is a bounded perturbation around $\psi$, we can define both states on the Poisson algebra $\mathbb{P}_\psi N$. \footnote{In general, when the two weights $\rho, \psi$ do not satisfy the conditions of Proposition \ref{proposition: PoissonRelEnt}, there is no guarantee that the Poisson states $\phi_\rho, \phi_\psi$ can be simultaneously defined on the same Poisson algebra. In fact, if $\psi = \lambda\rho$ for some constant $\lambda > 0$ and assume both are well-defined states on the Poisson algebra $\mathbb{P}_\psi N$, then using Powers-Størmer inequality and a net of approximating states, one can show that these two states are necessarily orthogonal.}
	\begin{proposition}\label{proposition: PoissonRelEnt}
		Let $N$ be a semifinite von Neumann algebra and let $\rho, \psi$ be two n.s.f. weights on $N$. In addition let $\mathbb{P}_\psi N$ be the Poisson algebra and $\phi_{\psi}$ be the corresponding Poisson state. Assume $\rho \leq \psi$ and $\psi - \rho$ is a bounded positive functional. Then on $\mathbb{P}_\psi N$, there exists a n.s.f. weight $\widetilde{\phi}_\rho$ such that:
		\begin{equation}
			(D\widetilde{\phi}_\rho : D\phi_\psi)_t  = \Gamma((D\rho: D\psi)_t)
		\end{equation}
		where $(D\rho:D\psi)_t \in N$ is Connes' cocycle \cite{T} between $\rho$ and $\psi$ and similarly for $(D\widetilde{\phi}_\rho:D\phi_\psi)_t$. Moreover the Poisson state $\phi_\rho$ is a well-defined state on $\mathbb{P}_\psi N$ and the cocycle between $\widetilde{\phi_{\rho}}$ and $\phi_\rho$ is given by: $(D\phi_\rho: D\widetilde{\phi_\rho})_t = \exp(-it(\psi- \rho)(1))$. Then the generalized Lindblad entropy is preserved
		\begin{equation}
			D(\phi_\rho | \phi_\psi) = D_{Lin}(\rho | \psi)\pl.
		\end{equation}
	\end{proposition}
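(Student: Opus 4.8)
The plan is to route everything through Connes' Radon--Nikodym cocycle, exploiting the fact (Lemma~\ref{lemma:GammaLiftWeight} together with Corollary~\ref{corollary:modularAutoWeight}) that Poissonization carries the cocycle on $N$ to the cocycle on $\mathbb{P}_\psi N$. First I would note that $u_t := (D\rho:D\psi)_t$ is a strongly continuous $\sigma^\psi$-cocycle of unitaries in $N$, and that because $\psi-\rho$ is bounded these unitaries lie in the group $S^\psi_{ana}$ on which $\Gamma$ is defined and multiplicative. Since $\Gamma$ is a group homomorphism and intertwines the modular automorphisms, $\sigma^{\phi_\psi}_s(\Gamma(u_t)) = \Gamma(\sigma^\psi_s(u_t))$, so that applying $\Gamma$ to $u_{s+t} = u_s\,\sigma^\psi_s(u_t)$ gives the cocycle identity $\Gamma(u_{s+t}) = \Gamma(u_s)\,\sigma^{\phi_\psi}_s(\Gamma(u_t))$ for $\sigma^{\phi_\psi}$. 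Connes' converse theorem then yields a unique n.s.f.\ weight $\widetilde{\phi}_\rho$ on $\mathbb{P}_\psi N$ with $(D\widetilde{\phi}_\rho:D\phi_\psi)_t = \Gamma(u_t)$, which is the first assertion.

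Next I would identify $\widetilde{\phi}_\rho$ with the honest Poisson state up to scaling. Lifting $\sigma^\rho_t = \mathrm{Ad}(u_t)\circ\sigma^\psi_t$ through the homomorphism $\Gamma$ and using Corollary~\ref{corollary:modularAutoWeight} to write $\sigma^{\phi_\rho}_t = \Gamma(\sigma^\rho_t)$, one gets $\sigma^{\phi_\rho}_t = \mathrm{Ad}(\Gamma(u_t))\circ\sigma^{\phi_\psi}_t = \sigma^{\widetilde{\phi}_\rho}_t$. Hence $\phi_\rho$ and $\widetilde{\phi}_\rho$ share the same modular group, so $(D\phi_\rho:D\widetilde{\phi}_\rho)_t$ is a central one-parameter group of unitaries; the scalar is pinned down by comparing vacuum normalizations, the Poisson state carrying the factor $e^{-\omega(1)}$ so that the relative normalization of $\rho$ against $\psi$ is $e^{(\psi-\rho)(1)}$, finite exactly because $\psi-\rho$ is bounded. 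This simultaneously shows that $\phi_\rho$ is a genuine normal state on $\mathbb{P}_\psi N$ (a bounded perturbation of $\phi_\psi$) and gives $(D\phi_\rho:D\widetilde{\phi}_\rho)_t = e^{-it(\psi-\rho)(1)}$. Chaining the two cocycles produces $(D\phi_\rho:D\phi_\psi)_t = e^{-it(\psi-\rho)(1)}\,\Gamma(u_t)$.

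For the entropy identity I would invoke the standard representation of Araki relative entropy as a cocycle derivative: writing $\Delta_{\phi_\rho,\phi_\psi}^{it} = (D\phi_\rho:D\phi_\psi)_t\,\Delta_{\phi_\psi}^{it}$ and differentiating at $t=0$, the term $\log\Delta_{\phi_\psi}$ annihilates the representing vector $\xi_{\phi_\psi}$, leaving $D(\phi_\rho|\phi_\psi) = i\,\phi_\psi\!\big(\tfrac{d}{dt}\big|_{t=0}(D\phi_\rho:D\phi_\psi)_t\big)$ (with the sign convention of Definition~(Araki)). Substituting the cocycle from the previous step, the scalar prefactor contributes $(\psi-\rho)(1)$, while $\tfrac{d}{dt}\big|_{t=0}\Gamma(u_t) = i\lambda(k_0)$ with $k_0 = -i\dot{u}_0$; the single-generator moment formula $\phi_\psi(\lambda(k_0)) = \psi(k_0)$ from Lemma~\ref{lemma:momentformulaPoisson} then returns exactly the Araki relative entropy $D(\rho|\psi)$ on $N$ in cocycle form. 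Adding the two contributions gives $D(\phi_\rho|\phi_\psi) = D(\rho|\psi) + (\psi-\rho)(1) = D_{Lin}(\rho|\psi)$ by Definition~\ref{definition:genLindbladEntropy}; notice that the Lindblad correction term is produced precisely by the scalar cocycle distinguishing $\phi_\rho$ from $\widetilde{\phi}_\rho$, which is the conceptual heart of the statement.

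The main obstacle is analytic rather than algebraic: justifying the cocycle-derivative formula for relative entropy and the term-by-term differentiation in the (type III) von Neumann setting, where $\dot{u}_0$ and $\tfrac{d}{dt}\Gamma(u_t)$ are a priori unbounded and affiliated operators. The hypothesis $\psi-\rho\in N_*$ is exactly what makes $u_t$ a differentiable bounded-perturbation cocycle with relative Hamiltonian in $N$, and what keeps $\Gamma(u_t)$ differentiable with $\phi_\psi$-integrable generator; I would control these uniformly using the moment estimates of Lemma~\ref{lemma:momentformulaPoisson} and the approximation of $\omega$ by bounded functionals from Lemma~\ref{lemma:limitWeight}, carrying out the differentiation at the level of the approximating states $\varphi_s$ (where everything reduces to finite functionals and the formula is elementary) and then passing to the limit along the ultrafilter.
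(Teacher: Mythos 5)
Your first and last steps are essentially the paper's own proof. The lift of the cocycle $u_t=(D\rho:D\psi)_t$ through the multiplicative map $\Gamma$, the verification $\Gamma(u_t)\sigma^{\phi_\psi}_t(\Gamma(u_s))=\Gamma(u_t\sigma^\psi_t(u_s))=\Gamma(u_{t+s})$, and the appeal to Connes' converse theorem to obtain $\widetilde{\phi}_\rho$ are identical; so is the entropy computation, since evaluating $-\log\Delta_{\phi_\rho,\phi_\psi}$ on the representing vector of $\phi_\psi$ via the cocycle $(D\phi_\rho:D\phi_\psi)_t=e^{-it(\psi-\rho)(1)}\Gamma(u_t)$ and using $\phi_\psi(\lambda(y))=\psi(y)$ is exactly the paper's chain $-\langle\phi_\psi^{1/2},\log\Delta_{\phi_\rho,\phi_\psi}\phi_\psi^{1/2}\rangle=-\langle\phi_\psi^{1/2},\lambda(\log\Delta_{\rho,\psi})\phi_\psi^{1/2}\rangle+(\psi-\rho)(1)=D_{Lin}(\rho|\psi)$. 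The genuine gap is your middle step. Writing $\sigma^{\phi_\rho}_t=\Gamma(\sigma^\rho_t)=\mathrm{Ad}(\Gamma(u_t))\circ\sigma^{\phi_\psi}_t=\sigma^{\widetilde{\phi}_\rho}_t$ presupposes that $\phi_\rho$ is a normal faithful state \emph{on} $\mathbb{P}_\psi N$ with modular group $\Gamma(\sigma^\rho_t)$; but Poissonization constructs $\phi_\rho$ only on the different algebra $\mathbb{P}_\rho N$ (built from the Tomita algebra of $\rho$, acting on a different Hilbert space), and Corollary \ref{corollary:modularAutoWeight} applies only there. That $\phi_\rho$ makes sense as a normal state on $\mathbb{P}_\psi N$ is precisely one of the assertions to be proved --- the paper flags this as the main difficulty just before the proposition --- so your argument assumes what it must show. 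Two further problems: centrality of $(D\phi_\rho:D\widetilde{\phi}_\rho)_t$ gives a scalar only when $\mathbb{P}_\psi N$ is a factor, which is not assumed; and ``comparing vacuum normalizations'' cannot pin down the scalar, since $e^{-\omega(1)}$ is meaningless for the unbounded weights $\rho,\psi$.

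The missing idea is to \emph{compute} $\widetilde{\phi}_\rho$ rather than characterize it by its modular group. Since $\rho\le\psi$, the analytic continuation $u_{-i/2}$ is a contraction, its lift $\Gamma(u_{-i/2})$ is a bounded contraction on $L_2(\mathbb{P}_\psi N,\phi_\psi)$, and $\widetilde{\phi}_\rho=\Gamma(u_{-i/2})\cdot\phi_\psi\cdot\Gamma(u_{-i/2})$. Evaluating on the generators,
\begin{equation*}
  \widetilde{\phi}_\rho(\Gamma(x))=\phi_\psi\big(\Gamma(u_{-i/2})^*\Gamma(x)\Gamma(u_{-i/2})\big)=\exp\big(\psi(u_{-i/2}^*xu_{-i/2}-1)\big)=\exp\big(\rho(x-1)\big)\exp\big((\rho-\psi)(1)\big),
\end{equation*}
which shows in one stroke that $e^{(\rho-\psi)(1)}\widetilde{\phi}_\rho$ is a normal state on $\mathbb{P}_\psi N$ satisfying the Poisson moment formula (hence equal to $\phi_\rho$, which is thereby well defined on $\mathbb{P}_\psi N$ as a bounded twist of $\phi_\psi$), and that $(D\phi_\rho:D\widetilde{\phi}_\rho)_t=e^{-it(\psi-\rho)(1)}$ simply because $\phi_\rho$ is a scalar multiple of $\widetilde{\phi}_\rho$ --- no factoriality needed. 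With this step restored, your differentiation argument for the entropy (including the care you propose via the approximants $\varphi_s$ of Lemma \ref{lemma:limitWeight} and the bounds of Lemma \ref{lemma:momentformulaPoisson}) goes through and matches the paper.
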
 
	\begin{proof}
		The proof is based on the simple observation that Poissonization naturally lifts a cocycle in $N$ to a cocycle in the Poisson algebra $\mathbb{P}_\psi N$. But the integrated weight is not the correct Poisson state. So the assumption on bounded perturbation is necessary to ensure that there exists another cocycle on the Poisson algebra to produce the correct Poisson state.
		
		Let $u_t:=(D\rho:D\psi)_t\in N$ be the cocycle between $\rho, \psi$ in N. Poissonization produces a unitary $\Gamma(u_t)\in \mathbb{P}_\psi N$ such that the cocycle condition holds:
		\begin{equation}
			\Gamma(u_t)\sigma_t^{\phi_\psi}(\Gamma(u_s)) = \Gamma(u_t)\Gamma(\sigma^\psi_t(u_s)) = \Gamma(u_t\sigma^\psi_t(u_s)) = \Gamma(u_{t+s})
		\end{equation} 
		where $\sigma^{\phi_\psi}_t$ is the modular automorphism of $\phi_\psi$ and similarly for $\sigma^\psi_t$. Hence by Connes-Matsuda theorem, there exists a n.s.f. weight $\widetilde{\phi_\rho}$ on the Poisson algebra $\mathbb{P}_\psi N$ such that: $(D\widetilde{\phi_\rho}:D\phi_\psi)_t = \Gamma(u_t)$.
		
		To calculate the $\widetilde{\phi_\rho}$, we first observe that $\Gamma(u_{-i/2})$ is a well-defined contraction on $L_2(\mathbb{P}_\psi N, \phi_\psi)$. Since $\rho \leq \psi$, $||u_{-i/2}|| \leq 1$ \cite{T} and using the isomorphism $\mathcal{F}_s(L_2(N,\psi))\cong L_2(\mathbb{P}_\psi N, \phi_\psi)$, the contraction $u_{-i/2}$ can be lifted to a contraction $\Gamma(u_{-i/2})$ such that:
		\begin{align}
			\begin{split}
				\big(\Gamma(u_{-i/2})\cdot\phi_\psi\cdot\Gamma(u_{-i/2}), \Gamma(x)\big) &= \phi_\psi(\Gamma(u_{-i/2})^*\Gamma(x)\Gamma(u_{-i/2})) = \exp(\psi(u_{-i/2}^*xu_{-i/2} - 1))\\& = \exp(\rho(x - 1))\exp((\rho-\psi)(1))
			\end{split}
		\end{align}
		where $x\in S^{\psi}_{ana} = \{y\in\mathcal{U}^{\psi}_{ana}: y - 1\in m_\psi, ||y||\leq 1\}$ and $\big(\cdot,\cdot\big)$ is the canonical pairing between the Poisson algebra and its predual. Since $\widetilde{\phi_\rho} = \Gamma(u_{-i/2})\cdot\phi_\psi\cdot\Gamma(u_{-i/2})$, then by density we see that the twisted version of $\widetilde{\phi_\rho}$ coincides with the Poisson state $\phi_\rho$:
		\begin{equation}
			\exp((\rho - \psi)(1))\widetilde{\phi_\rho}(\Gamma(x)) = \exp(\rho(x - 1)) =  \phi_\rho(\Gamma(x))\pl.
		\end{equation}
		Hence $\phi_\rho$ is well-defined state on the Poisson algebra $\mathbb{P}_\psi N$ and we have cocycle: $(D\phi_\rho:D\widetilde{\phi_\rho})_t = \exp(it(\rho - \psi)(1))$. In particular, $(D\phi_\rho:D\phi_\psi)_t = \exp(it(\rho - \psi)(1))\Gamma(u_t)$. Therefore by the definition of Araki's relative entropy, we have:
		\begin{align}
			\begin{split}
				D(\phi_\rho | \phi_\psi) &= -\langle\phi_\psi^{1/2}, \log\Delta_{\phi_\rho, \phi_\psi}\phi_\psi^{1/2}\rangle = -\langle\phi_\psi^{1/2}, \lambda(\log\Delta_{\rho,\psi})\phi_\psi^{1/2}\rangle + (\psi - \rho)(1)\\&
				=-\langle\psi^{1/2}, \log\Delta_{\rho, \psi}\psi^{1/2}\rangle + (\psi - \rho)(1) = D_{Lin}(\rho |\psi)\pl.
			\end{split}
		\end{align}
	\end{proof}
	A similar calculation also works for the Renyi entropies. In a separate paper, we will use this observation to study several other quantum information quantities on the Poisson algebras. It turns out that the Poisson algebra is a good toy model to study the so-call chaotic quantum systems.
	\nocite{*}
	\bibliographystyle{alpha}
	\bibliography{abstractPoiss}

\newcommand{\etalchar}[1]{$^{#1}$}
\begin{thebibliography}{RSGSS22}

\bibitem[AAS22]{ADS}
andA natoly~Dymarsky Alexander~Avdoshkin and Michael Smolkin.
\newblock Krylov complexity in quantum field theory, and beyond.
\newblock {\em arXiv prepring arXiv:2212.14429}, 2022.

\bibitem[AD20]{AD1}
Alexander Avdoshkin and Anatoly Dymarsky.
\newblock Euclidean operator growth and quantum chaos.
\newblock {\em Phys. Rev. Res.}, 2:043234, Nov 2020.

\bibitem[AH14]{AH}
Hiroshi Ando and Uffe Haagerup.
\newblock Ultraproducts of von neumann algebras.
\newblock {\em Journal of Functional Analysis}, 266(12):6842--6913, 2014.

\bibitem[Akh20]{Moment}
N.~I. Akhiezer.
\newblock {\em The Classical Moment Problem and Some Related Questions in
  Analysis}.
\newblock Society for Industrial and Applied Mathematics, Philadelphia, PA,
  2020.

\bibitem[Ara76]{Ara1}
H.~Araki.
\newblock Relative entropy of states of von neumann algebras.
\newblock {\em Publ. Res. Inst. Math. Sci. Kyoto}, 1976:809--833, 1976.

\bibitem[Ara77]{Ara2}
H.~Araki.
\newblock Relative entropy for states of von neumann algebras ii.
\newblock {\em Publ. Res. Inst. Math. Sci. Kyoto}, 1:173--192, 1977.

\bibitem[BCK66]{BDK}
Jean Bretagnolle, Didier~Dacunha Castelle, and Jean-Louis Krivine.
\newblock Lois stables et espaces lp.
\newblock {\em Annales de l'institut Henri Poincare}, 2(3):231--259, 1965-1966.

\bibitem[BDH15]{BDH}
Arthur Bartels, Christopher~L. Douglas, and Andre Henriques.
\newblock Conformal nets i: coordinate-free nets.
\newblock {\em International Mathematics Research Notices}, 2015:4975--5952,
  2015.

\bibitem[BDL90a]{BDL1}
Detlev Buchholz, Claudio D'Antoni, and Roberto Longo.
\newblock Nuclear maps and modular structures. i. general properties.
\newblock {\em Journal of Functional Analysis}, 88(2):233--250, 1990.

\bibitem[BDL90b]{BDL2}
Detlev Buchholz, Claudio D'Antoni, and Roberto Longo.
\newblock {Nuclear maps and modular structures. II. Applications to quantum
  field theory}.
\newblock {\em Communications in Mathematical Physics}, 129(1):115 -- 138,
  1990.

\bibitem[Ber92]{bern}
Joseph Bernstein.
\newblock Representations of p-adic groups.
\newblock 1992.
\newblock Lecture note draft.

\bibitem[Bor92]{B}
H.-J. Borchers.
\newblock The cpt-theorem in two-dimensional theories of local observables.
\newblock {\em Communications in Mathematical Physics}, 143(2):315--332, 1992.

\bibitem[Bou15]{Bouch}
Gabriel Bouch.
\newblock Complex-time singularity and locality estimates for quantum lattice
  systems.
\newblock {\em Journal of Mathematical Physics}, 56(12):123303, 2015.

\bibitem[BR81]{BR2}
Ola Bratteli and Derek~W. Robinson.
\newblock {\em Operator Algebras and Quantum Statistical Mechanics II}.
\newblock Theoretical and Mathematical Physics. Springer Berlin, Heidelberg, 1
  edition, 1981.

\bibitem[BR87]{BR1}
Ola Bratteli and Derek~W. Robinson.
\newblock {\em Operator Algebras and Quantum Statistical Mechanics I}.
\newblock Theoretical and Mathematical Physics. Springer Berlin, Heidelberg, 2
  edition, 1987.

\bibitem[Bur03]{Burns}
Michael Burns.
\newblock Subfactors, planar algebras and rotations.
\newblock {\em arXiv preprint arXiv:1111.1362}, 2003.
\newblock PhD thesis at the University of California, Berkeley.

\bibitem[Cao21]{Cao}
Xiangyu Cao.
\newblock A statistical mechanism for operator growth.
\newblock {\em Journal of Physics A: Mathematical and Theoretical}, 54(14),
  2021.

\bibitem[CH15]{CH}
Tyrone Crisp and Nigel Higson.
\newblock Parabolic induction, categories of representations and operator
  spaces.
\newblock {\em arXiv preprint arXiv:1506.09111}, 2015.

\bibitem[Chr95]{Lance}
Lance~E. Christopher.
\newblock {\em Hilbert C*-Modules: A Toolkit for Operator Algebraists}.
\newblock London Mathematical Society Lecture Note Series. Cambridge University
  Press, 1995.

\bibitem[DG20]{DG1}
Anatoly Dymarsky and Alexander Gorsky.
\newblock Quantum chaos as delocalization in krylov space.
\newblock {\em Phys. Rev. B}, 102:085137, Aug 2020.

\bibitem[DL]{Split1}
S.~Doplicher and R.~Longo.
\newblock {\em Inventiones mathematicae}.

\bibitem[DL84]{DL1}
S.~Doplicher and R.~Longo.
\newblock {\em Inventiones Mathematicae}, (75):493--536, 1984.

\bibitem[DM71]{DM}
J.~Dixmier and O.~Mar{\'e}chal.
\newblock {Vecteurs totalisateurs d'une algèbre de von Neumann}.
\newblock {\em Communications in Mathematical Physics}, 22(1):44 -- 50, 1971.

\bibitem[DS21]{DS1}
Anatoly Dymarsky and Michael Smolkin.
\newblock Krylov complexity in conformal field theory.
\newblock {\em Phys. Rev. D}, 104:L081702, Oct 2021.

\bibitem[EHF14]{EHF}
Tarek~A. Elsayed, Benjamin Hess, and Boris~V. Fine.
\newblock Signatures of chaos in time series generated by many-spin systems at
  high temperatures.
\newblock {\em Phys. Rev. E}, 90:022910, Aug 2014.

\bibitem[Flo98]{F}
Martin Florig.
\newblock On borchers' theorem.
\newblock {\em Letters in Mathematical Physics}, 46:289--293, 1998.

\bibitem[GKZ22]{GKZ1}
Y.~Gu, A.~Kitaev, and P.~Zhang.
\newblock A two-way approach to out-of-time-order correlations.
\newblock {\em Journal of High Energy Physics}, (133), 2022.

\bibitem[HA05]{AZ}
Laszlo~Zsido Huzihiro~Araki.
\newblock Extension of the structure theorem of borchers and its application to
  half-sided modular inclusions.
\newblock {\em Reviews in Mathematical Physics}, 17(05):491--543, 2005.

\bibitem[Haa96]{haag}
Rudolf Haag.
\newblock {\em Local Quantum Physics}.
\newblock Theoretical and Mathematical Physics. Springer Berlin, Heidelberg,
  1996.

\bibitem[HQRY16]{HQRY}
P.~Hosur, X.~L. Qi, D.~A. Roberts, and Beni Yoshida.
\newblock Chaos in quantum channels.
\newblock {\em Journal of High Energy Physics}, (4), 2016.

\bibitem[Hus94]{fiber}
Dale. Husemöller.
\newblock {\em Fibre Bundles}.
\newblock Graduate Texts in Mathematics, 20. Springer New York, New York, NY,
  3rd ed. 1994. edition, 1994.

\bibitem[JJB75]{BW}
Eyvind H.~Wichmann Joseph J.~Bisognano.
\newblock On the duality condition for a hermitian scalar field.
\newblock {\em Journal of Mathematical Physics}, 16(4):985--1007, 1975.

\bibitem[Jon83]{Jones1}
V.F.R. Jones.
\newblock Index for subfactors.
\newblock {\em Inventiones mathematicae}, 72:1--26, 1983.

\bibitem[Jon99]{Jones2}
Vaughan F.~R. Jones.
\newblock Planar algebras i.
\newblock {\em arXiv preprint arXiv:math/9909027}, 1999.

\bibitem[JP09]{MJJP}
Marius Junge and Javier Parcet.
\newblock {\em Mixed-Norm Inequalities and Operator Space Lp Embedding Theory},
  volume 203 of {\em Memoirs of the American Mathematical Society}.
\newblock 2009.

\bibitem[JP15]{JP1}
V.F.R. Jones and D.~Penneys.
\newblock Infinite index subfactors and the gicar categories.
\newblock {\em Communications in Mathematical Physics}, (339):729--768, 2015.

\bibitem[Jun]{M2}
Marius Junge.
\newblock Noncommutative poisson measure.
\newblock unpublished.

\bibitem[Jun06]{MAW}
Marius Junge.
\newblock {Operator spaces and Araki-Woods factors: A quantum probabilistic
  approach}.
\newblock {\em International Mathematics Research Papers}, 2006, 01 2006.

\bibitem[KL92]{KL1}
Hideki Kosaki and Roberto Longo.
\newblock A remark on the minimal index of subfactors.
\newblock {\em Journal of Functional Analysis}, 107(2):458--470, 1992.

\bibitem[KLM01]{KLM}
Y.~Kawahigashi, R.~Longo, and M.~Muger.
\newblock Multi-interval subfactors and modularity of representations in
  conformal field theory.
\newblock {\em Communications in Mathematical Physics}, (219):631--669, 2001.

\bibitem[Kna86]{Knapp1}
Anthony~W. Knapp.
\newblock {\em Representation Theory of Semisimple Groups: An Overview Based on
  Examples (PMS-36)}.
\newblock Princeton University Press, rev - revised edition, 1986.

\bibitem[Kna96]{Knapp}
Anthony~W. Knapp.
\newblock {\em Lie Groups Beyond an Introduction}.
\newblock Progress in Mathematics. Birkhauser Boston, MA, 1 edition, 1996.

\bibitem[Kos]{Kos1}
Hideki Kosaki.
\newblock Relative entropy of states: A variational expression.
\newblock {\em Journal of Operator Theory}, (2):335--348.

\bibitem[Lie83]{HLS}
Elliott~H. Lieb.
\newblock Sharp constants in the hardy-littlewood-sobolev and related
  inequalities.
\newblock {\em Annals of Mathematics}, 118(2):349--374, 1983.

\bibitem[Lin74]{Lin}
G.~Lindblad.
\newblock Expectations and entropy inequalities for finite quantum systems.
\newblock {\em Communications in Mathematical Physics}, (39):111--119, 1974.

\bibitem[LMVR19]{LMVR}
Tyler LeBlond, Krishnanand Mallayya, Lev Vidmar, and Marcos Rigol.
\newblock Entanglement and matrix elements of observables in interacting
  integrable systems.
\newblock {\em Phys. Rev. E}, 100:062134, Dec 2019.

\bibitem[Lon]{Longo1}
Roberto Longo.
\newblock Lectures on conformal nets.
\newblock unpublished.

\bibitem[Lon84]{Longo2}
R.~Longo.
\newblock Solution of the factorial stone-weierstrass conjecture. an
  application of the theory of standard split w*-inclusions.
\newblock {\em Inventiones mathematicae}, 76:145--156, 1984.

\bibitem[Lon89]{Longo3}
Roberto Longo.
\newblock {Index of subfactors and statistics of quantum fields. I}.
\newblock {\em Communications in Mathematical Physics}, 126(2):217 -- 247,
  1989.

\bibitem[LTZ22]{LTZ}
Chang Liu, Haifeng Tang, and Hui Zhai.
\newblock Krylov complexity in open quantum systems.
\newblock {\em arXiv preprint arXiv:2207.13603}, 2022.

\bibitem[Mey95]{Meyer}
Paul-Andre Meyer.
\newblock {\em Quantum Probability for Probabilists}.
\newblock Lecture Notes in Mathematics. Springer Berlin, Heidelberg, 1995.

\bibitem[MSS16]{MSS}
J.~Maldacena, S.~H. Shenker, and D.~Stanford.
\newblock A bound on chaos.
\newblock {\em Journal of High Energy Physics}, (106), 2016.

\bibitem[NN20]{NN}
T.~Nosaka and T.~Numasawa.
\newblock Quantum chaos, thermodynamics and black hole microstates in the mass
  deformed syk model.
\newblock {\em Journal of High Energy Physics}, (81), 2020.

\bibitem[Ocn89]{Ocn1}
A.~Ocneanu.
\newblock {\em Quantized groups, string algebras, and Galois theory for
  algebras}, volume~2 of {\em London Mathematical Society Lecture Note Series},
  page 119–172.
\newblock Cambridge University Press, 1989.

\bibitem[Par92]{KRP}
K.~R. Parthasarathy.
\newblock {\em An Introduction to Quantum Stochastic Calculus}.
\newblock Modern Birkhauser Classics. Birkhauser Basel, 1 edition, 1992.

\bibitem[Par12]{quantStoch}
K.~R. Parthasarathy.
\newblock {\em An Introduction to Quantum Stochastic Calculus}.
\newblock Modern Birhauser Classics. Birkhauser Basel, 2012.

\bibitem[Pas73]{Pash1}
William~L. Paschke.
\newblock Inner product modules over b*-algebras.
\newblock {\em Transactions of the American Mathematical Society},
  182:443--468, 1973.

\bibitem[PCA{\etalchar{+}}19]{PCASA}
Daniel~E. Parker, Xiangyu Cao, Alexander Avdoshkin, Thomas Scaffidi, and Ehud
  Altman.
\newblock A universal operator growth hypothesis.
\newblock {\em Phys. Rev. X}, 9:041017, Oct 2019.

\bibitem[Pen13]{P1}
D.~Penneys.
\newblock A planar calculus for infinite index subfactors.
\newblock {\em Communications in Mathematical Physics}, (319):595--648, 2013.

\bibitem[Pop95]{Popa1}
Sorin Popa.
\newblock volume~86 of {\em CBMS Regional Conference Series in Mathematics}.
\newblock 1995.

\bibitem[PP86]{PP1}
Mihai Pimsner and Sorin Popa.
\newblock Entropy and index for subfactors.
\newblock {\em Annales scientifiques de l'Ecole Normale Superieure},
  19(1):57--106, 1986.

\bibitem[Rep00]{Rep}
{\em Representation theory of Lie groups}.
\newblock IAS/Park City mathematics series ; v. 8. American Mathematical
  Society, Providence, RI, 2000.

\bibitem[RSGSS21]{RSSS1}
E.~Rabinovici, A.~Sanchez-Garrido, R.~Shir, and J.~Sonner.
\newblock Operator complexity: a journey to the edge of krylov space.
\newblock {\em Journal of High Energy Physics}, (62), 2021.

\bibitem[RSGSS22]{RSSS2}
E.~Rabinovici, A.~Sanchez-Garrido, R.~Shir, and J.~Sonner.
\newblock Krylov complexity from integrability to chaos.
\newblock {\em Journal of High Energy Physics}, (151), 2022.

\bibitem[RSS18]{RSS}
D.~A. Roberts, D.~Stanford, and A.~Streicher.
\newblock Operator growth in the syk model.
\newblock (122), 2018.

\bibitem[RvD77]{RvD}
Marc~A. Rieffel and Alfons van Daele.
\newblock A bounded operator approach to tomita-takesaki theory.
\newblock {\em Pacific Journal of Mathematics}, 69(1):187 -- 221, 1977.

\bibitem[Sat99]{classical}
Kenichi Sato.
\newblock {\em Levy Processes and Infinitely Divisible Distributions}.
\newblock Cambridge studies in advanced mathematics. Cambridge University
  Press, Cambridge, U.K., 1999.

\bibitem[SML18]{SML}
Akshay Seshadri, Vaibhav Madhok, and Arul Lakshminarayan.
\newblock Tripartite mutual information, entanglement, and scrambling in
  permutation symmetric systems with an application to quantum chaos.
\newblock {\em Phys. Rev. E}, 98:052205, Nov 2018.

\bibitem[SPQ{\etalchar{+}}19]{SPQSC}
C.~Sunderhauf, L.~Piroli, X.~L. Qi, N.~Schuch, and J.~I. Cirac.
\newblock Quantum chaos in the brownian syk model with large finite n: Otocs
  and tripartite information.
\newblock {\em Journal of High Energy Physics}, (38), 2019.

\bibitem[ST]{ST}
Gennady Samoradnitsky and M.~S. Taqqu.
\newblock {\em Stable Non-Gaussian Random Processes Stochastic Models with
  Infinite Variance}.
\newblock Chapman and Hall, 1 edition.

\bibitem[Tak79]{T2}
Masamichi Takesaki.
\newblock {\em Theory of operator algebras. I}.
\newblock Springer-Verlag, New York, 1979.

\bibitem[Tak03]{T}
Masamichi Takesaki.
\newblock {\em Theory of operator algebras. II}.
\newblock Encyclopaedia of Mathematical Sciences, vol. 127. Springer-Verlag,
  Berlin, 2003.

\bibitem[Ter82]{MT}
Marianne Terp.
\newblock Interpolation spaces between a von neumann algebra and its predual.
\newblock {\em Journal of Operator Theory}, 8(2):327--360, 1982.

\bibitem[VND92]{VND}
Dan Voiculescu, Alexandru Nica, and Kenneth~J. Dykema.
\newblock {\em Free Random Variables}, volume~1 of {\em CRM Monograph Series}.
\newblock 1992.

\bibitem[Was98]{wassermann}
Antony Wassermann.
\newblock Operator algebras and conformal field theory iii. fusion rules of
  positive energy representations of lsu(n) using bounded operators.
\newblock {\em Inventiones Mathematicase}, (133):467--538, 1998.

\bibitem[Wid46]{wid}
David~Vernon Widder.
\newblock {\em The Laplace Transform}.
\newblock Princeton University Press, 1946.

\bibitem[Zel05]{Zeld2}
S.~Zelditch.
\newblock Quantum ergodicity and mixing.
\newblock {\em arXiv preprint arXiv:math-ph/0503026}, 2005.

\bibitem[Zel09]{Zeld1}
S.~Zelditch.
\newblock Recent developments in mathematical quantum chaos.
\newblock {\em arXiv preprint arXiv:0911.4312}, 2009.

\end{thebibliography}
\end{document}